\newcommand*{\jfrac}[2]{\genfrac{}{}{0pt}{}{#1}{#2}}
\newcommand{\xrar}{\xrightarrow}
\renewcommand{\chaptermark}[1]{\markboth{\MakeUppercase{#1}}{}}
\renewcommand{\headrulewidth}{1.3pt}
\renewcommand{\headrule}{\hbox to\headwidth{\color{uibred}\leaders\hrule height \headrulewidth\hfill}}
\newtheorem{defn}{Definition}
\newtheorem{lem}{Lemma}
\newtheorem{mlem}{Mini-Lemma}
\newtheorem{thm}{Theorem}
\newtheorem{mthm}{Mini-Theorem}
\newtheorem*{pthm*}{Teorema}
\newtheorem{ex}{Example}
\newtheorem{prop}{Proposition}
 \newtheorem*{prob*}{Problema}
\newtheorem{cor}{Corollary}
\newtheorem{claim}{Claim}
\newtheorem{mclaim}{Mini-Claim}
\newtheorem{quest}{Q}
\newtheorem{qsconj}{$\boldsymbol{(\star)}$}
\newcommand{\C}{\mathbb{C}}
\newcommand{\CC}{\widehat{\mathbb{C}}}
\newcommand{\R}{\mathbb{R}}
\newcommand{\N}{\mathbb{N}}
\newcommand{\Z}{\mathbb{Z}}
\newcommand{\Q}{\mathbb{Q}}
\newcommand{\s}{\mathbb{S}}
\newcommand{\rr}{\overline{\mathbb{R}}}
\newcommand{\bg}{\mathcal{B}\mathcal{G}}
\newcommand{\bgg}{\boldsymbol{\mathcal{B}}\boldsymbol{\mathcal{G}}}
\newcommand{\p}{\mathbb{P}}
\newcommand{\cc}{\overline{\mathbb{C}}}
\newcommand{\CP}{\mathbb{C}\mathbb{P}}
\newcommand{\CPu}{\mathbb{C}\mathbb{P}^1}
\newcommand{\RPu}{\mathbb{R}\mathbb{P}^1}
\newcommand*{\myov}[1]{\overbracket[1.1pt][0pt]{#1}}
\definecolor{deeppink}{rgb}{1.0, 0.08, 0.58}
\definecolor{uibred}  {HTML}{db3f3d}
\definecolor{uibblue} {HTML}{4ea0b7}
\definecolor{uibgreen}{HTML}{789a5b}
\definecolor{uibgray} {HTML}{d0cac2}
\definecolor{uiblink} {HTML}{00769E}
\let\old@rule\@rule
\def\@rule[#1]#2#3{\textcolor{rulecolor}{\old@rule[#1]{#2}{#3}}}
\definecolor{rulecolor}{named}{uibred}
\definecolor{laranja}{rgb}{1,0.45,0.05}
\begin{document}
\frontmatter 

\fancyhead[RO]{{\footnotesize\rightmark}\hspace{2em}\thepage}
\setcounter{tocdepth}{2}
\fancyhead[LE]{\thepage\hspace{2em}\footnotesize{\leftmark}}
\fancyhead[RE,LO]{}
\fancyhead[RO]{{\footnotesize\rightmark}\hspace{2em}\thepage}
\onehalfspacing  

\thispagestyle{empty}
\begin{center}
    \vspace*{2.3cm}
    \textbf{\Large{Branched coverings of the $2$-sphere.}}\\
    
    \vspace*{1.2cm}
    \Large{Arcelino Bruno Lobato do Nascimento}
    
    \vskip 2cm
    \textsc{
    Tese apresentada\\[-0.25cm] 
    ao\\[-0.25cm]
    Instituto de Matemática e Estatística\\[-0.25cm]
    da\\[-0.25cm]
    Universidade de São Paulo\\[-0.25cm]
    para\\[-0.25cm]
    obtenção do título\\[-0.25cm]
    de\\[-0.25cm]
    Doutor em Ciências}
    
    \vskip 1.5cm
    Programa: Matemática\\
    Orientador: Sylvain Bonnot\\

   	\vskip 1cm
    \normalsize{Durante o desenvolvimento deste trabalho o autor recebeu auxílio financeiro da CAPES}
    
    \vskip 0.5cm
    \normalsize{Povoado São Joaquim, Penalva-MA, 2021}
\end{center}

%
%
%
\newpage

%
%
%
%
\newpage
\thispagestyle{empty}
    \begin{center}
        \vspace*{2.3 cm}
        \textbf{\Large{Branched coverings of the $2$-sphere.}}\\
        \vspace*{2 cm}
    \end{center}

    \vskip 2cm

    \begin{flushright}
    This is the original version of the thesis,\\ as submitted to the thesis committee,\\ written by Arcelino Bruno Lobato do Nascimento.


   \vskip 2cm

    \end{flushright}

\pagebreak

\pagenumbering{roman}     

\chapter*{Abstract}
\noindent Nascimento, A. B. L. do \textbf{Branched coverings of the 2-sphere}. 
2021. xxx f. Tese (Doutorado) - Instituto de Matemática e Estatística,
Universidade de São Paulo, São Paulo, 2021.
\\

\justifying \emph{Thurston} obtained a combinatorial characterization for generic branched self-coverings that preserve the orientation of the oriented 2-sphere by associating a planar graph to them \cite{STL:15}. In this work, the Thurston result is generalized to any branched covering of the oriented 2-sphere. To achieve that the notion of local balance introduced by Thurston is generalized. As an application, a new proof for a Theorem of \emph{Eremenko-Gabrielov-Mukhin-Tarasov-Varchenko} \cite{MR1888795}, \cite{MR2552110} is obtained. This theorem corresponded to a special case of the B. \& M. Shapiro conjecture. In this case, it refers to generic rational functions stating that a generic rational function $R:\CPu \rightarrow \CPu$ with only real critical points can be transformed by post-composition with an automorphism of $\CPu$ into a quotient of polynomials with real coefficients. Operations against balanced graphs are introduced.
\\

\noindent \textbf{Keywords:}  branched coverings, cell graphs, geometric topology, combinatorics, balanced graphs

\tableofcontents    

\listoffigures            

\newpage

\section{INTRODUCTION}
The present work began with the task given by \emph{Sylvain Bonnot} of developing a computer program in the software \href{https://www.wolfram.com/mathematica/}{Mathematica} that would draw the preimage of the real line $\rr\subset\cc$ by a cubic rational function with real coefficients 
of the form $\phi_a:z\mapsto \dfrac{az^3 +(1-2a)z^2}{(2-a)z-1}$. The critical points of $\phi_a$ are all real points, namely, $0$, $1$, $\infty$ and $c(a)=\dfrac{2a-1}{a(2-a)}$ for $a\in \rr-\{-1,0,\frac{1}{2}, 1,2\}$. For each of these functions the inverse image of the real line yields a cellularly embedded graph into $\cc$, that is, the $1$-skeleton of a cellular decomposition of $\cc$. 
\emph{Sylvain Bonnot}'s interest was to describe how these graphs vary as we vary the critical point $c(a)$. This was done and is presented in Chapter $\ref{cap-04}.$

The central purpose of the research presented in this thesis is to determine combinatorial objects that can characterize rational functions considering their critical configuration. Consisting, therefore, in a certain sense, in a dual theory to the one initiated by Hurwitz that studies the branched coverings of the two-dimensional $\s^2$ sphere taking into account their critical values.

The family of functions $\phi_a$ was presented to me by \emph{Sylvain Bonnot} through a post by 
\href{http://www.math.hawaii.edu/~xander/index.html}{Xander Faber} on the \emph{Mathematics} question \& answer site, 
\href{https://mathoverflow.net/}{Mathoverflow}. As the title of the post presumes, 
\href{https://mathoverflow.net/questions/102506/determining-rational-functions-by-their-critical-points?rq=1}{Determining rational functions by their critical points}, \emph{Xander Faber} draws attention to the problem of determining rational functions from its critical configuration.

Fulfilling the design stated above we propose a combinatorial description of orientation-preserving branched coverings of the two-dimensional sphere via a cellular graph that captures their critical configuration. 

The most distant ancestor to this idea of to capture the essence of a mapping by restricting it to a  graph is the combination 
of the \emph{Alexander (trick) lemma} \cite{MR3203728} and the \emph{Schöenflies theorem} \cite{MR728227} that allows us to distinguishes homeomorphisms of a closed 2-cell, up to isotopy, by its restriction to the boundary circle.

\justifying A branched covering of genus $g$ 
 of the sphere $\s^2$ is a continuous surjective map $f:S_g\rightarrow\s^2$ from a genus $g$ surface $S_g$ to the 2-sphere 
that, around each point $p \in S_g$, it is given in local topological coordinates by $z \mapsto z^{e}$ around $0\in\C$ with $e:=e(p) \geq 1$ an integer and such that $|\{p\in S_g ; e(p)>1 \}|<\infty$. Each point $p_0 \in \{p\in S_g ; e(p)>1 \}$ is called \emph{critical point} of $f$ and its image $f(p_0)$ we call \emph{critical value}. The integer $e(p)\geq 1$ is the local degree (or, ramification index) of $f$ at $p$. The degree of a branched covering is the cardinality of the set $\{p \in S_g ; f(p)=q\}$ for some $q \in \s^2 - \{p\in S_g ; e(p)>1 \}$.

For a branched covering, the data consisting of its critical points, their multiplicities and their clusterings according to their image by that map is called \emph{critical configuration}. This information is given through a list of integer partitions of the degree of the branched covering, one for each critical value, called the \emph{passport} of the map, together with the sequence of critical points in $\s^2$.

\justifying The notion of equivalence suitable for the classification of branched coverings according to their critical configuration is the one that identifies them via post-composition with homeomorphisms of $\s^2$  that preserve the orientation. 
Two equivalent branched coverings, according to that notion given above, have the same critical configuration.

\justifying In the strict context of rational functions of the \emph{Riemann sphere}, $\CPu$ the equivalence considered specializes to identify rational functions by post - composition with automorphisms of $\CPu$, that is, by post - composition with \emph{Möebius transformations}.  

A natural problem 
is the counting of the equivalence classes for a prescribed configuration. Some results for this problem are known, as described below.

The problem of establishing combinatorially the equivalence class count for a given critical configuration is the pivotal guiding point for the current research.


To this end, we will associate a combinatorial object to a branched covering of $\s^2$ by a closed oriented surface $S_g$, $f: S_g\rightarrow\s^2$. This combinatorial object is a cellularly embedded graph on $S_g$, i.e., the $1$-skeleton of a cellular decomposition of $S_g$, just like the planar graphs that appear as an inverse image of $\rr\subset\cc$ by the rational functions $\phi_a$ presented above. Although, as mentioned earlier, the present research takes as its starting point questions about rational functions $f:\cc\rightarrow\cc$,  for which is already presupposed an analytic structure, we will consider branched coverings of the sphere by closed surfaces of any genus and these will be considered prior as topological objects.

\begin{figure}[h!]
\centering
\subfloat{\tikz[remember
picture]{\node(1AL){\includegraphics[width=4cm]{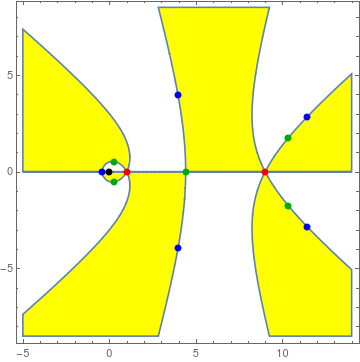}};}}%
\hspace*{1.5cm}%
\subfloat{\tikz[remember picture]{\node(1AR){\includegraphics[width=4cm]{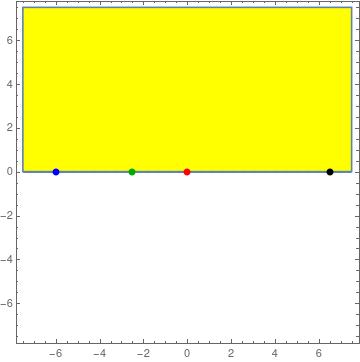}};}}
\caption{}
\end{figure}
\tikz[overlay,remember picture]{\draw[-latex,thick] (1AL) -- (1AL-|1AR.west)
node[midway,below,text width=1.5cm]{\hspace{.65cm}$f$};}

In the article \cite{STL:15}, \textrm{Sarah Koch \& Tan Lei} present the ideas and a result obtained by \textrm{William Thurston} in an email group discussion whose central goal 
 was the \textrm{determination of the form of a rational function of the complex projective line $\CPu$}. \emph{Thurston}, then introduced a class of planar graphs, named \emph{balanced graphs}, capable of combinatorially representing a generic branched selfcovering of the sphere. The graphs mentioned in the previous paragraph are a generalization of the \textrm{balanced graphs} defined by \textrm{Thurston}, as Thurston considered only regular planar graphs of degree $4$ with $2d-2$ vertices.

\justify{However, this was a later discovery in the course of the research presented here,  since the strategy of considering such graphs to represent rational functions (the starting point, and arrival/return point as well) has as inspiration the \emph{Dessins d'Enfants} (children's drawings) intruded by \emph{A. Grothendieck} to study the \emph{Absolute Galois} group  $\text{Gal}(\overline{\Q}, \Q)$ and \emph{Arithmetic Riemann Surfaces} (a Riemann surface is said to be \textrm{Arithmetic} if it admits an algebraic model defined over the \textrm{the field of Algebraic Numbers} $\overline{\Q}$) \cite{MR1483107},\cite{MR1305390},\cite{MR2895884},\cite{lando:03}). This strategy also naturally stemmed from casual conversations with \textrm{Sylvain Bonnot} about some mathematical curiosities, in particular, about degenerations of hyperbolic structures in manifolds of dimension $2$ and $3$ (\cite{MR769158},\cite{MR3053012} \cite{MR1855976}) as well as on the combinatorial structure of \emph{Moduli spaces} of Riemann surfaces via combinatorial representations of the geometric structures of these surfaces \cite{MPcom2}, \cite{MPcom1},\cite{MR1734132}, \cite{MR2497787}, those are theories in which graphs embedded in surfaces play a important role}.

\justifying The problem of counting equivalence classes of rational functions of $\CPu$  was considered previously by \emph{Eisenbud} \& \emph{Harris} in \cite{EiH:83} and by \emph{Lisa Goldberg} in \cite{Gold:91}. For the adjacent Schubert problem, Eisenbud and Harris established the necessary transversality for the intersections of Schubert varieties involved. 
The tranversality ensure zero dimensionality of the intersection and the number of points in it are computed by the \emph{Pieri formula} \cite[Theorem 9.1]{EiH:83}, \cite{Oss:06}, \cite{SF:11}, \cite{Fult:84}. 

\emph{Goldberg} established a combinatorial formula for the count of rational functions with generic critical configuration, assuring that \emph{by fixing the degree $d\geq 2$ and imposing the rational functions to have $2d-2$ critical points all with multiplicity $2$, there are}

\[\rho(d):=\frac{1}{d}{2d-2\choose d-1}\] 
\emph{equivalence classes of rational functions for each list of $2d-2$ points in general position in $\CPu$ prescribed as the critical points.}

This result was established using \emph{Algebraic Geometry}, more precisely, by translating it into a \emph{Schubert's problem}.

The \emph{Enumerative Geometric} problem to which Goldberg reduced the counting problem 
is:

\begin{prob*}\label{prob1-eg}
Given $2d - 2$ lines in general position in the projective space $\CP^d$, how many projective subspaces of codimension $2$ intersect all those lines?
\end{prob*}

The integer $\rho (d)$ is called the $d$-\emph{Catalan number}. These numbers are highly present and recurrent in \emph{Discrete Mathematics}, having a huge number of combinatorial interpretations  (see \cite{Stancat:15}). Moreover, \emph{Catalan numbers} often manifest themselves in several areas beyond  \emph{Discrete Mathematics} without there being an obvious combinatorial reason for such an appearance. For example, in the problem of determining the degree of applications or dimension of algebraic varieties in \emph{intersection theory} \cite{GH:94}\cite{Fult:84} and \emph{Schubert calculus} (\cite{GH:94}\cite{KL:72}) for Grassmanians, which a priori are problems involving much more sophisticated structures apart from the discrete mathematics.

A complete solution with obtaining a generic combinatorial formula for this problem was given by I.  Scherbak in \cite{Scherbak:02}. Such a result was established by combining \emph{Schubert's Calculus, Representation Theory, Fuchsian Differential Equations and KZ Equation Theory}.

Making use of the \emph{Limit Linear Series Theory} developed by \emph{Eisenbud} \& \emph{Harris} in \cite{EiH:86}, B. Osserman in \cite{Oss:03} established the count for the more general case of branched coverings of the sphere (including that one of positive genus over $\CPu$), i.e., He counts the rational functions with positive genus domain. Actually, the results obtained by  \emph{Osserman} are more general, they are for \emph{Linear Series} of dimension greater than $1$ as well.

Eremenko and Gabrielov in \cite{MR1888795} by proving the simplest case of the conjecture of \emph{B} \& \emph{M Shapiro} showed that the number of equivalence classes of rational $\CPu$ functions of degree $d$ with $2d-2$ prescribed critical points contained in $\RPu$ is at least $\rho(d).$ Then, this means that the genericity constraint on the prescription of the $2d-2$ critical points imposed in Goldberg's result can be taken off. This a kind of phenomenon/problem is referred to as the \emph{reality of the Schubert Calculus} \cite[and references therein]{SF:11} in \emph{Enumerative Geometry}. 

The conjecture of \emph{B} \& \emph{M Shapiro}, now a theorem due to mathematicians E. Mukhin, V. Tarasov and A. Varchenko \cite{MR2552110}, states that if the \emph{Wronskian Determinant} of a list of polynomials of degree $d$ with complex coefficients, $f_1 (z), f_2 (z), \cdots , f_m (z)\in\C[z]$, has only real zeros, then the vector subspace $\langle f_1(z), f_2(z), \\ 
\cdots , f_d(z)\rangle_{\C}\subset \C [z]$ has a basis in $\R [z]$.

The \emph{Wronskian Determinant} is the polynomial  
\[\textrm{W}(f_1, f_2, \cdots , f_m) := \det\left( \left( \frac{d}{dt}\right)^{i-1} f_j(t)\right)_{i,j=1,\cdots ,m}\]
The degree of $\textrm{W}(f_1, f_2, \cdots , f_m)$ is at most $m(d-1)$.

In \cite{MR1888795}, Eremenko and Gabrielov introduced a cellular decomposition of the \emph{Riemann sphere} $\CPu$, which they called ``net''. They use this cellular decomposition of the sphere to construct the expected number of classes of real rational functions. These cellular graphs are particular examples of the \emph{balanced graphs} introduced by \emph{Thurston} \cite{STL:15}. 

Thurston \cite{STL:15} has established a complete combinatorial characterization of generic branched selfcoverings of the two dimensional sphere $\s^2$. A branched covering of degree $d$, $\s^2\rightarrow\s^2$, is said to be generic when it has the maximum number of critical points, $2d-2$
(or equivalently, when all its critical points have ramification index $2$).

\begin{pthm*}[\cite{STL:15}]\label{tthm00}
A $4$-regular planar oriented graph $\Gamma$ with $2d-2$ vertices is equal to $f^{-1}(\Sigma)$ for some branched covering of degree $d$, $f:\s^2 \rightarrow\s^2$ and some \emph{Jordan curve}, $\Sigma\subset \s^2$, containing the critical values of $f$ if and only if :
\begin{itemize}
\item[$\boldsymbol{1. }$]{\textbf{global balancing:} for any alternating \textcolor{deeppink}{A}-\textcolor{blue}{B} coloration of the faces of $\Gamma$, there are $d$ faces of the \textcolor{deeppink}{A} and $d$ faces of the \textcolor{blue}{B}, and}
\item[$\boldsymbol{2. }$]{\textbf{local balancing:} any cycle oriented in $\Gamma$, which is incident to only faces of color \textcolor{deeppink}{A} on its left, 
 contains, in its interior, more faces of the color \textcolor{deeppink}{A} than faces of the color \textcolor{blue}{B}.}
\end{itemize}
\end{pthm*}
See definitions in Chapter $\ref{cap-03}$.

The general version encompassing branched coverings of $\s^2$ by closed surfaces of any genus and with any admissible critical configuration is given(see Section $\ref{cap-03}$). To this end, the definition of \emph{local balance} is extended so that it can properly capture the base topology.  

We introduce two classes of {cellular graphs} called \emph{Pullback graphs} [$\ref{pullbackg}$] and \emph{Admissible graphs} [$\ref{adm-g}$]. We show that an admissible graph actually encodes a recipe for constructing a branched covering of $\s^2$ [see $\ref{subsec:bcconstruction}$]. Thus the less obvious direction of the \emph{Thurston's Theorem} (generalized) [$\ref{THURSTHG1}$], consisting of to show that balanced graphs are preimage by branched coverings of special curves, transmutes into the task of ensuring that a balanced graph, i.e., a cellular graph that satisfies the global and local balance conditions, can be promoted to a \emph{admissible graph}. Half of the proof of that comprises of 
an underlying problem in (abstrac) \emph{graph theory} [$\ref{vertex-capacity-fn}$], it suffices to ensure that the enriched balanced graph admits a good vertex labelling turning it on an admissible graph. In the generic planar case \emph{Thurston} ritchs this by resorting to \emph{Cohomology}.

A genelarization for any branched selfcovering of $\s^2$ of the \emph{Thurston}'s theorem stated above was also obtained by \emph{J. Tomasini} \cite{Tomtese} in his doctoral thesis. 
He did not follow the approach introduced by \emph{Thurston}.  
Guided by a usual approach in \emph{Hurwitz's theory}, \emph{Tomasini} had considered a star map consisting into a collection of Jordan arcs connecting a chosen regular point of the branched covering, say $f$, to each critical value of $f$. Then, he consider the preimage of this cellular graph in order to get a combinatorial object associated to the branched covering as \emph{Thurnston} 
had proposed. He translated the balance condition of Thurston to a class of cellular bipartite planar graphs and then proved a complete planar version of the Thurston theorem. \emph{Tomasini} also established some results concerning the decomposition of its balanced graphs following the decompositions operations introduced by \emph{Thurston} in \cite{STL:15}. 



We count the globally balanced real graphs (these have as underlying graph those planar graphs considered by Eremenko \& Gabrielov in 
\cite{MR1888795}). For every $2d-2$ points in $\overline{\R}$ there exist $\rho(d)$ real globally balanced graphs with these points as vertices [$\ref{2.5-sect}$]. We also show that globally balanced real graphs are always locally balanced [$\ref{realgb-is-lg}$] and in this way it is established that there exist at least $\rho(d)$ equivalence classes of generic real functions with their $2d-2$ prescribed critical points. This, combined with \emph{Goldberg}'s result on counting equivalence classes of generic rational functions with pre-fixed critical points \cite{Gold:91}, culminates into a new proof [$\ref{sha-conj}$] for the \emph{Theorem of Eremenko-Gabrielov-Mukhin-Tarasov-Varchenko} (\cite{MR1888795},\cite{MR2552110}), which previously corresponded to a case of Shapiro's \emph{conjecture}.

A bunch of operations against \emph{balanced graphs} are introduced [$\ref{oper-bg}$]. These operations are interesting due 
the fact that they allow us to understand 
the structures of these objects and also allow us to produce more complex specimens of them from simpler ones. Some of these operations were formalized from computational observation of how the graph changes in parametric families (see Chapter $\ref{cap-04}$), thus representing degenerations, that is, the changes of the critical configuration. Some of them embody the changes of the isotopy class of the post-critical curve for a fixed branched covering. So, in this way, we could be able to, probably, combinatorially encode the structure of the space of branched coverings. These operations defines over the class of balanced graphs the structure of a groupoid. 

Other guiding reasons for the consideration of the operations based on balanced graphs are:

\begin{itemize}
\item{ to stablish a \emph{Reconstrution Principle}, that is, the possibility of to ensure the validity of the conjectural fact that any balanced graph can be obtained by the more simplest ones in genus $0$ and $1$ through concatenation of operations.}


\item{Is expected that, as in \cite{MR3436154}, \cite{MR3366120}, \cite{MR1396978}, \cite{MR3723168}, \cite{eynard2010laplace}, \cite{eynard:16}, this operation may produce relations into the collection of the generating series for the counting of the balanced graphs and in the topological recursion for them;}
\item{to use these approach to achieve the combinatorial proof asked by Lisa Goldberg in \cite[{PROBLEM}, at page $132$]{Gold:91} to the counting problem of equivalence classes of generic rational functions of $\CPu$ for prescribed critical points in general position.}
\end{itemize}

\subsection*{These text is organized as follows}

Chapter $\ref{cap-02}$ introduces the basic elements that support the research. The foundational results therein are conveniently presented in accordance with the taste and general point of view of the research. References for proofs are given. This chapter also includes some simple new results of technical character, namely, the proposition $\ref{vertex-capacity-fn}$ into \emph{Graph Theory}, this result is fundamental to the proof of the generalization of the \emph{Thurston} Theorem $\ref{tthm00}$, Theorem $\ref{THURSTHG1}$; and the Proposition $\ref{simultiso}$ about isotopy of collection of \emph{Jordan} arcs into surfaces. 

Chapter $\ref{cap-03}$ contains the main contributions of this thesis. There we develop the theory of combinatorial representation of a branched covering through cellular maps. We explain Thurston's proposal to capture the essence of a generic branched selfcovering by a planar graph, and then this idea is extended to any branching covering of the sphere. To this end we introduce the \emph{local balance condition} $\ref{localbalance}$ for positive genus cellular graphs and this definition recovers the one introduced by \emph{Thurston} in the generic planar case. Thurston's theorem $\ref{tthm00}$ is completely generalized, $\ref{THURSTHG1}$.

The class of \emph{Pullback Graphs} [$\ref{pullbackg}$] and \emph{Admissible Graphs} [$\ref{adm-g}$] are introduced. The \emph{Pullback Graph} is the combinatorial object rised from a branched covering whereas the \emph{Admissible Graph} is essentially a diagrammatic recipe for construction of a branched covering. Theorem $\ref{bcfromg2}$ says that this classes are essentially the same assuring that any \emph{Admissible Graphs} is realized as a \emph{Pullback Graphs}. This Chapter also presents a range of operations against balanced graphs [$\ref{oper-bg}$]. There are several reasons for introducing these operations. Some of these operations were formalized from computational observation of how the graph changes in parametric families (see Chapter $\ref{cap-04}$), thus representing degenerations, that is, the changes of the critical configuration or they embodies the changes of the isotopy class of the post-critical curve for a fixed branched covering. So, in this way, we could be able to, possibly, combinatorially encode the structure of the space of branched coverings. 
In this chapter, as an aplication of the \emph{Thurston Theorem} the simplest case of \emph{Shapiro conjecture} is proven.

The Chapter $\ref{cap-04}$ consists of a brief study of the generic cubic rational functions. For those real generic cubic rational function is showed 
that the \emph{Pullback Graph} relative to the post-critical curve $\rr$ distinguishes the two equivalence classes. Unfortunately this does not happens for complex, non real, generic cubic rational functions. Examples are given. Some results on the equivalence relation and the isotopy type of pullback graph are given.

\subsection*{$\bullet$ In a nutshell, this thesis contains:}
\begin{itemize}
\item{\textcolor{DarkGreen}{definition of adimissible graphs;}}
\item{\textcolor{DarkGreen}{construction of branched coverings from admissible graphs;}
\begin{itemize}
\item{\textcolor{DarkGreen}{in particular, construction of real rational functions;}}
\end{itemize}}
\item{\textcolor{DarkGreen}{definition of balanced graphs with positive genus;}}
\item{\textcolor{DarkGreen}{generalization of a theorem of \emph{Thurston};}}
\item{\textcolor{DarkGreen}{definition of operations on balanced graphs;}}
\item{\textcolor{DarkGreen}{demonstration that globally balanced real graphs are locally balanced;}}
\item{\textcolor{DarkGreen}{proof of a case of the conjecture of B \& M Shapiro;}}
\item{\textcolor{DarkGreen}{slight study of generic cubic rational functions.}}
\end{itemize}

\mainmatter


\fancyhead[RE,LO]{\thesection}

\singlespacing              

\chapter[Foundational results]{\rule[0ex]{16.5cm}{0.2cm}\vspace{-23pt}
\rule[0ex]{16.5cm}{0.05cm}\\Foundational results}\label{cap-02}

This chapter is not intended to present a detailed study or to develop in-depth the areas and results that underpin this work. Thus, it is meant to be a brief review and a base point for references.

\section{Topology, Coverings and Branched Coverings Spaces}

The main objects we shall work with are manifolds and maps between them. So, let's recall them.

Manifold are topological spaces that lookfs locally like a \emph{Euclidean space}.

\begin{defn}[(\textbf{Top}ological) $n$-manifold (with boundary)]\label{n-manif}
A topological $n$-manifold is a \emph{second countable Hausdorff connected topological space} $M$ for which there exists a family of pairs $\{(M_{\lambda},c_{\lambda})\}_{\Lambda}$, called \emph{atlas}, with the following properties:
\begin{itemize}
\item[$\boldsymbol{(1)}$]{for each $\lambda \in \Lambda$, $M_{\lambda}\subset M$ is an open subset of $M$ and $\bigcup_{\lambda\in\Lambda} M_{\lambda}=M$;}
\item[$\boldsymbol{(2)}$]{for each $\lambda \in \Lambda$, $c_{\lambda}:M_{\lambda}\xrar{\quad}\boldsymbol{\mathbb{H}^n}:=\{(x_1 ,x_2 ,\cdots ,x_n);x_1\geq 0\}$ is a homeomorphism for $\boldsymbol{\mathbb{H}^n}$ with the induced topology from $\R^n$.}
\end{itemize}
We call $(M_{\lambda},c_{\lambda})$ by a \emph{chart} of $M$, and if a pont $p\in M_{\lambda}$ singled out we say that $(M_{\lambda},c_{\lambda})$ is a \emph{chart} of $M$ around $p$.

The set of all points in $M$ that have a neighborhood homeomorphic to $\mathcal{H}^n$ but no neighborhood homeomorphic to $\R^n$ is the boundary of $M$ and is denoted by $\partial{M}$ and a point $p\in\partial{M}$ is a boundary point of $M$. $M-\partial{M}$ is the interior of $M$ and a point $p\in M-\partial{M}$ is a interior point of $M$.

A topological $n$-manifold $M$ is said to be compact if the underline topological space $M$ is compact.
\end{defn}

\begin{prop}[''boundary manfold´´]\label{top-b}
If $M$ is a topological $n$-manifold with boundary, then $\partial{M}$ is a
topological $(n-1)$-manifold without boundary. If $M$ compact, then $\partial{M}$ is too.
\end{prop}

\begin{defn}[compact manifold]\label{submani}
A topological $n$-manifold $M$ is said to be compact if the underline topological space $M$ is compact.
\end{defn} 

\begin{defn}[$p$-submanifold]\label{submani}
Let $M$ be an topological $n$-manifold with boundary. A $p$-dimensional submanifold of $M$ is a closed subset $L$ of $M$ for which there is an atlas $\{(M_{\lambda},c_{\lambda})\}_{\Lambda}$ of $M$ and $p\in\{0,\cdots , n\}$ such that for all $x\in L$ in the interior of $M$ there is a chart $(M_{\iota},c_{\iota})\in \{(M_{\lambda},c_{\lambda})\}_{\Lambda}$ such that $x\in M_{\lambda}$ and
\[c_{\iota}(L\cap M_{\iota})=\{(0,\cdots ,0)\}\times\R^{p}\subset\R^n\]
 for all $x\in L$ in the boundary of $M$ there is a chart $(M_{\beta},c_{\beta})\in \{(M_{\lambda},c_{\lambda})\}_{\Lambda}$ such that $x\in M_{\lambda}$ and
\[c_{\beta}(L\cap M_{\beta})=\{(0,\cdots ,0)\}\times\boldsymbol{\mathbb{H}^p}\subset\boldsymbol{\mathbb{H}^n}\]
and such that
\[c_{\beta}(x)\in\{(0,\cdots ,0)\}\times\partial{\boldsymbol{\mathbb{H}^p}}\subset \partial{\boldsymbol{\mathbb{H}^n}}\]
\end{defn}

\begin{defn}[closed manifold]\label{c-manif}
A topological $n$-manifold compact with empty boundary is said to be a closed $n$-manifold.
\end{defn}

\begin{defn}[embedding]\label{emb-manif}
Let $L, M$ be manifolds. A map $f : L \xrar{\quad} M$ is an
embedding if it is a homeomorphism onto its image $f(L)$ and $f(L)$ is a
submanifold of $M$.
\end{defn}

Now we highlite a notion of relation between manifolds.

\begin{defn}[homotopy]
Two \emph{continuous maps} $f, g : M \xrar{\quad}N$ are homotopic
if there is a continuous map $H : M \times [0, 1]\xrar{\quad}N$ such that $H(x, 0) = f$ and $H(x, 1) = g(x)$ for all $x \in M$. The map $H$ is called a homotopy between $f$ and $g$.
\end{defn}
Through the study of topology and geometry of manifolds a more stric type of homotopy is very often considered. 

\begin{defn}[isotopy]
Two \emph{embeddings} $f, g : M \xrar{\quad}N$ are isotopic if there
is a continuous map $H : M \times [0, 1]\xrar{\quad}N$ such that $H(x, 0) = f$ and $H(x, 1) = g(x)$ for all $x \in M$ and such that for all $t\in [0, 1]$, the map $f_t$ defined by $H( \cdot{}, t)$ is an embedding. The map $H$ is called an isotopy between $f$ and $g$.
Two submanifolds $N_1$, $N_2$ of $M$ are isotopic if their inclusion maps are isotopic.
\end{defn}

\begin{defn}[relative homotopy/isotopy]
A homotopy (or isotopy) $H : M \times [0, 1]\xrar{\quad}N$ between maps $f, g : M \xrar{\quad}N$ is said to be relative to a subset $A\subset M$ if the points in $A$ stay fixed throughout the homotopy(isotopy),i.e, for every $t\in[0,1]$, $f(a)=H(t,a)=g(a)$ for all $a\in A$.
\end{defn}

\begin{defn}[(\textbf{Top}ological) surface]\label{top-surf}
A topological surface is a topological $2$-manifold.
\end{defn}
\begin{ex}[basic examples]\label{ex.1}\hspace{15cm}

\begin{itemize}
\item[\textbf{$\jfrac{\text{the}}{\text{plane}}$:}]{Obviously, $\R^2$ itself is a surface. Its open subset are also  immediate examples of a surface.}\\

\item[\textbf{$\jfrac{\text{the}}{\text{disk}}$:}]{another simple example of surface is that one called disk. It is $\mathbb{D} = \{(x,y)\in\R^2 ; x^2 +y^2 \leq 1\}$. $\mathbb{D}$ is an suface with boundary, $\partial{}\mathbb{D}=\s^1:=\{(x,y)\in\R^2;{x^2 + y^2}=1\}$.

\begin{figure}[H]{
    \begin{center}
    {{\includegraphics[width=3cm]{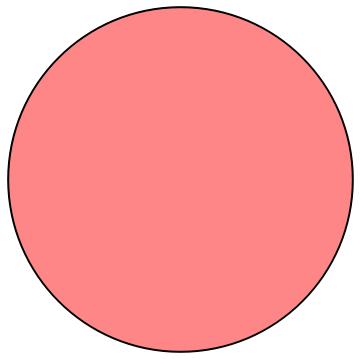}}}  \caption{disk}
    \label{disk-surf}
    \end{center}}
    \end{figure}}
    
\item[\textbf{$\jfrac{\text{the}}{\text{2-sphere}}$:}]{ The set $\s^2 := \{(x,y,z) \in\R^{2+1} ; x^2+y^2+z^2 = 1\}$ is an 2-dimensional manifold called the $2$-sphere. The Stereographic projection provides a homeomorphism $h : \s^2-{(0, 0, 1)}\xrar{\quad}\R^n$. Thus any point $x\in\s^2$ such that $x \neq (0, 0, 1)$ has the neighborhood $\s^2 - \{(0, 0, 1)\}$ that is homeomorphic to $\R^2$. To exhibit a neighborhood of $(0, 0, 1)$ that is homeomorphic to $\R^2$ we compose the reflection in $\R^2\times \{0\}$ with $h$ to obtain
$h': \s^2-\{(0, 0, -1)\} \xrar{\quad} \R^2$. Thus $\s^2-\{(0, 0, -1)\}$ is a neighborhood of $(0, 0, 1)$ homeomorphic to $\R^2$.
\begin{figure}[H]{
    \begin{center}
    {{\includegraphics[width=6.15cm]{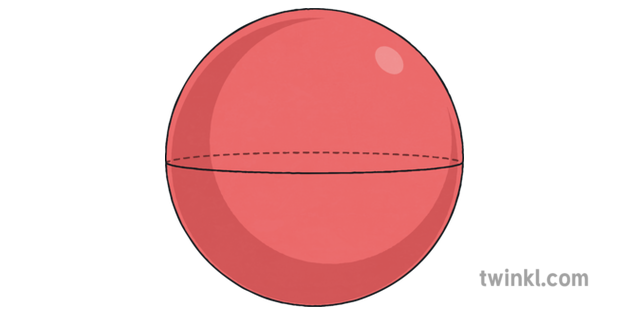}}}\quad  
    \caption{2-sphere}
    \label{b-surf}
    \end{center}}
    \end{figure}}

   \item[\textbf{$\jfrac{\text{the}}{\text{2-torus}}$:}] {
 The $2$-torus is a quotient space obtained as follows: Consider the subgroup of translations $G$ in $\R^2$ generated by the maps $(x,y)\mapsto{T_{1}} (x+1,y)$ and $(x,y)\mapsto{T^{1}} (x,y+1)$ acting in $\R^2$. Two points $x, y \in \R^2$ are identified if and only if there is a $g\in G$ such that $g(x) = y$. Let $p: \R^2\xrar{\quad}\mathbb{T}^2:=\faktor{\R^2}{G}$. For a point $[p]\in\mathbb{T}$ take the open disk centered at $p$ of radio  $1/16$,$B(x,1/16)$, then we can see that for all $g\in G$, $g(B(x,1/16))\cap B(x,1/16)=\emptyset$. In this way, $p^{-1}:p(B(x,1/16))\xrar{\quad} B(x,1/16)$ is a homeomorphism. Therefore, $\mathbb{T}^2$ is everywhere locally like $\R^2$, then is a topological 2-manifold.
 
 \begin{figure}[H]
    \begin{center}
    {{\includegraphics[width=6.15cm]{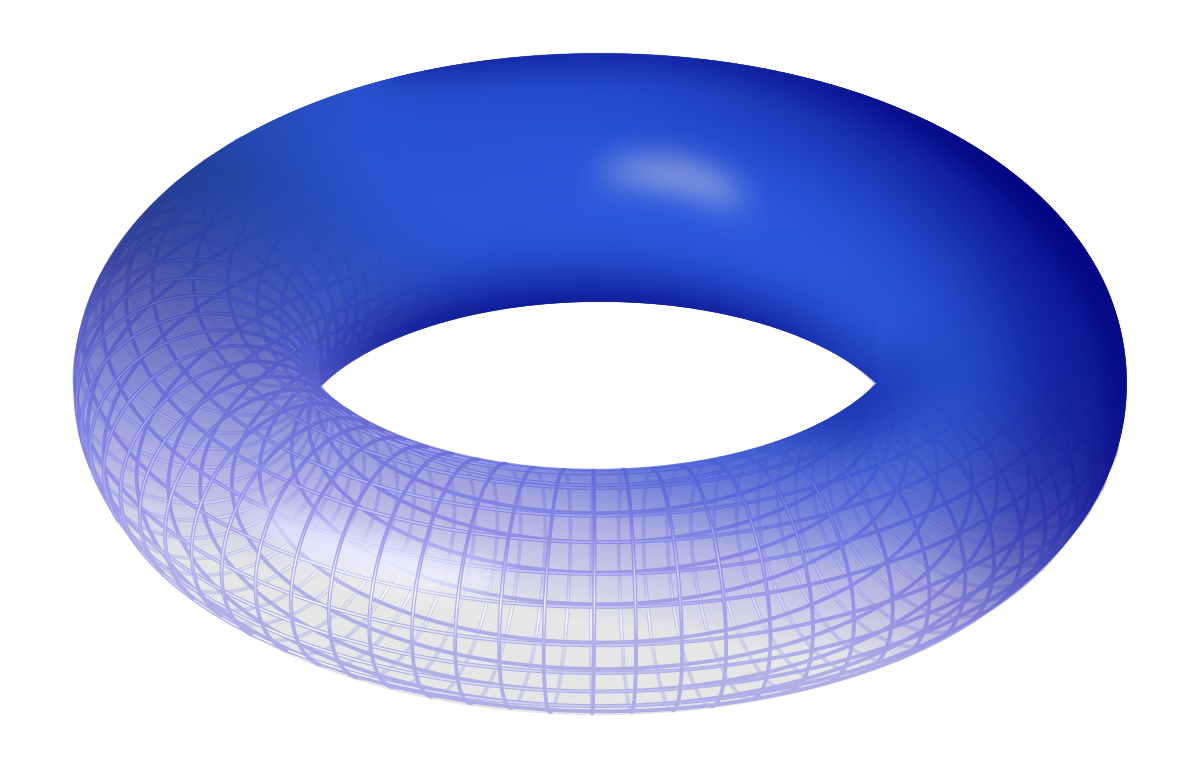}}}\quad  
    \caption{2-torus}
    \label{b-surf}
    \end{center}
    \end{figure}
    }
\item[\textbf{$\jfrac{\text{the projec-}}{\text{-tive plane}}$:}]{ Two points $(x_1,y_1)$ and $(x_2,y_2)$ on the $2$-sphere are said to be atipodal if $x_2=-x_1$ and $y_2 =y_1$. The quotiente space produced by the identification of antipodal points on the $2$-sphere is an $2$-manifold. It is called $2$-dimensional real projective space and is denoted by $\R P^2$. Let $[p]$ be a point in $\R P^2$. 
Takes a chart $(M_p, \phi_p )$ of $\s^2$ around $p$. For an open set $U\subset\s^2$ the set $-U:=\{(-x,-y)\in\s^2, (x,y)\in\s^2\}$ is also an open set, then the map $a:\s^2\xrar{\quad}\s^2$ given by $a(x,y)=(-x,-y)$ is a homeomorphism. Therefore $(-M_p, \phi_p \circ{}a)$ is a chart of $\s^2$ around $-p\in\s^2$. Changing enough $(M_p, \phi_p)$ in order to have $a(M_p)\cap M_p=\emptyset$
we obtain a homeomorphism $[\phi_p]:[M_p]\xrar{\quad}\R^2$. All those pairs $([M_p],[\phi_p])$ gives to $\R P^2$ the structure of a topological 2-manifold.}
\end{itemize}
\end{ex}
A topological  $2$-manifold (with boundary) $S$ will be referred hereafter simply as a \emph{surface} or \emph{a $2$-manifold}.

For an accurate definition of the notion of orientability of manifolds into the strict topological level (ca-\\-tegory).

But we will present here a \emph{intuitive} definition in dimension $2$ of this notion. So intuitively, an orientation on a surface is a globally consistent choice of sense to turning around each point of the surface. Our experience as conscious beings immersed in (locally) \emph{three-dimensional Euclidean space} has single out two possible senses to turn around some referential (point) in the shell of something for which here we set/define they as \emph{clockwise} and \emph{counterclockwise} meaning this exactly what it means by our collective sense of the reality. By convention, the \emph{counterclockwise} is the \emph{positive sense}. 

Thus an oriented surface is a $2$-manifold with a atlas coherent with the sense of turning around points in $\R^2$. With \emph{coherence} we mean that for the overlapping charts $(M_{\beta},c_{\beta})$ and $(M_{\iota},c_{\iota})$ the homeomorphism $c_{\iota} \circ c_{\beta}^{-1}|_{M_{\beta}\cap M_{\iota}}:c_{\beta}(M_{\beta}\cap M_{\iota})\xrar{\quad}c_{\iota}(M_{\beta}\cap M_{\iota})$ preserve a pre-chosen sence (positive or negative) to turn around points. Otherwise, if a surface does not admite a atlas enjoing the above condition, it is said to be non-orientable. 

\begin{ex}[an oriented $\&$ a non-oriented compact surfaces]
\end{ex}
\begin{figure}[H]{
    \centering
       \subfloat[\emph{2-sphere}]
    {{\includegraphics[width=5cm]{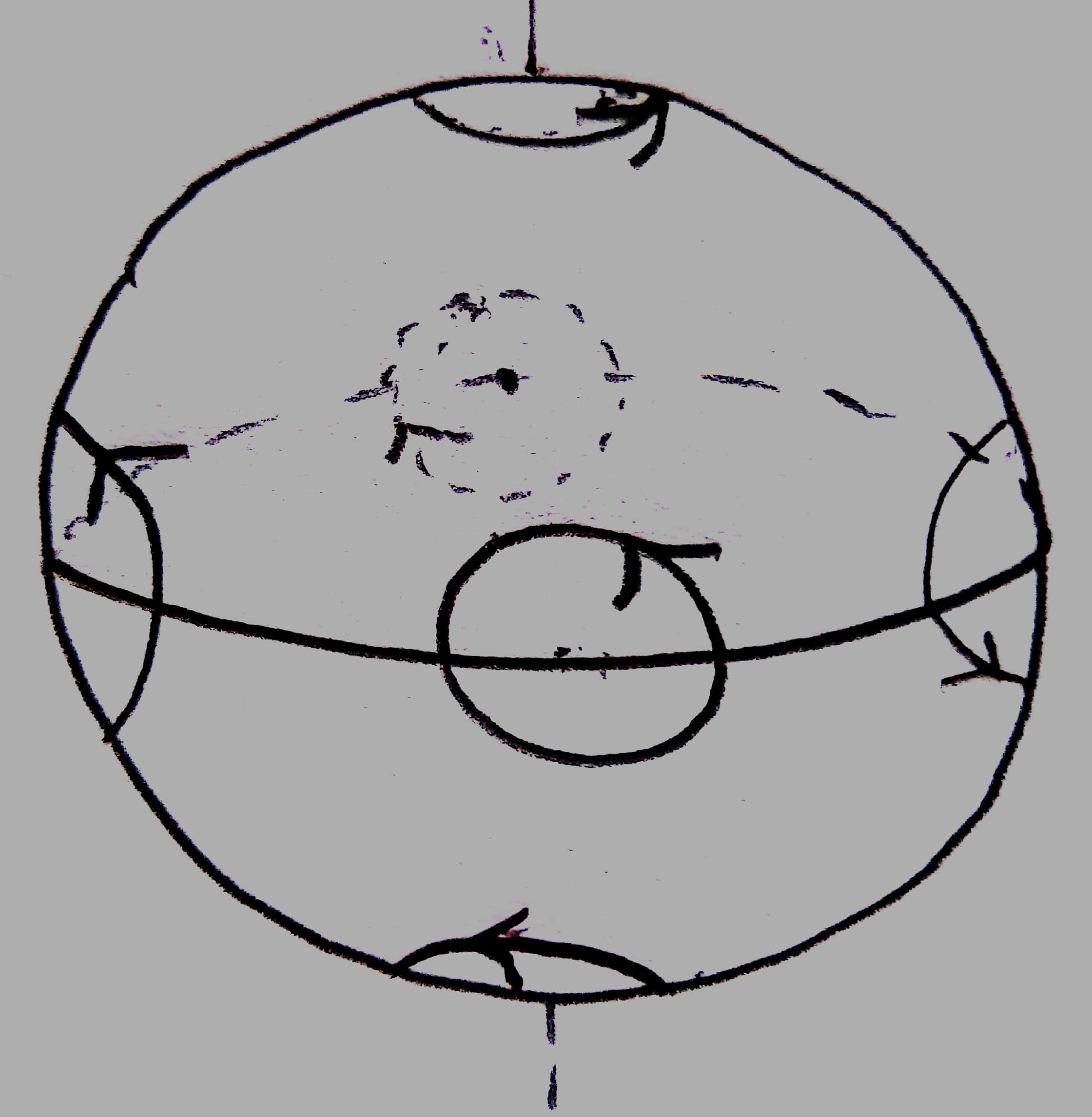} }}
        \qquad
    \subfloat[\emph{Möbius band} (non-orientable)]
    {{\includegraphics[width=4.8cm]{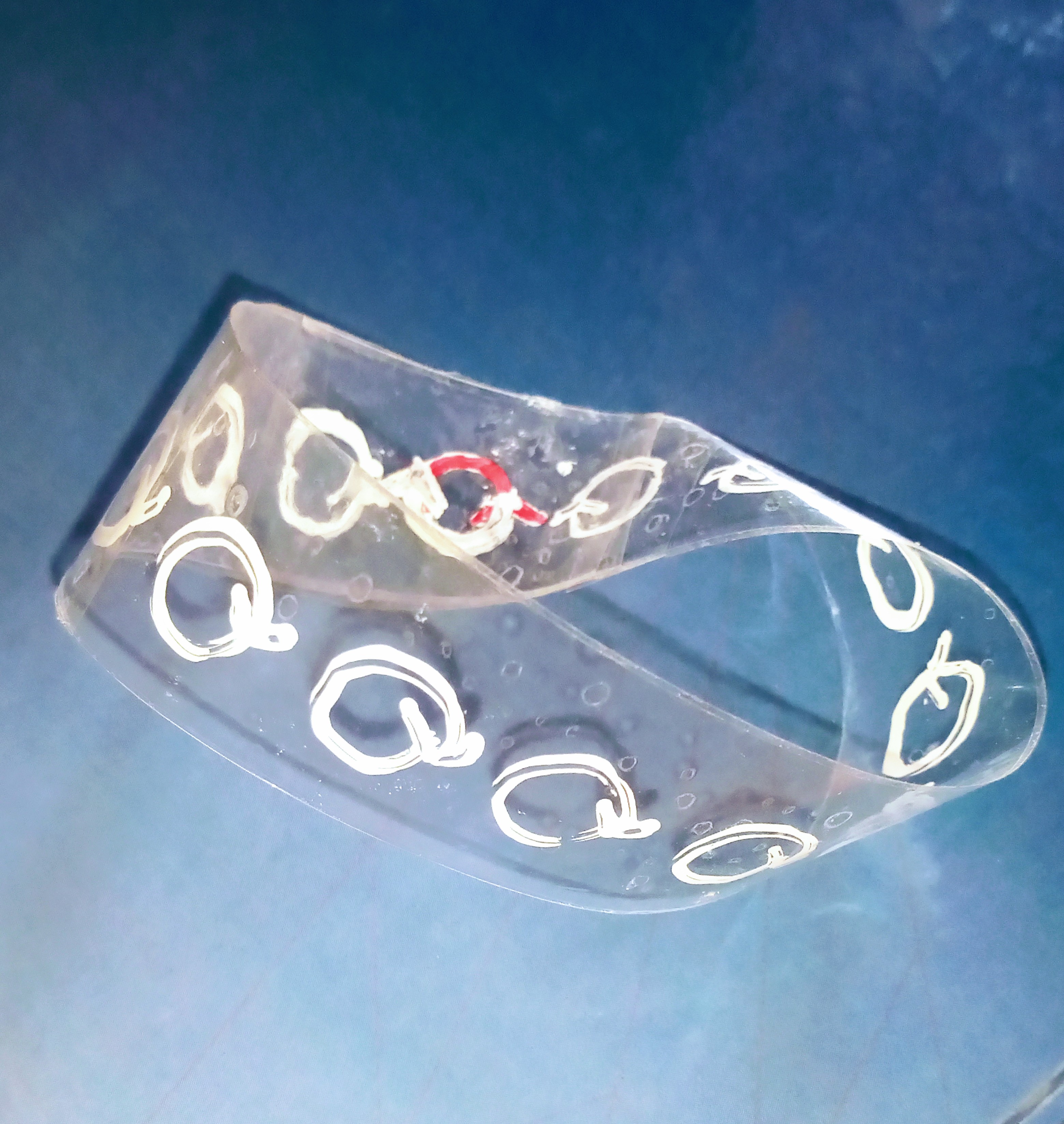} }}
       \caption{compact surfaces}
    \label{(n)-orient-surf}
    }
\end{figure}

This can be formalized resorting the topological degree theory of maps and/or to \emph{(Co)Homology theory} \cite{HAalt},\cite{MR787801},\cite{Mass},\cite{Span66},\cite{MR0200928},\cite{MR2766102}.

The surface in figure $\ref{(n)-orient-surf}$-$(b)$ is known as the \emph{Möbius strip}. It is constructed from a rectangle (a closed disk) identifying a pair of opposite sides reversely with respect to the orientation of the boundary
.

\begin{figure}[H]{
    \centering
       \subfloat[]
    {{\includegraphics[width=5.55cm]{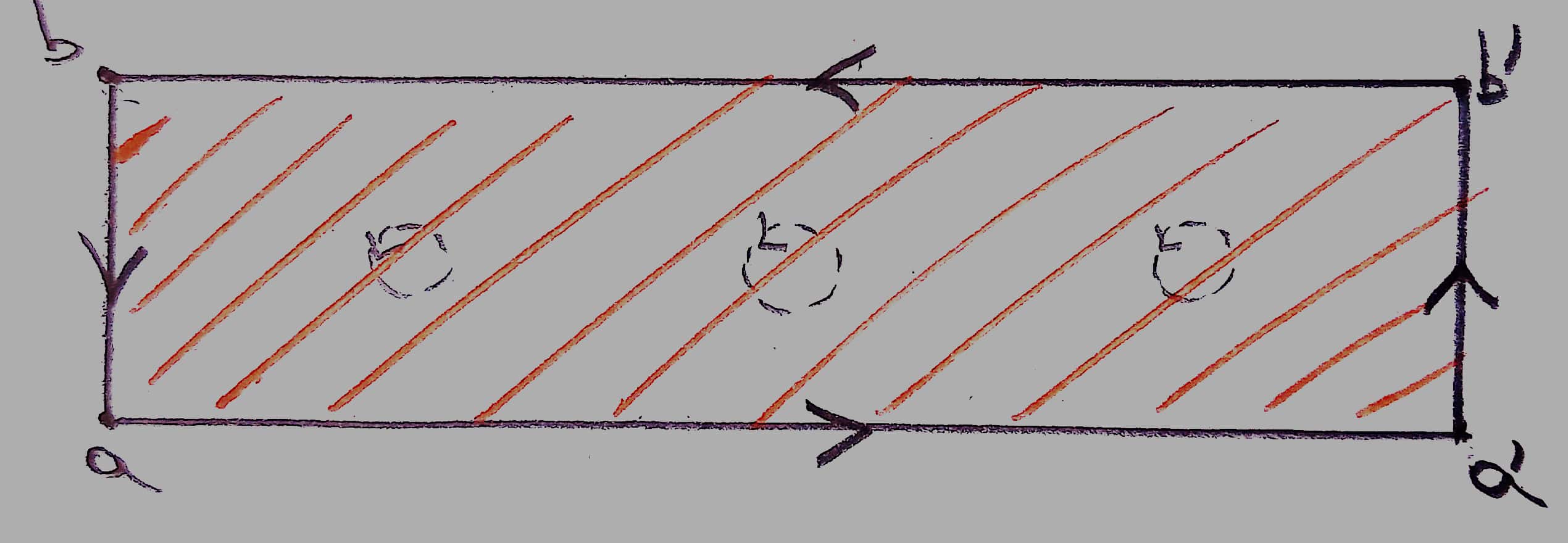}}}\quad
    \subfloat[]
    {{\includegraphics[width=5cm]{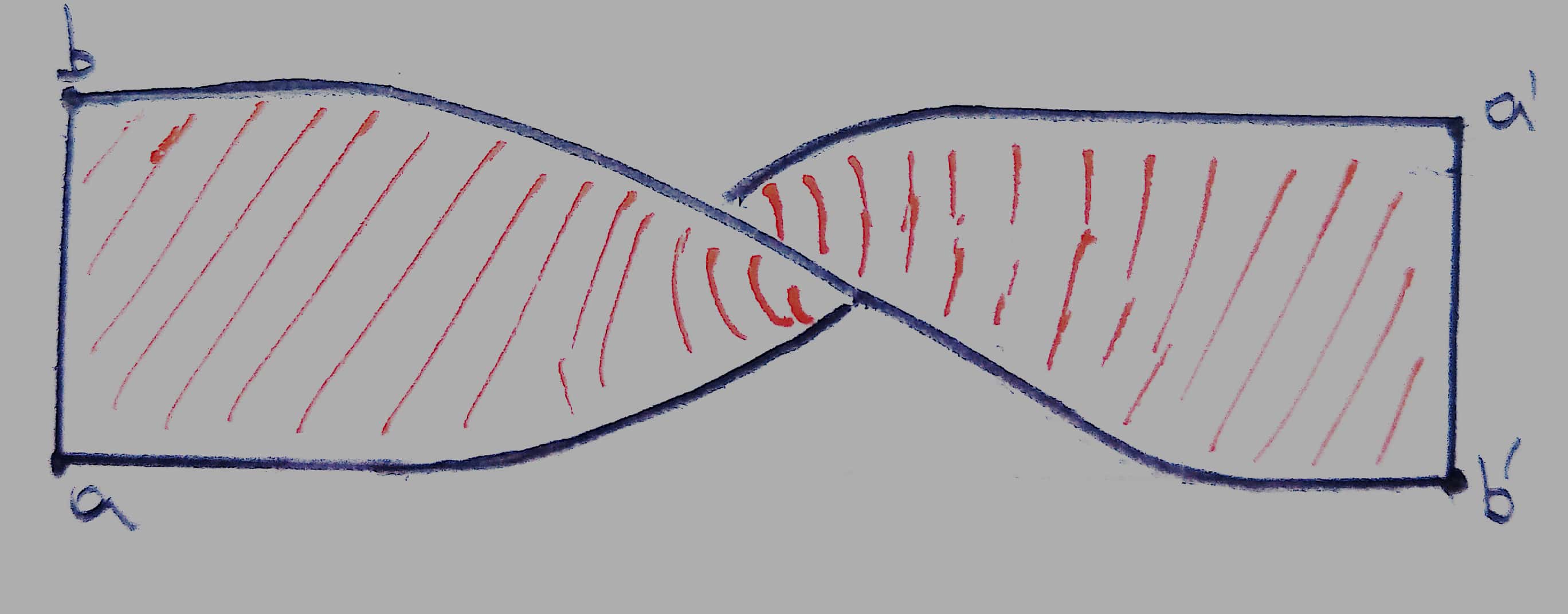}}}   \\
 \subfloat[]
    {{\includegraphics[width=4.6cm]{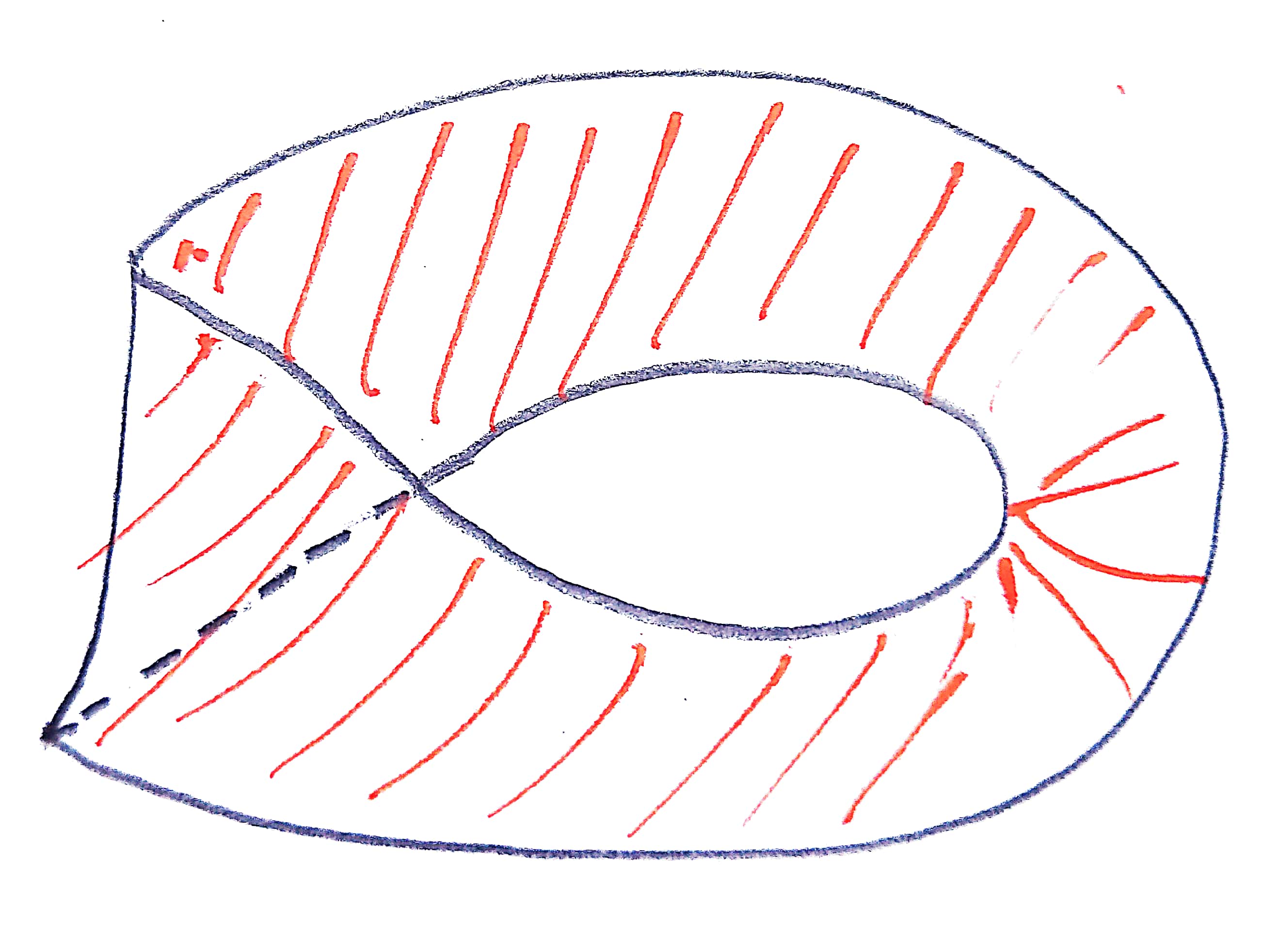}}}
    \caption{constructing a Möbius strip}
    \label{ms-surf}
    }
\end{figure}


\begin{defn}\label{o-p-homeo}
A local homeomorphism between two oriented surfaces, say $h: M\xrar{\quad}N$ is orientation-preserving if at each related through $h$ pair of points $(p,h(p))\in M\times N$ for any charts $(M_{\beta},c_{\beta})$ and $(N_{\iota},c_{\iota})$ around $p$ and $h(p)$ the homeomorphism $c_{\iota} \circ h\circ c_{\beta}^{-1}|_{M_{\beta}}:c_{\beta}(M_{\beta})\xrar{\quad}c_{\iota}(N_{\iota})$ preserves a pre-chosen sence to turn around points in $\R^2$.
\end{defn} 

There is only one, up to homeomorphism, closed $1$-manifold that is the circle \[\s^{1} :=\{(x,y)\in\R^2 ; {x^2 + y^2}=1\}\](the locally Euclidean quotient space $\faktor{\R}{\Z}$). 

Thus, from proposition $\ref{top-b}$ the boundary components of a compact surface are (topological) circles. 

\begin{defn}[curves into surfaces]\label{c-surf}
Let $S$ be a surface with marked points $\mathcal{P}\subset S$.
A \emph{arc} into $S$ is a continuous map $\alpha : [0,1]\xrar{\quad} S$.
A \emph{arc} $\alpha : [0,1]\xrar{\quad} S$ is :
\begin{itemize}
\item{\emph{simple} if it is a embedding of $(0,1)$;}
\item{\emph{proper} if $\alpha^{-1}(\mathcal{P}\cup\partial{S})=\{0,1\}$;}
\item{\emph{essential} if it is neither homotopic into a boundary component nor to a marked point of S;}
\end{itemize}

A \emph{closed curve} into $S$ is a continuous map $\gamma:\s^1\xrar{\quad}S$.
A \emph{closed curve} $\gamma : \s^1\xrar{\quad} S$ is :
\begin{itemize}
\item{\emph{simple} if it is a embedding;}
\item{\emph{essential} if it is not homotopic to a point, a puncture (or maked point), or a boundary component.}
\end{itemize}
We will usually identify a \emph{arc} or a \emph{closed curve} with its image in $S$, and see a \emph{simple arc} into the surface $S$ as a compact connected 1-dimensional submanifold of $S$ with non-empty boundary 
and a \emph{simple closed curve} into the surface $S$ as a compact connected 1-dimensional submanifold of $S$  without boundary. 
\end{defn}

\begin{thm}[Jordan curve theorem]\label{jcurve}
Let  $\gamma : \s^1\xrar{\quad} S$ be a simple closed curve into the plane, $\R^2$. Then $\R^2-\gamma$ is the disjoint union of two open sets, say $A$ and $B$ so that each one is path connected and have $\gamma$ as its boundary. Moreover, one of these sets is bounded and the other is unbounded.
If $\gamma$ is a simple closed curve in $\s^2$, then $\s^2 - \gamma$ consists of two open path connected sets sharing $\gamma$ as its (topologycal) boundary.
\end{thm}

\begin{defn}[Jordan domains(curves)]\label{Jd}
A \emph{Jordan curve} is simple closed curve into $\R^2$ (or $\s^2$). And a \emph{Jordan Domain} is a open set of $\R^2$ (or $\s^2$) with the topological boundary being a \emph{Jordan curve}.
\end{defn}

The following theorem asserts that those components are actually what our intuition says that they are. But in higher dimension this history changes[consult:\cite{MR117695},\cite{MR3203728},\href{https://lamington.wordpress.com/2013/10/18/scharlemann-on-schoenflies/}{blog post}].

\begin{thm}[Schoenflies Theorem\cite{MR2190924},\cite{MR1144352}]\label{sch-thm}
Let $B$ be the topological closure of a Jordan domain in $\s^2$
with boundary the Jordan curve $C$. Then there exists a homeomorphism $H:B \rightarrow \mathbb{B}^2$ sending $C$ onto $\s^1$.
\end{thm}

\begin{thm}[Baer-Epstein-\cite{FM:12}]
Let $\alpha$ and $\beta$ be two essential simple closed curves (or two essential proper arcs) in a surface $S$. Then $\alpha$ is isotopic to $\beta$ if and only if $\alpha$ is homotopic to $\beta$.
\end{thm}

Now we are going to introduce (recall) a procedure of to build a new surface from old ones.

\begin{defn}[gluings]
Let $X$ and $Y$ be compact surfaces with boundary. Let $h:A\xrar{\quad}B$ be a homeomorphis between one boundary components of $A\in\partial{}X$ and $B\in\partial{}Y$. The glue relation induced by $h$ is the equivalence relation defined by:\[a\thicksim_{h}b \text{\quad{if}\quad} \left\{\begin{array}{lr}
        a=b, & \text{for } a\in X-A \text{ or } a\in Y-B; \text{or } \\
        a=h^{-1}(b), & \text{for } b\in B; \text{or } \\
        b=h(a), & \text{for } a\in A
        \end{array}\right.
\]

$\faktor{X\sqcup{Y}}{\thicksim_{h}}$ is a topological space with the quotient topology and actualy it inherits the structure of surface from $X$ and $Y$, we denote it by $X\sqcup_{h}{Y}:=\faktor{X\sqcup{Y}}{\thicksim_{h}}$ and say that it is the \emph{gluing of $X$ and $Y$ along $h$ (or along $A$ and $B$)}.
\end{defn}

\begin{defn}[connected sum]
The connected sum of two surfaces, say $X$ and $Y$ consists on the procedure of to remove an open disk from each one of those and then glue them together along an homeomorphism of the circles boundaries of the cutting off open diks. The resulting space is a surface and is denoted by $X \# Y$. When $X$ and $Y$ are oriented is constrained to be along an orientation-reversing homeomorphism of the circles boundaries of the cutting off open diks with the induced orientation of $X$ and $Y$.
In ths cse, $X \# Y$ is a oriented surface.
\end{defn}

\begin{thm}[classification of compact surfaces]\label{class-surf}
Every compact surface is homeomorphic to either:
\begin{itemize}
\item[$\boldsymbol{(1)}$]{The sepher with $ n \geq 0$ boundaries components, which is obtained by removing $n$ open disks with disjoint closures;}

\item[$\boldsymbol{(2)}$]{The orientable surface of genus $g\geq 0$ with $ n \geq 0$ boundaries components, which is obtained by a connected sum of tori, and and removing $n$ open disks with disjoint closure;}

\item[$\boldsymbol{(3)}$]{The non-orientable surface of genus $g \geq 0$ with $n\geq 0$ boundaries components, which is obtained by the connected sum of $g$ projective planes, and removing $n$ open disks with disjoint closure.}
\end{itemize}
\end{thm}

\begin{defn}[genus \& type]\label{g-surf}
The integer number $g\geq 0$ in the above theorem $\ref{class-surf}$ is called \emph{genus}. 
The pair of integer number $(g, n)$ for $g$ and $n$ as in Theorem $\ref{class-surf}$ associated to a compact surface is the \emph{type} of the surface.
\end{defn}

Intuitively, $g$ corresponds to the number of holes of a surface.
\begin{ex}
\end{ex}

\begin{figure}[H]{
    \centering
    {{\includegraphics[width=5cm]{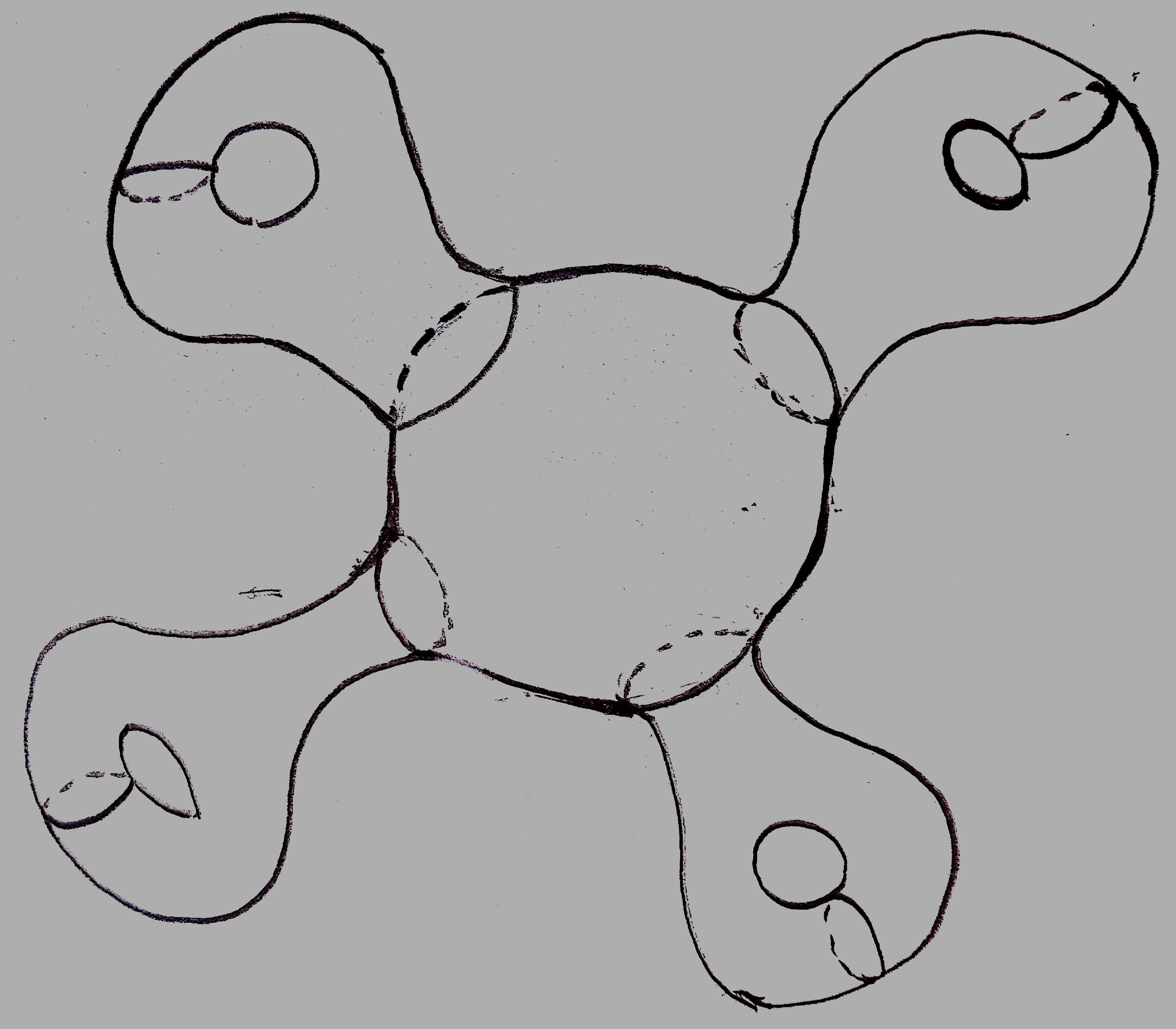} }}
  \caption{genus $4$ closed surfaces}
    \label{(n)-orient-surf}}
\end{figure}

\begin{defn}[cellular sets and cellular decompositions]
A $n$-cell into a Hausdorff space $M$ is a subset $X^n\subset M$ that is a homeomorphic to a (Euclidean) open ball of dimension $n$ under the condition that the homeomorphism extends to a continuous map from the close $n$-ball into $M$. That extended countinuous map is the $n$-cell map.
The $n$-cell for $n=0,1,2$ have distinguished name. The $0$-cell, the $1$-cell and the $2$-cell are called by \emph{vertex, edge and face}, respectively.

A \emph{cell-decomposition} of a Hausdorff space $M$ is a partition such space into cells in such a way that the boundary of each $n$-cell of the partition is contained into the union of all k-cells for $0\leq k <n.$  
\end{defn}

\begin{defn}[Euler characteristic]
For a cellular decomposition of a compact $n$-manifold $M$ the \emph{Euler chacteristic} $\chi(M)$ is the sum of the number of cells of even dimension minus the sum of cells of odd dimension. This number, actually, does not depends of the chosen cellular decomposition. Then it is associated to the topological essence of the manifold.

In particular, for a compact surface $S$ the \emph{Euler chacteristic} $\chi(S)=V-E+F$ here $V,E$ and $F$ is the number of vertices, edges and faces of a cellular decomposition of $S$. We refer to the formula
\[\chi(S)=V-E+F\]
as the \emph{Euler formula}. (\emph{Leonard Euler} was who firstly provide this formula. He proved it for \emph{polyhedral surfaces}.)
\end{defn}

\begin{thm}[\cite{MR787801}]
The \emph{Euler characteristic} is a topological invariant (i.e., homeomorphic manifolds have equal \emph{Euler chacteristic}). And, in the two dimensional case, we have the following relation with the type of a orientable surface:
\[\chi (S_{g,n}) = 2-2g-n\]
\end{thm}

\begin{defn}[coverings]
Let $X$ and $Y$ be two topological surfaces.
A continuous map $\varphi:X \xrightarrow{\quad}{Y}$ is a degree $d$ covering  (of $Y$ by $X$) if it is subject to the following condition:
\begin{itemize}
\item{ for any open set $U\subset Y$, $\rho^{-1}(U)$ is a disjoint union of $d$ open sets of $X$, $\{V_{n}\}_{n=1}^{d}$, such that \[\rho|_{V_n}:V_n\longrightarrow U\] is a homeomorphism. } 

\end{itemize}

$\rho$ is called \emph{covering map} and we also refer to the triad $(X, Y, \rho)$ as a covering.
\end{defn}

\begin{defn}[lifting of a map]\label{lift-c}
Let $\rho: Y\xrar{\quad} X$ and $f: Z\xrar{\quad} X$ be continuous maps between topological spaces. A lifting of $f$ 
by $\rho$ is a continuous mapping $\tilde{f}:Z \xrar{\quad} Y$ such that $f = p\circ \tilde{f}$, i.e., such that the following diagram commutes:

\begin{center}
\begin{tikzcd}[row sep=huge, column sep = 5.2em]
               & Y \arrow{d}{\rho} \\
Z \arrow[dashrightarrow,uibred]{ur}{\tilde{f} }\arrow{r}{f} & X
\end{tikzcd}\end{center}

\end{defn}

\begin{thm}[existence and uniqueness of liftings for covering map-\cite{MR1185074}]
Suppose $X$ and $\tilde{X}$ are Hausdorff spaces and $\rho: Y\rightarrow{X}$ is a covering map. Further, suppose $Z$ is a simply connected, pathwise connected and locally pathwise connected topological space and $f: Z\rightarrow{X}$ is a continuous mapping. Then for every choice of points $z_0\in Z$ and $y_0\in Y$ with $f(z_0) =\rho(y_0)$ there exist only one lifting $\tilde{f}:Z\rightarrow{Y}$ such that $\tilde{f}(z_0) = y_0$.
\end{thm}

\begin{defn}[Riemann surface-\cite{MR1185074},\cite{DonSrs}]\label{r-surf}
A \emph{Riemann surface} $S$ is surface $S$ with a atlas $\{(S_{\alpha}, c_{\alpha})\}_{{\alpha}\in A}$ such that for each pair of overlapping charts $(S_a,c_a)$ and $(S_b,c_b)$,
\[c_{b} \circ c_{a}^{-1}|_{S_{a}\cap S_{b}}:c_{a}({S_{a}\cap S_{b}})\xrar{\quad}c_{b}({S_{a}\cap S_{b}})\]
is a holomorphic map (identifying $\R^2$ with $\C$).
In this case, the atlas $\{(S_{\alpha}, c_{\alpha})\}_{{\alpha}\in A}$ is a \emph{Complex atlas} on $S$as \emph{} and a chart is called as \emph{complex chart}.
\end{defn}

\begin{defn}[Complex structure]\label{c-struct}
Two complex atlases on a \emph{Riemann surface} $S$ are equivalent if their union is also a complex atlas. 

A equivalence class $\mathfrak{S}$ of complex atlases on $S$ is a \emph{Complex Structure} on $S$.
\end{defn}

\begin{defn}[holomorphic maps]\label{holo-m}
A continuous map $f:S\xrar{\quad}R$ between \emph{Riemann surfaces} is said to be \emph{holomorphic} if for each pair of complex charts $(A,c_a)$ and $(B,c_b)$, such that $f(A)\subset B$, then the complex function
\[c_b \circ{}f\circ{}c_{a}^{-1}: c_a (A)\xrar{\quad}c_b(B)\]
is holomorphic.
If $f:S\xrar{\quad}R$ is bijective and its inverse $f^{-1}:R\xrar{\quad}S$ is holomorphic in the above given sence, it is said to be a \emph{biholomorphism}, and $S$ and $R$ are said \emph{isomorphic} (or even \emph{biholomorphic}) 
\end{defn}

The most importante theorem in the \emph{theory of Riemann surfaces} is a result descovered and almost completely proved by \emph{Riemann}. It guarantees that the universal covering of an arbitrary Riemann surface is always isomorphic to one of three normal (geometric) models: the Riemann sphere, the complex plane or the unit disk.

\begin{thm}[Uniformization Theorem/Riemann mapping theorem]\label{r-m-thm}
Every simply connected Riemann surface is isomorphic to $\mathbb{D}:=\{z\in\C;|z|<1\}$, $\C$ or $\cc$.
\end{thm}

Combining these result with the topological theory of covering surfaces, follows:
\begin{thm}[Uniformization of compact Riemann surfaces]
According to their universal coverings, compact Riemann
surfaces can be classified as follows:
\begin{itemize}
\item[$\boldsymbol{(1)}$]{ $\cc$ is the only compact Riemann surface of genus $0$;}
\item[$\boldsymbol{(2)}$]{Every compact Riemann surface of genus $1$ can be described in the form $\C/\Lambda$, where $\Lambda$ is a lattice, that is $\Lambda = w_1\Z \oplus w_2\Z$ for two complex numbers $w_1,w_2$ such that $w_1/w_2 \notin \R$ acting on $\C$ as a group of translations;}

\item[$\boldsymbol{(3)}$]{Every compact Riemann surface of genus greater than $1$ is isomorphic to a quotient $\mathbb{H}/K$, where $K\subset PSL(2,\R)$ acts freely and properly discontinuously.}
\end{itemize}
\end{thm}

\begin{thm}[lifting complex structure]\label{lift-c-s}
Suppose $S$ is a \emph{Riemann surface}, $R$ is a Hausdorff topological
space and $\rho: R \xrar{\quad}S$ is a local homeomorphism. Then there is a unique complex structure on $R$ such that $\rho$ is holomorphic.
\end{thm}

\begin{defn}[branched coverings]
Let $X$ and $Y$ be two topological surfaces.
A surjective continuous map $\rho:X\xrar{\quad}{Y}$ is a degree $d$ branched covering (of $Y$ by $X$) if:
\begin{itemize}
\item[$\boldsymbol{(1)}$]{there exists a discrete subset $B\subset Y$, such that:
{\[\rho|_{X-\rho^{-1}(B)}:X-\rho^{-1}(B)\longrightarrow Y-B\] is a degree $d$ covering;}}
\item[$\boldsymbol{(2)}$]{for each $b\in \rho^{-1}(B)$, $\varphi$ is topologically the map $z\mapsto z^k$ with $k(p):=k\geq 1$ a integer,i.e., there exists pairs of charts $(c_1, U\ni b)$ and $(c_2, V\ni \rho(b))$ of $X$ and $Y$ around $b$ and $\rho(b)$, respectively, such that\[c_2 \circ{}\rho\circ{}c_{1}^{-1}(z)=z^k : c_1 (U)\xrar{\quad}c_2 (V)\]
$k(p)$ is the multiplicity of $p$ for $\rho$;}
\item[$\boldsymbol{(3)}$]{and for each $p\in B$, \[\sum_{b\in\rho^{-1}(p)}k(b)=d\] and $k(b)>1$ for at least one $b\in\rho^{-1}(p)$.}
\end{itemize}
$\rho$ is called \emph{branched covering map} and we also refer to the triad $(X, Y, \rho)$ as a branched covering.

A point $b\in \rho^{-1}(B)$ with $k(b)>1$ is called by \emph{critical point} or \emph{ramification point} of $\rho$ and each point $q\in B$ is called by \emph{critical value} or \emph{brach point}.

\end{defn}

\begin{defn}[orientation-preserving (branched) covering]\label{o-p-b-c}
A (branched) covering $\rho:X\xrar{\quad}Y$ between two oriented surface $X$ and $Y$ is said to be \emph{orienttion-preserving} (o even, \emph{that preserves the orientations}) if the underline local homeomorphism $\rho|_{X-\rho^{-1}(B)}:X-\rho^{-1}(B)\longrightarrow Y-B$ is orientation-preserving. 
\end{defn}

\begin{defn}\label{b-c-sets}
Denote by $\mathfrak{R}_g$ the set of all orientation-preserving branched coverings of $\s^2$ by the oriented closed surfaces of genus $g$. Then for $f\in \mathfrak{R}_g$ we denote the set of critical points and critical values of $f$ by $C_f$ and $R_f :=f(C_f)$, respectively.
\end{defn}

\begin{defn}[passport of a branched covering of $\s^2$]\label{passp}
Let $f\in{\mathfrak{R}_g}$ be a branched covering of $\s^2$ of degree $d$ with critical value set $\{w_1,w_2 , ..., w_m\}$.
The passport $\pi=\pi(f)$ of $f\in{\mathfrak{R}_g}$ is the following list of $m$ non-trivial integer partition of $d$, $\pi(f)=[\pi_{1}, \pi_{2},..., \pi_{m}]$, that is, $\pi_{j}=[d_{(j,1)}, d_{(j,2)}\cdots, d_{(j,l_j)}]$ is a list of positive interger satisfying: $d_{(j,k)}\in\{1,2 ..., d\}$, $d=\sum_{k=1}^{l_j}d_{(j,k)}$
and for at least one $k\in\{1, 2, ..., l_j\}$, $d_{(j,k)}\neq 1$; such that the numbers $d_{(j,k)}$ are the multiplicities of the  critical points of $f$ that are in the fibre of $f$ above the critical value $w_j \in R_f$.

It is also convinient 
to consider the following notation for those integer partitions $\pi_j$'s:
\begin{eqnarray}
\pi_j^{\ast} &=&(1^{p_{1}^{j}}, 2^{p_{2}^{j}}, 3^{p_{3}^{j}}, ..., (d-1)^{p_{d-1}^{j}}, d^{p_{d}^{j}})
\end{eqnarray}
Such that
\begin{eqnarray}
\sum_{n=1}^{d}n\cdot{}{p_{n}^{j}}=d
\end{eqnarray}
i.e, $p_{n}^{j}$ is the number of times that the integer $n\in\{1, 2, ..., d\}$ appears as a sommand in the partition $\pi_j$ of $d\in \Z$. We say that $n\in\{1, 2, ..., d\}$ is in the support of $\pi_j$ if $p_{n}^{j}\neq 0$. 
\end{defn}

\begin{defn}[genus of a $d$-passport]
The genus of a $d$-passport
 $\pi=[\pi_{1}, \pi_{2},..., \pi_{m}]$, with\\
  $\pi_{j}=[d_{(j,1)}, d_{(j,2)}\cdots, d_{(j,l_j)}]$, is the number
\[g=g(\pi):=1-d+\frac{1}{2}\displaystyle{\sum_{j=1}^{m}\sum_{k=1}^{l_j}(d_{(j,k)} -1)}\]
\end{defn}

\begin{defn}[admisible passport]\label{adm-passp}
An admissible passport of degree $d$ and genus $g$ is a finite list of integer partitions of the integer $d>0$ that satisfies the \emph{Riemann-Hurwitz condition}.
\end{defn}


\subsection{Liftings by branched covers}

Let $f:\tilde{X}\rightarrow{X}$ be an branched cover and $C\subset \tilde{X}$ the set of branchig points of $f$.

\begin{defn}[landing paths for branched covers]
Let $f:\tilde{X}\rightarrow{X}$ be an branched cover.
A path $\gamma:[0,1]\rightarrow{X}$ with start point $\gamma(0)\in X$ being a regular point and end point $c:=\gamma(1)\in X$ being a critical value for $f$ is said to be a landing path for $f$. We also say that $\gamma$ is a path landing on $c$. And, in general,
we will say that a path ending at a point $p$ is a path landing at $p$. 
\end{defn}

\begin{lem}\label{landing}
Any landing path $\gamma$ for $f$ have a unique lift to $\tilde{X}$ through $f$ for each $x\in f^{-1}(\gamma(0))$ that lands at points on the fibre of $f$ over $\gamma(1)$. Furthermore, if a point $a\in f^{-1}(c)$ has local degree $k=\deg_{loc}(f, a)$ then there is $k$ start points over $\gamma(0)$ for liftings of $\gamma$ landing at $a$.
\end{lem}

\begin{thm}[lifting landing path isotopies]\label{lisobc}
Let $f:\tilde{X}\rightarrow{X}$ be an branched cover and $F:[0,1]\times [0,1] \rightarrow X$ a isotopy between the landing paths $\gamma_0:[0,1] \rightarrow X$ and $\gamma_1:[0,1] \rightarrow X$ for $f$ with fixed extremal points $p_0 = \gamma_0(0) = \gamma_1(0)$ and $p_1 = \gamma_0(1) =\gamma_1(1)$. Let $\tilde{p} \in f^{-1}(p_0)$. Then $F$ can be lifted to a isotopy $\tilde{F}: [0,1] \times [0,1]\rightarrow X$ with initial
point $\tilde{p}$. In particular,
the lifted paths $\tilde{\gamma_0}$ and $\tilde{\gamma_1}$ with start poit $\tilde{p}$ have the same landing point $\tilde{p_1}\in f^{-1}(p_1)$, and are isotopic.
\end{thm}

\begin{defn}[saddle-connection]
A saddle-connection for a branched covering $f:\tilde{X}\rightarrow{X}$ is a path $\tilde{\gamma}:[0,1]\rightarrow \tilde{X}$ in $\tilde{X}$ with distinct extremal points in $C$ and with interior $\tilde{\gamma}((0,1))$ disjoint from $C$.

\end{defn}

That is, a saddle-connection for $f:\tilde{X}\rightarrow{X}$, a branched covering, is a path into $\tilde{X}$ connecting only two different points in  $C\subset{\tilde{X}}$.

\begin{defn}[postcritical arc]
A postcritical arc for a branched covering $f:\tilde{X}\rightarrow{X}$ is a simple arc ${\gamma}:[0,1]\rightarrow X$  in $X$ with distinct extremal points in $f(C)$ and with interior $\gamma((0,1))$ disjoint from  $f(C)$.
\end{defn}

That is, a postcritical arc for $f:\tilde{X}\rightarrow{X}$, a branched covering, is a path into $X$ connecting only two different points in $f(C)\subset{X}$.

\begin{cor}[liftings of postcritical arc]
Let ${\gamma}:[0,1]\rightarrow X$ be a postcritical arc for a branched covering $f:\tilde{X}\rightarrow{X}$ with a marked point $x$.  Then, for each $p\in f^{-1}(x)$, ${\gamma}$ have a unique lift to $\tilde{X}$ through $f$. 
\end{cor}
\begin{proof}
That is a immediate consequence of Lemma $\ref{landing}$.
\end{proof}

\begin{defn}
We say that a par $(\Gamma,\Gamma^{\prime})$ of finite collections $\Gamma:=\{\gamma_1,... ,\gamma_n\}$ and $\Gamma^{\prime}:=\{\gamma^{\prime}_1,... ,\gamma^{\prime}_n\}$ of proper arcs on a surface $X$ has the property $(P)$ if it satifies:
\begin{itemize}
\item[(1)]{The arcs in $\Gamma$ and $\Gamma^{\prime}$ are pairwise in minimal position;}
\item[(2)]{The arcs in $\Gamma$ and $\Gamma^{\prime}$ are pairwise nonisotopic;}
\item[(3)]{each bigon between $\gamma_k$ and $\gamma^{\prime}_k$ does not contains intersection of arcs from $\{\gamma_1,... ,\gamma_n\}$ or from $\{\gamma^{\prime}_1,... ,\gamma^{\prime}_n\}$.}
\end{itemize}

\end{defn}

\begin{lem}[adjoining isotopy (simultaneous isotopy)]\label{simultiso}
Let $X$ be a compact surface, possibly with marked points, and
let $(\Gamma,\Gamma^{\prime})$ one pair of finite collections of proper arcs on $X$ with the property $(P)$. If $\gamma^{\prime}_i$ is isotopic to $\gamma_i$ relative to $\partial{X}\cup C$ for each $i$. Then there is an isotopy of $X$ relative to $\partial{X}\cup C$ that takes $\gamma^{\prime}_i$ to $\gamma_i$ for all $i$ simultaneously and hence takes $\cup_{i}\gamma_i\prime$ to $\cup_{i}\gamma_i$.
\end{lem}
\begin{proof}
Compare with \cite[Lemma 2.9]{FM:12} 
\end{proof}

\begin{thm}[lifting isotopies]\label{isotpcbc}
Let $f:\tilde{X}\rightarrow{X}$ be a branched cover and $\Sigma_f$ be a \emph{Jordan} curve running through the critical values of $f$, $f(C)$. Then, for every \emph{Jordan} curve $\Sigma$ isotopic to $\Sigma_f$ relative to $f(C)$, the pullback graph $\Gamma_f (\Sigma):=f^{-1}(\Sigma)$ is isotopic to $\Gamma_f (\Sigma_f)$.
\end{thm}
\begin{proof}
$\Sigma$ and $\Sigma_f$ determines, each one respectivelly, two collections $\sigma:=\{\sigma_1,... ,\sigma_m\}$ and $\sigma^{\prime}:=\{\sigma^{\prime}_1, ...,\sigma^{\prime}_m\}$ of post saddle-connections for $f$ where $m:=f(C)$. By hypotesis follwos that $(\sigma,\sigma^{\prime})$ is a pair of collections of proper arcs in $X$ with the property $(P)$. Then, applying Lemma $\ref{simultiso}$ follows the result expected.
\end{proof}

\begin{thm}[Riemann-Hurwtiz formula]

For any branched covering between compact surfaces of degree $d$, $\rho:S\rightarrow R$ it holds:
\begin{eqnarray}
\chi(S)&=&d \cdot \chi(R)-\displaystyle{\sum_{j=1}^{m}\sum_{k=1}^{l_j}(d_{(j,k)} -1)}
\end{eqnarray}
with notation in accordance with $\ref{passp}$.
\end{thm}

\section{Terminologies and some results from graph theory}
\label{ape:gtheory}

\begin{defn}[(abstract) graph]\label{def-g}
A graph $G$ is an ordered pair $(V (G),E(G))$ consisting of a set $V(G)$ whose elements are called vertices and a set $E(G)$, disjoint from $V(G)$, whose elements are called edges, together with an incidence function
$\psi_G: E(G)\rightarrow \sfrac{(V(G)\times V(G))}{\mathcal{S}_2}$ that associates to each edge of $G$ an unordered pair of (not necessarily distinct) vertices of $G$. If $\psi_G (e)=\{u,v\}\in \sfrac{(V(G)\times V(G))}{\mathcal{S}_2}$, we write $e=\{u,v\}$. The vertices $u$ and $v$ are called endpoints (or extremal points) of the edge $e$ and we say that those vertices are connected (or joined) by the edge $e$. We also say that a edge $e=\{u,v\}$ is incidente to the vertices $u$ and $v$, and that the vertices  
$u$ and $v$ are incident to the edge $e=\{u,v\}$. Two vertices (edge) which are incident with a common edge (vertex) are said to be adjacent.
\end{defn}

\begin{defn}[degree of a vertex]\label{deg-v-g}
The degree(valence) of a vertex $v\in V(G)$ of a graph $G$ is the number of edges that are incident to $v$, and it is denoted by $deg(v)\in\N$. A vertex of degree (valence) $k$ is a vertex of degree (valence) $k$ or a $k$-valent vertex.
\end{defn}

\begin{defn}[comparing graphs]\label{isom-g}
Two graphs $G$ and $H$ are isomorphic, if there are
bijections $\theta : V(G) \rightarrow V(H)$ and $\varphi : E(G) \rightarrow E(H)$ such that the following diagram commutes

\begin{center}
\begin{tikzcd}[row sep=large, column sep = large]
\arrow{d}{\psi_G} E(G) \arrow{r}{\varphi} & \mathcal{E}(S)  \arrow{d}{\mathfrak{p}}  \\
 \faktor{(V(G)\times V(G))}{\mathcal{S}_2} \arrow{r}{\theta\times\theta} & \faktor{(V(H)\times V(H))}{\mathcal{S}_2}
\end{tikzcd}\end{center}

That is, such that $\psi_G (e) = uv$ if and only if $\psi_H (\varphi(e)) = \theta(u)\theta(v)$.

Such a pair of mappings $(\theta, \varphi)$ is called an isomorphism between $G$ and $H$, and we indicate its existence writing, $ G \cong H $.
\end{defn}

\begin{defn}[labeled graph]\label{label-g}
A vertex-labeling of a graph $G$ by a set ${\mathfrak{U}}$ is a surjetive map $\mathfrak{l}:V(G)\rightarrow{\mathfrak{U}}$. This permits to single out vertices into subclasses in accordance with its image by that map. For a vertex $v\in V(G)$ such that $\mathfrak{l}(v)=x$ we write $v_x$.
\end{defn}

\begin{defn}[$k$-path/$k$-cycle]\label{paths}
A path into a graph $G$ is a collection of edges of $G$, $\gamma\subset{E(G)}$, whose vertices can be arranged in a linear sequence (that is, labeled from a total ordered set) in such a way that two vertices are adjacent if they are consecutive in the sequence, and are nonadjacent otherwise. A path with $k\in\N$ edges is called $k$-path and $k$ is the length of that path. If $v\in V(G)$ and $u\in V(G)$ are the initial vertex and the terminal vertex of the first and the last edges respectively on the linear sequence of a path $\gamma$ into $G$. The edges incident to this vertices inherted that nomenclature. We say that $\gamma$ join (connects) $v$ to $u$ and also that $v$ and $u$ are joined (connected) by $\gamma$.  

Likewise, a cycle into a graph $G$ is a collection of edges of $G$, $\gamma\subset{E(G)}$, whose vertices can be arranged in a cyclic sequence in such a way that two vertices are adjacent if they are consecutive in the sequence, and are nonadjacent otherwise. A cycle that contains $k\in\N$ edges is called $k$-cycle and $k$ is the length of that path. 
\end{defn}

\begin{defn}[connected graph]\label{conn-g}
A graph is connected if, for every partition of its vertex set into two nonempty sets $X$ and $Y$, there is an edge with one endpoint in $X$ and one endpoint in $Y$ ; otherwise the graph is disconnected. This is equivalent to set that a graph is connected if any pair of its vertices are joined by a path. A maximal connected subgraph of a graph $G$ is called connected component of $G$.
\end{defn}

\begin{defn}[bipartite graph]\label{bip-g}
A graph $G$ is a \emph{bipartite} if its vertex set $V(G)$ is partitioned into two sets, say $X$ and $Y$ such that any edge in $E(G)$ has one endpoint in $X$ and the another one in $Y$. The partition $V(G)=X\sqcup Y$ is called a bipartition and the subsets $X$ and $Y$ are called parts. We denote such a bipartite graph by $G=[X,Y]$.
\end{defn}

\begin{defn}[direct graph (digraph)]\label{dig}
A direct graph (or simply, digraph) $G$ is an ordered pair $(V(G), E(G))$ consisting of a set $V(G)$ whose elements are called vertices and a set $E(G)$, disjoint from $V(G)$, whose elements are called directed(or oriented) edges, together with an incidence function $\psi_G:E(G)\rightarrow V(G)\times V(G)$ that associates to each edge of $G$ an \textbf{ordered} pair of (not necessarily distinct) vertices of $G$. If $\psi_G (e)=(u,v)\in(V(G)\times V(G))$, we write $e=(u,v)$. The vertices $u$ and $v$ are called endpoints (or extremal points) of the edge $e$ and we say that those vertices are connected  (or joined) by the edge $e$. For a directed edge $(u,v)$ of $G$ we say that $u$ dominates $v$.
For a vertex $v\in V(G)$ a edge of $G$ of the form $(s,v)\in E(G)$ is called incomig-edge at $v$ and those ones of the form $(u,s)$ are called outgoing-edge at $v$.
\end{defn}

\begin{defn}[matching]\label{match}
A \emph{matching} on a graph $G$ is a subset of edges $\mathcal{M}\subset E(G)$ that do not have vertices in commom.
\end{defn}

We refer to the problem of find out a matching on a bipartite graph as the \emph{Matching Problem}.

\begin{defn}[perfect matching]\label{perf-match}
A matching $\mathcal{M}\subset{E(G)}$ in a graph $G$ which covers all vertices of $G$ is called \emph{perfect matching.} 
\end{defn}

\begin{defn}[pontential mates]\label{pot-m}
Let $G$ be a graph and $S\subset V(G)$ be a collection of vertices of $G$.
The neighbors set of $S$ in $G$ is 
\[N_{G}(S):=\{x\in V(G); \exists\; e\in E(G), \psi_G (e)=\{x,v\}\}.\]

When we are considering the matching problem on a bipartite graph $G=G[X, Y]$ we commonly refer to the set $N_{G}(S)$ as the set of the potential mates for the subset $S\subset V(G)$.
\end{defn}

\begin{thm}[Hall's Merriage Theorem-\cite{BondyG},\cite{Ha94}]\label{ksamento}
A bipartite graph $G := G[X, Y]$ has a matching which covers every vertex in $X$ if and only if
\[ |N_G(S)| \geq |S|\]
for all $S \subset X$.
\end{thm}
\begin{cor}[(perfect) Matching Theorem]\label{cor-ksamento}
A bipartite graph $G := G[X, Y]$ has a perfect matching if and only if
$|X| = |Y|$ and $|N_G (S)| \geq |S|$ for all $S\subset X$.
\end{cor}

\begin{defn}[multi-extremal chargeable graph]\label{ch-g}
A multi-extremal chargeable graph $C:=C[I,O]:=C[I,X$ $;O,Y]$ is a bipartite graph $G[X,Y]$ (the underlying graph of $C$) with two distinguished set of vertices, an input set $I\subset X$ and an output set $O\subset Y$, together with a nonnegative real-valued function $c:V(G)-(I\sqcup O)\rightarrow \R_{>0}$. $c$ is the vertex-capacity function of $C$ and its value on an vertex $v$ the capacity of $v$. When is necessary to emphasize the capacity function we say that $C$ is a multi-extremal chargeable graph with capacity $c$.

The vertices in 
$V(G)-I\sqcup O$ are called interior vertices. 
We denote by $int.V(G):=V(G)-(I\sqcup O)$ the subset of interior vertices. The edges with endpoints in $int.V(G)$ 
is called interior edges.  
\end{defn}

\begin{defn}[edge-weighting on a graph]\label{e-w-g}
A edge-weighting on graph $G$ is a real function $w:E(G)\rightarrow \R$. A graph with a edge-weighting is a weighted graph.
\end{defn}

\begin{defn}[feasible weighting]\label{f-weight}
A edge-weighting $w$ on a multi-extremal chargeable graph $G$  with capacity $c$ is feasible if it satisfies the following additional constraints:
\begin{itemize}
\item[$\boldsymbol(1)$]{$w$ is a real estrictly positive function,i.e., $w(E(G))\subset \R_{>0}$;}
\item[$\boldsymbol(2)$]{$\sum_{x\in N_G (v)} w(\{x,v\})=c(v)$ for each interior vertex $v\in int.V(G)$.}
\end{itemize}
 
The sums \[|w|^{in}:=\sum_{x\in N_G (I)}\sum_{\jfrac{v\in I;}{\{v,x\}\in E(G)}} w(\{v,x\})\]
and
\[|w|^{out}:=\sum_{x\in N_G (O)}\sum_{\jfrac{v\in O;}{\{v,x\}\in E(G)}} w(\{v,x\})\]
are respectively the input value and output value of $w$.

A multi-extremal chargeable graph $G$ with a feasible weighting is called  multi-extremal weighted graph.
\end{defn}

\begin{prop}[charge conservation]\label{vertex-capacity-fn}
Let $N=N[I,X;O,Y]$ be a multi-extremal chargeable graph with constant capacity $M\in\R_{>0}$. Then for any 
feasible weighting $w$ on $N$ the input and output values are equal.
\end{prop}

\begin{proof}
The proof we are going to give will be by induction on the number of interior edges of the multi-extremal chargeable graph.

Let's start verifying the base case.

Let $N$ be a multi-extremal chargeable graph with constant capacity $M$ with only one interior edge $e\in E(N)$, $n$ initial edges and $m$ terminal edges. And let $w:E(N)\rightarrow \R_{>0}$ be a feasible weighting on $N$ assigning the weight $k>0$ to $e\in E(N)$.
Then,
\[|w|^{in} +k=M=k+ |w|^{out}\]
Thus, we have
\[|w|^{in}=|w|^{out}\]

\begin{figure}[H]
\begin{center}
\includegraphics[width=4cm]{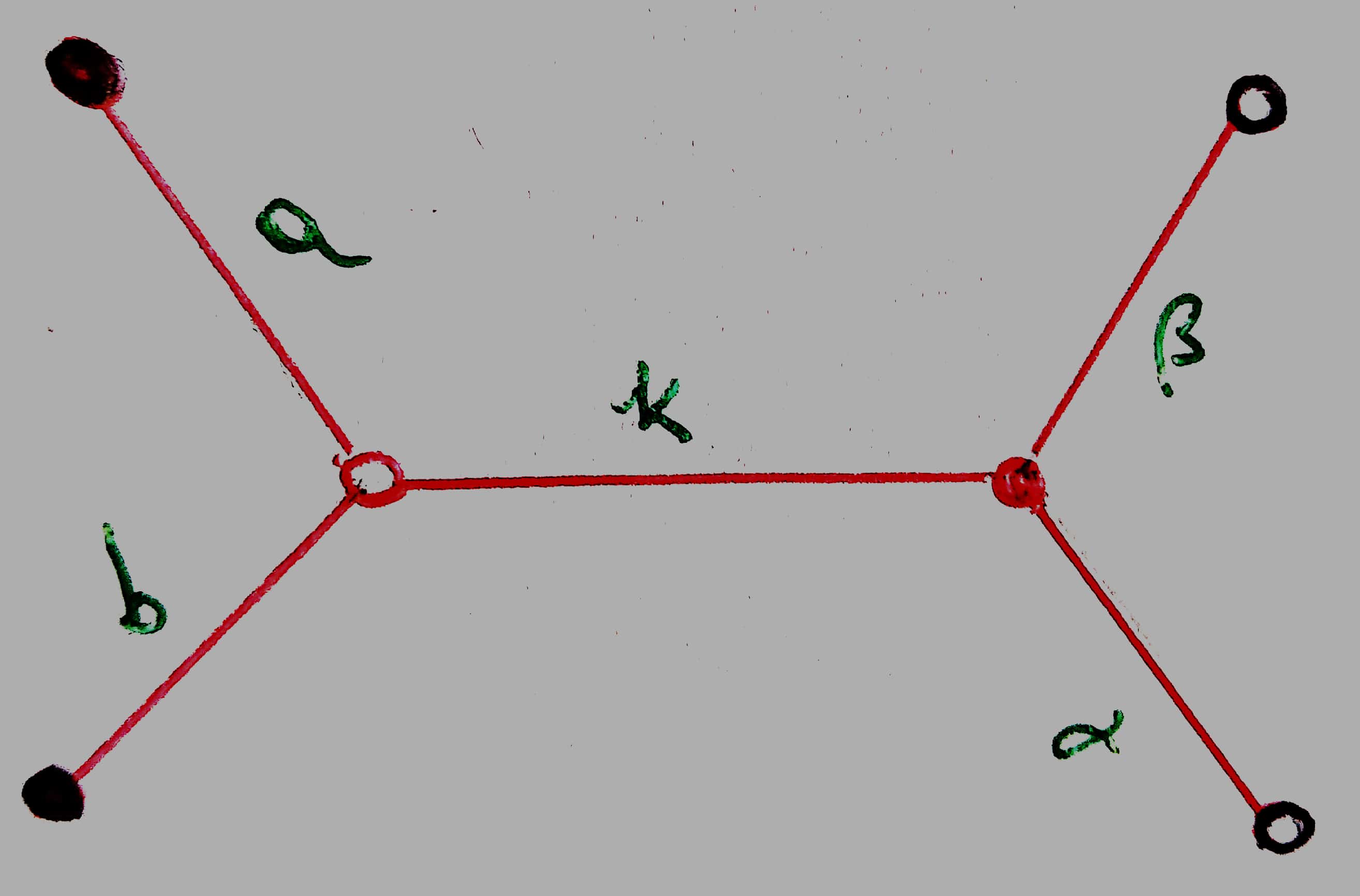}
\caption[chargeable graph]{$a+b+k=k+\alpha +\beta$}
\label{deg7pbg}
\end{center}
\end{figure}

Given $k>1$, we assume that for an arbitrary multi-extremal chargeable graph with constant capacity $M$ with $1<l\leq k$ interior edges, it is true that
\[|w|^{in}=|w|^{out}\]
for any feasible weighting $w:E(N)\rightarrow \R_{>0}$ on it.

Now, let $N'$ be a bipartite multi-extremal chargeable graph with constant capacity $M$ with $k+1$ interior edges and with a feasible weighting $w':E(N')\rightarrow \R_{>0}$ on $N'$. 

Let $E_1$ be a interior edge of $N'$ adjacent to at least one terminal edge of $N'$ and let $\epsilon_1=w'(E_1)>0$.

Let $a_1:=w'(A_1),a_2:=w'(A_2), \cdots ,a_p:=w'(A_p) $ be the list of the weights assigned by $w'$ to each terminal edge $A_h$ adjacent to $E_1$ with $h\in\{1,2,\cdots ,p\}$. 

There may exist more than one internal edge of $N'$ that is incident to the set of terminal edges $\{A_1, A_2,$ 
\\ $\cdots , A_p\}$. So, let $E_1, E_2, \cdots, E_u$ be those, possibly existing, edges with $\epsilon_l=w'(E_l)>0$ and let $\{b_{ij}\}_{j=1}^{u_i}$ the list of weights assigned by $w'$ to the interior edges of $N'$ that are incident to $E_{i}$.  And, let  $d_{j}>0$ for $j\in{1, 2, \cdots , r}$ be the list of weights assigned by $w'$ to all terminal edges of $N'$ different from those $E_i$ already considered.

Furthermore, let $F$ be the subgraph of $N'$ formed by the edges $\{A_1, A_2,  \cdots , A_p\}\cup\{E_1, E_2, \cdots , E_u\}$.

Then, the graph $N:=N'-F$ is a bipartite multi-extremal chargeable graph with constant capacity $M$ with $k+1-u\leq k$ interior edges and with a feasible weighting $w:w'|_{E(N)}:E(N)\rightarrow \R_{>0}$ on $N$. Thus, by the induction hypotesis,

 \begin{eqnarray}
 |w'|^{in}=|w|^{in}&=&|w|^{out}\\\nonumber
 &=& \left(\sum_{j=1}^{r}d_j\right) + \left(\sum_{i=1,j=1}^{u,u_i}b_{ij}\right)
\end{eqnarray}

But we also have
 \begin{eqnarray}
\left(\sum_{l=1}^{u}\epsilon_l\right) + \left(\sum_{i=1,j=1}^{u,u_i}b_{ij}\right) = M = \left(\sum_{l=1}^{u}\epsilon_l\right) +\left(\sum_{l=1}^{p}a_l\right)
\end{eqnarray}
Hence,
\[\sum_{i=1,j=1}^{u,u_i}b_{ij}=\sum_{l=1}^{p} a_l\]
Therefore,
\begin{eqnarray}
|w'|^{out} &=& \left(\sum_{j=1}^{r}d_j\right) + \left(\sum_{l=1}^{p}a_{l}\right)\\\nonumber
 &=&\left(\sum_{j=1}^{r}d_j\right) + \left(\sum_{i=1,j=1}^{u,u_i}b_{ij}\right)\\\nonumber
 &=& |w'|^{in}
\end{eqnarray}

\begin{figure}[H]
\begin{center}
\includegraphics[width=10cm]{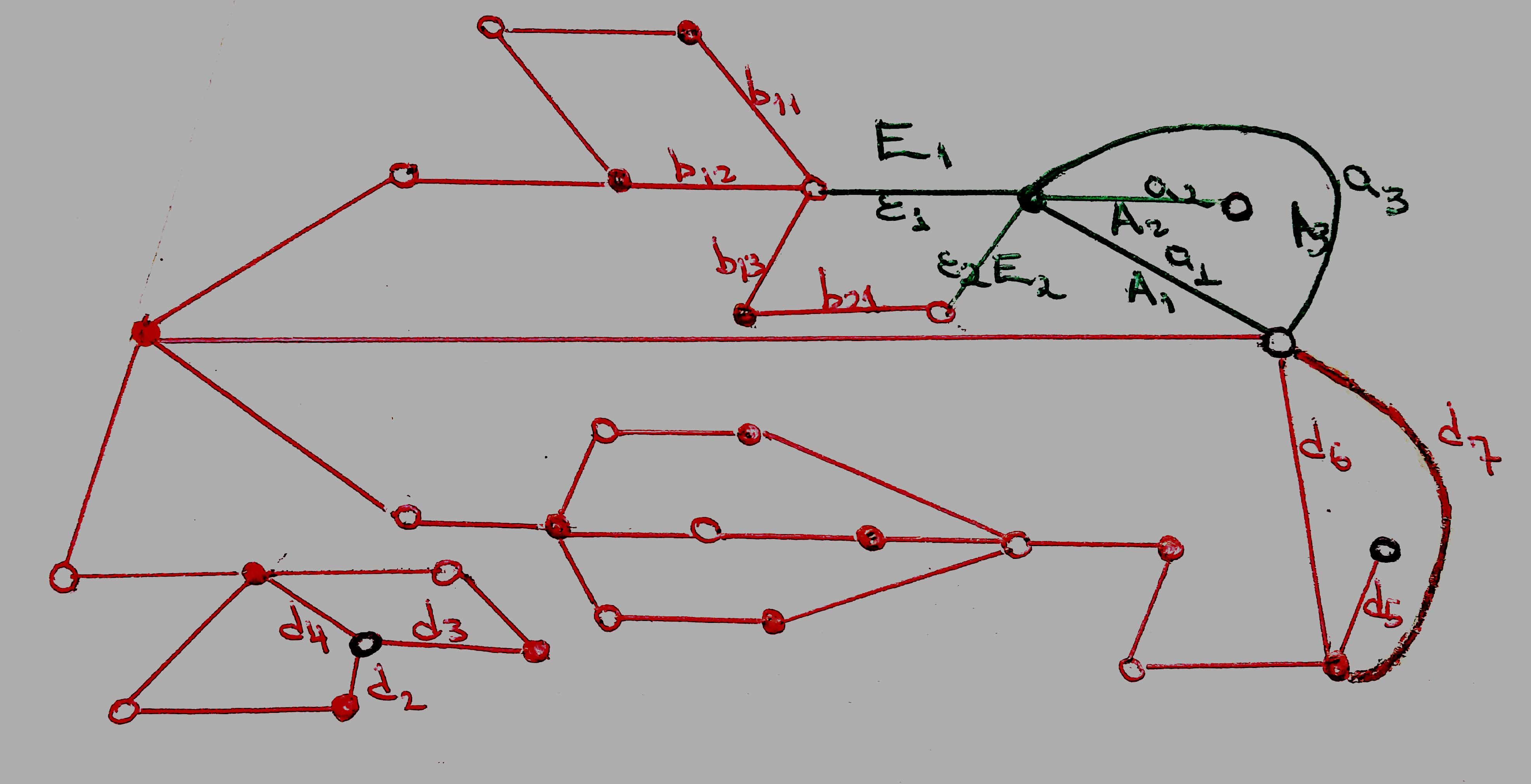}
\caption[weighted graph]{a weighted graph(with only the weights mentioned in the proof being visible)}
\label{deg7pbg}
\end{center}
\end{figure}
\end{proof}

\subsection{Cellularly Embedded Graphs}

\begin{defn}\label{comp(s)}
Let $S$ be a topological surface (possibly with boundary). 
$\mathcal{E}(S)$ is, by definition, the set of all Jordan arcs on $S$. 
And $\mathfrak{p}:\mathcal{E}(S)\rightarrow  \faktor{(S\times S)}{\mathcal{S}_2}$ the map that takes the endpoints of a Jordan arc on $S.$
\end{defn}

\begin{defn}\label{embed-g}
A cellular embedded graph $\Gamma=(G,S,R_V,R_E)$ is the data of a graph $G$, a topological connected oriented closed surface $S$ and a pair of injective map $R_V: V(G)\rightarrow S$ and $R_E: E(G)\rightarrow CP(S)$, such that:
\begin{itemize}
\item[(a)]{$R_V(v)$ is a point of $S$ for each $v\in V(E)$;}
\item[(b)]{ $R_E(e)$ is a Jordan arc on $S$ for each $v\in V(E)$;}
\item[(c)]{\hspace{9cm}

\begin{center}
\begin{tikzcd}[row sep=large, column sep = large]
E(G) \arrow{r}{R_E}\arrow{d}{\psi_G} & \mathcal{E}(S)  \arrow{d}{\mathfrak{p}}  \\
 \faktor{(V(G)\times V(G))}{\mathcal{S}_2} \arrow{r}{R_V\times R_V} & \faktor{(S\times S)}{\mathcal{S}_2}
\end{tikzcd}\end{center}

is a commutative diagram;}
\item[(c)]{$R_E (e_1)\cap R_E (e_2)=\emptyset$ if $e_1\neq e_2 \in E(G)$;}
\item[(e)]{$S- R_E(E(G))$ is finite union of simply connected open subsets of $S$.}
\end{itemize}
We say that $\Gamma$ is a cellularly embedded graph in $S$ and $G$ is the \emph{graph model}.
\end{defn}

\begin{defn}[planar graphs]\label{plan-g}
A \emph{planar graph} is a cellular embedded graph in $\s^2$
\end{defn}

\begin{defn}\label{equivg}
Two embedded graphs $(G,S,R_V,R_E)$ and $(G',S',R'_V,R'_E)$ are isomorphic if there exist an orientation preserving homeomorphism $h:S\rightarrow S'$ that induces a isomorphism of (abstract) graphs. That is, such that the following diagram commutes

\begin{center}
\begin{tikzcd}[row sep=huge, column sep = 5.2em]
E(G) \arrow{r}{R_E}\arrow{d}{\psi_G} & \mathcal{E}(G)\arrow{d}{\mathfrak{p}_S}  \arrow[dashrightarrow,uibred]{r}{h} &\mathcal{E}(G')\arrow{r}{(R')_{E}^{-1}} \arrow{d}{\mathfrak{p}_{S'}} &E(G') \arrow{d}{\psi_{G'}}  \\
\sfrac{V(G)\times V(G)}{\mathcal{S}_2} \arrow{r}{R_{V}\times R_{V}} & \sfrac{(S\times S)}{\mathcal{S}_2}\arrow[dashrightarrow,uibred]{r}{h\times h}&\sfrac{(S'\times S')}{\mathcal{S}_2}\arrow{r}{(R')_{V}^{-1}\times (R')_{V}^{-1}}&\sfrac{V(G')\times V(G')}{\mathcal{S}_2}
\end{tikzcd}\end{center}

The graph isomorphism determined by $h$ is $(\theta, \varphi)$ with $\varphi:=(R')_{E}^{-1}\circ{}h\circ{}R_{E}$ and $\theta:=((R')_{V}^{-1}\circ{}h\circ{}R_{V}) \times ((R')_{V}^{-1}\circ{}h\circ{}R_{V})$. 
\end{defn}

\begin{defn}[faces]
For a embedded graphs $\Gamma:=(G,S,R_V,R_E)$ each component of $S-R_E(E(G))$ is called \emph{face} and its closure is a \emph{closed face}. $F(\Gamma)$ is the set of faces of $\Gamma$. 

The edges and vertex in the boundary of a face is said to be incident to that face and vice-versa. 
\end{defn}
We also resort to the word \emph{adjacent} to announce that relation between vertices, edges, and faces of a cellularly embedded graph.

\begin{defn}[corner]\label{corner}
A \emph{corner} of a embedded graph $\Gamma$ is a vertex of degree greater or equal to $3$.
\end{defn}

\begin{defn}[parity]\label{par}
A (\emph{odd}) \emph{even} graph is a graph whose all of its vertices have a (odd) even degree. The same words are atributed to embedded graph in accordance with its abstract model graph.  
\end{defn}

When the graphs are endowed along with an additional structure, for instane with a labeling of its vertices, the horizontal morphisms are required to respect this structure.

\begin{defn}[dual graph]
The dual graph of a cellularly embedded graph $\Gamma := ((H,S,R_V,R_E))$ is a cellularly embedded graph $\Gamma^{\ast}$ in $S$ with graph model $G^{\ast}$ such that:
\begin{itemize}
\item[(a)]{for each $f\in F(\Gamma)\,\exists !\, f^{\ast}\in V(G)$ such that $R_{V}^{\ast}(f^{\ast})\in f$;}
\item[(b)]{for each $e\in E(G)\,\exists !\,e^{\ast}\in E(G^{\ast})$ such that:
\begin{itemize}
\item[(b.1)]{ $|R_E (e)\cap R_{E}^{\ast}(e')|=1$;}
\item[(b.2)]{ if $f_1 , f_2 \in F(\Gamma)$ are the two faces of $\Gamma$ adjacents to $\epsilon:=R_E(e)$ then $\mathfrak{p}_S (R_{E}^{\ast} (e^{\ast}))=\{f_{1}^{\ast}, f_{2}^{\ast}\}$.}
\end{itemize}}
\end{itemize} 
\end{defn}

That is, the dual graph is the embedded graph in $S$ constructed in the following way:
\begin{itemize}
\item[\textbf{1st}]{choosing a unique point into each face of $\Gamma$;}
\item[\textbf{2nd}]{and then, for each pair of those points that are in adjacent faces of $\Gamma$ we connect they by a Jordan arc. Being one Jordan arc for  each edge of $\Gamma$ that those two faces share, with the constraint that they intersect once.}
\end{itemize}

\begin{defn}[face coloring]\label{fc-g}
A face coloring of a cellular embedded graph $\Gamma$ is a surjective function $\mathfrak{c}:F(\Gamma)\xrar{\quad}C$ where $C$ is a finite set. The elements of $C$ are called \emph{colors}.
\end{defn}

\begin{defn}[alternate face coloring]\label{fc-g}
An alternating face coloring of $\Gamma$ is a face coloring $\mathfrak{c}:F(\Gamma)\xrar{\quad}C$ with $|C|=2$ and such that adjacent faces have distint colors from $\mathfrak{c}$.
That is, the dual graph $\Gamma^{\ast}$ is a embedded bipartite graph.
\end{defn}


\chapter[A combinatorial presentation of branched coverings]{\rule[0ex]{16.5cm}{0.2cm}\vspace{-23pt}
\rule[0ex]{16.5cm}{0.05cm}\\A combinatorial presentation for branched covers}
\label{cap-03}

We are interested in to understand and to classify rational functions on $\cc$ through its critical datum. From the \emph{Riemann-Hurwitz formula} is known that a degree $d$ rational function has $2d-2$ critical points counted regarding a degree of coincidences, its multiplicities.

In \cite[ theorem 9.1]{EiH:83} \emph{Eisenbud and Harris} showed that there are, up to post-composition with \emph{M\"obius} transformations of $\cc$, finitely many degree $d$ rational functions with given critical points at $p_1 , p_2, \cdots ,\\ p_{2d-2}\in\cc.$


With the constraint that $p_i\neq p_j$ for $i\neq j$, 
\emph{L. Goldberg} in \cite{Gold:91} showed 
that there exist, up to post-composition with $\cc$-automorphisms, at most the $d$-\emph{Catalan} number ${\rho(d):=\frac{1}{d}{2d-2\choose d-1}}$  of 
rational functions of degree $d$ with the critical set being the given subset $R:=\{p_1 , p_2, \cdots , p_{2d-2}\}\subset\CPu$. 

Let $\mathcal{R}_{R}=\mathcal{R}_{R}^{d}$ be the set of degree $d$ rational functions that possesses $R:=\{p_1 , p_2, \cdots , p_{2d-2}\}\subset\cc$ as its critical set. For a given subset $R\subset\cc$, by changes of coordinates with \emph{Mobius transformations} on the domain and codomain space, we can modify 
those rational functions with $R$ as its critical set such that three chosen points of $R$ turn to $0, 1$, and $\infty$ and so that they are fixed points for those rational functions from $\mathcal{R}_{R}$ after that appropriate changes of coordinate. For counting purposes this procedure of normalization is allowed (see $\ref{just-norml}$), thus we can consider only as prescribed critical sets, subsets of the form \[R=\{p_1, p_2, ...,p_{2d-5}, 0, 1, \infty\}\subset\cc\] and in this case those rational functions having 
$R$ as its common critical set and keeping the set $ \{0, 1, \infty \} $ pointwise fixed can not be transformed one into another by post-composition with a \emph{Möbius} transformation (since the identity $\cc$-automorphism is the unique one that fixes $3$ points of $\cc$).

\begin{defn}[$\mathcal{C}$-equivalence]
Two rational functions of the same degree $f,g\in\C(z)$ are $\mathcal{C}$-equivalent 
if there exists an automorphism $\sigma\in Aut(\overline{\C})$ such that
\[f=\sigma\circ g\]
A class of rational function for that equivalence will be assined by $[\bullet]_\mathcal{C}$.
\end{defn}

\section{Normalizations}
Given a rational function $f\in{\C(z)_d}$, let $M:= M_{crit(f)} \in Aut (\cc)$ be the conformal automorphism that send $m, n, p$ to $1, 0, \infty$ respectively for some choice $\{m, n, p\}\subset crit(f)$. And let  $M_f \in Aut (\cc)$ be the \emph{M\"obius} mapping sending $f(m)$, $f(n)$ and $f(p)$ to  $1$,  $0$ and $\infty$ respectively.

\begin{lem}\label{just-norml}
Let $f\in\C(z)$ and $g\in\C(z)$ two rational functions. $f$ is equivalent to $g$ if and only if $M_f\circ f\circ M$ and $M_g\circ g\circ M$ are equivalent.
\end{lem}
\begin{proof}
If for some $\sigma\in Aut(\cc)$, $M_f\circ f\circ M=\sigma\circ (M_g\circ g\circ M)$ then, $f=(M^{-1}_{f}\circ \sigma\circ M_g)\circ g$. So $f$ and $g$ are equivalent. 

Now, if for some $\sigma\in Aut(\cc)$, $f=\sigma\circ g$ we also have $M_f \circ f \circ M=(M_f \circ \sigma\circ M_g^{-1})\circ M_g\circ g\circ M$. Therefore, $M_f\circ f\circ M$ is equivalent to $M_g\circ g\circ M$. 
\end{proof}
That lemma is certainly valid for any other choice of three distinct points in $ \overline {\C} $ and we can choose $ M $ and $ M_{.}$ so that the normalized rational function $ M_f \circ{} f \circ{} M$ exchanges two of those distinct points and therefore leaves the third fixed.

This lemma enable us to care about only with those rational functions that have $0$, $1$ and $\infty$ as critical and fix points among its critical points. It garantees that the number of equivalent classes of rational functions sharing the set $\{c_1, c_2, c_3, \cdots , c_k\}$ as its critical set is the same for those one sharing the set $\{0, 1, \infty, p_3 \cdots , p_k\}$ where $\{p_3,\cdots, p_k\}\subset\{c_1, c_2, c_3, \cdots , c_k\}$ as its critical set and that maintain fix $0$, $1$ and $\infty$.

Furthermore, notice that not necessarily $f$ is equivalent to $M_f \circ f\circ M$. But if $M$ is a covering map to the branched cover $f:\cc\rightarrow\cc$, i. e., if $f=f\circ M$ or if $M$ is an automorphism of $f$, i.e., if $M\circ f=f\circ M$ than we have $f$ equivalent to $M_f \circ f\circ M$.  

We can also ask if there is some good relation between the dynamical moduli space \[\mathfrak{M}_d:=\C (z)_d\left/\left(\text{conjugation in} Aut(\cc)\right)\right.\] and \[PM_d:=\C(z)_d\left/\left(f\sim \text{Möbius}\circ{}f\right)\right.\].

If $f$ and $g$ are rational functions of the same degree corresponding to the same point in $\mathfrak{M}_d$ and $PM_d$, that is $f=\alpha^{-1}\circ{}g\circ{}\alpha$ and $f=\sigma\circ{}g$ for some $\alpha, \sigma \in Aut(\cc)$, it follows that 
\begin{eqnarray}
(\alpha\circ{}\sigma)\circ{}g=g\circ{}\alpha
\end{eqnarray}

For a fixed rational map $R$, as we mentioned above, a \emph{M\"obius} map that satisfies $\alpha\circ{}R=R\circ{}\alpha$ is called an \emph{automorphism of} $R$. The group of such degree one rational maps is called the \emph{automprphism group} of $R$ and is denoted by $Aut(R)$. This group is always finite.

So, what can we say about the set $qAut(R):=\{\alpha\in Aut(\cc); \exists\sigma\in Aut(\cc)\quad\mbox{such that}\quad R\circ{}\alpha=\alpha\circ{}\sigma\circ{}R\}$?

\begin{lem}
$qAut(R)$ is a subgroup of $Aut(\cc)$. 
\end{lem}
\begin{proof}
First, is clear that $Id \in qAut(R)$.
 
Suppose that $(\alpha\circ{}\sigma)\circ{}R=R\circ{}\alpha$ and $(a\circ{}s)\circ{}R=R\circ{}a$ for some $\alpha, \sigma, a, s \in Aut (\cc)$. Then taking $\delta:=a^{-1}\circ \sigma\circ a\circ s$ we obtain $((\alpha\circ a)\circ{}\delta)\circ{}R=R\circ{}(\alpha\circ a)$. Therefore, $qAut(R)$ is closed for composition.

And choosing $s:=\alpha^{-1}\circ \sigma^{-1}\circ \alpha^{-1}$ it follows that $(\alpha^{-1}\circ s)\circ R= R\circ\alpha^{-1}$. So $qAut(R)$ is also closed for taking the inverse into $Aut(\cc)$. So, we are done. 
\end{proof}

Note that the non triviality of $qAut(R)$  is the same that the existence of a rational map $S$ that is equivalent to $R$ and also conjugated to it. 

From an observation by Thurston presented in \cite{MR3515032}-Lemma $8.6$ we have:
\begin{lem}
 $qAut(R)$ is non trivial for all generic cubic rational function $R\in\C (z)_3$.
\end{lem}

That is, for each cubic rational function there is a cubic rational function that is both conformally conjugated and equivalent to it.




\section{Embedded Graphs and Branched Coverings}\label{emb-cell-g}

Let $f:X\rightarrow \s^2$ an orientation-preserving branched
covering map of $\s^2$ of degree $d\geq 2$ with $m$ critical values. 

Let $\Sigma$ be an oriented Jordan curve passing through the critical values of $f$, and let ${\Gamma}:=f^{-1}(\Sigma)$ be the inverse image of $\Sigma$ by $f.$ $\Gamma$ is a cellularly embedded graph into $X$. 
That is the principal object of the study of the present chapter. 

Except when explicitly stated in a different way, hereinafter $f:X\rightarrow \s^2$ will be an orientation-preserving branched covering of $\s^2$ of degree $d\geq 2$ with $m$ critical values.

\begin{defn}[Post-critical curve]\label{def:pcc}
A \emph{post-critical curve} for $f$ is an isotopy class relative to $R_f$ of a \emph{Jordan curve} $\Sigma\subset\s^2$ passing through the critical values of $f$ into $\s^2.$ Such an isotopy class will be simply denoted by $\Sigma$, some representative of it.
\end{defn}

The points in the critical values set $R_f$ of $f$ will be labeled by $1, 2, \cdots , m$ according to the order that $\Sigma$ pass through them positively regarding the orientation of $\s^2$.  

\begin{defn}[Pullback graph]\label{pullbackg}
The isotopy class relative to $C_f$ of ${\Gamma}:=f^{-1}(\Sigma)$ is called the \emph{pullback graph} of $f$ with respect to $\Sigma$, or simply, $\Sigma$-\emph{pullback graph} of $f$. 

A point in $f^{-1}({R_f})$ will be called by \emph{vertex} and $f^{-1}(R_f)$ will be called the vertex set of ${\Gamma}$ and denoted by $V({{\Gamma}})$.

An arc into ${\Gamma}$ connecting two points in $f^{-1}(R_f)$ will be called by \emph{edge} and the set of edges joining the points in $V({{\Gamma}})$ will be called by \emph{edge set} of ${\Gamma}$ and denoted by $E({{\Gamma}})$. 

A connected component of $X-{\Gamma}$ will be called by \emph{face of} ${\Gamma}$ and the set of such connected components will be called by \emph{face set} of ${\Gamma}$ and denoted by $F({{\Gamma}})$.
\end{defn}

Since, by definition, a \emph{post-critical curve} of $f$ is a isotopy class, for a \emph{pullback graph} to be well definide we have to ensure that the inverse image of two representatives of a \emph{post-critical curve} are isotopic relative to the critical set of $f$. But that is guaranteed by Lemma $\ref{isotpc1}$.

\emph{Thurston} has introduced the notion of \emph{balanced planar graphs}\cite{STL:15} and then showed that they combinatorially characterizes all such ${\Gamma}=f^{-1}(\Sigma)$, where $f:\s^2\rightarrow\s^2$ has $2d-2$ distinct critical points. In other words, we can say that \emph{Thurston} characterized how oriented planar graphs into $\s^2$ with $2d-2$ vertices of valence $4$ corresponds to the \emph{inverse image} by some generic orientation-preserving branched cover $f:\s^2\rightarrow\s^2$ of an oriented \emph{Jordan curve} passing through the critical values of such $f$ and vice versa.

The general version of this characterization that we will obtain here refers to finite degree branched covers of the $2$-sphere, whose domain can be any compact oriented surface and for all possible critical configurations, in addition to the generic branched selfcoverings of $\s^2$ initially considered by $Thurston $. That consists of a full compact oriented $2$-dimensional version of the \emph{Thurston result}.

Thus, to get that, we are going to adapt the notion of \emph{balanced graphs} to the broader class of embedded graphs on surfaces of arbitrary genus. The meaningful fact about the modified \emph{balance condition} is that in the palnar case it is equivalent to the \emph{Thurston's balance condition}.

Consider $\Sigma$ as an oriented graph with vertex set $V(\Sigma) =R_f$. Hence each vertex of $\Sigma$ has valence $2$ and $V({\Gamma})=f^{-1}(R_f)$. Notice that ${\Gamma}$ is an oriented graph 
on $\s^2$.

Let  $\pi(f,\Sigma)=(\pi_{1}^{\ast}, \pi_{2}^{\ast}, ..., \pi_{m}^{\ast})$ be the passport of $f$ with each partition labeled in accordance with the labeling of the critical points determined by $\Sigma$.
 
For each vertex $w_j\in R_f$ there are $l_j$ vertices in ${\Gamma}$ corresponding to it by $f$. 
Among them there are $p_{k}^{j}$ vertices of valence $2\cdot k$ for each $k$ on the support of $\pi_j$. 

Thus $|V({{\Gamma}})|=\sum_{j=1}^{m}l_j.$ The vertices of valence strictly greater than $2$ are the critical points of $f$ and the other vertex are regular preimage of the critical value $v$ that we will call  \emph{cocritical} points for $f$.

If $v\in V({{\Gamma}})$ is such that $f(v)=w_j$ then label it by $j.$  Thus we will have $\sum_{n=1}^{l_j}p_{n}^{j}$ vertices in $V({{\Gamma}})$  labeled by $j$ for each $j\in\{1, 2, \cdots, m\}$.

Each connected component of $X-f^{-1}(\Sigma)$ is mapped by $f$ over the $2$-cell in the left or right side of $\Sigma\subset\s^2$. We can see that those faces are also topological disk, furthermore, having as boundary a finite union of \emph{Jordan archs} connecting points of $f^{-1}(R_f)$.

Color the left side of $\Sigma\subset\s^2$ \emph{pink} and \emph{blue} the right side of it.

Then, color each connected component of $X-{\Gamma}$ by the color of its $2$-cell image by $f$ in the right or left side of $\Sigma\subset\s^2$. That will give us a chessboardlike decoration to $X$, that is, a cellular decomposition of $X$ with an alternating bi-colouration of the faces.

\begin{figure}[H]
\begin{center}
\includegraphics[scale=0.55]{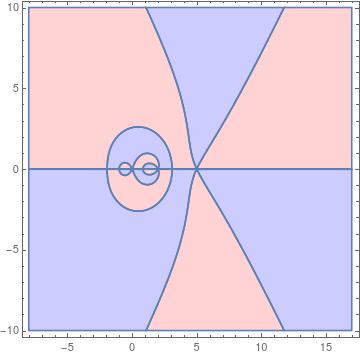}\
\caption[degree $7$ pullback graph]{$f(z)=\dfrac{500- 50z- 1215z^2 +1388.5z^3 -674.5z^4  +166.5z^5 -20.5z^6 +z^7 }{-z^3 +z^4 }$}
\label{deg7pbg}
\end{center}
\end{figure}

\begin{defn}
A vertex in $V({{\Gamma}})-C_f$ is called \emph{cocritical} vertex. A vertex in $C_f \subset V({{\Gamma}})$ will be called by a \emph{corner} and a path in ${\Gamma}$ connecting two corners will be called \emph{saddle-connection}.
\end{defn}
\begin{prop}\label{prop-shape-graph} 
Let $f:X\rightarrow \s^2$ with passport $\pi_{f}^{\ast}=(\pi_{1}^{\ast}, \pi_{2}^{\ast}, ..., \pi_{m}^{\ast})$. Then, ${\Gamma}={\Gamma}_f(\Sigma):=f^{-1}(\Sigma)$ is a connected embedded graph on $X$ with $2d$ faces and $\sum_{j=1}^{m}l_j$ vertices such that each of its faces is a \emph{Jordan domain} containing on its boundary only one vertex corresponding to each critical value of $f$ with the labelings appearing cyclically ordered around it.

Furthermore, we have $p_{n}^{j}$ vertices in $V_{\Gamma}$ of valence $2n$ corresponding to the critical value $w_j$ for each $j\in\{1, 2, ..., m\}$ and $n\in\{1, 2, ..., d\}.$
\end{prop}
\begin{proof}
Everything except the fact that the faces are Jordan domains was clarified above. Then, let's prove it.

Let $\Sigma$ be a $\emph{Jordan}$ curve passing through the critical values of $f$, then $f|_{X-\{f^{-1}(\Sigma)\}}:X-\{f^{-1}(\Sigma)\}\longrightarrow \s^{2}-\Sigma$ is a covering map. 

By \emph{Jordan-Sch\" oenflies theorem}, $\s^{2}-\Sigma$ is a disjoint union of two Jordan domains, say $A$ and $B$. Let $a\in A$ and $b\in B$. Then the fibres of $f$ above $a$ and $b$ contains, each one, $d$ distincts points. For each point $x_1 \in f^{-1}(a)$ and $y_1 \in f^{-1}(b)$ the map $Id_A:A\hookrightarrow \s^2 -\Sigma$ and the map $Id_B:B\hookrightarrow \s^2 -\Sigma$ lifts uniquely to a map $S_{1a}:A\longrightarrow X-\Gamma$ and $S_{1b}:B\longrightarrow X -\Gamma$ over the component of
$X -\Gamma$ that contains $x_1$ and $y_1$, respectively, giving therefore a section to $f$ over each face of ${\Gamma}$. Thus, being $S_{1a}$ and $S_{1b}$ homeomorphisms over its image, 
$X-\{f^{-1}(\Sigma)\}$ is a union of $2d$ open sets
that are homeomorphic to Jordan domains. Then, we are done. 
\end{proof}

Therefore, the guiding question is:

\begin{quest}\label{Q1}
\begin{center}\textcolor{DarkGreen}{What oriented embedded graphs into a compact surface $X$ can be realized as a pullback graph?}
\end{center}
\end{quest}

This question is motivated by the following 
visionary issue raised by Thurston:
 
\begin{quest}\label{Q2}
\begin{center}
\textcolor{DarkGreen}{What is the shape of a rational map?}
\end{center}
\end{quest}

The Proposition $\ref{prop-shape-graph}$ points out that the embedded graphs wondered in $\ref{Q1}$ should to be among those cellularly embedded graphs that admits an alternate $2$-coloring with the same number of faces colored by each color and for wich is possible grouping the vertices in a suitable manner compatible with an 
branched covering passport. 

This latter condition will be duly presented and examined 
in the next section.

\begin{defn}[Globally Balanced Graph]\label{fefn-bg}
A \emph{Globaly Balanced Graph} of type $(g, d, m)$
is a cellularly embedded graph on an oriented compact surface of genus $g$, $S_g$, with $2d$ faces, $m$ corners $\ref{corner}$ 
 and which admits an alternating $2$-coloring of the its faces with 
$d$ faces colored by each color. We say also that such an embedded graph satisfy the \emph{Global balance condition}.
\end{defn}

\begin{figure}[H]
\begin{center}
\includegraphics[width=9cm,height=5.5cm]{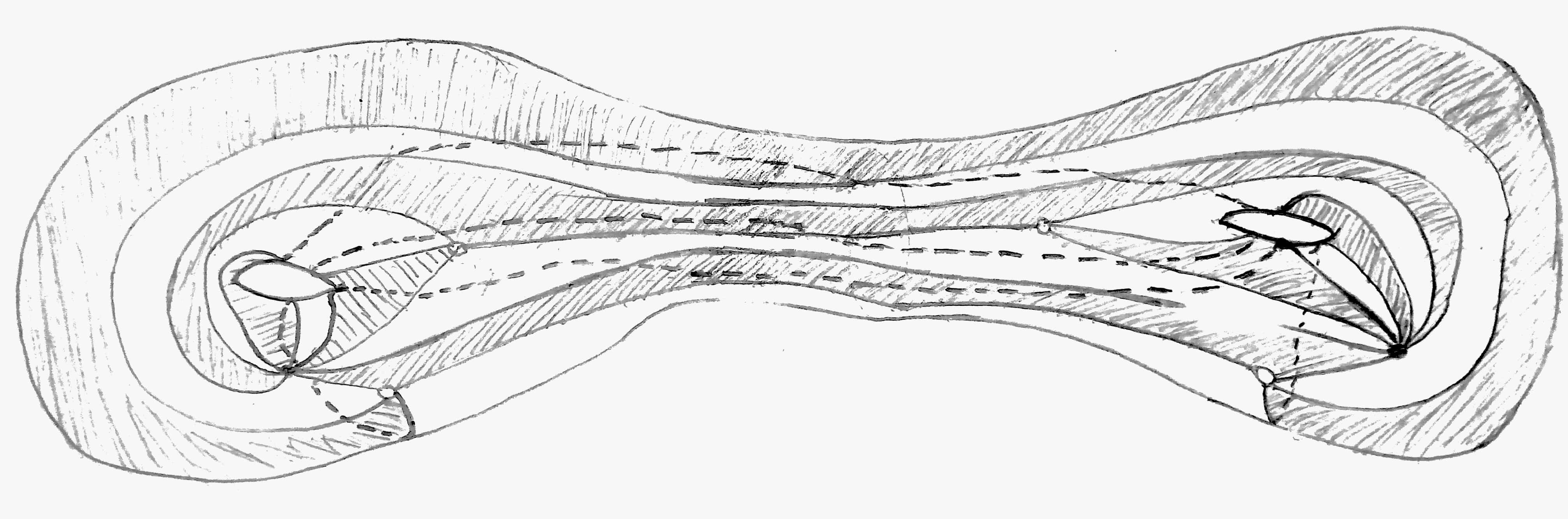}
\caption{Globally Blalanced graph of type $(2, 4, 6)$}
\label{par_n_gbal}
\end{center}
\end{figure}

Notice that any connected even planar graph admite an alternating coloring for its faces (there is only tow possible colorings).  Nonetheless, 
it doesn't always happen that these graphs are globaly balanced as we can see in the Figure $\ref{par_n_gbal}$.

 \begin{figure}[H]
\begin{center}
\includegraphics[width=4cm]{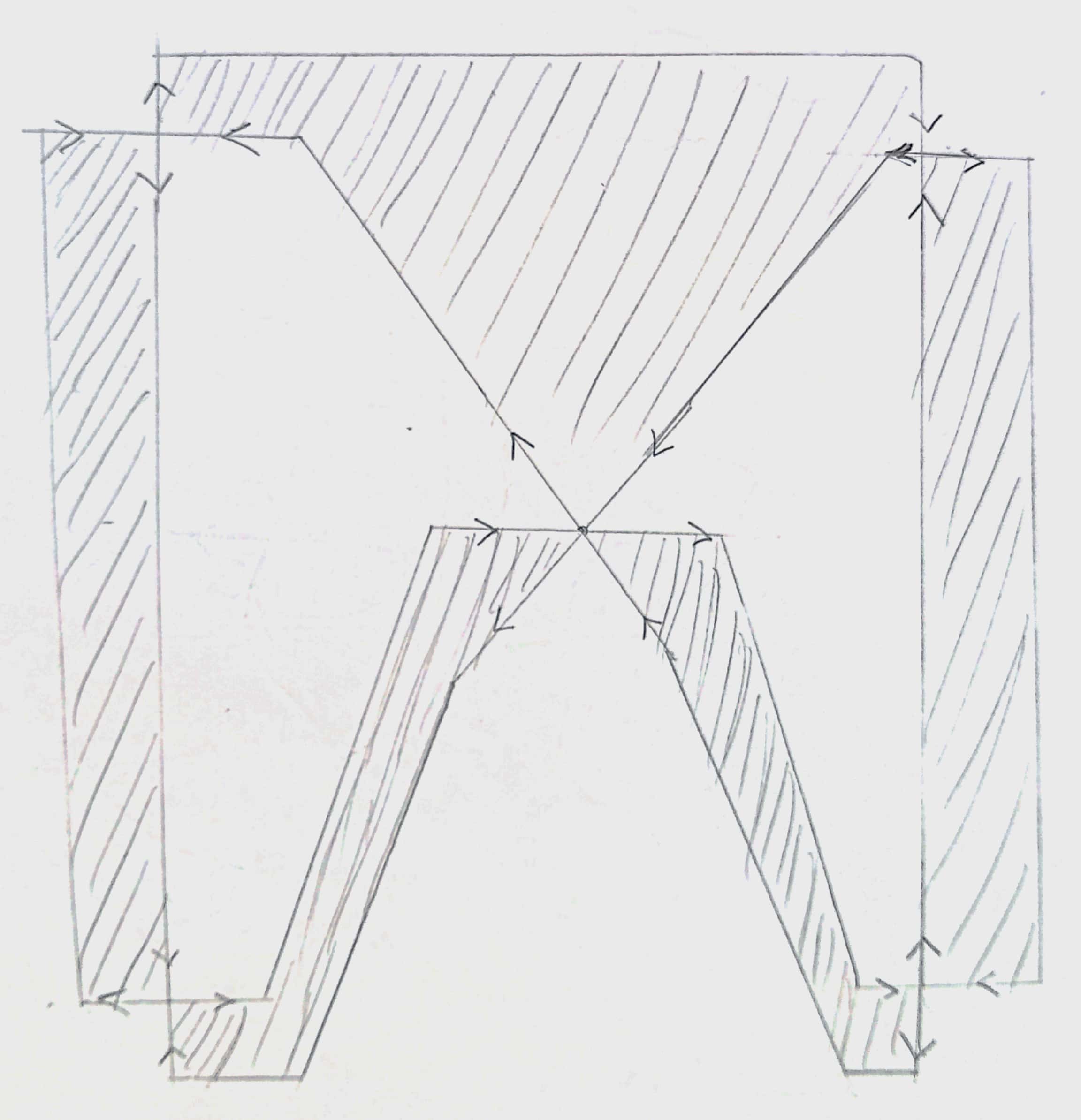}
\caption{even graph non Globally Blalanced}
\label{par_n_gbal}
\end{center}
\end{figure}

\begin{lem}\label{mcornern}
The maximal number of corners on a balanced graph of degree $d$ and genus $g$ is
\[2g+2d-2\]
\end{lem}
\begin{proof}
From the \emph{Euler formula},\[2-2g=V(\Gamma)-E(\Gamma)+2d\]. And, sice each corner has degree greater or equal to $4$, $E(\Gamma)\geq{2V(\Gamma)}=\frac{4V(\Gamma)}{2}.$

Therefore,
\[V(\Gamma)=2V(\Gamma)-V(\Gamma)\leq E(\Gamma)-V(\Gamma)=2g+2d-2\]
\end{proof}

\begin{defn}\label{deg-bg}
The degree of a  globally balanced graph is half of the number of its faces
 (i.e., is the number of faces with the same color).
\end{defn}

Now we are going to introduce a class of \emph{embedded graphics} and we will describe how to build a branched covering from them.
 
\subsection{Construction of branched coverings from diagrams}

\begin{defn}[vertex labeling]\label{def:labeling}
For a graph $G$, a surjective map $L:V_{{G}} \longrightarrow J$ from the vertex set to a finite set $J$ is called a vertex labeling of $G$ by $J$. For a vertex $v\in V_G$ such that $L(v)=j$ we write $v_j .$
\end{defn} 

\begin{defn}[admissible vertex labeling]\label{adm-v-l}
Let ${G}$ be a degree $d>0$ 
globally balanced graph 
with the same number $m\geq 2$ of vertices incident to each one of its faces (here we are also considering vertices of valence $2$).  
A vertex labeling of $G$ by the ordered set $\{1<2<\cdots <m\}$ 
is called \emph{admissible labeling} if:
\begin{itemize}
\item[(1)]{at each face of $G$ the labelings $1<2<\cdots <m$ appears cyclically ordered around it and such that every bordering cycle when is traveled in the increasing sense of the labelings it is incident to one prefered color on the left side (in consequence of the alternating hypothesis over the coloring, it is incident to the other color on the right side of the border);}
\item[(2)]{and for each label $j\in\{1<2<\cdots <m\}$ it holds
\begin{eqnarray}
\displaystyle{\sum_{k=1}^{l_j} \dfrac{deg(v_{j}^{k})}{2}=d}
\end{eqnarray}where those $v_{j}^{k}$'s are the vertices of $G$ labeled with $j\in\{1<2<\cdots <m\}$, i.e., $\{v_{j}^{1}, v_{j}^{2},\cdots ,v_{j}^{l_j}\}=L^{-1}(j)$.}
\end{itemize}
\end{defn}

\begin{defn}[Admissible Graph]\label{adm-g}
An \emph{admissible} 
graph is a globally balanced graph with an admissible labeling.
\end{defn}

Note that if $M$ is the biggest number of corners (those topologicaly not hiden vertices of the embedded graph) that are incident to a face among all faces of the graph, than necessarily it follows that $m\geq M.$

We can have the same alternating bicolored cellular decomposition corresponding to different admissible graphs.

\begin{figure}[H]{
    \centering
       \subfloat[]
    {{\includegraphics[width=6.6cm]{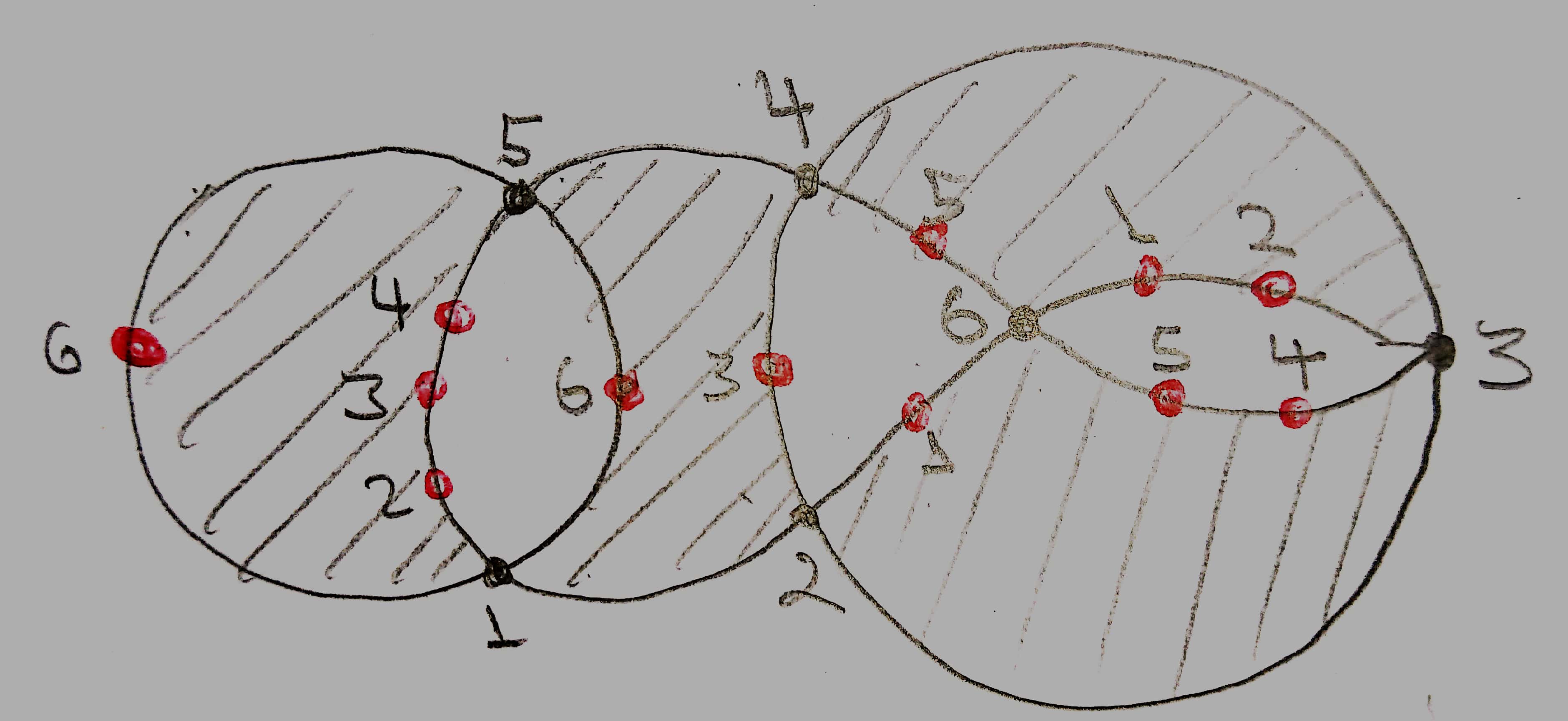} }}
        \qquad
    \subfloat[]
    {{\includegraphics[width=6.4cm]{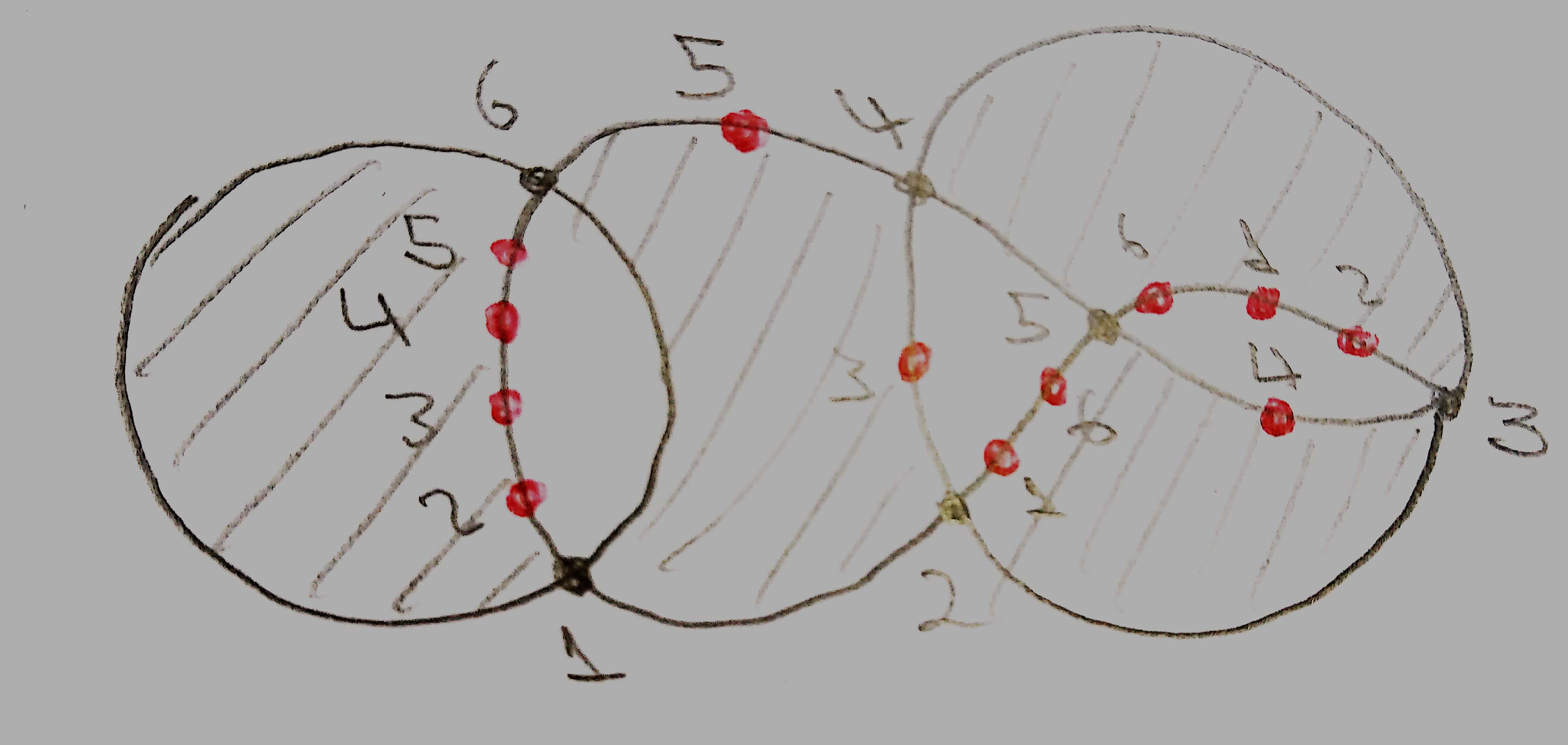} }}
    \newline
    \centering
    \subfloat[]
    {{\includegraphics[width=6cm]{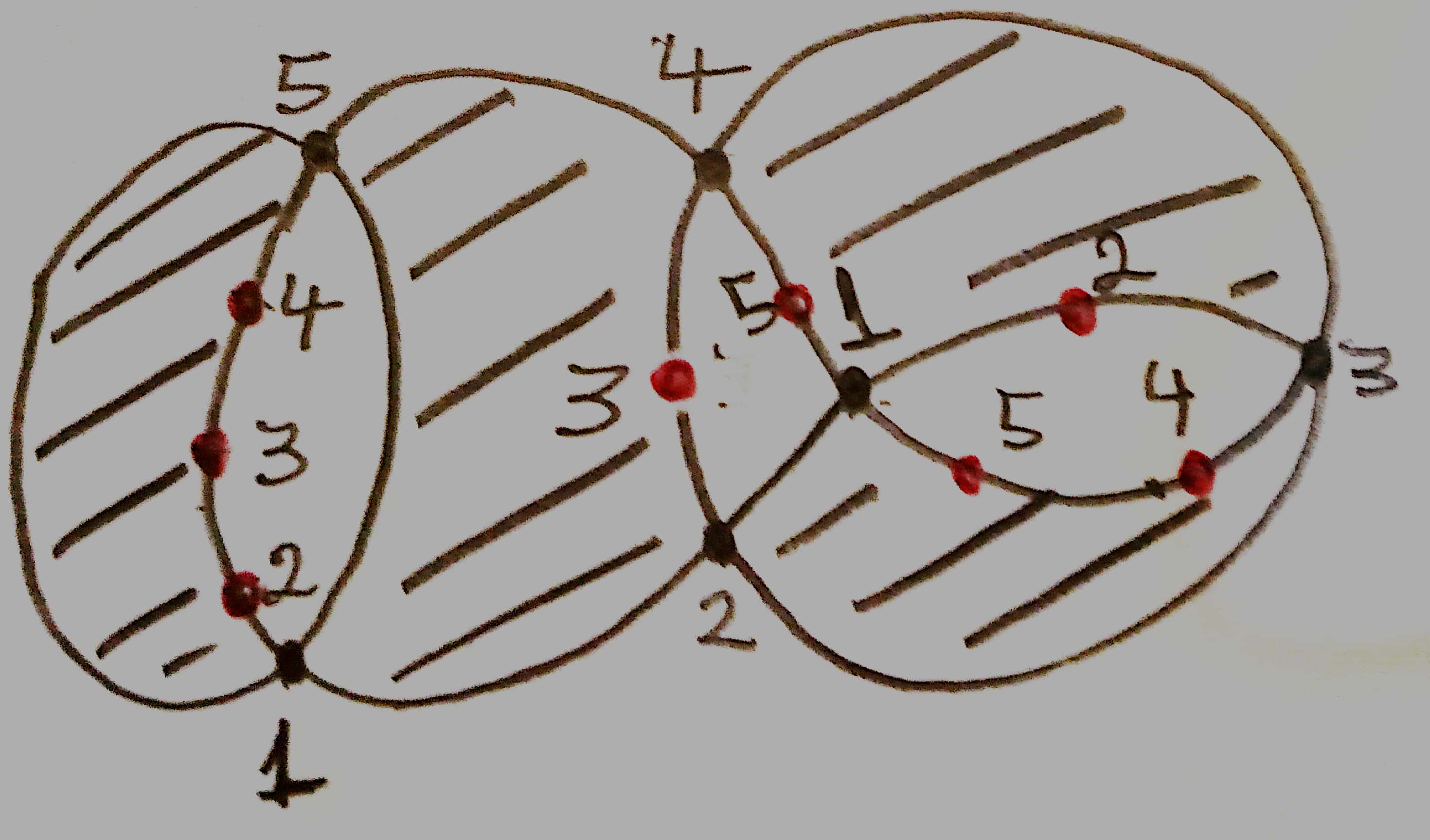} }}\qquad
       \caption{ different admissible graphs from the same cellular decomposition of $\s^2$ }%
    \label{fig:h12pullb1}%
    }
\end{figure}

\subsubsection{construction of a branched selfcovering of $\s^2$ from an admissible graph}\label{bcconstruction}

Let $G=(G, L)$ be an admissible graph. 
Choose a enumeration for the faces of $G$, $N:\{1, 2,\cdots,2d\}$ $\longrightarrow F_G$ such that $G_k=N(k)$ has a saddle-connection in common with $G_{k+1}=N(k+1)$ for each $k$ $\mod 2d$ and the face $G_1$ has at least $3$ vertex with valence strictly greater than $2$. Notice that for any  admissible graph with admissible labeling different from $L:V_{{G}} \longrightarrow\{1<2\}$ it has at least one face with at least $3$ corners incident to it.

We start distinguishing $3$ consecutive corners along $\partial{}G_1$ and appoint they by $\alpha$, $\beta$ and $\gamma$.
By \emph{Sch\"oenflies Theorem} we can embed the closure of the face $G_1$ into $\s^2$, $\iota_1:\overline{G}_{1}\hookrightarrow\s^2$ and in such a way that only those $3$ distinguished corners are sended over itself by $\iota_1$. We will refer to this choice and imposition as normalization and to admissible graphs with such corners highlighted as normalized admissible graph.

Again, using \emph{Sch\"oenflies Theorem}, we embed the closure of $G_2$ onto the closure of the complement of $A:=\iota_1(G_1)\subset\s^2$ in $\s^2$ in such a way that those embeddings agrees on the common saddle-connections and such that 
 $\partial G_2$ is sended over the image of $\partial G_1$  by $\iota_1$ with vertices with the same label 
 having the same image by the corresponding embeddings $\iota_1$ and $\iota_2$. Furthermore, such that $\iota_2(p)\neq p$ for all points into $G_2$ except for those $3$ distinguished corners on $\partial{}G_1$.

Then, repeating that procedure up to $G_{2d}$ we will have constructed a finite degree continuous map $f:\s^2\longrightarrow\s^2$ since by construction every point in $\s^2$ excepting those ones corresponding to the corners possesses exactly $d$ points above it. 

Let $\Sigma:=\iota_{1}(\partial G_1)$(notice that: $\Sigma:=\iota_{k}(\partial G_k),\;\forall\; k= 2, 3, ..., 2d$).

By construction $f:\s^2-G\longrightarrow\s^2-\Sigma$ is a local homeomorphism. Let $C\subset\s^2$ be the set of vertices of $G$ with degree strictly greater than $2$ and  $Q:=f(C)\subset\s^2$. Due to $f^{-1}(\Sigma)=G$ and the coincidence of the imbeddings over the saddle-connections, $f$ is a local homeomorphism in each point in $G-C. $

Note also that the local degree of $f$ around each point $v^{k}_{j}$ in the fiber of $f$ over the point $q_j:=f(\{v_{j}^{1},\cdots ,v_{j}^{l_j}\})$ is equal to $\dfrac{deg(v_{j}^{k})}{2}$.
 
Therefore, $f$ has the passport $\pi_j=\left(\dfrac{deg(v_{j}^{1})}{2}, \dfrac{deg(v_{j}^{2})}{2}, \cdots , \dfrac{deg(v_{j}^{l_j})}{2}\right)$ for each $j\in\{1, 2, \cdots, m\}.$ 

All that procedure described 
above is depicted into the following figure:
 
 \begin{figure}[H]
\begin{center}
\includegraphics[scale=0.08]{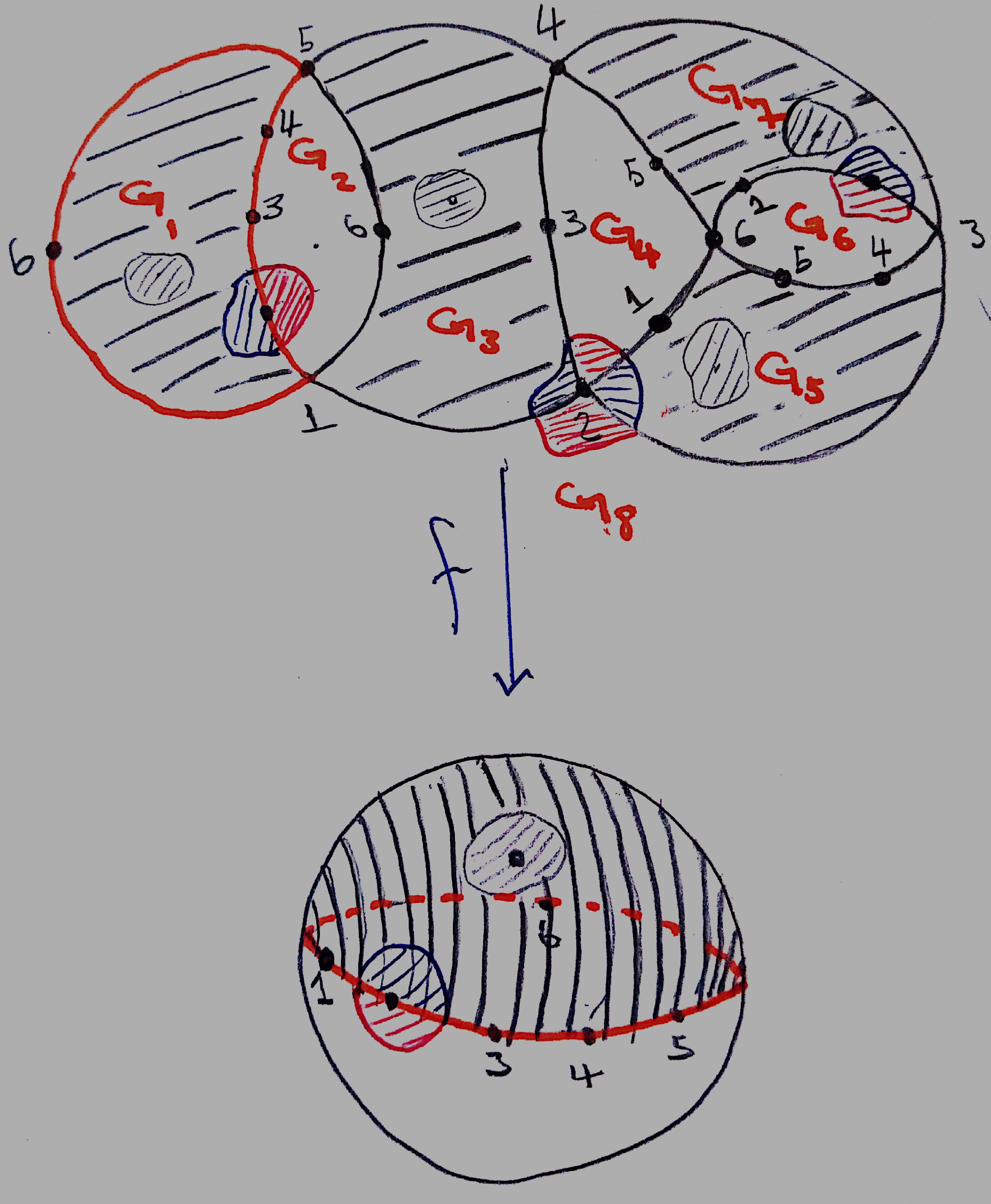}\
\caption{construction of a branched cover $\s^2\rightarrow\s^2$}
\label{cbc1}
\end{center}
\end{figure}

Now, by the \emph{uniformization theorem} there exist a unique homeomorfism $\mu:(\s^2, f(C)) \rightarrow(\overline{\C}, \{0,1,\infty,v_1,\cdots$\\
$, v_{m-3}\})$ for wich $\mu (\alpha)=1$, $\mu (\beta)=0$ and $\mu (\gamma)=\infty$ defining a complex structure over $(\s^2, f(C))$. But being $f$ a local homeomorphism over $\s^2 -C$ we can pullback the complex structure by $f$ to a new complex structure over $(\s^2 ,C)$, $\nu:(\s^2 ,C)\rightarrow(\overline{\C},\{0,1,\infty,c_1,\cdots,c_{2d-5}\})$ such that $\nu (\alpha)=1$, $\nu (\beta)=0$ and $\nu (\gamma)=\infty$. Therefore, the map \[F_{\nu\,\mu}:=\mu\circ f\circ\nu^{-1}:(\overline{\C},\{0,1,\infty,c_1,\cdots,c_{2d-5}\})\rightarrow(\overline{\C},\{0,1,\infty, v_1,\cdots, v_{m-3}\})\] is a holomorphic function, that is, is a rational function.


Then we have achieved

\begin{thm}\label{bcfromg1}
For each admissible planar graph $G$ there exist a holomorphic ramified selfcover of $\overline{\C}$ (\emph{i. e.}, a rational function), having $G$ as a pullback graph.
\end{thm}

All of the above argument also works for admissible non-planar graphics. Thus we actually have:
\begin{thm}\label{bcfromg2}
For each admissible graph $G$ into a genus $g$ compact surface $S_g$ there exist a holomorphic ramified cover $S_g\rightarrow{\cc}$ (\emph{i. e.}, a rational function), having $G$ as a pullback graph.
\end{thm}
Notice that we obtain the same branched cover if we choose a different suitable face enumeration but keeping the embeddings of the faces.

But, is there some distinction between, a priori, different rational functions obtained from the same admissible graph but constructed from different choices of those distinguished corners or from different embeddings  of the faces?

\begin{prop}\label{prop2}
Let $\mathcal{G}=(G,\iota)$ and $\mathcal{H}=(H,\eta)$ two equivalent  embedded cellular graphs. 
Let $f:(S_g,\mathcal{G})\rightarrow \s^2$ and $h:(S_g,\mathcal{H})\rightarrow \s^2$ two continuous surjective maps that restricts to homeomorphisms over the topological closure of each face and such that 
\begin{eqnarray}
f(\myov{F})=h(I(\myov{F}))
\end{eqnarray}
for $I$ as in definition $(\ref{equivg})$ and each face of $\mathcal{G}$. Then there exist a homeomorphism $\mathcal{I}:S_g\rightarrow S_g$ such that
\[f=h\circ{}\mathcal{I}\]. 
\end{prop}
\begin{proof}
Via the homeomorphism $I$ and the property $(\ref{prop2})$ define
\begin{eqnarray}
\mathfrak{I}(z):= h^{-1}|_{I(\myov{F})}\circ{}f(z)
\end{eqnarray} for each $z\in \myov{F}$ for each face $F$ of $\mathcal{G}$. And $\mathfrak{I}$ is a homeomorphism due the hypotesis that $f$ and $h$ restricts to homeomorphisms over each closed face of $\mathcal{G}$ and $\mathcal{H}$.
\end{proof}
 This proposition is essentially the Lemma 2 in \cite{MR1888795}.
 \begin{defn}
 Two embedded admissible graphs with a normalization are equivalent if they are equivalent (see definition $\ref{equivg}$) and there exist morphisms atesting that equivalence preserving the vertex labeling and the normalization.
\end{defn}
 
\begin{cor}
Given two equivalent admissible graphs with a 
{normalization}, say $\mathcal{G}$ and $\mathcal{H}$, the rational functions produced from it as in the preceding construction $\ref{bcconstruction}$ are equals if the face embeddings are isotopic relative to the critical value set.
\end{cor}
\begin{proof}
First, what we mean by saying that the face embeddings are isotopic relative the critical value set is that the two Jordan curves image of the boundary of some face (therefore, of any one) of each graph from the face  embeddings are isotopic relative to the critical value set. to the Jordan curve bounding the image of the embedding of some face of the other graph relative to the image of the image of the vertices by the embeddings.

The isotopy hypothesis guarantees the existence of a homeomorphim $\phi:\s^2\rightarrow\s^2$ compatible with the face embeddings, i.e.,
\begin{eqnarray}
h(I(\myov{F}))=\phi(g(\myov{F}))
\end{eqnarray} for each face of $\mathcal{G}$.

So, Proposition $\ref{prop2}$ gives a homeomorphism $\mathfrak{I}$ such that
\begin{eqnarray}
h\circ{}\mathfrak{I}=\phi\circ{}g
\end{eqnarray}

Let $G:=\mu_g\circ{}g\circ{}\nu_{g}^{-1}$ and $H:=\mu_h\circ{}h\circ{}\nu_{h}^{-1}$ be those two rational functions as anounced, where $\nu_g, \mu_g, \nu_h , \mu_h$ the uniformizing maps of the domain and codomain of the topological branched coverings $g$ and $h$ constructed from $\mathcal{G}$ and $\mathcal{H}$ (as in $\ref{bcconstruction}$). 

Since $\mathfrak{I}$ and $\phi$ fix the distinguished corners $\alpha$,$\beta$, and $\gamma$ (the normalization), and $\nu_g(0)=\nu_h(0)=\alpha$,$\nu_g(1)=\nu_h(1)=\beta$, $\nu_g(\infty)=\nu_h(\infty)=\gamma$, $\mu_g(\alpha)=\mu_h(\alpha)=0$, $\mu_g(\beta)=\mu_h(\beta)=1$ and $\mu_g(\gamma)=\mu_h(\gamma)=\infty$, follows that $\nu_h\circ{}\mathfrak{I}\circ{}\nu_g^{-1}=Id_{\CC}$ and $\mu_h\circ{}\phi\circ{}\mu_g^{-1}=Id_{\CC}=Id_{\CC}$ as they are conformal automorphisms of $\CC$ that fixes three points.

Therefore,
\begin{eqnarray}
H &=& \mu_{h}\circ{}h\circ{}\nu_{h}^{-1}\\ \nonumber
&=& \mu_{h}\circ{}h\circ{}\mathfrak{I}\circ{}\nu_{g}^{-1}\\ \nonumber
&=& \mu_{h}\circ{}\phi\circ{}g \circ{}\nu_{g}^{-1}\\ \nonumber
&=& \mu_{g}\circ{}g\circ \nu_{g}^{-1}\\ \nonumber
&=& G\\ \nonumber
\end{eqnarray}

\begin{center}
\begin{tikzcd}[row sep=large, column sep = large]
\cc \arrow{r}{\nu_g}\arrow{d}{Id_{\cc}} & (\s^2 ,\mathcal{G})  \arrow[dashrightarrow, xshift=-1.4ex]{d}{\mathfrak{I}}  \arrow{r}{g} \arrow[xshift=1.4ex]{d}{I} &(\s^2 ,g(C))\arrow{r}{\mu_g} \arrow{d}{\phi} &\cc \arrow{d}{Id_{\cc}}  \\
\cc \arrow{r}{\nu_h} & (\s^2 ,\mathcal{H})\arrow{r}{h}&(\s^2 ,h(C))\arrow{r}{\mu_h}&\cc
\end{tikzcd}\end{center}
\end{proof}
\subsection{A special case: Real Functions from diagrams.}\label{realcase?}

Now, we will focus on a special class of 
admissible graphs. We will consider those planar admissible graphs with an additional structure: as embedded graph into $\overline{\C}$ with vertices into $\overline{\R}$, each face incident to the real line $\overline{\R}$ and
with the set of faces being invariant by the complex conjugation, 
$\overline{{\hspace{.1em}}^{\hspace{.35em}}}:z=x+iy\mapsto\overline{z}:=x-iy$. Or more generally, we are now considering those graphs that are embeddable into $\cc$ and are (ambient) isotopic to one planar graph that enjoy the properties described above.

We will refer to these planar graphs by admissible real graphs\label{real-adm-g} and the underlined embedded graph to it will be called a globally balanced real graph or, for short, by a real \emph{GB-graph}.

\begin{figure}[!h]{
    \centering
       \subfloat[degree $3$ GB-graph]
    {{\includegraphics[width=4.6cm]{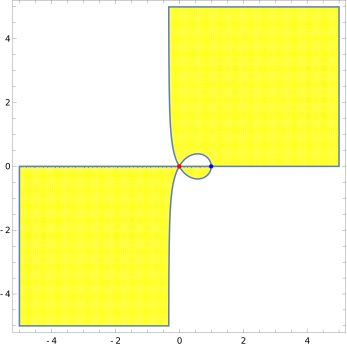} }}
        \qquad
    \subfloat[degree $3$ GB-graph]
    {{\includegraphics[width=4.6cm]{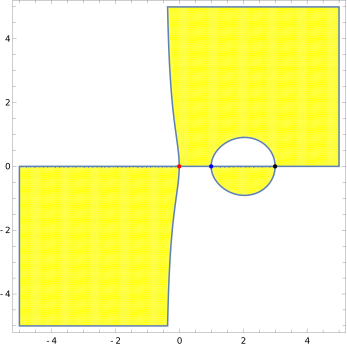} }}
    \qquad
    \subfloat[degree $4$ GB-graph]
    {{\includegraphics[width=4.6cm]{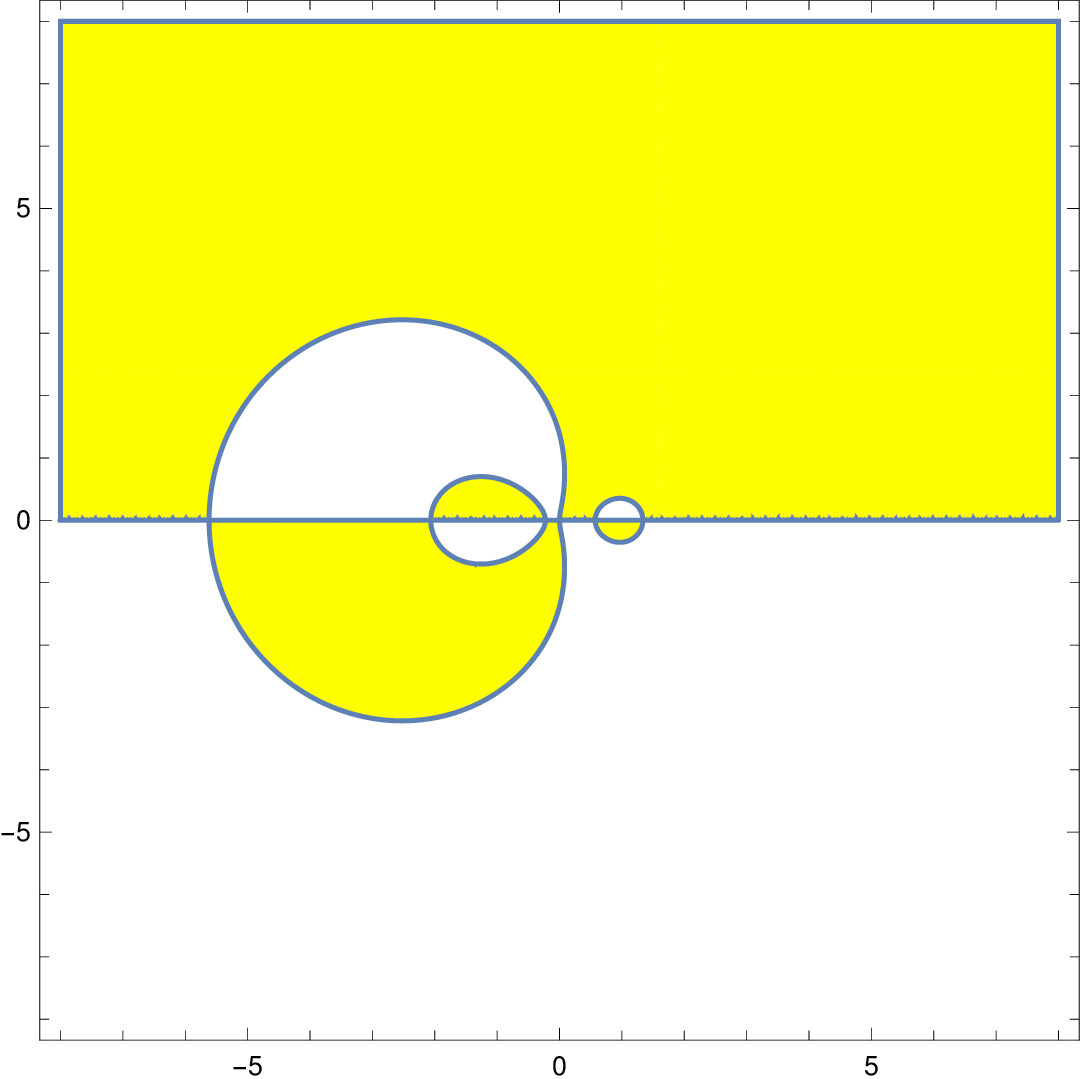} }}\qquad
    \subfloat[degree $6$ GB-graph]
    {{\includegraphics[width=4.6cm]{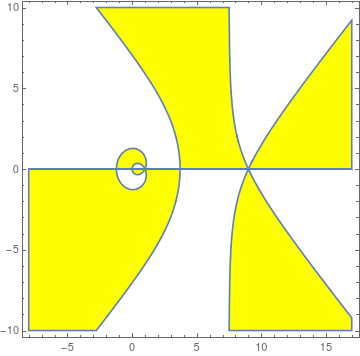} }}%
    \qquad
     \subfloat[degree $7$ GB-graph]
    {{\includegraphics[width=4.6cm]{imagem/2COLORS-PBG-DEG10H.jpeg} }}
        \qquad
    \subfloat[degree $7$ polynomial real GB-graphs]
    {{\includegraphics[width=4.6cm]{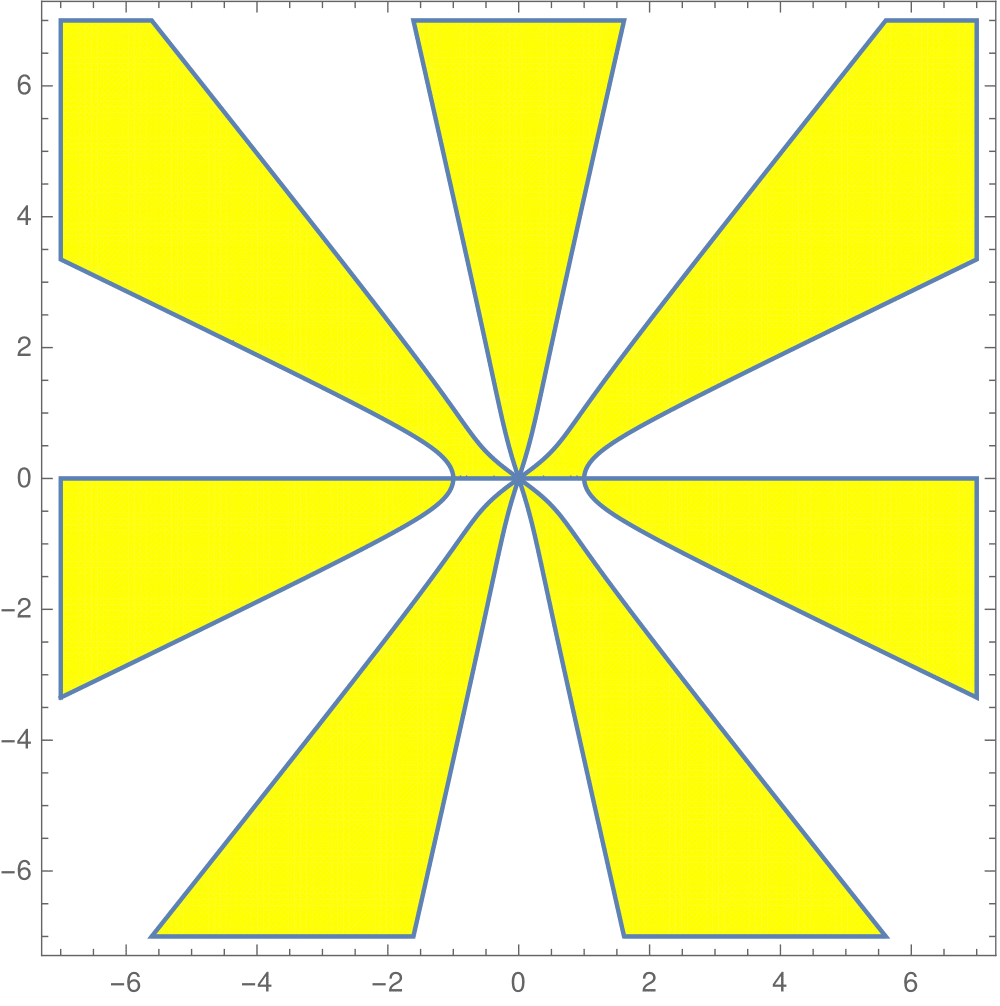} }}%
    \caption{real GB-graphs}%
    \label{fig:h12pullb1}%
    }
\end{figure}

Let $G\subset\overline{\C}$ be a degree $d$ real admissible GB-graph. Then, we can perform for such a map the procedure  described in $\ref{bcconstruction}$, 
then producing a finite degree branched cover $f:(\s^2,C)\rightarrow(\s^2 ,f(C))$. Furthermore, we can performe the embeddings $\iota_k 's$ in accordance with the symmetry of the graphs asking to $\overline{\iota_k (z)}=\iota_l (\bar{z})$ for all $z$ on the $1$-squeleton of the embedded graph, where $\iota_k$ and $\iota_l$ are embeddings of two complex conjugated closed $2$-cells of $G$ that have the point $z$ at its boundary. 

We endow the target space $(\s^2 ,f(C))$ with a complex structure $\mu:\s^2\rightarrow\overline{\C}$ that identify $\Sigma_f :=\iota_1(\partial{G_1})$ with $\overline{\R}$. Thus we pulled back that complex structure on the codomain to the domain 2-sphere getting a new complex structure $\nu:\s^2\rightarrow\overline{\C}$. Therefore, we obtain a holomorphic function $F:=\mu\circ{}f\circ \nu^{-1}:\overline{\C}\rightarrow\overline{\C}$ that satisfies the functional equation $\overline{F(z)}=F(\bar{z})$ over the $1$-squeleton of $G$,  then by the \emph{Identity Principle} (vide\,\cite{LVAhl},\cite{GamAC}) for holomorphic mappings $\overline{F(z)}=F(\bar{z})$ on $\overline{\C}$.

Hence we have

\begin{prop}\label{realbcfromg2}
For each real admissible graph $G$ there exist a holomorphic branched cover $\cc\rightarrow{\cc}$\newline\emph{(} \emph{i. e.}, a rational function\emph{)}, having $G$ as a pullback graph and satisfying the identity 
\[\overline{F_{\mu}(z)}=F_{\mu}(\bar{z})\]
for all $z\in\cc$.
\end{prop}

\begin{figure}[H]{
  \begin{center}
    \subfloat[admissible gaph]
    {{\includegraphics[width=6.5cm]{imagem/realguide-ex-01.png}}}    
    \quad\subfloat[canonical postcritical curve$=\R$]
    {{\includegraphics[width=6.5cm]{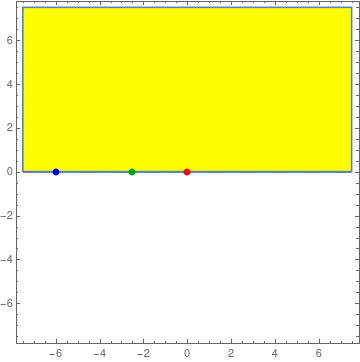} }}\end{center}
    \caption{real Admissible Graph}%
    \label{fig:h12pullb1}%
}\end{figure}

\begin{lem}\label{lem_relfunc}
A meromorphic function $F:\overline{\C}\rightarrow\overline{\C}$ satisfying for all $z\in\overline{\C}$ the identity $\overline{F_{}(z)}=F(\bar{z})$ is a quotient of two polynomials with real coefficients.
\end{lem}

\begin{proof}
First, notice that for a non-constante rational fraction $f\in\C(z)$, the new one $F(z)=\overline{f(\overline{z})}$, where $\overline{*}$ denotes the complex conjugation, is obtained by taking simply the conjugates of the coeficcients of $f$.
\begin{lem}
Given pollynomials $A, B, C, D\in \C[z]$ such that 
\[\dfrac{A}{B}=\dfrac{C}{D} \in \C(z)-\C\],
than there are $k\in \C-\{0\}$ such that 
\[A=k\cdot C\quad\text{and}\quad B=k\cdot{}D\]. 
\end{lem}
\begin{proof}[proof of the lemma]
The zeros and poles of $f(z):=\dfrac{A}{B}$ and $g(z):=\dfrac{C}{D}$ are the same and with the same multiplicite since they are the local degree of the two maps $f$ and $g$.  

Hence ${A}$ and ${B}$ as $C$ and $D$ has the same zeros with the same multiplicite, then \[A=k_1\cdot{}C\quad\text{and}\quad B=k_2\cdot D\] for some $k_1 , k_2 \in\C-\{0\}$. But, $\dfrac{A}{B}=\dfrac{C}{D}$, thus, $k_1 = k_2$.
\end{proof}

By multiplying the numerator and denominator by a suitable non-zero constant we can assume $Q(z)$ monic in the fraction $\dfrac{P}{Q}$.

Now, $\dfrac{P}{Q}=\dfrac{\overline{P}}{\overline{Q}}$ implies ${P(z)}={\overline{P(z)}}$ and ${Q(z)}={\overline{Q(z)}}$, therefore, $P, Q\in\R[z]$.

\end{proof}

Therefore,
\begin{cor}\label{r_a_gb_r_r_f}
For each admissible real planar graph $G$(\emph{i. e.}, $G$ is real planar GB-graph with an admissible vertex labeling) there exist a real rational function having $G$ as a pullback graph for the (canonical) postcritical curve $\R$.
\end{cor}
\begin{proof}
Follows straightforwardly from $\ref{realbcfromg2}$ and $\ref{lem_relfunc}$.
\end{proof}

Now, we draw attention to the fact that a given real rational function 
can have a non-real pullback graph. 
For a given rational function, the pullback graph depends on the isotopy type of 
the chosen post-critical curve. 
Here goes some examples:

\begin{ex}\label{isotopy-real-map}
Some differents post-critical curves for $f(z)=\tiny{\frac{\frac{1}{2} \left(3-\sqrt{7}\right) z^3+\left(\sqrt{7}-2\right) z^2}{\left(\frac{1}{2} \left(\sqrt{7}-3\right)+2\right) z-1}}$ and its respectives pullback graphs. The critical points of $f$ are $-2,0,1$ {and} $\infty$.

\begin{figure}[H]
    \begin{center}
       \subfloat[$\R$: post-critcal cuve for $f$]
    {{\includegraphics[width=4.3cm]{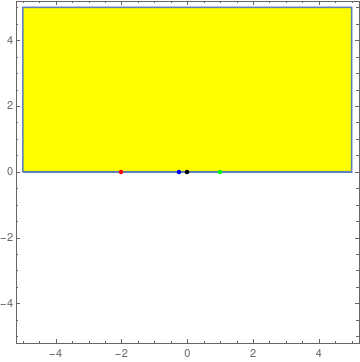} }}
        \qquad
    \subfloat[pullback graph $\Gamma(f,\R)$]
    {{\includegraphics[width=4.6cm]{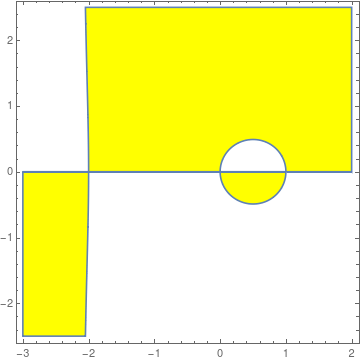} }}\end{center}
      \label{fig:h12pullb01}	
\end{figure}
    
    \begin{figure}[H]
    \begin{center}
    \subfloat[$\Gamma(f,\Sigma_1)$: post-critcal cuve for $f$]
    {{\includegraphics[width=4.6cm]{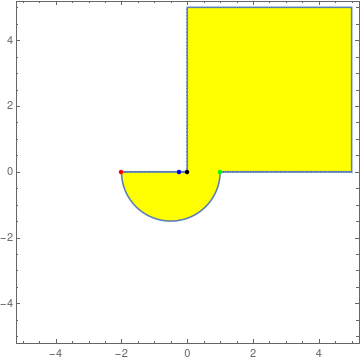} }}\qquad
    \subfloat[pullback graph $\Gamma(f,\Sigma_1)$]
    {{\includegraphics[width=4.3cm]{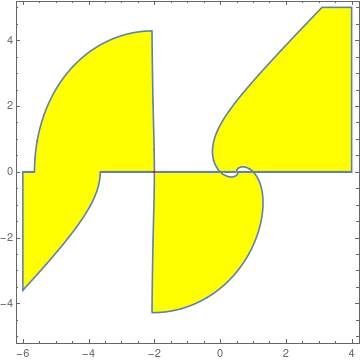} }}\end{center}%
    
    \begin{center}
     \subfloat[$\Gamma(f,\Sigma_2)$: post-critcal cuve for $f$]
    {{\includegraphics[width=4.6cm]{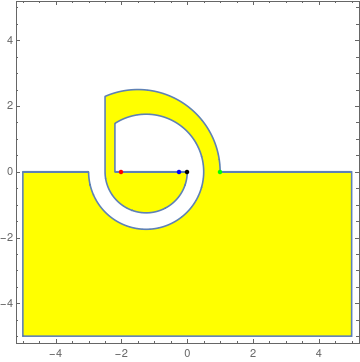} }}
        \qquad
    \subfloat[pullback graph $\Gamma(f,\Sigma_2)$]
    {{\includegraphics[width=4.6cm]{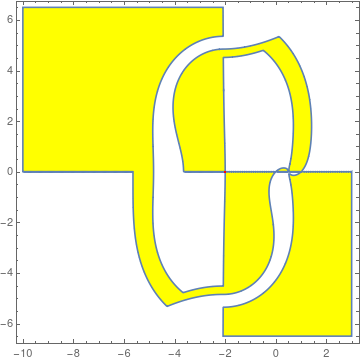} }}%
    \caption{real GB-graphs}%
    \label{fig:h12pullb1}%
    \end{center}	
\end{figure}
\end{ex}

In the next section, we will achieve a full generalization of a theorem by Thurston proved firstly for generic branched self-coverings of the $2$-sphere.
\section{General version of a theorem by Thurston}\label{sect-thurstonthm}
\begin{defn}
A \emph{simple closed curve} $\gamma$ into a surface $S$ 
 is \emph{separating} if $S-\gamma$ has two components. Otherwise, $\gamma$ is \emph{non separating}.
\end{defn}

\begin{defn}\label{set-upththm}
Let $\Gamma$ be an oriented globally balanced graph on an oriented compact surface $S$ 
that admits an alternate \textcolor{deeppink}{A}-\textcolor{blue}{B} face coloring such that the $A$ faces are kept on the left side of the edges of $\Gamma$ regarding the orientation. We say that the color \textcolor{deeppink}{A} is the \emph{preferred} one of that alternate face coloring.

Each cycle into $\Gamma$ (i.e., a concatenation of edges of $\Gamma$ that forms a simple closed curve) that keeps only \textcolor{deeppink}{A} faces on its left side is then said to be a \emph{positive cycle}.

If $\gamma$ is a positive separating cycle of $\Gamma$ we will call by the \emph{interior of} $\gamma$ the component of $S-\gamma$ that contains those \textcolor{deeppink}{A} faces incident to $\gamma$.
\end{defn}

\begin{defn}[cobordant cycles]\label{sub-surfs}
Let $\Gamma\subset S$ 
be a globally balanced graph.
We say that a collection of disjoint cycles $L:=\{\gamma_1, \cdots, \gamma_k\}$ of $\Gamma$ are cobordant if:
\begin{itemize}
\item[i.]{$S-\{\gamma_1, \cdots , \gamma_k\}$ is disconnected;}
\item[ii.]{there is a connected component $R$ of $S-\{\gamma_1, \cdots , \gamma_k\}$ such that $\partial R = \bigsqcup_{n=1}^{k}\gamma_n$.}
\end{itemize}
We will reffers to a such collection $L=\{\gamma_1, \cdots, \gamma_k\}$ as a \emph{cobordant multicycle of} $\Gamma$. If each cycle $\gamma_n \in L$ is positive, then we call $L$, \emph{positive cobordant multicycle of} $\Gamma$.

$R$ is called the \emph{interior of} $L$.
\end{defn}

\begin{defn}[local balancedness]\label{localbalance}
Let $\Gamma$ be globally balanced graph with an alternating \textcolor{deeppink}{A}-\textcolor{blue}{B} face coloring.
We say that $\Gamma$ is \emph{locally balanced} if for any \emph{positive cobordant multicycle of} $\Gamma$ the number of $\textcolor{deeppink}{A}$ faces inside it (i.e, on the interior of that multicycle) is strictly greater than the number of $\textcolor{Blue}{B}$ faces.
\end{defn}

That definition of the \emph{local balance condition} is a generalization of the former one introduced by \emph{Thurston}\\\cite{STL:15}.  Although in the planar situation Definition $\ref{localbalance}$ it seems more restrictive than the one given by Thurston, they are actualy equivalent. To show that, let us first presents the definition settled by \emph{Thurston}:

\begin{defn}[planar local balance condition from Thurston]\label{thurstonlb}
A planar globally balanced graph $\Gamma$ with an alternating \textcolor{deeppink}{A}-\textcolor{blue}{B} face coloring is \emph{locally balanced} if for every positive cycle of $\Gamma$ the number of $\textcolor{deeppink}{A}$ faces inside it, is strictly greater than the number of $\textcolor{Blue}{B}$ faces.
\end{defn}

\begin{prop}[meaningfullness of Definiton $\ref{localbalance}$]
For \emph{Planar} globally balanced graphs those two definitions of local balancedness are equivalents.
\end{prop}
\begin{proof}
Thanks to the \emph{Jordans Theorem} is immediate that Definition $\ref{localbalance}$ implies the Definition $\ref{thurstonlb}$.

So, let's prove the reverse implication. That is, we will guarantee that if a planar balanced graph that satifies the Definition $\ref{thurstonlb}$ then it also enjoys the Definition $\ref{localbalance}$. 

Let $\Gamma$ be a planar globally balanced graph with an alternating \textcolor{deeppink}{A}-\textcolor{blue}{B} face coloring and $\Lambda$ be a \emph{cobordant positive multicycle of} $\Gamma$ with interior $R$.

Let $Y$ be a connected component of $\s^2 -R$. Since $R$ is connected the boundary of $Y$ has only one component $\gamma\in L$. 

Thus $\gamma$ encloses the complement of $Y$ leaving \textcolor{deeppink}{A} faces on its left side.

Hence, from the \emph{local balance} condition we conclude that are more pink faces than blue ones outside $Y$.

Let $Y_1 , Y_2, \cdots, Y_n$ be the components of $\s^2 - R$, and $\textcolor{deeppink}{a_{k}}$ and $\textcolor{blue}{b_{k}}$ the number of \textcolor{deeppink}{A} faces into $Y_k$ and the number of \textcolor{blue}{B} faces into $Y_k$, respectively. $\textcolor{deeppink}{a_{R}}$ and $\textcolor{blue}{b_{R}}$ are the numbers of \textcolor{deeppink}{A} faces and \textcolor{blue}{B} faces into $R$.

Hence, from the above argumentation
\begin{eqnarray}
\textcolor{deeppink}{a_{k}}<\textcolor{blue}{b_{k}}
\end{eqnarray}for each $k=1,2, \cdots , n$

And, since,
\begin{eqnarray}
\textcolor{deeppink}{a_R} + \sum_{k=1}^{n}\textcolor{deeppink}{a_{k}}=\textcolor{blue}{b_R} + \sum_{k=1}^{n}\textcolor{blue}{b_{k}}
\end{eqnarray}

Then,

\begin{eqnarray}
\textcolor{deeppink}{a_R} >\textcolor{blue}{b_R}
\end{eqnarray}

\end{proof}

\begin{defn}[Balanced Graph]\label{balance}
A \emph{balanced graph} is an oriented cellularly embedded graph $\Gamma$ into an oriented compact surface that it is both, global  and locally balanced. The \emph{type} of a balanced graph is its type as a globally balanced graph. 
\end{defn}

\begin{figure}[H]
\begin{center}
\subfloat[balanced graph of type (0,4,6)]
{{\includegraphics[width=4cm,height=4cm]{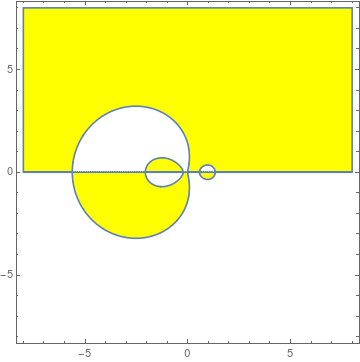}}}\quad
\subfloat[balanced graph of type (1,4,4)]
{{\includegraphics[width=6.5cm,height=4.cm]{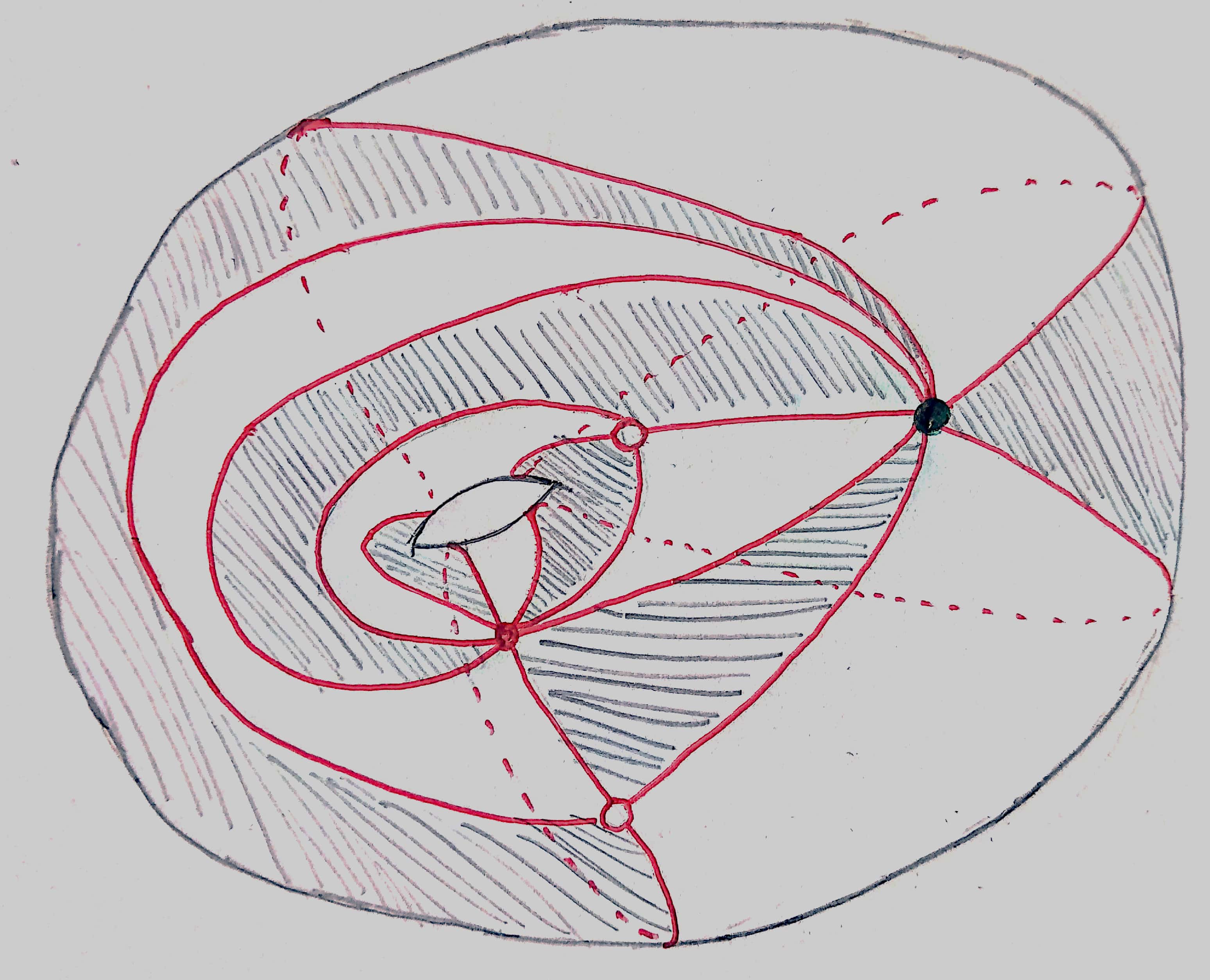}}}
\label{bg-000}
\end{center}
\end{figure}

\begin{figure}[H]
\begin{center}
\subfloat[Globally Blalanced graph of type $(2, 4, 6)$]
{{\includegraphics[width=8cm,height=4.75cm]{imagem/g2-bg00.jpg} }}
\label{bg-001}
\end{center}
\end{figure}

\begin{thm}[\textcolor{DarkGreen}{General version of a theorem by Thurston}]\label{THURSTHG1}
An oriented cellularly embedded graph $\Gamma$ into a genus $g$ oriented compact surface $S_g$ is a pullback graph if and only if it is a balanced graph.
\end{thm}

\begin{proof}
We will follow closely the initial proof given by Thurston\cite{STL:15}.

The gist of the proof 
is:
\begin{itemize}
\item[i.]{to translate the realization problem into
finding a pattern of vertices, including those $2$-valents ones, so that each face of $\Gamma$ must have 
the same number of vertices on its boundary;
\begin{itemize}
\item[$\bullet$]{this pattern is the one that admissible and pullback graphs present (see $\ref{emb-cell-g}$).  }
\end{itemize}}
\item[ii.]\label{item-ii}{then to reduce the problem to a \emph{matching problem} in graph theory in the follow way:\\
Let $f:X\rightarrow\s^2$ be a degree $d$ branched cover with $m$ critical values. Consider $\Sigma$, a post-critical curve for $f$, and let ${\Gamma}$ be the corresponding pullback graph $\ref{pullbackg}$. 

For each $2$-valent vertex we mark a dot into those two face of ${\Gamma}$ incident to it. 
 Thus, each $2$-valent vertex will have two marked dots corresponding to it into each one of its two neighboring faces. 
Since the boundary of each face contains exactly $m$ vertices in its boundary, after we did that, each face of $\Gamma$ will contain $m-e$ dots, where $e$ is the number of corners (i.e., vertices of degree $k>2$). Each dot corresponds through $f$ to a different critical value.

Now, to group into pairs those dots from adjacents faces back together forming vertices for each pair then becomes a graph theoretic \emph{matching problem}.}
\end{itemize}

More precisely, what we are doing is constructing an adjacent bipartite graph $G:=G(\Gamma)$ from the given pullback graph $\Gamma$.
Let $A\subset \s^2$ and $B\subset \s^2$  be the two connected components of $\s^2 - \Sigma$.
The graph $G$ then is the pair $(D=DA\sqcup DB, E)$ where $DX$ is the set of dots from those faces whose image by $f$ is $X\in\{A, B\}$ and $E$ is the adjacency relation from $\Gamma$ in the sense that a vertex $u\in DA$ is connected to a vertex $v\in DB$ by an edge $e\in E$ only if they belong to adjacent faces of $\Gamma$.

Then, that spliting procedure of the $2$-valent vertices described at item $\textrm{ii.}$ above $\pageref{item-ii}$, provides a \emph{perfect matching} on the graph $G$.

On the other hand, if we have a balanced graph $\Gamma$ we can also construct that adjacent graph $G$ inserting $m-e_F$ dots into each face $F$ of $\Gamma$ being $m$ the number of corners of $\Gamma$ and $e_f$ the number of corners incident to $F$. The vertex set of $G$ in this case is partitioned into two subsets whit respect to the face coloring of the balanced graph $\Gamma$.

Thus, now the existence of a \emph{perfect matching} on $G$ will allow us to enrich $\Gamma$  to a new graph, that we will continue to denote by $\Gamma$,  that it has $m$ vertices incident to each face. As described in item \textrm{ii.} each new vertex of degree $2$ arouses from each pair of vertices of $G$ matched.

Thus we ask:


\begin{quest} \begin{center}Is there a perfect matching for those dots?\end{center} \end{quest}

But notice that such a \emph{matching} must also admit a nice vertex labeling as described above in $\ref{emb-cell-g}$ (see also Definition $\ref{def:labeling}$).

\paragraph{Let's prove the \underline{\emph{\texttt{if}}} part:}
Let $\Gamma\subset X$ be a pullback graph on the compact oriented surface $X$ with post-critical curve $\Sigma$.

From Proposition $\ref{prop-shape-graph}$ follows that the faces are \emph{Jordan} domain's .

Color by \textcolor{deeppink}{pink} the interior of $\Sigma \subset\s^2$ and call it by $P$ and color by \textcolor{blue}{blue} the another component of $\s^2-\Sigma$ and call it by $B$. 

Each point $p\in P$ and $b\in B$ possesses exactly $d$ distinct preimages in $X-\Gamma$, since all critical values are on $\Sigma$. Due to the continuity of $f$ a preimage $\tilde{p}\in f^{-1}(p)$ and $\tilde{b}\in f^{-1}(b)$ can not be in the same face of $\Gamma$, say $F$, for otherwise, we could connect $\tilde{p}$ and $\tilde{b}$ by a curve $\gamma$ into $F$ and in this way $f(\gamma)$ will be a connected set connecting $p\in P$ to $b\in B$ but being interelly contained into $f(F)$ that is equal to $P$ or $B$, what is certantily impossible, since  $P$ and $B$ are disjoint open set. Since $f:X-\Gamma\rightarrow \s^2-\Sigma$ is a local homeomorphism, we also can not have $\tilde{p_0},\tilde{p_1}\in f^{-1}(p)$ into the same face (recall the lifting property of local homeomorphisms). The same, for sure, works for that points over $b$. Therefore, there are $d$ faces of $\Gamma$ colored pink and $d$ faces of $\Gamma$ colored blue. This means that $\Gamma$ is \emph{globally balanced}.


Let $L=\{\gamma_1 , \cdots , \gamma_k\}$ be a cobordant positive multicycle of $\Gamma$ with interior $R$.

Let:
\begin{itemize}
\item[(1)]{$E_{n}>0$ to be the number of corners of $\Gamma$ in $\gamma_n$ that do not are incidente to blue faces inside $\gamma_n$, for each $n\in\{1, \cdots , k\}$;}
\item[(2)]{$a$ be the number of pink faces in $R$;}
\item[(3)]{$b$ be the number of blue faces in $R$;}
\item[(4)]{$D_A$ be the number of dots into those pink faces in $R$;}
\item[(5)]{$D_B$ be the number of dots into those blue faces in $R$;}
\end{itemize}

Then:

\begin{itemize}
\item[(1)]{
since the number of edges $e_j$ bordering a face $B_j$ is equal to the number of corners on its boundary, it follows that
\begin{eqnarray}\label{eqI}
D_B = (m-e_1)+(m-e_2)+\cdots+(m-e_B)=mb-(\sum_{j=1}^{b}e_j)
\end{eqnarray}}
\item[(2)]{and
\begin{eqnarray}\label{eqII} 
D_A = ma-(\sum_{j=1}^{b}e_j)-\sum_{n=1}^{k}E_{n}
\end{eqnarray}}
\end{itemize}

Suppose 
$E_{n}=0$, for each $n\in\{1,\cdots ,k\}$.
Then each connected component of $X-R$ is a \emph{simply connected} domain. This stems from the fact that $\Gamma$ to be conected and $E_n=0$ to imply that each positive cycle $\gamma_{n}$ to be incident to only one blue face outside $R$. Therefore, each component of $X-R$ is a blue face and since $\Gamma$ has so many blue as pink faces, say $d>0$, it follows:

\begin{eqnarray}
b=d-k<d=a
\end{eqnarray}

Now, suppose $E_n >0$ for at least one $n\in\{1\cdots , k\}$.
Then, by the necessary condition from the \emph{marriage theorem} $\ref{ksamento}$ we have:

\begin{align}\nonumber
mb-(\sum_{j=1}^{b}e_j)=D_B &\leq  D_A = ma-(\sum_{j=1}^{b}e_j)-\sum_{n=1}^{k}E_{n} \\ \nonumber 
\shortintertext{since $\sum_{n=1}^{k} E_{n} \geq 1$ ,}
 b&<a 
\end{align}

Thus, $\Gamma$ is \emph{locally balanced}.
\paragraph{Now, let's prove the \underline{\texttt{\emph{only if}}}\, part of our statement $\ref{THURSTHG1}$ :}\hspace{6cm}

Let $\Gamma$ be a balanced graph with $m$ corners.

Since each face $F$ of $\Gamma$ is a Jordan domain the number of saddle-connections of $\Gamma$ surrounding $F$ is equal to the number of corners on $\partial{F}\subset\Gamma$. 

Recall that each face $F$ of $\Gamma$ contains $m-e_F$ dots, where $e_F$ is the number of corners incident to $F$.

Let $S$ be an arbitrary set of dots from blue faces of $\Gamma$.

Then the task is: \emph{to show that the set of potential mates for $S$ is at least so large as $S$} (that is the sufficient condition of the \emph{Hall's marriage theorem} $\ref{ksamento}$).

Note that the potential mates for a dot into a blue face is exactly the same set of potential mates for any other dot from the same face. Therefore, we can change $S$ adding to it all the remains dots in a face that already has at least one of its dots in $S$. That change will not affect the number of potential mates and, of course, the condition is satisfied for any subset of dots from that enlarged set $S$ whether it itself satisfies the condition. Therefore, due to that, we will take $S$ as being the subset of all dots from a collection $ U $ of blue faces of $\Gamma$.

Denote by $R$ the topological closure of the collection $U$ together with its neighboring pink faces, i.e., $R$ is the union of the faces in $U$ with its neighboring pink faces and all boundaries of those faces.

Then the dots inside pink faces in $R$ are exactly those potential mates for the dots into $S$.

Note that the boundary of $R$ leaves pink faces in its left side, except at the corners.

If the interior of $R$ is not connected, then dots into blue faces of one component can only be matched with those dots inside pink faces from the same connected component of the interior of $R$. Hence we should have enough mates for the individuals of $S$ in each connected component of the interior of $R$. In this way we will have enough mates in $R$ for all individuals. Then is enough to assure the condition for each connected component what allows us to consider $R$ with the interior connected.

Let:
\begin{itemize}
\item[(1)]{$D_A$ denote the number of dots into pink faces inside $R$;}
\item[(2)]{$D_B=|S|$ denote the number of dots into blue faces inside $R$;}
\item[(3)]{$E_1$ be the number of corners on $\partial{R}$ that have only one face from $R$ neighboring it(that number was the number $E_\gamma$ when we prove the local balance condition of a pullback graph above);}
\item[(4)]{$\mu_k$ be the number of corners on $\partial{R}$ that have $k$ blue faces incident to it  from $R$ ;}
\item[(5)]{$\nu_k$ be the number of corners in the interior of $R$ with degree $k$;}
\item[(6)]{$a$ be the number of pink faces in $R$;}
\item[(7)]{$b$ be the number of blue faces in $R$.}
\end{itemize}



Thus, going back to the equations $\ref{eqI}$ and $\ref{eqII}$ we have:
\begin{eqnarray}\nonumber
D_B = mb-\sum_{j=1}^{b} e_j &=& mb-\frac{1}{2}\left(\sum_{j=2}^{d}2\mu_{j} + \sum_{j=2}^{d}{2j}\nu_{2j}\right)\\
&=& mb-\left(\sum_{j=1}^{d}\mu_{j}+\sum_{j=2}^{d}{j}\nu_{2j}\right)
\end{eqnarray}
and
\begin{eqnarray}\nonumber
D_A &=& ma-\frac{1}{2}\left(2E_{1}+\sum_{j=1}^{d}2\mu_{j} + \sum_{j=2}^{d}{2j}\nu_{2j}\right)\\
&=& ma-\left(E_{1}+ \sum_{j=1}^{d}\mu_{j} + \sum_{j=2}^{d}{j}\nu_{2j}\right)
\end{eqnarray} 

From the local balance condition we have $b \leq a-1$, and we also have $E_1 + \sum_{j=1}^{d}\mu_{j} + \sum_{j=2}^{d}\nu_{2j}\leq m$ where $m$ is the total number of corners of $\Gamma$. 

Hence
\begin{eqnarray}
D_A &=& ma-\left(E_{1}+ \sum_{j=1}^{d}\mu_{j} + \sum_{j=2}^{d}{j}\nu_{2j}\right)\\
&\geq& ma-\left(m+ \sum_{j=1}^{d}\mu_{j} +\sum_{j=2}^{d}{j}\nu_{2j}\right)\nonumber\\
&=&m(a-1)-\left(\sum_{j=1}^{d}\mu_{j} + \sum_{j=2}^{d}{j}\nu_{2j}\right)\nonumber\\
&\geq &  mb-\left(\sum_{j=1}^{d}\mu_{j}+\sum_{j=2}^{d}{j}\nu_{2j}\right)\nonumber\\
&=&D_B
\end{eqnarray} 

That is the desired inequality.

Therefore, we have proved that for an arbitrary set $S$ of dots from blue faces of $\Gamma$ the set of potential mates for those dots into $S$ is so large as $S$. Then the \emph{Hall's Marriage Theorem $\ref{ksamento}$} with the global balancedness assures the existence of a \emph{perfect matching}.

For each pair of dots matched we get a new vertex on the common side separating the faces containing those dots. These new vertices are taken distinct for each matched pair of dots from the same pair of faces.

Then, $\Gamma$ was enriched into a new graph, now with a bunch of $2$-valent vertices inserted, that we shall continue denoting by $\Gamma$.

But in addition to having $m$ vertices incident to each face, these vertices must be numbered cyclically (regarding the graph orientation) in such a way that the number at a corner given from each face labeling incident to it is the same and, furthermore, with such labeling being in accordance with an admissible passport. With `` to be in accordance with a passport '' we mean that the sum of half the degree of the vertices for a fixed label is equal to the degree $d$ of $ \Gamma $, for each label $ j\in \{1, 2, \cdots , m \}$.

Thus we have to ensure that we can always perform a vertex labeling with that especifications on such a enriched balanced graph. That is, every balanced graph is an admissible graph. Therefore, from Theorem $\ref{bcfromg2}$, we will be done!

\begin{lem}\label{lththm}
The enriched balanced graph obtained above is admissible.
\end{lem}
\begin{proof}[proof of the lemma $\ref{lththm}$]
We must display 
one admissible vertex labeling for $\Gamma$ (the enriched graph). $\Gamma$ has $m$ corners
. We can construct an admissible vertex labeling $N:V(\Gamma)\rightarrow \{1<2<\cdots<m\}$ 
inductively, 
as follows.

First, choose a pink face $F_1 \in F(\Gamma)$ with a numbering of the $m$ vertices incident to it by $1, 2, \cdots, m$ appearing in this order around the face keeping it on the left side. 

For a (labeled) corner adjacent to $F_1$, say $c_1$, we consider all the pink faces incident to it. Then we complete the labeling of the left $m-1$ vertices on each face respecting the already labeled corner $c_1$ incident to it in such a way that the increasing order of the labelings coincide with the positive sence of the orientation. Let $F_{2}$ be a face incident to $c_1$, but also incident to another corner, say $c_2 \in \partial{F_1}$. Since each vertex has to have a unique label assigned to it we must to ensure that the label assigned to the corner $c_2\in \Gamma$ when we label the vertex adjacent to $F_{2}$, as especified above, is equal to the one assigned to it from the label of it as a vertex incident to $F_{1}$. We shall see that this is the case, but for the sake of readability, we will leave the proof of that to the end, and then continuing the argumentation assuming it. 

That procedure stops at some point since we have a finite number of faces, each one with only $m$ vertex adjacent to it. In that way we have constructed a surjective map 
$N:V_{\Gamma}\rightarrow \{1, 2, \cdots, m\}$. And at each blue face the indices $1, 2, \cdots, m$ appears at this order but in reverse sense of the edges orientation (recall that the edges are oriented kepping pink faces on its left side). 

But can occur that one index $k\in\{1, 2, \cdots, m\}$, or actually more than only one, do not be attained by a corner through the map $N$, i. e., so that $N^{-1}(k)$ concists only by $2$ valent vertices of $\Gamma$.

If that was not the case, then $N$ defines an admissible vertex labeling to $\Gamma$ since by construction a label $j$ is assined to only one vertex of each pink face and we have $d$ faces, furthemore, if $e$ is the valence of a vertex with label $j$ there are exacle $\frac{e}{2}$ pink face incident to it.

On the other hand, let 
$M\subset \{1,  2, \cdots, m\}$ with $|M|=p<m$ be the subset of the labelings $k\in \{1,  2, \cdots, m\}$ such that $N^{-1}(k)$ is made up only by $2$-valent vertices.
Then we can erase from the enriched graph all the vertices with label in $M$ and in the sequel to repeat the procedure of the construction of $N$ presented above wth the label set $\{1, 2, \cdots, m-p\}$.
Thus we will get a vertex labeling that tags more than one corner of the graph with the same label, for at least one label into $\{1, 2, \cdots, m-p\}$. For the same reason given above, that labeling is admissible. 

Now, let's prove the part left about the (global) consistency of the procedure presented above to construct a vertex labeling. 

Let $\{e_{j}^{1}\}_{j=1}^{x}\subset E(\Gamma^{\ast})$ and $\{e_{j}^{2}\}_{j=1}^{y}\subset E(\Gamma^{\ast})$ be the sets of edges of the bipartite dual graph $\Gamma^{\ast}$ of $\Gamma$ made up by the edges duals to the saddle-connections adjacents to $F_1$ and $F_{2}$, respectively, that form the positive path into $\Gamma$ connecting $c_1$ to $c_2$.

Thus, we consider the subgraph $G^{\ast}\subset\Gamma^{\ast}$ formed by the collection of paths into $\Gamma^{\ast}$ that possesses the inital edge in $\{e_{j}^{1}\}_{j=1}^{x}\subset E({\Gamma^{\ast}})$ and terminal edge in $\{e_{j}^{2}\}_{j=1}^{y}\subset E({\Gamma^{\ast}})$. $G^{\ast}$ have two sets of distinguished vertices, one is the singleton $I:=\{F_{1}^{\ast}\}$ and the another one is the subset 
$O\subset V(\Gamma^{\ast})$ of the vertices of $\Gamma^{\ast}$ 
duals to those blue faces that are incident to the positive path adjacent to $F_2$ joining $c_1$ and $c$. 

Note that if the cycle $\gamma:=\prod_{j=1}^{x}e_{j}^{1}\cdot{}\prod_{j=1}^{y} e_{j}^{2}$ is a separating curve of the underline surface $X$ such that the component $\Omega\subset X$ that not contains the face $F_1$ is a disk, then the defining condition of $G^{\ast}\subset\Gamma^{\ast}$ is the same that define $G^{\ast}\subset\Gamma^{\ast}$ as the subgraph of $\Gamma^{\ast}$ consisting of its part inside $\Omega$ together the edges dual to the saddle-connections into $\gamma$.

To each edge $e\in E({G^{\ast}})$ of $G^{\ast}\subset\Gamma^{\ast}$ we assign the positive integer $n-1$ where $n$ is the number of vertices over its dual saddle-connection $e^{\ast}\in E({\Gamma})$. Therefore, for each vertex of $G^{\ast}$ not being in $I\subset V(G)$ or $O\subset V(G)$
the sum of the numbers attached to the edges incident to it equals $m$. That is $G$, endowed with the above decribed structure, is a multi-extremal weighted graph with charge $m$. 

Let $\epsilon_{j}^{1}\geq 1$ be the number assigned to the edge $e_{j}^{1}$ for $j\in\{1,2,\cdots, x\}$ and $\epsilon_{j}^{2}\geq 1$ be the number assigned to the edge $e_{k}^{2}$ for $k\in\{1,2,\cdots, y\}$.

If $\sum_{j=1}^{x}\epsilon_{j}^{1}=\sum_{j=1}^{y}\epsilon_{j}^{2}$, the positive paths $\prod_{j=1}^{x}(e_{j}^{1})^{\ast}$ and $\prod_{j=1}^{y} (e_{j}^{2})^{\ast}$ from $c_1$ to $c$ have the same number of vertex on it, therefor the labeling atributed to $c$ by the labeling of the vertices adjacents to $F_{2}$, as described previously, will agree with the one assigned by the labeling of the vertices that are incident to $F_1.$

But, Proposition $\ref{vertex-capacity-fn}$ assure the expected equality between the numbers $\sum_{j=1}^{x}\epsilon_{j}^{1}$ and $\sum_{j=1}^{y}\epsilon_{j}^{2}$, sice they are the input and output values of the multi-extremal weighted graph with constant capacity $m$, $G$.


\begin{figure}[H]
 \begin{center}
 \subfloat[]
{\includegraphics[width=5cm,height=5.2cm]{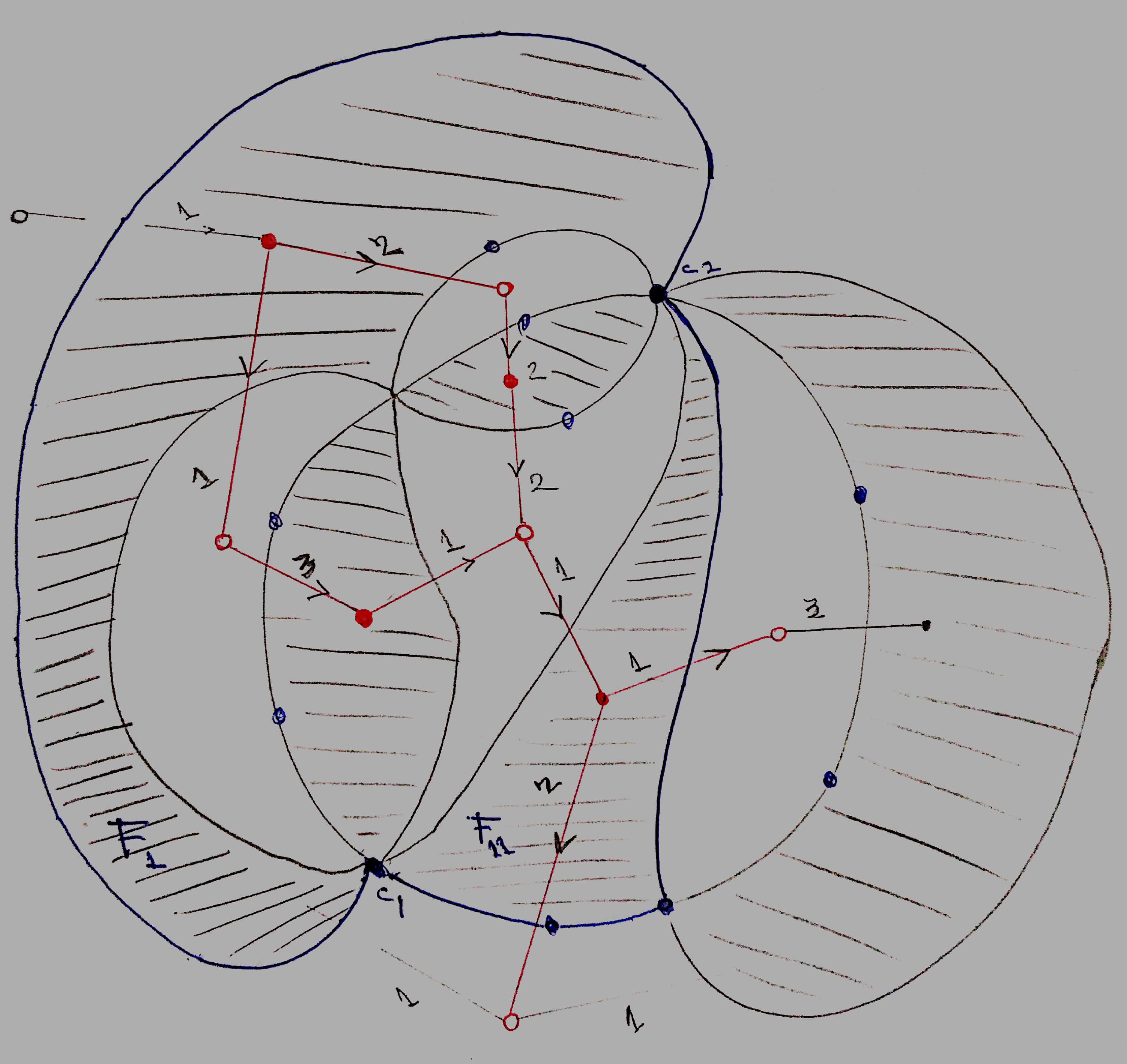}}
\quad
\subfloat[]
{\includegraphics[width=4cm]{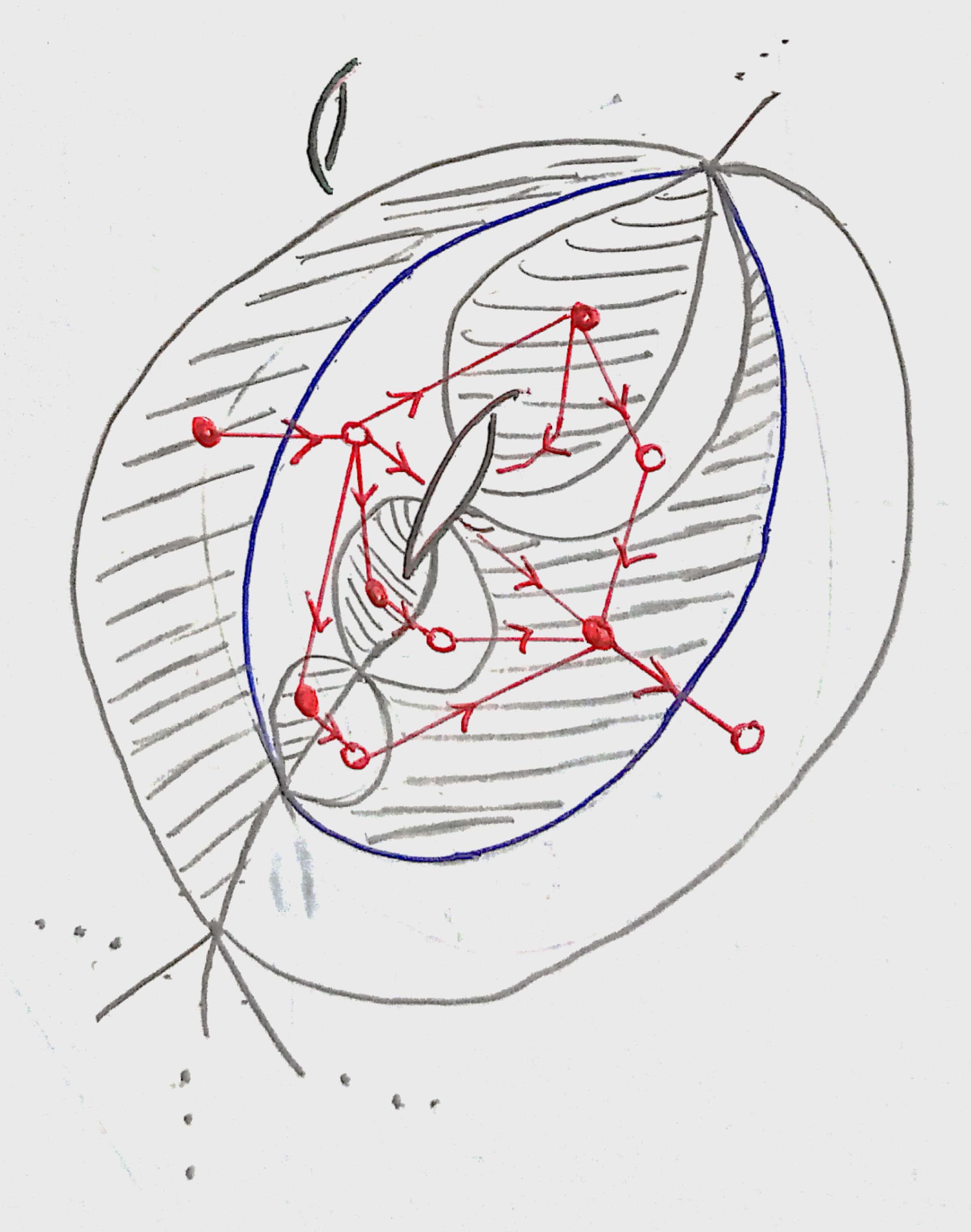}}\\
\subfloat[]
{\includegraphics[width=4cm]{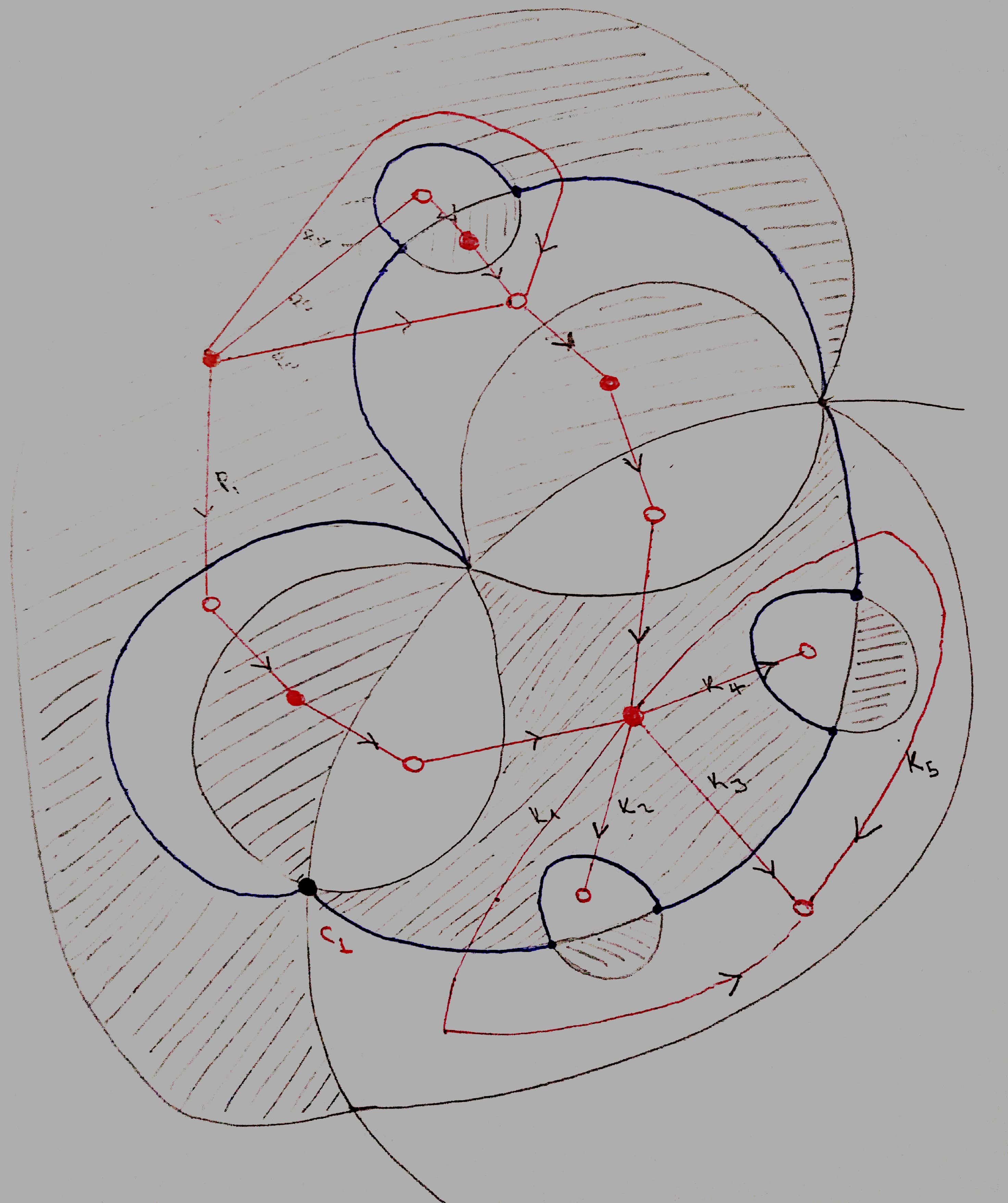}}
\quad
\subfloat[]
{\includegraphics[width=5cm,height=4.8cm]{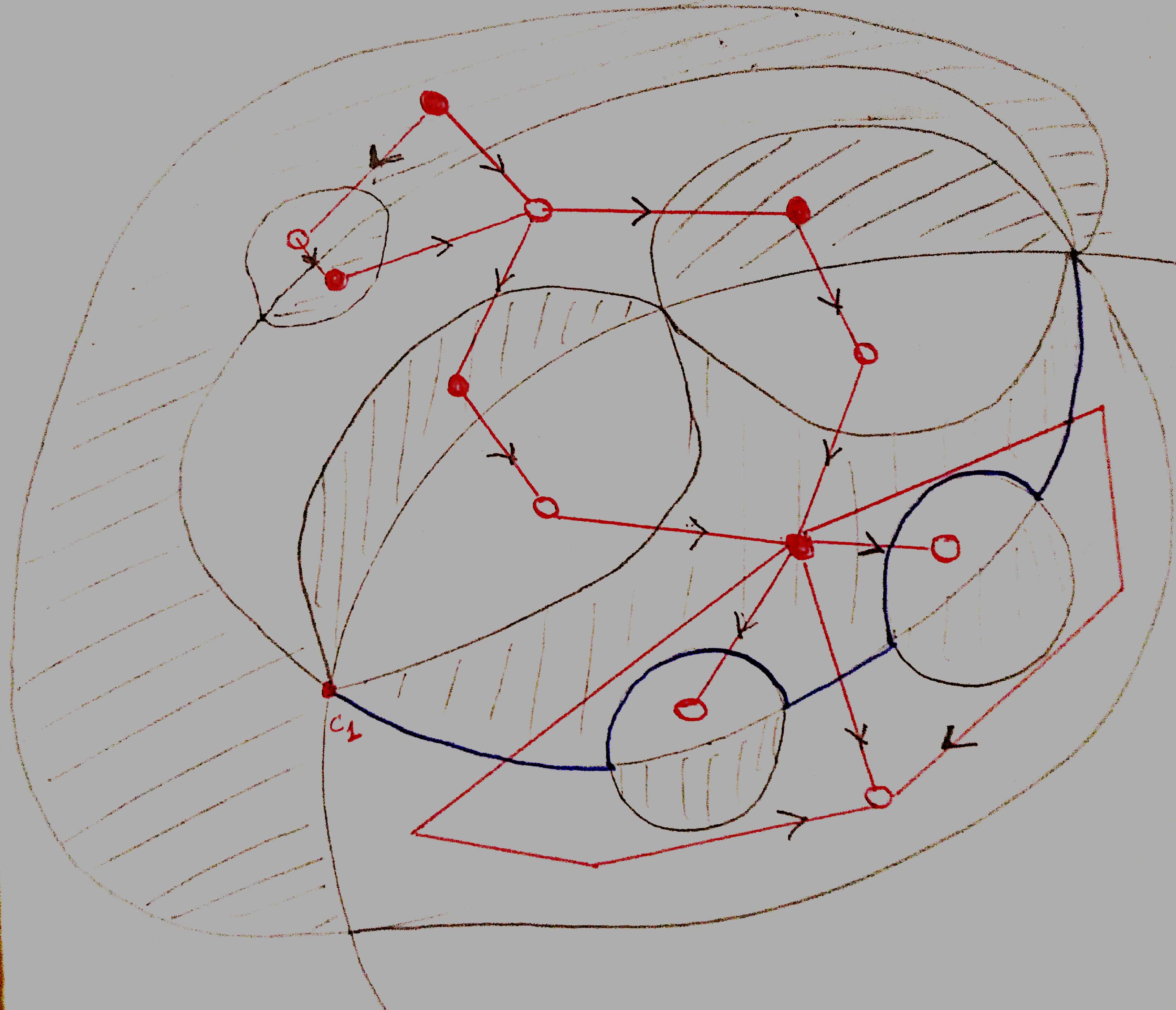}}
\caption{}\label{rp02}
\end{center}
\end{figure}

\end{proof}

Then we are done.
\end{proof}

But notice that the admissible vertex labeling depends on the matching realized to enrich the balanced graph. So a balanced graph can be, ignoring the $2$-valent vertices, the pullback graph of more than one branched cover, but all being of the same degree. See the example below:
\begin{ex}
Distincts matchings on the same balanced graph:
 \begin{figure}[H]
 \begin{center}
       \subfloat[enriching a balanced graph from a perfect matching]
    {{\includegraphics[scale=.049]{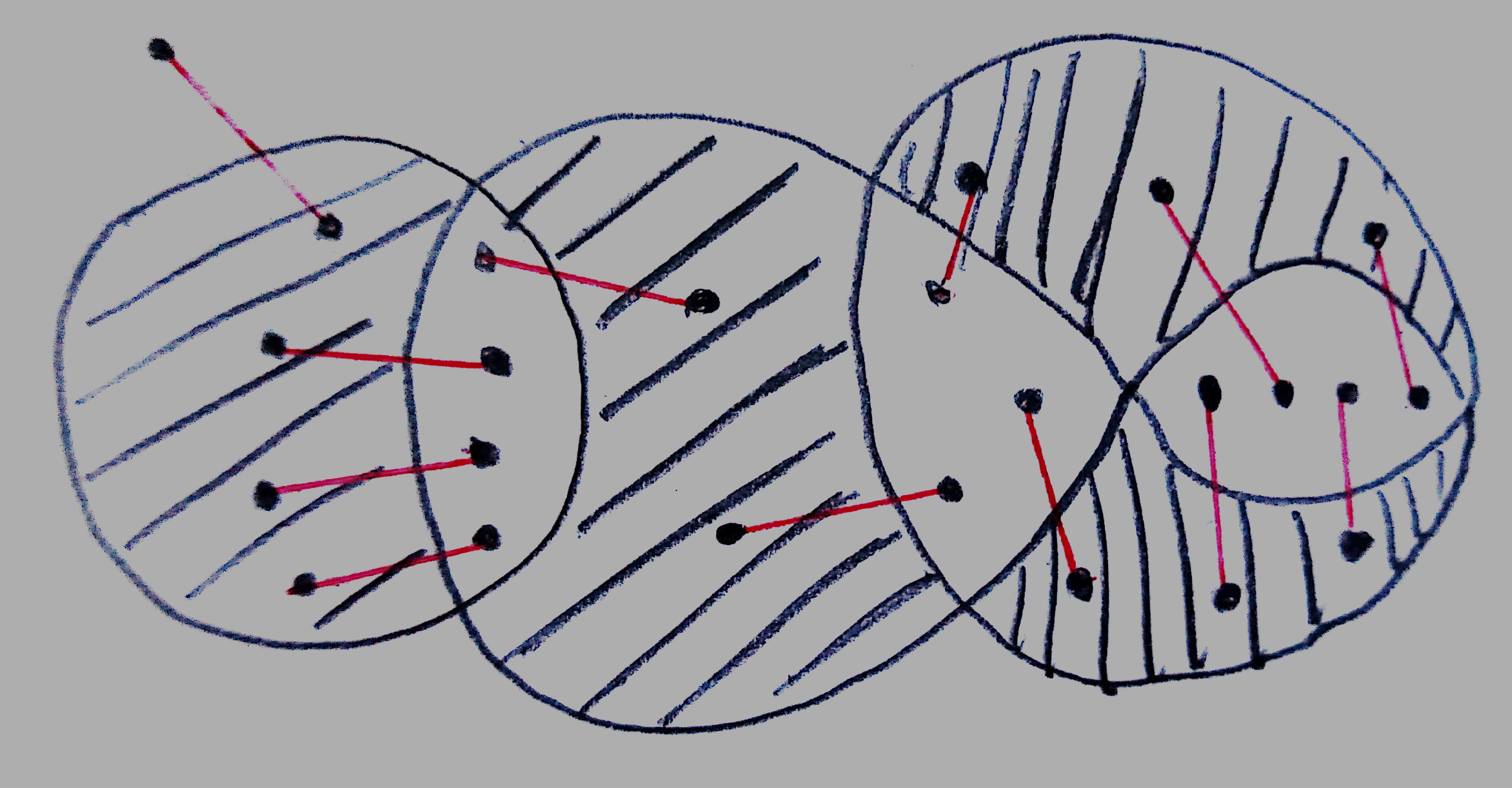}}}\hspace{0.0001mm} 
    \subfloat[admissible graph from (a)]  
    {{\includegraphics[width=7.3cm]{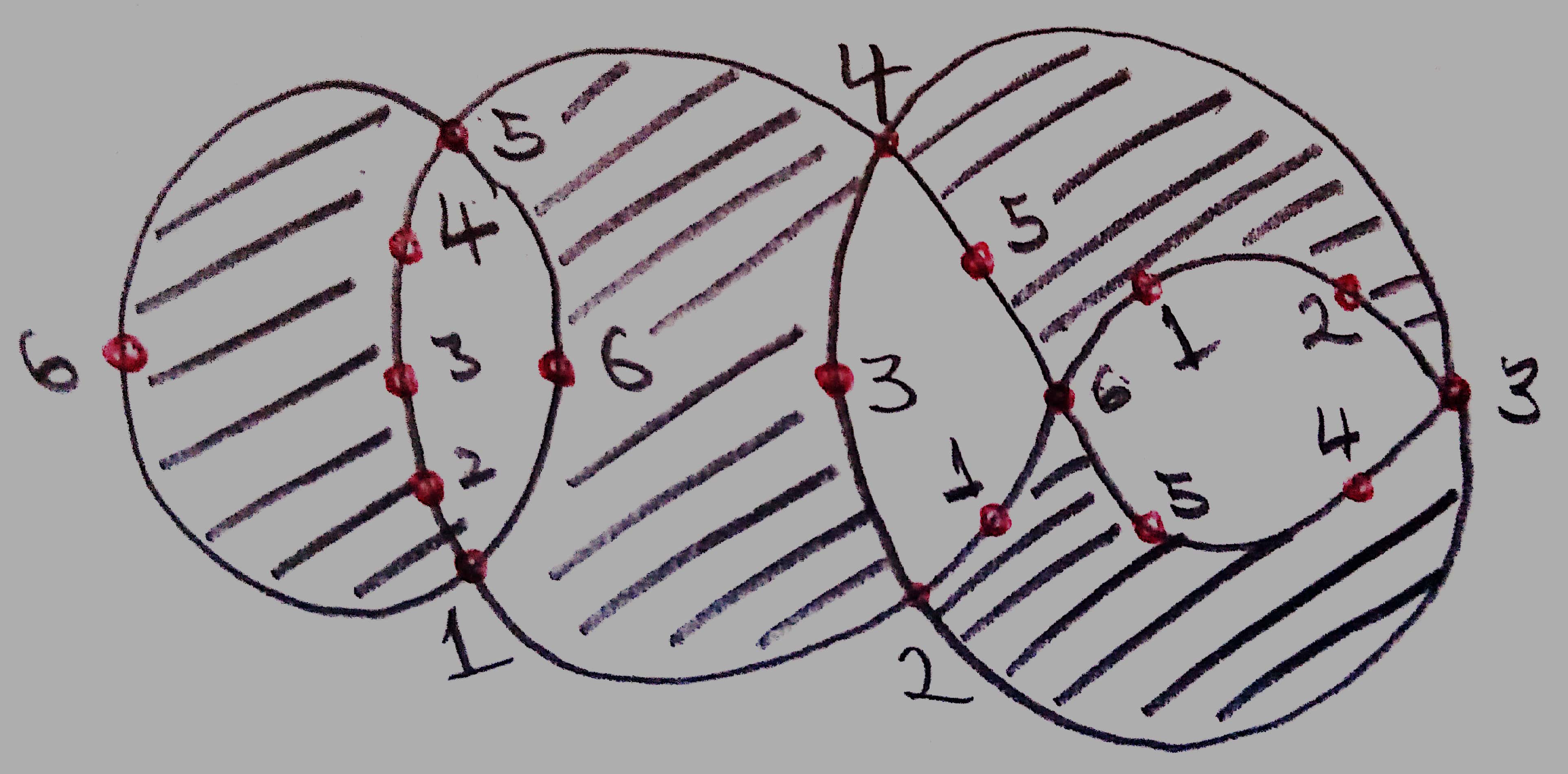} }}
     \caption[matching on a balanced graph]{}
\end{center}
\end{figure}
 \begin{figure}[H]
 \begin{center}
       \subfloat[enriching a balanced graph from a perfect matching]
    {{\includegraphics[scale=.05]{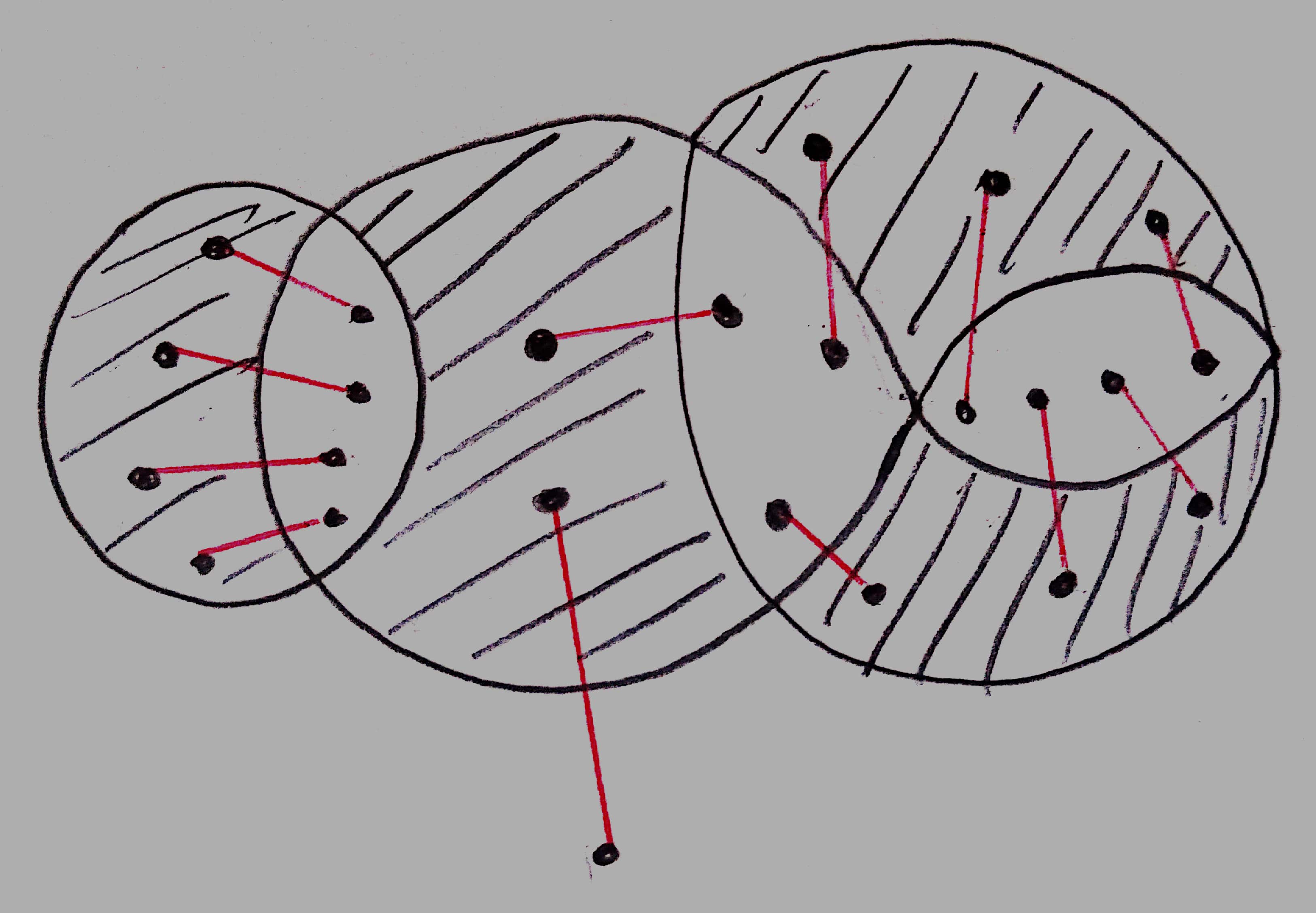} }}    
    \subfloat[``superfluous'' admissible labelling]
    {{\includegraphics[width=6.5765cm]{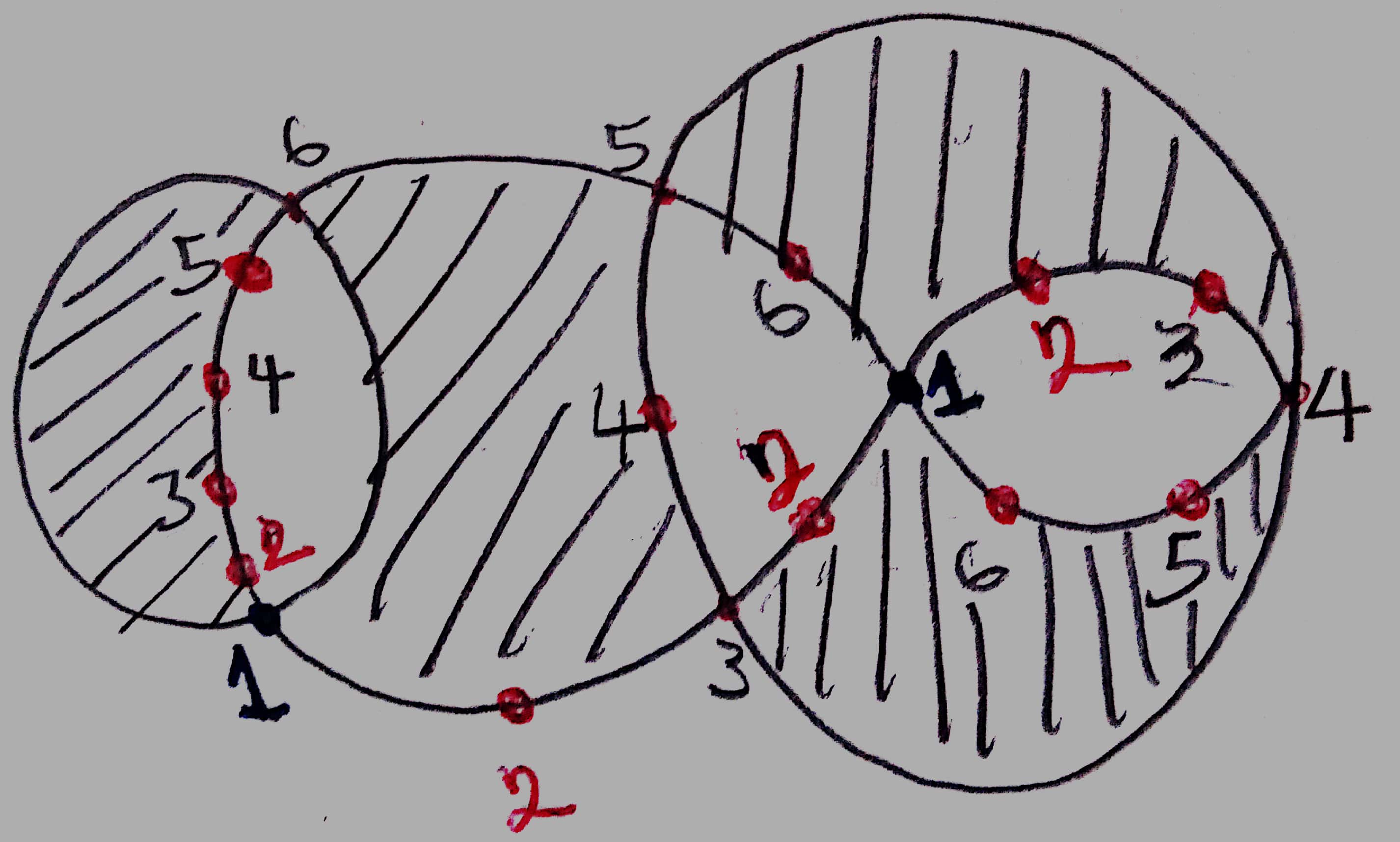} }}\\
    \subfloat[admissible graph from (b)]
    {{\includegraphics[width=6.875cm]{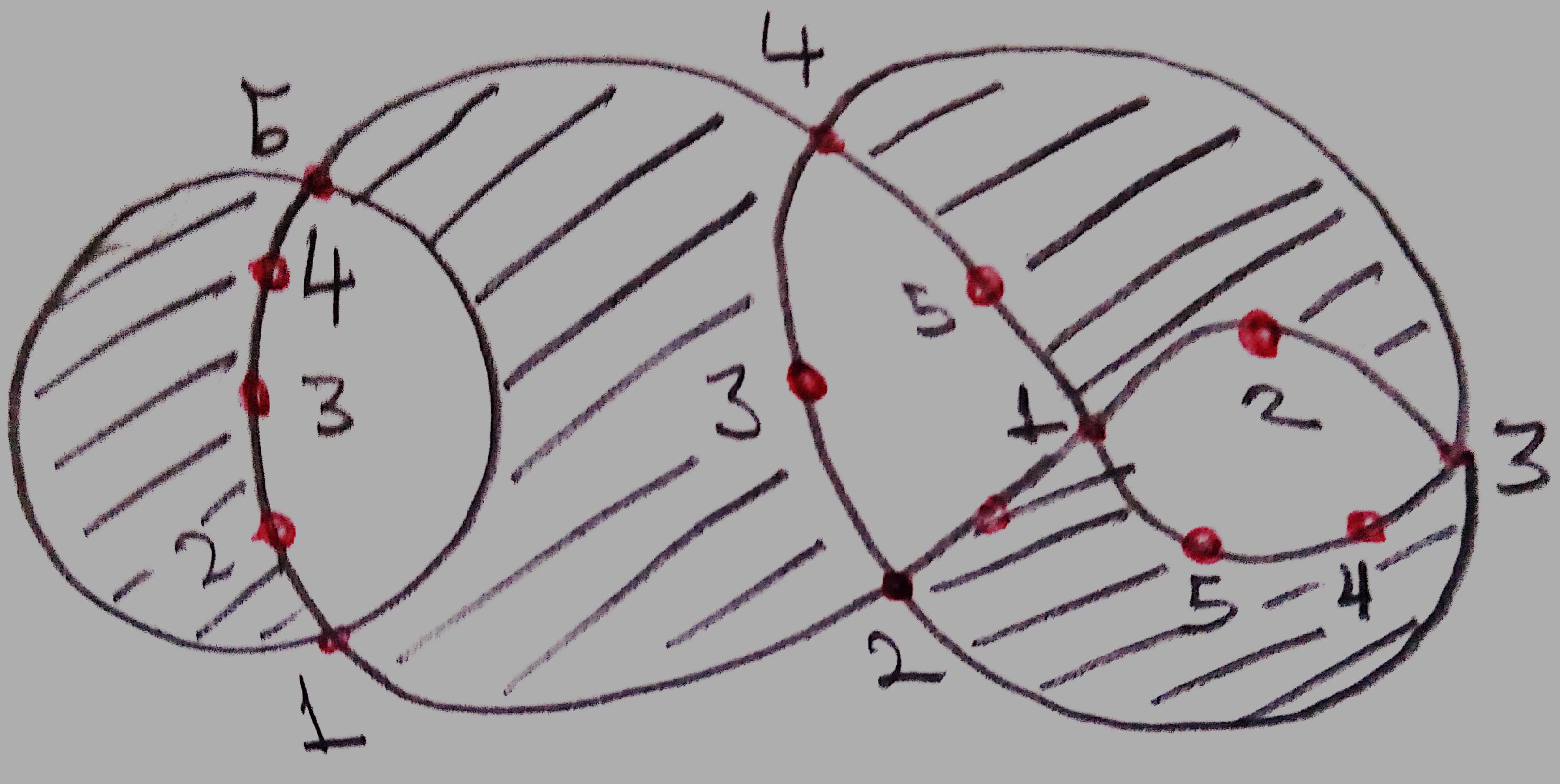} }}
    \caption[matching on a balanced graph]{}
\end{center}
\end{figure}
\end{ex}

\begin{ex}[another example]\hspace{7cm}
\end{ex}
 \begin{figure}[H]
 \begin{center}
       \subfloat[enriching a balanced graph from a perfect matching]
    {{\includegraphics[scale=.0517]{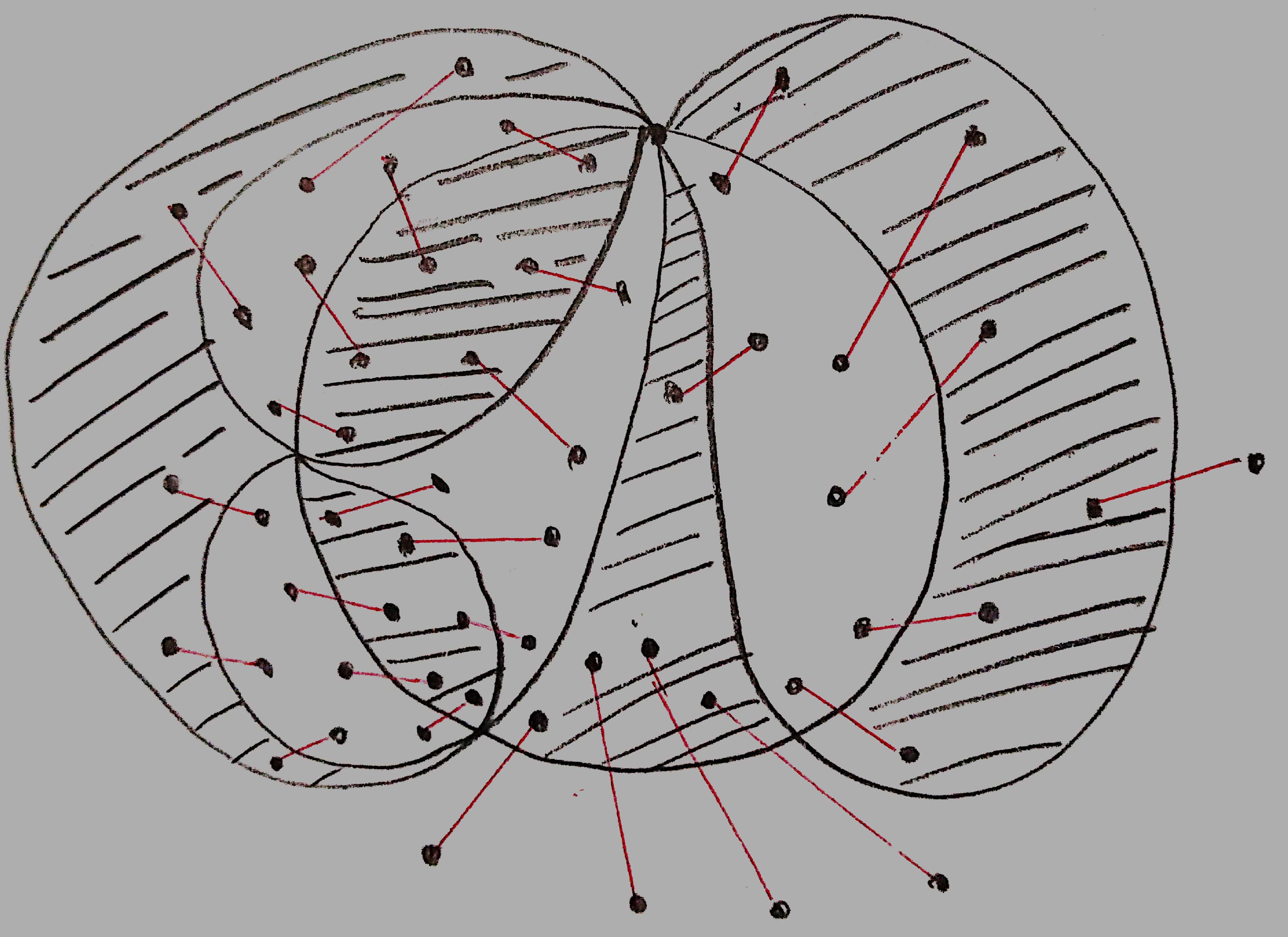} }}    
    \subfloat[``superfluous'' admissible labelling]
    {{\includegraphics[width=5.575cm]{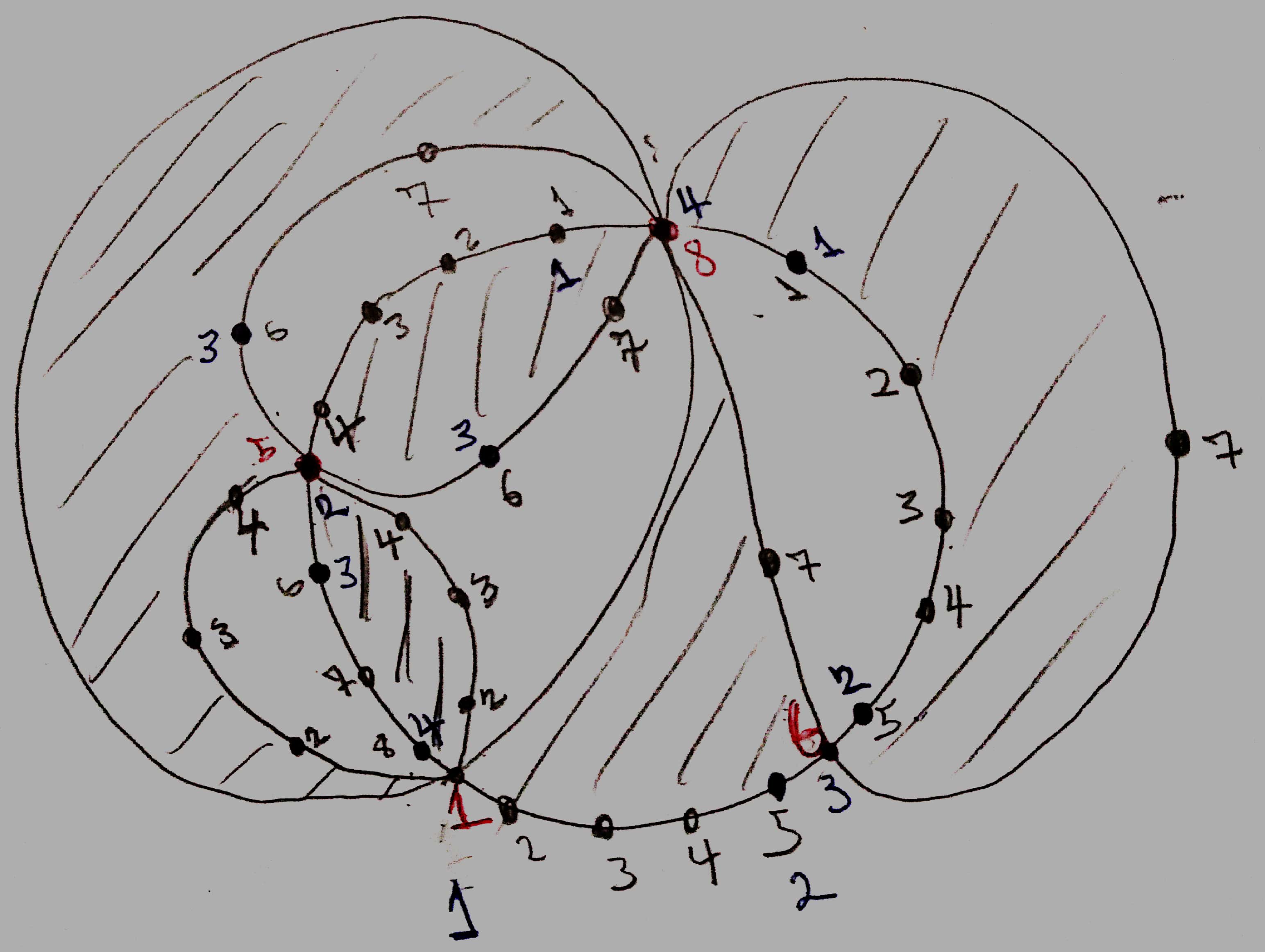} }}
    \end{center}
\end{figure}
\begin{figure}[H]
 \begin{center}
    \subfloat[admissible graph from (b)]
    {{\includegraphics[width=5.2cm]{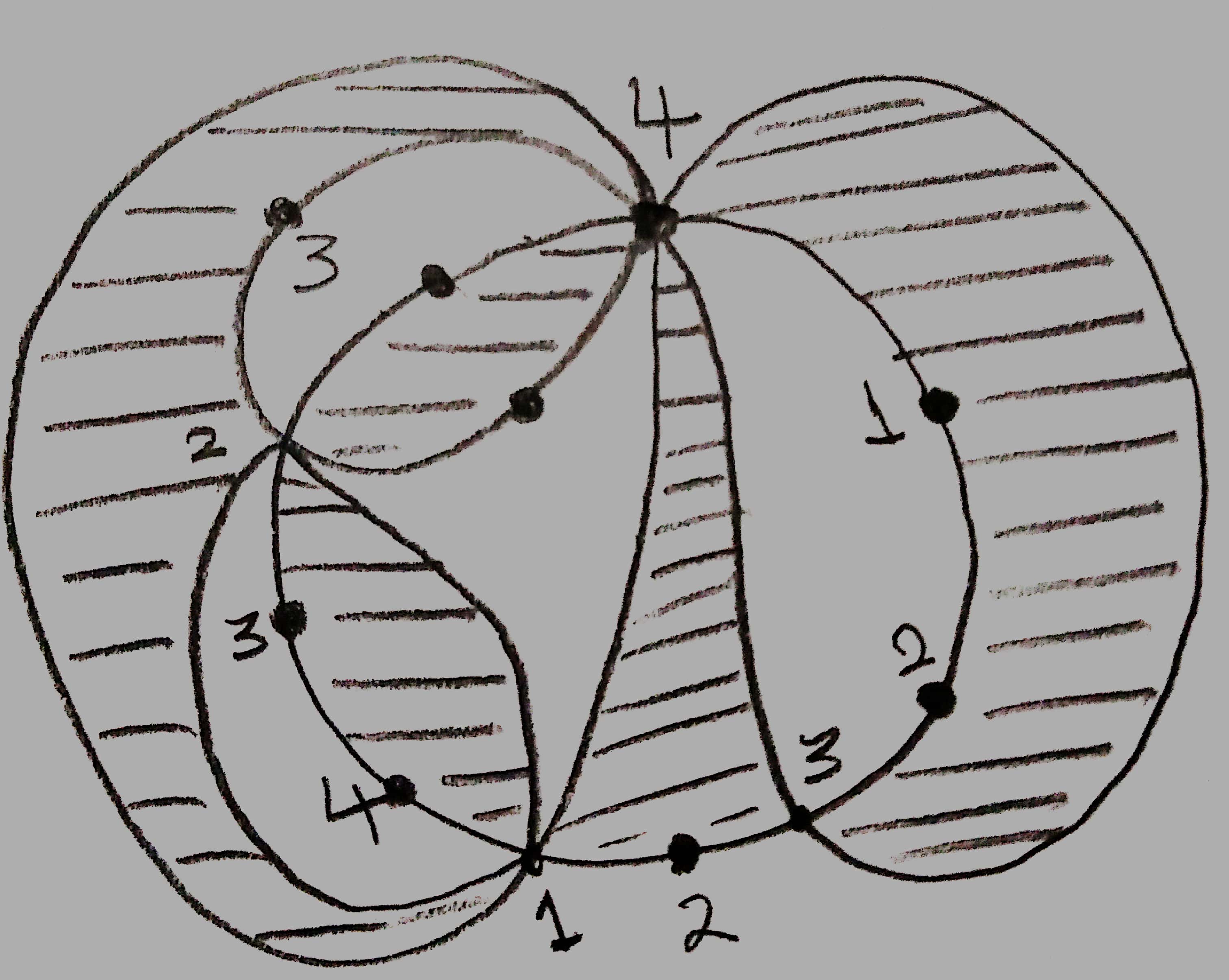} }}
    \caption[matching on a balanced graph]{}
\end{center}
\end{figure}

\section{Pullback graphs of real rational functions with real critical points}\label{2.5-sect}

For fixed integer $d\geq 3$, let $R_{\R}\subset \C(z)_{ d}$ be the set of rational function of degree $d$ with real coefficients and the set of critical points $C$ contained in $\overline{\R}$. We refers to such a map as a \emph{degree $d$ real rational function}.

That class of functions has a \emph{canonical post-critical curve}, namely the real  line $\overline{\R}\subset\overline{\C}$, since $f(\overline{\R})\subset\overline{\R}$ for all $f\in R_{\R}$. 

Each function $f\in R_{\R}$ satisfies $f(\overline{z})=f(z)$ for all $z\in f^{-1}(\R)$. Therefore, the pullback graph $\Gamma =f^{-1}(\R)$ are symmetric with respect to $\R$ for every $f\in R_{\R}$. 

\begin{ex}
Real pullback graphs of some degree $3$ rational functions: $f_1 (z)=\frac{z^2 \left(-\left(\sqrt{7}+2\right) z+2 \sqrt{7}+1\right)}{\left(\sqrt{7}-4\right) z+3}$, \\
$f_2 (z)=\frac{z^2 \left(\left(\sqrt{7}-2\right) (-z)+2 \sqrt{7}-1\right)}{\left(\sqrt{7}+4\right) z-3}$ and $f_3 (z)=\frac{z^3}{3 z-2}$, respectively.
\begin{figure}[H]
 \begin{center}
\includegraphics[scale=.35]{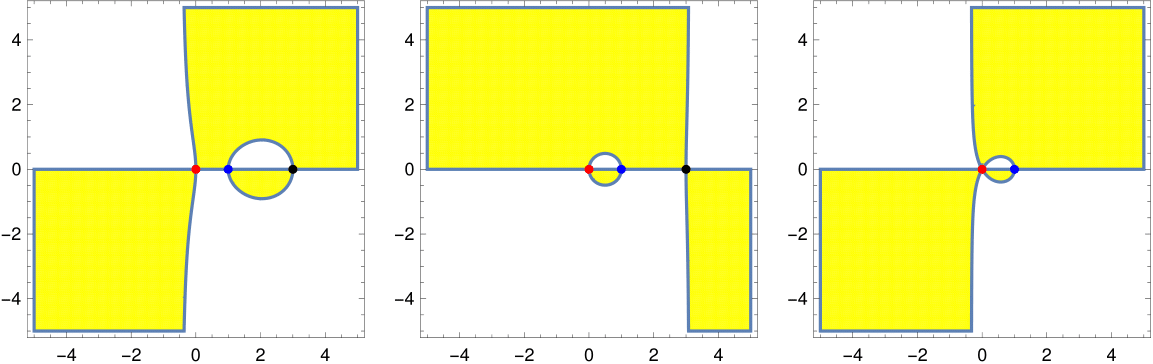}
\caption{Real Pullback Graphs}\label{rp02}
\end{center}
\end{figure}
\end{ex}

Thus, by the symmetry, each pullback graph $\Gamma=f^{-1}(\R)$ is uniquely determined by its non-real edges into the upper half-plane $\mathbb{H}^{u}$. Any two edges of $\Gamma$ do not intersect unless at their terminal real points. Notice that all of these terminal points forms the vertex set of the graph $\Gamma$. By the \emph{Thurston} theorem $\ref{THURSTHG1}$ such graphs are balanced.

The first and the second pullback graphs in the Figure $\ref{rp02}$ correspond to the unique two non-equivalent cubic generic real rational functions that maintains fixed the points $0,1$ and $\infty$ and it has critical points at $0,1,3, \infty$. 

Recall that a generic degree $d$ rational function, by definition, has $2d-2$ distinct critical points and $2d-2$ distinct critical values.
 
In another hand, each perfect matching of $2d-2$ vertices on $\overline{\R}$ in such a way that we can connect the vertices paired 
by non intersecting arcs into $\mathbb{H}^{u}$
it determines a connected graph with $2d-2$ vertices, all of them being of valence $4$ and with $2d$ faces if for such each arch into $\mathbb{H}^{u}$ connecting matched vertices we consider also its reflexion into the lower half-plane $\mathbb{H}^{d}$ with respect to the real line $\overline{\R}$. For a complete picture consult Figure $\ref{match00}$. 

\begin{figure}[H]{
 \centering
\includegraphics[width=6cm]{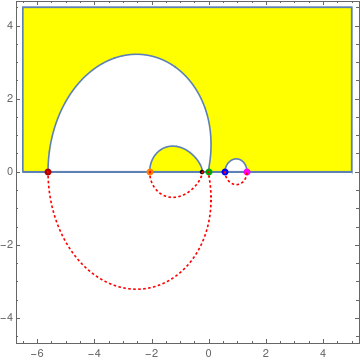}
\caption{Noncrossing matching of $6$ real points}\label{match00}
}
\end{figure}


Due to the symmetry, it is then immediate that an alternating face coloration of its faces turns it into a  globally balanced graph. Thus these graphs forms a subclass of the class of the underline graphs of the \emph{real admissible graphs} in $\ref{realcase?}$. 

In order to be able to construct, as in $\ref{realcase?}$, a rational function from some that globally balanced graphs as depicted above it should to support an admissible labeling. Or, as we saw in the proof of \emph{Thurston Theorem} $\ref{THURSTHG1}$ it should to be also \emph{locally balanced}.  Moreover, if they support an admissible vertex labeling $L:V(G)\longrightarrow\{1, 2, \cdots, 2d-2\}$ then the rational function aroused from it will be a degree $d$ generic rational function.\label{goldissue}

\begin{defn}[generic real planar GB-graph]\label{rgb-g}
A planar GB-graph as described above coming from a noncrossing matching of $2d-2$ real points will be called by \emph{generic degree $d$ real planar GB-graph}. And a generic real admissible graphs will be a generic degree $d$ real planar GB-graph with an admissible labeling 
$L:V_G\longrightarrow\{1, 2, \cdots, 2d-2\}$.
\end{defn}

Note that for non-isotopic {generic real admissible graphs} with vertex set $\{v_1, v_2 , \cdots ,v_{2d-2}\}\subset\overline{\R}$ the corresponding real rational functions from Theorem $\ref{r_a_gb_r_r_f}$, say $f$ and $g$,  are not equivalent. For if $g=\sigma\circ{}f$, since $\overline{\R}=f(\overline{\R})=g(\overline{\R})$ then $\sigma(\overline{\R})=\overline{\R}$. Therefore, $g^{-1}(\overline{\R})=f^{-1}(\sigma^{-1}(\overline{\R}))=f^{-1}(\overline{\R})$.

Leaving the vertices fixed, the counting of such matchings is a well-known problem in enumerative combinatorics (see \cite{Stancat:15}-exercise $59$). And there are $\displaystyle{\rho_{d}=\dfrac{1}{d} \binom{2d-2}{d-1}}$ such matchings. Then 
the number of real GB-graphs of degree $d$ for prescribed $2d-2$ vertices into $\overline{\R}$ is $\rho_{d}$.

So, if the issue laid out above could be settled we will have obtained the following result:

\begin{qsconj}\label{preanuncio}
{The number of equivalence classes of generic real rational function of degree $d$ for a prefixed set of $2d-2$ distinct points in $\overline{\R}$ is
\[\displaystyle{\rho_{d}=\dfrac{1}{d} \binom{2d-2}{d-1}}.\]}
\end{qsconj}

This result, once proven, will consist on a combinatorial solution for the counting problem of equivalence classes of generic rational function asked by Lisa Goldberg in \cite[{PROBLEM}, at page $132$]{Gold:91}

$\boldsymbol{(\star)}$ $\ref{preanuncio}$ will be proved in section $\ref{shap-section}.$

\section{The category of 
 balanced graphs ${\mathcal{B}\mathcal{G}}$}

In this section we will construct the Category of Balanced Graphs $\bgg$. 

\begin{defn}\label{glset}
Let 
$\textbf{BG}(g,d)$ denote the set of genus $g$ balanced graphs of degree $d$, and $\textbf{BG}:={\bigsqcup_{d=2, g=0}^{\infty, \infty}}\textbf{BG}(g,d)$. 
\end{defn}

\subsection{Operations on Balanced Graph}\label{oper-bg}

In this section, we will describe a series of operations against balanced graphs. 
Those operations shall allow us to transform one graph into another one but preserving some essential properties.

Those essential properties that we expect to be maintained under the operations are the local and global balance conditions on cellular embedded 
even graphs.

In addition to providing a deep understanding 
of the graphs, having these operations at our disposal is a great asset in order to simplify some proofs.

Some of these operations are related to the continuous deformation of a branched cover in another one with a different critical configuration.
 
\subsubsection{Edge-Contraction}

Although we probably haven't highlighted this previously, balanced graphs do not contain loops. And more generally, it does not contains corners that are incident more than once to a face. A priori, this could be seen (or even taken) as a natural imposition, given the intention of having each face as a compact piece where a branched covering is injective.But, actually, this fact stems from the balance conditions.

\begin{lem}\label{lem-jface}
The boundary of the topological closure of each face of a balanced graph 
consists of only one Jordan curve.
\end{lem}  
\begin{proof}
Let $\Gamma$ be a balanced graph 
with a face $F$ whose topological closure, $\myov{F}$, has its boundary containing more than one Jordan curve. 
Thus $F$ should contain at least one corner incident to it more than once. This follows from the fact that each face is simply connected. Since, if a Jordan curve, say $\gamma_0$, into the boundary of $F$, $\partial{F}$, is not connected to $\partial{F}-\gamma_0$ by a saddle-connection or even 
by a corner, the simply connectivity of the planar domain $F$ is lost.

But, as we shall see, the occurrence of that kind of corner obstructs the (local) balance condition.

Assume $\Gamma$ balanced with a Yellow - White alternating colloring (being the Yellow color the preferred one). 
Suppose that $F$ is white.

Choose a connected component, $B$, of $X-{F}$. Since $F$ is white each cycle on the boundary of $B$ is positive. Thus, by the local and global balance conditions the number $W_{b}$ of white faces into $X-B$ is strictly bigger than the number of yellow faces, $Y_{b}$ there. That is, 
\begin{eqnarray}\label{deswy}
W_b > Y_b
\end{eqnarray}

On the other hand, we have $W_k \leq Y_k -1$ for each connected component , $C_k$, of $X-\{F\cup B\}$. Let $m$ be the number of those $C_k$ components.Then,
\[W_b -1=\sum W_k \leq\sum (Y_k -1)=Y_b - m< Y_b\]

But this contradicts $\ref{deswy}$.

And, if $F$ is yellow face of $\Gamma$, a similar argument 
will bring us to the expected end (only with the role of the colors exchanged).
\begin{figure}[H]
 \begin{center}
    {{\includegraphics[width=6cm]{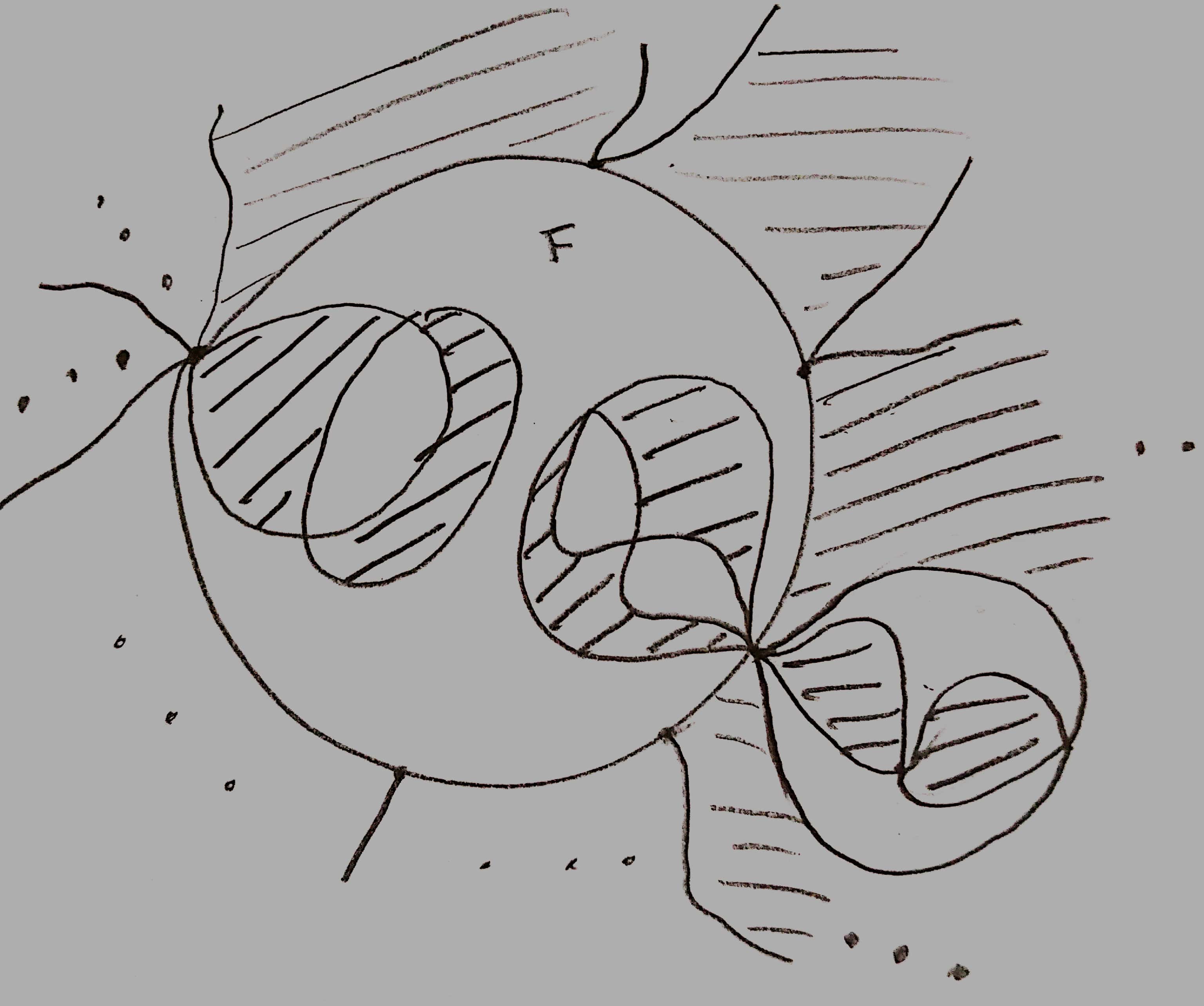}}}
 \caption{non simply connected face}
\end{center}
\end{figure}
\end{proof}

\begin{defn}[splitting saddle-connection]
A saddle-connection of a balanced graph is said to be a splitting saddle-connection if its extremal points (corners) are simultaneously incident to more than $2$ faces. Or equivalently, if there is at least one face to which the extremal points of the saddle-connection are incident but the saddle-connection itself does not.
Two corners, $A$ and $B$, of a balanced graph are called \emph{splitting-corners} if they are connected by a splitting saddle-connection.
\end{defn}

We highlight this type of saddle-connections because the procedure of removing one such saddle-connection and then identify its endpoints 
it generates a new embedded graph having faces with a topology that obstructs the local balance (see Lemma $\ref{lem-jface}$).

\begin{defn}[{edge-contraction}]
The operation of \emph{edge-contraction} on balanced graphs consists on the procedure of to identify 
a non splitting saddle-connection of the graph to a single point.  
\end{defn}

Notice that the \emph{edge-contraction} operation does not change the topology of the support 
surface sice it collapse a cellular subset. 

 \begin{figure}[H]
 \begin{center}
       \subfloat[balanced graph]
    {{\includegraphics[width=6.2cm]{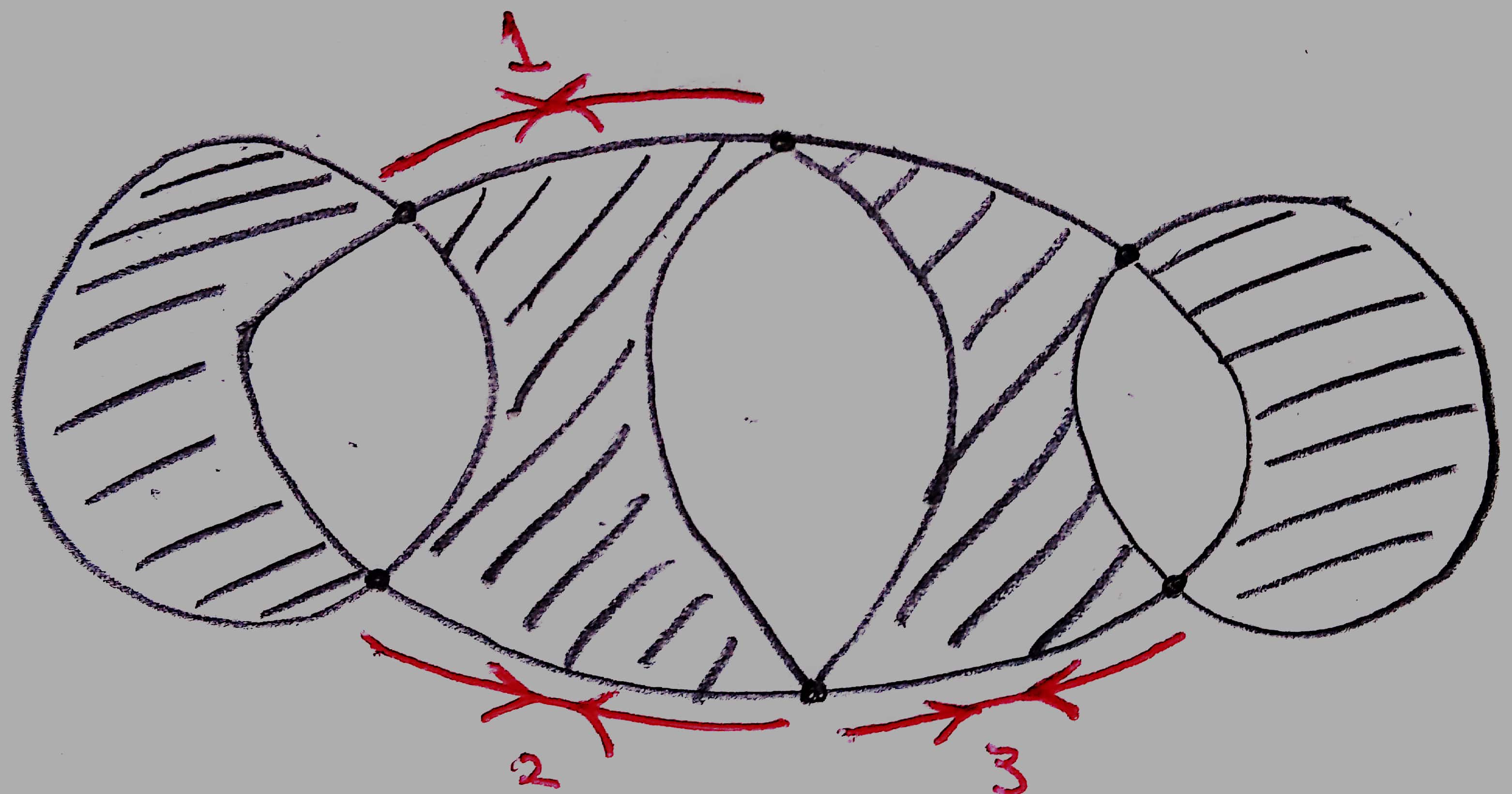}}}
       \; \subfloat[edge-contraction - 1]
    {{\includegraphics[width=7.65cm]{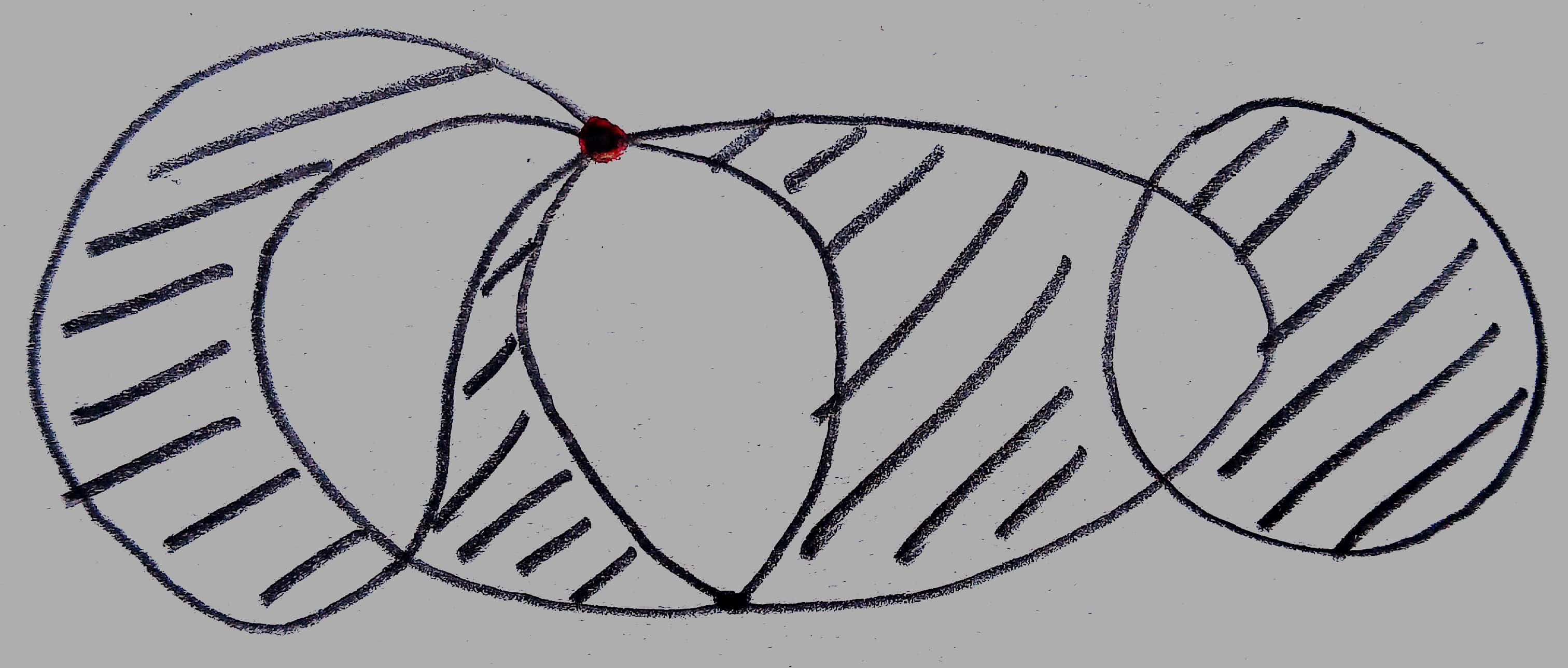} }}\\
    \subfloat[edge-contraction - 2 and 3]
    {{\includegraphics[width=5.9cm]{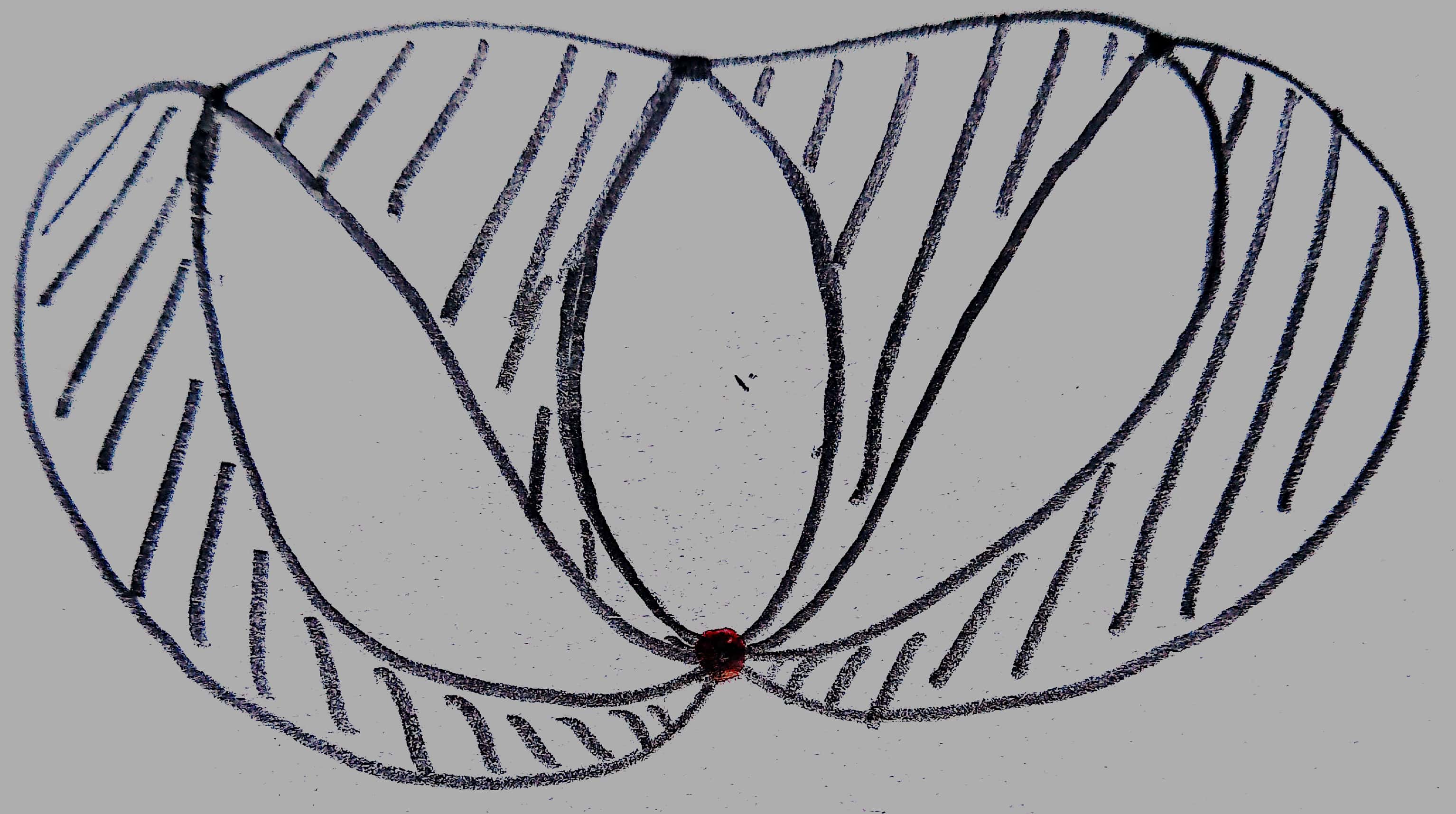} }}
     \subfloat[contraction of a splitting sadle-connection (not allowed)]
    {{\includegraphics[width=8.25cm]{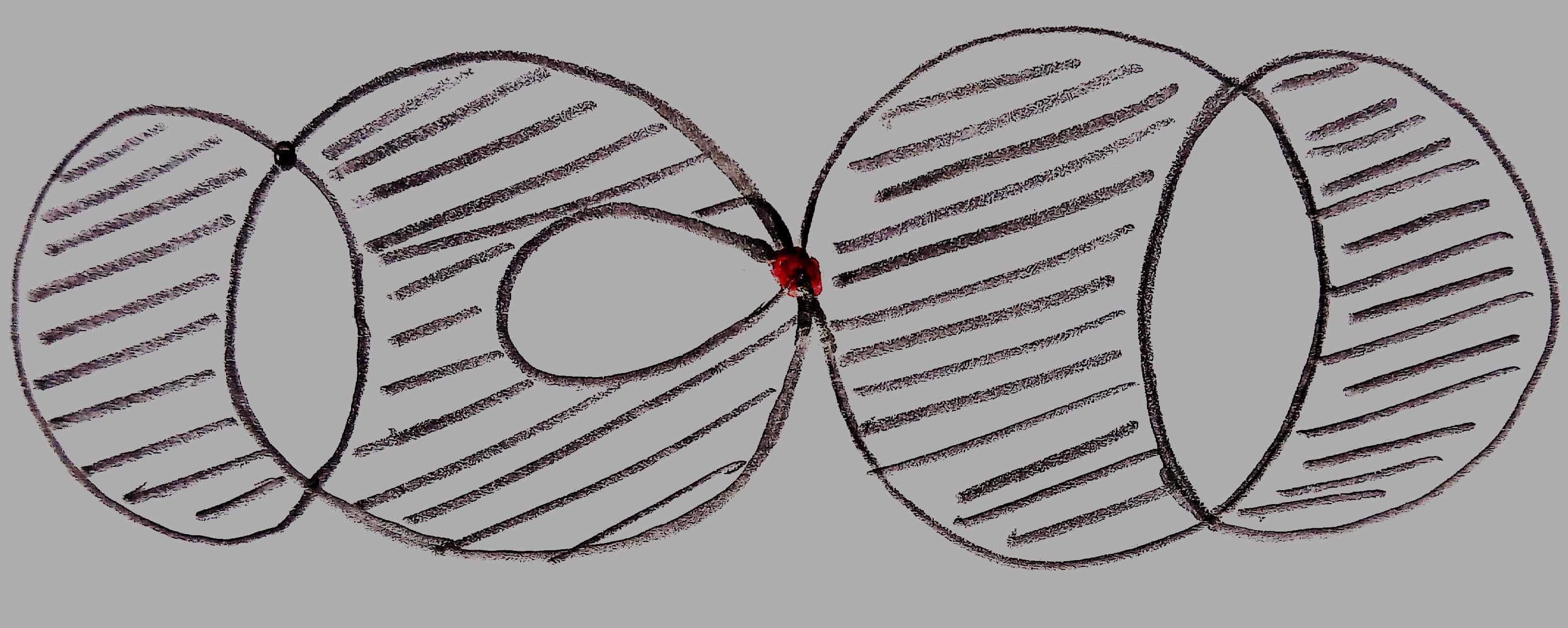} }}
        \caption{edge-contractions}
\end{center}
\end{figure}

 \begin{figure}[H]
 \begin{center}
    \subfloat[generic real GB-graph]
    {{\includegraphics[scale=.05]{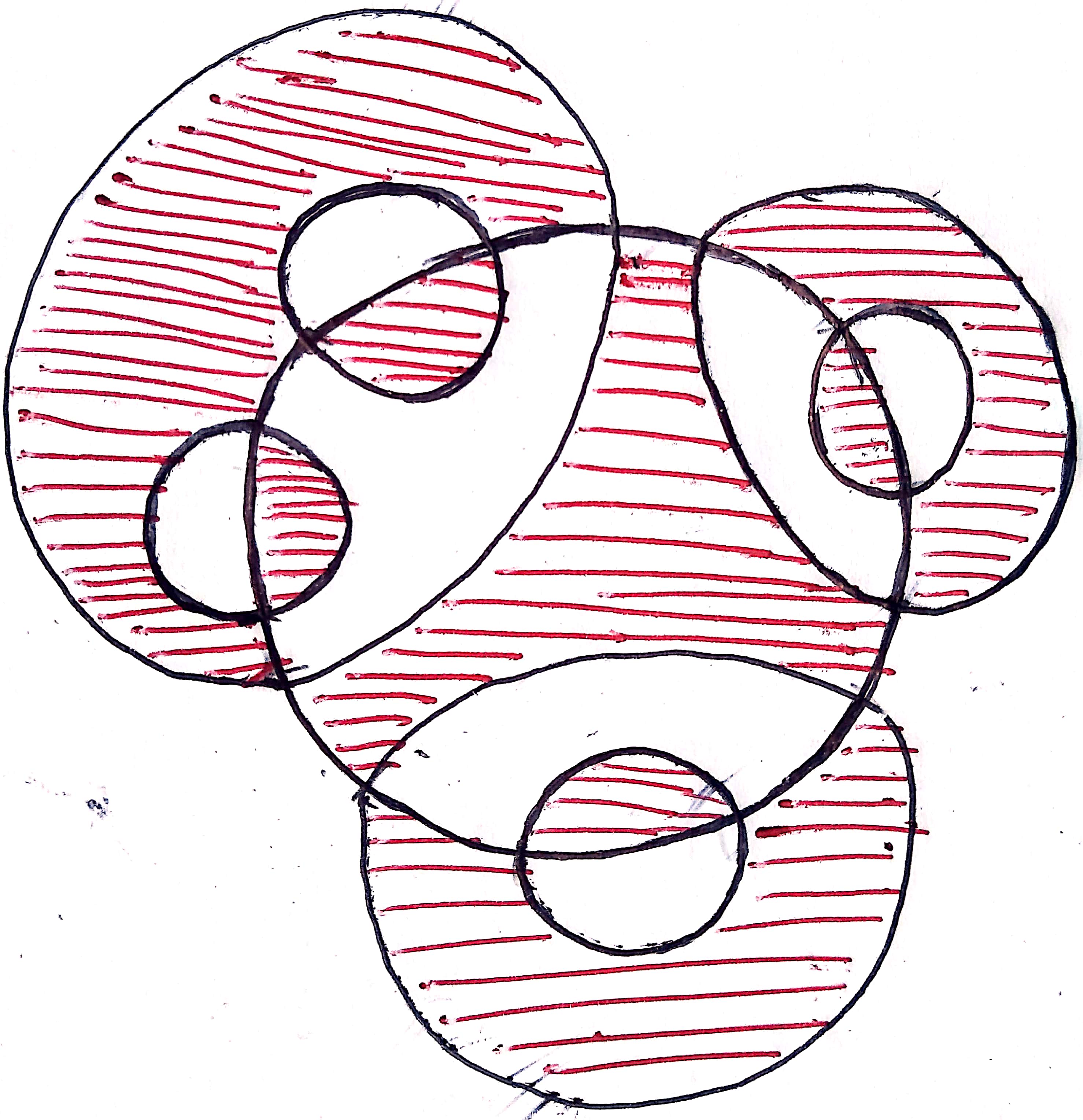} }}
    \qquad
    \subfloat[non-generic real GB-graph from (a)]
    {{\includegraphics[scale=.05]{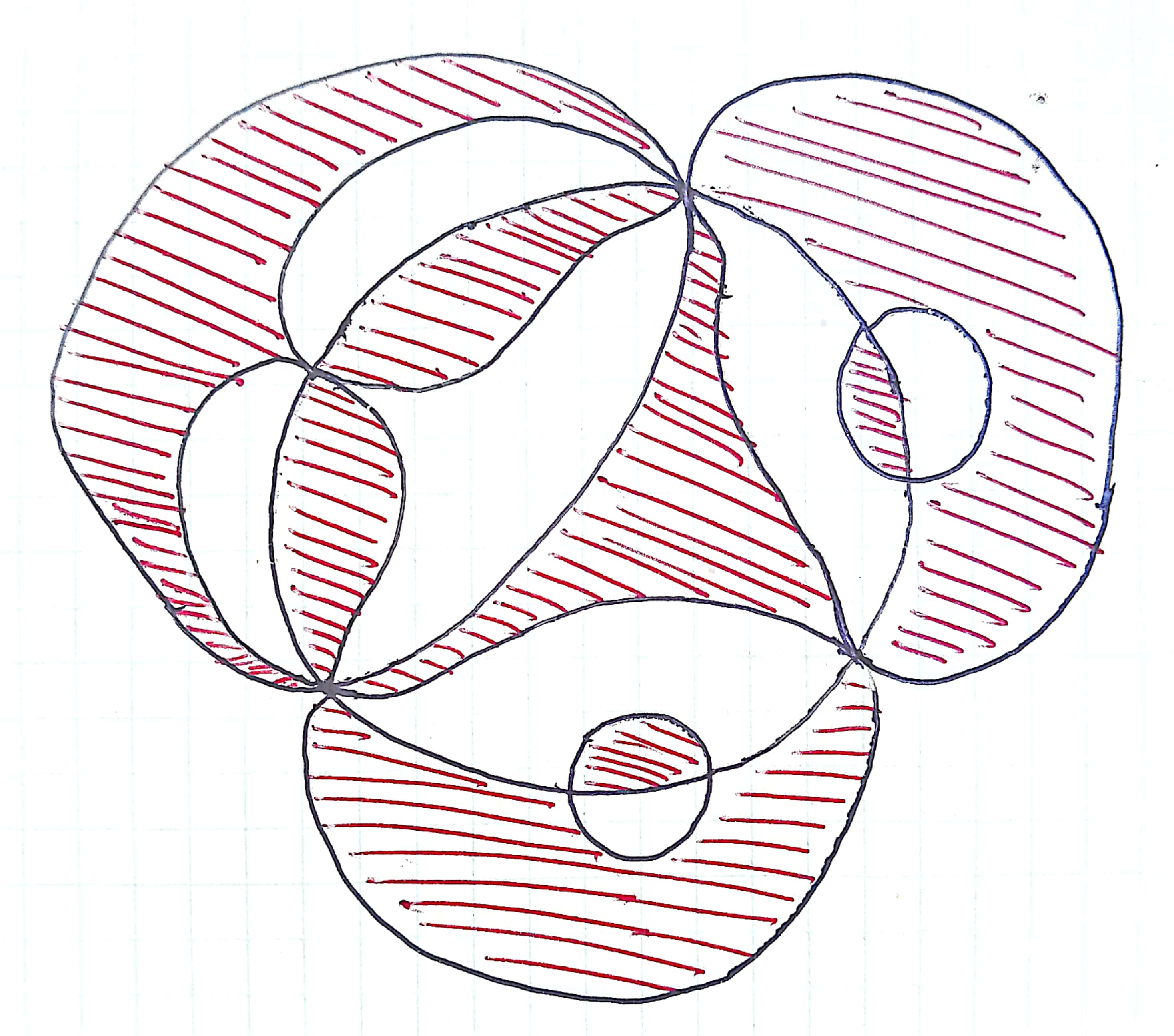} }}%
    \caption{edge-contractions}
\end{center}
\end{figure}

Given a balanced graph $\Gamma$ for each non splitting saddle-connection we can perform an \emph{edge-contraction} 
and so combine a sequence of such operations on the graph. The number of such kind of operation is finite, one for each  non splitting saddle-connection, thus the same happen for combinations of those operations.

There is an inverse operation for the \emph{edge-contraction}.

\subsubsection{vertex-expansion}

\begin{defn}[vertex-expansion]
\label{v-exp}
The operation \emph{vertex-expansion} on balanced graphs consists on the procedure of splitting a corner of degree greater or equal to $6$ of a balanced graph in another $2$ new corners and then to inserting a new edge connecting them as especified below:
\begin{itemize} 
\item[$\boldsymbol{(1)}$]{the set of edge incident to the vertex to be split is split up into two subsets of edges, say $A$ and $B$, such that the edges in each subset runs around the original vertex (the corner to be split) with only one gap.
Each subset correspond to one of the two new vertices;}\\
\item[$\boldsymbol{(2)}$]{ the cardinal of $A$ and $B$ is odd and greater or equal to $3$;}\\
\item[$\boldsymbol{(3)}$]{and a new edge is inserted connecting these two new vertices such that its contraction produces a vertex whose the \emph{order of incidence} of the edges \emph{around} recovers the order of incidence of the edges around the original vertex 
(or, such that we realize the order of incidence around the original vertex going around one vertex from a adjacent edge to the new edge up to the new edge again, and then passing through it until the another vertex and then continuing turning around it in the same sence that we goes around the former vertex up to that new edge again)}.
\end{itemize}

 \begin{figure}[H]
 \begin{center}
       \subfloat[around a vertex]
    {{\includegraphics[width=3.23cm]{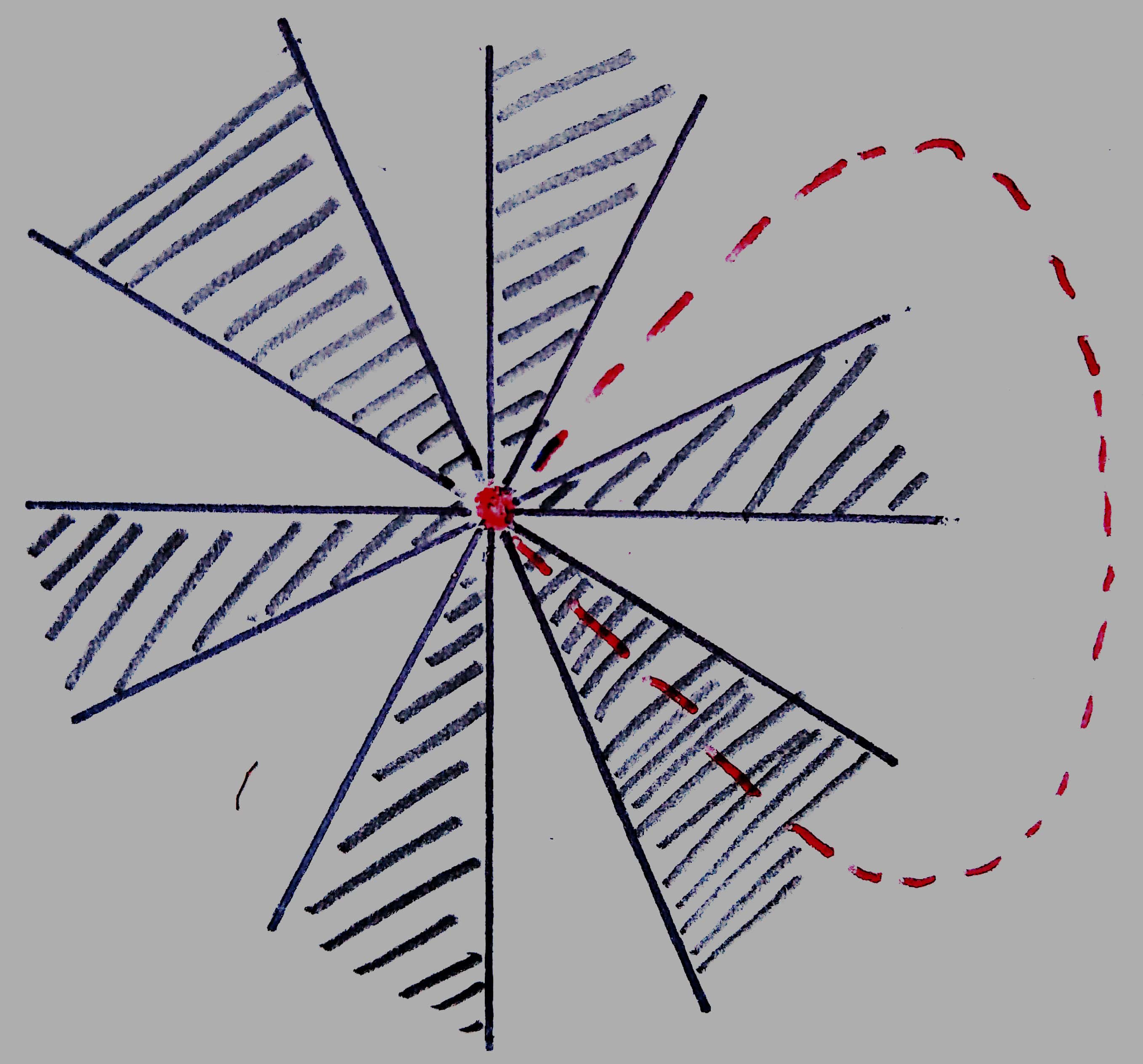} }} \qquad \qquad  
    \subfloat[ vertex-expansion at (a)]
    {{\includegraphics[width=6.5cm]{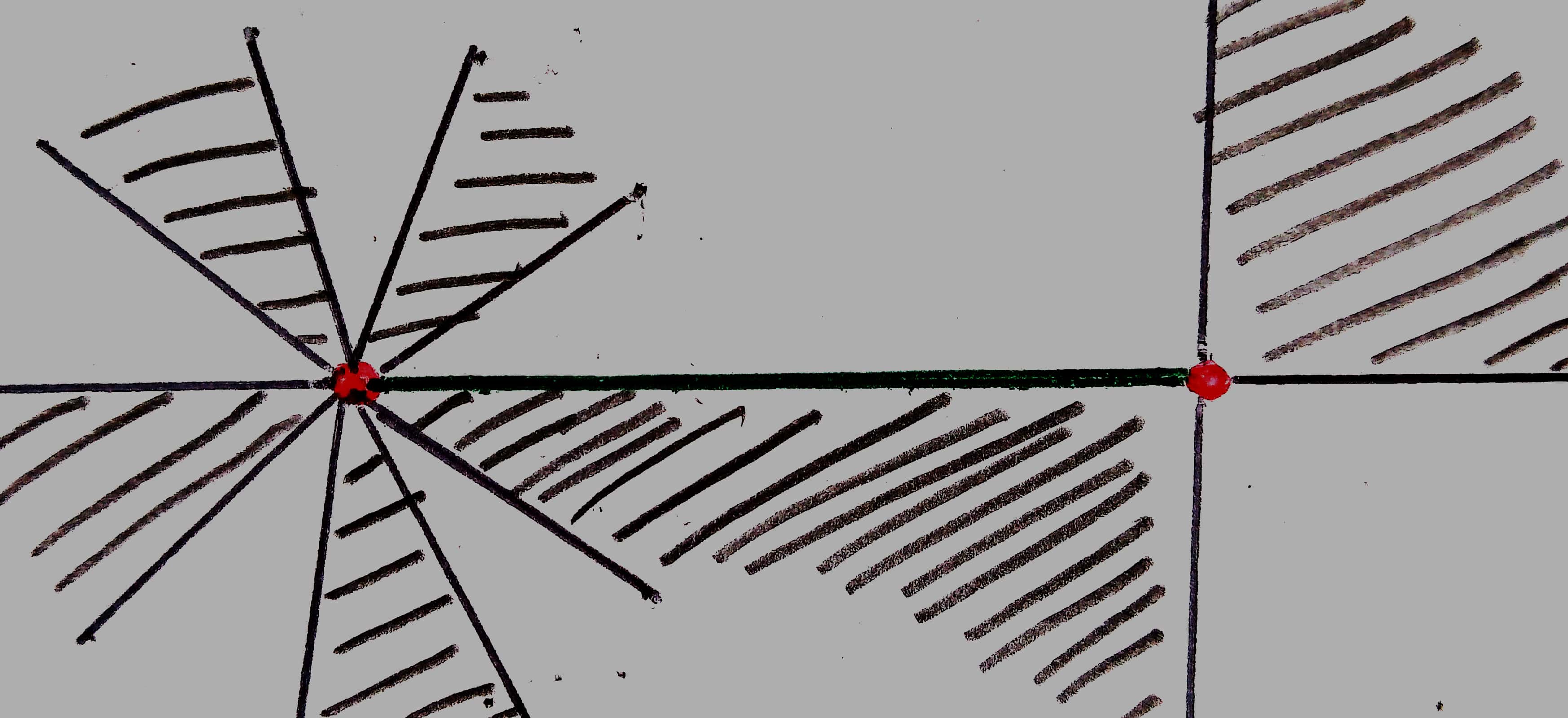} }}   
    \caption{two different vertex-expansion on the same vertex}
\end{center}
\end{figure}
\end{defn}

 \begin{figure}[H]
 \begin{center}
       \subfloat[balanced graph]
    {{\includegraphics[width=5.4cm]{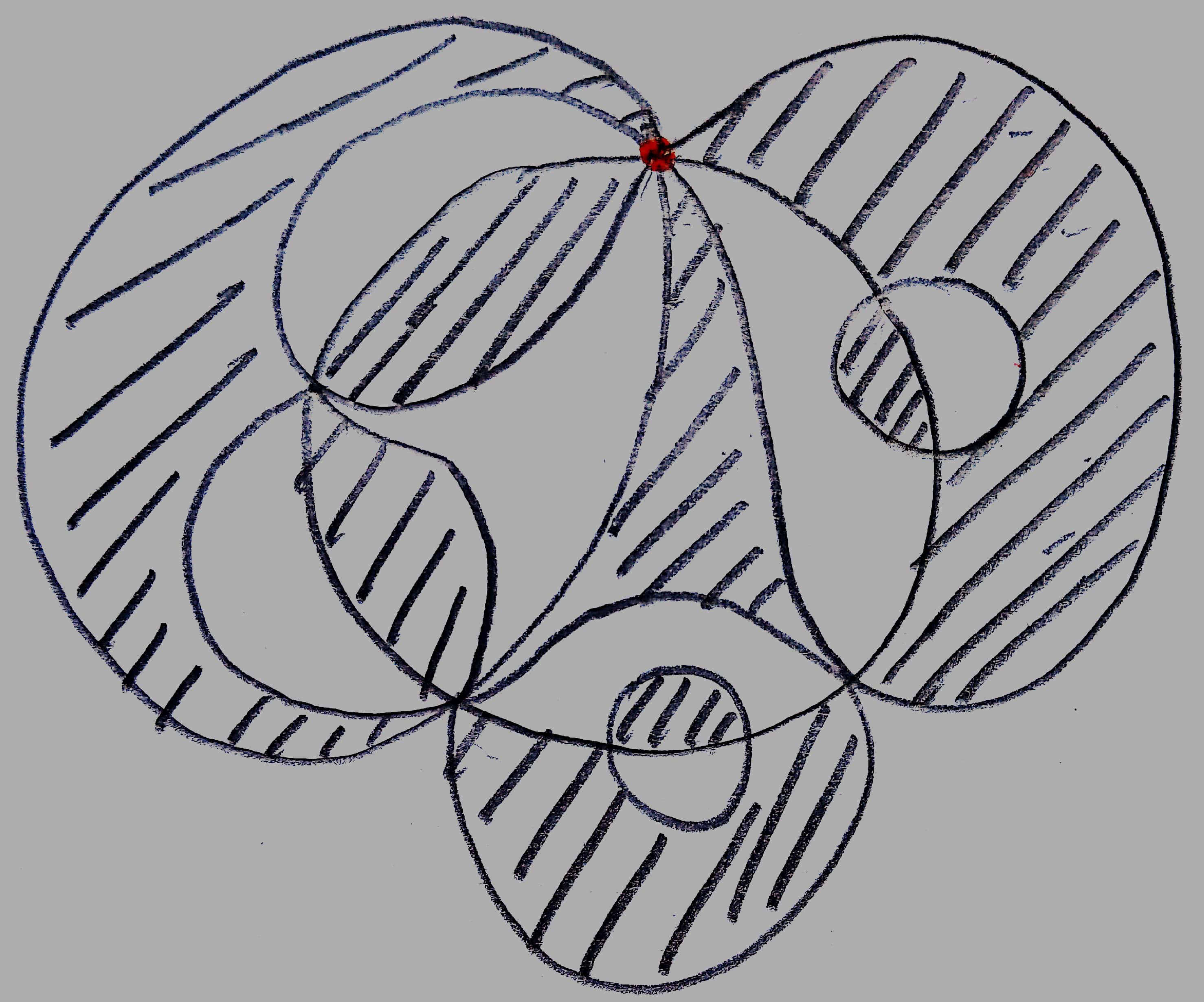} }}    
    \qquad
    \subfloat[ vertex-expansion]
    {{\includegraphics[width=5.3cm]{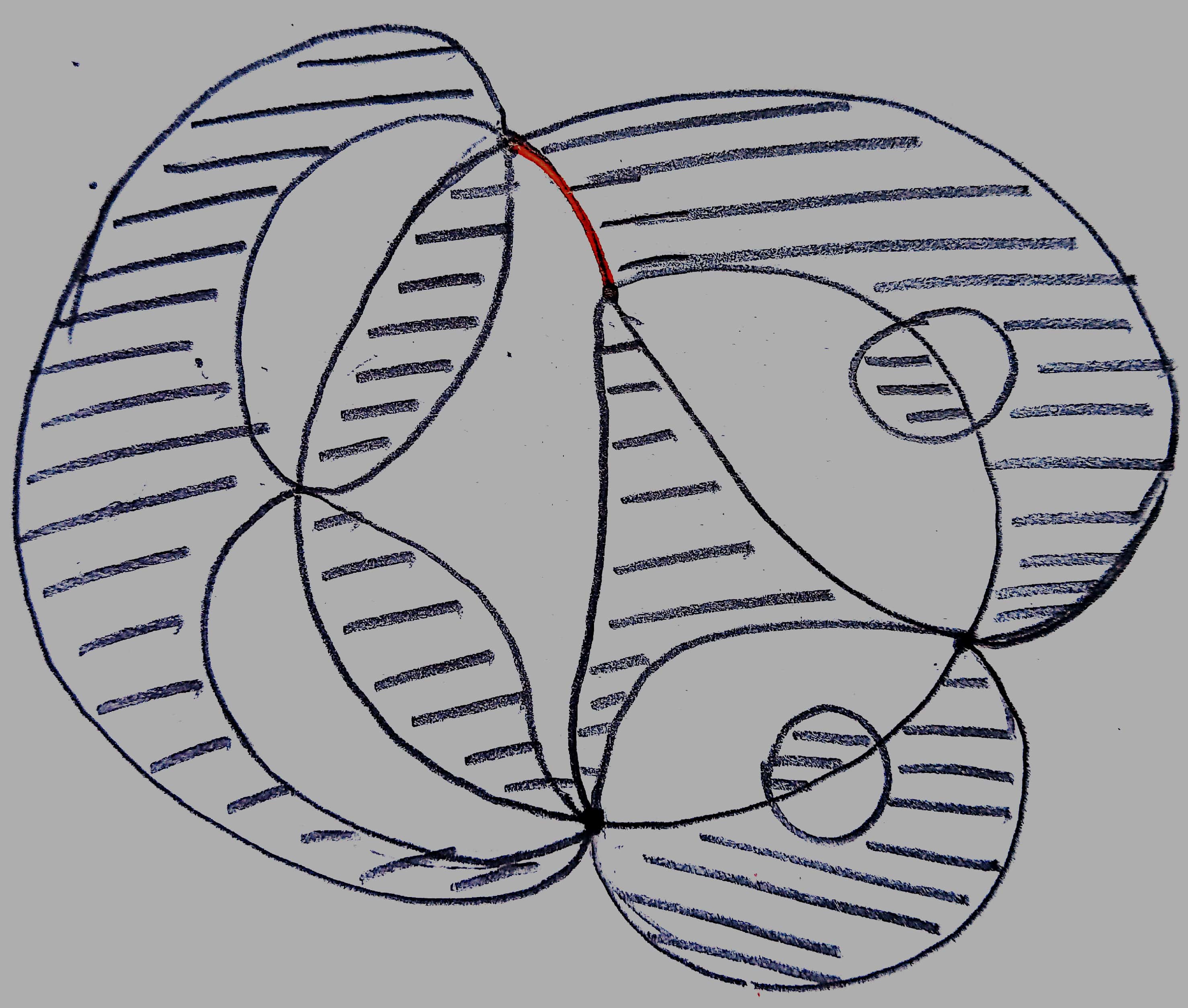} }}\\
    \subfloat[vertex-expansion]
    {{\includegraphics[width=5.4cm]{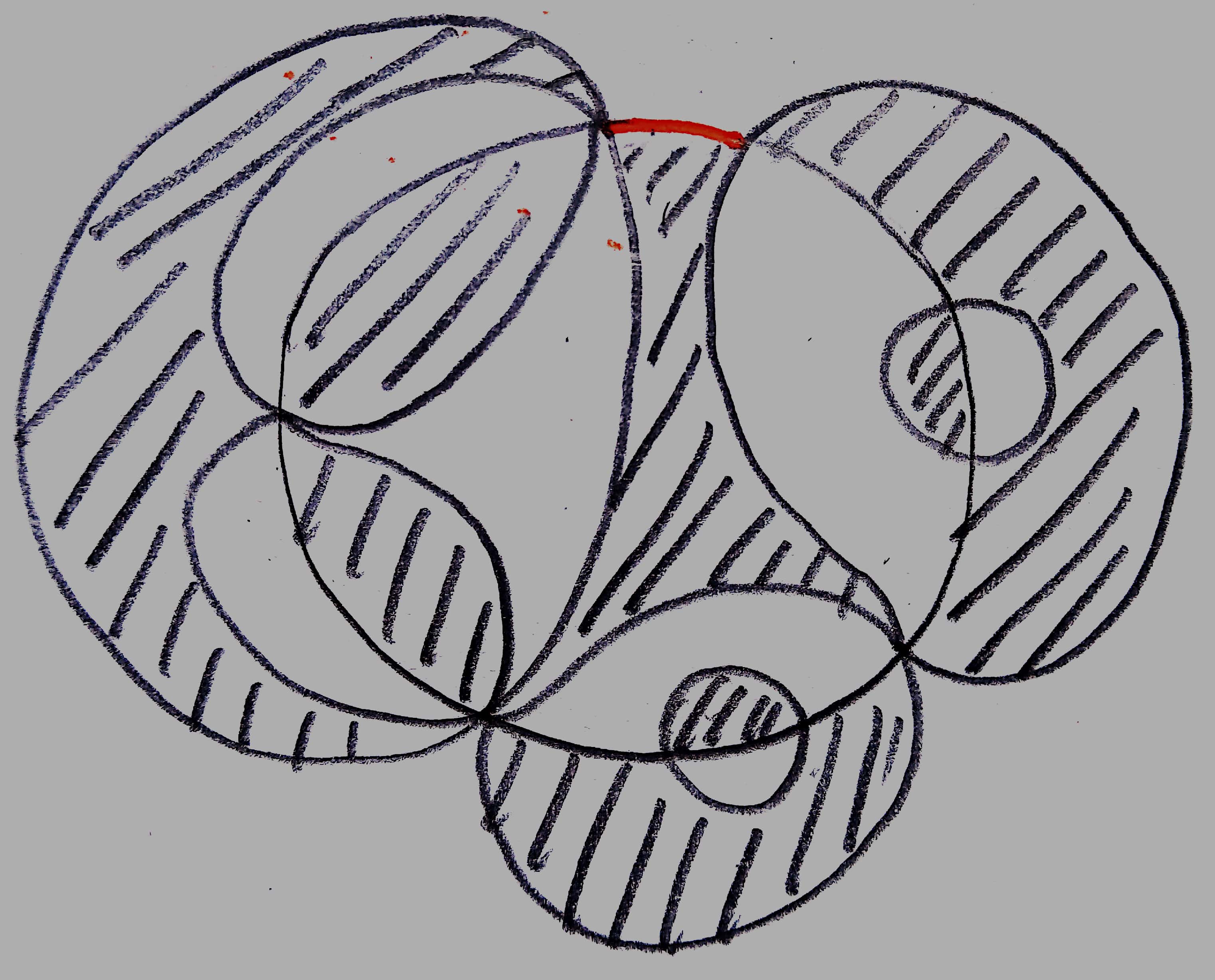} }}
    \caption{two different vertex-expansion on the same vertex}
\end{center}
\end{figure}


\begin{lem}\label{lem:counting-v-exps}
Given a vertex $v$ of degree $m\geq 6$ of a balanced graph. The number of all possible \emph{vertex-expansions} at $v$ is
\[\dfrac{m(m-4)}{4}\]
\end{lem}

This means that from a balanced graph $\Gamma$ with a vertex of degree $m\geq 6$ we can produce $\frac{m(m-4)}{4}$ new balanced graphs from \emph{vertex-expasions} against that vertex.

\begin{proof}[Proof of Lemma $\ref{lem:counting-v-exps}$]
Let $m\geq 6$ be a positive even integer number.
The number of partitions of $m$ with two parts and with each part being geater or equal to $3$ equals $\frac{m-4}{2}$.

Regarding the orientation, we enumerate the edges that are incident to $v$ from $1$ until $m$. Each bipartition of the edges that are incident to $v$ into the subsets $A$ and $B$ as in Definition $\ref{v-exp}$ possesses a edge $k_A$ and $k_B$ that left the same number of edges from $A$ and $B$, respectively, at their left and right sides, since $|A|$ and $|B|$ are  both odd numbers. We have $k_B= k_A + 1+\frac{m-2}{2}\mod{m}$. 

For a choosen edge $k\in\{1,2, \cdots , m\}$ there is $\frac{m-4}{2}$ partitions, $m=|A|+|B|$, for which $k=k_A$. Each \emph{vertex-expansion} from each such partition are the same of those ones obtained taken the edge $k_A +1+\frac{m-2}{2}\mod{m}$. Hence, if that is the only coincidence between all possible \emph{vertex-expasion}, it follows that the total number of vertex-expansion at $v$ is
\[\frac{\text{number of edges at}\, v}{2}\cdot\frac{(m-4)}{2}=\frac{m(m-4)}{4}\]
 
Suppose that $(A_1, B_1)$ and $(A_2, B_2)$ are partitions of the edges incident to $v$ as in Definition $\ref{v-exp}$, that produces the same \emph{vertex-expansion}. 
But, the items $(2)$  and $(3)$ of the definition of the \emph{vertex-expansion operation} $\ref{v-exp}$, implies that $k_{A_1} = k_{A_2}$ or $k_{A_1} = k_{B_2}$ . Therefore, $A_1=A_2$ or $A_1=B_2$. That is the coincidence taken into account previously. Hence, we are done. 

\begin{figure}[H]
 \begin{center}
       \subfloat[around the vertex $v$]
    {{\includegraphics[width=5cm]{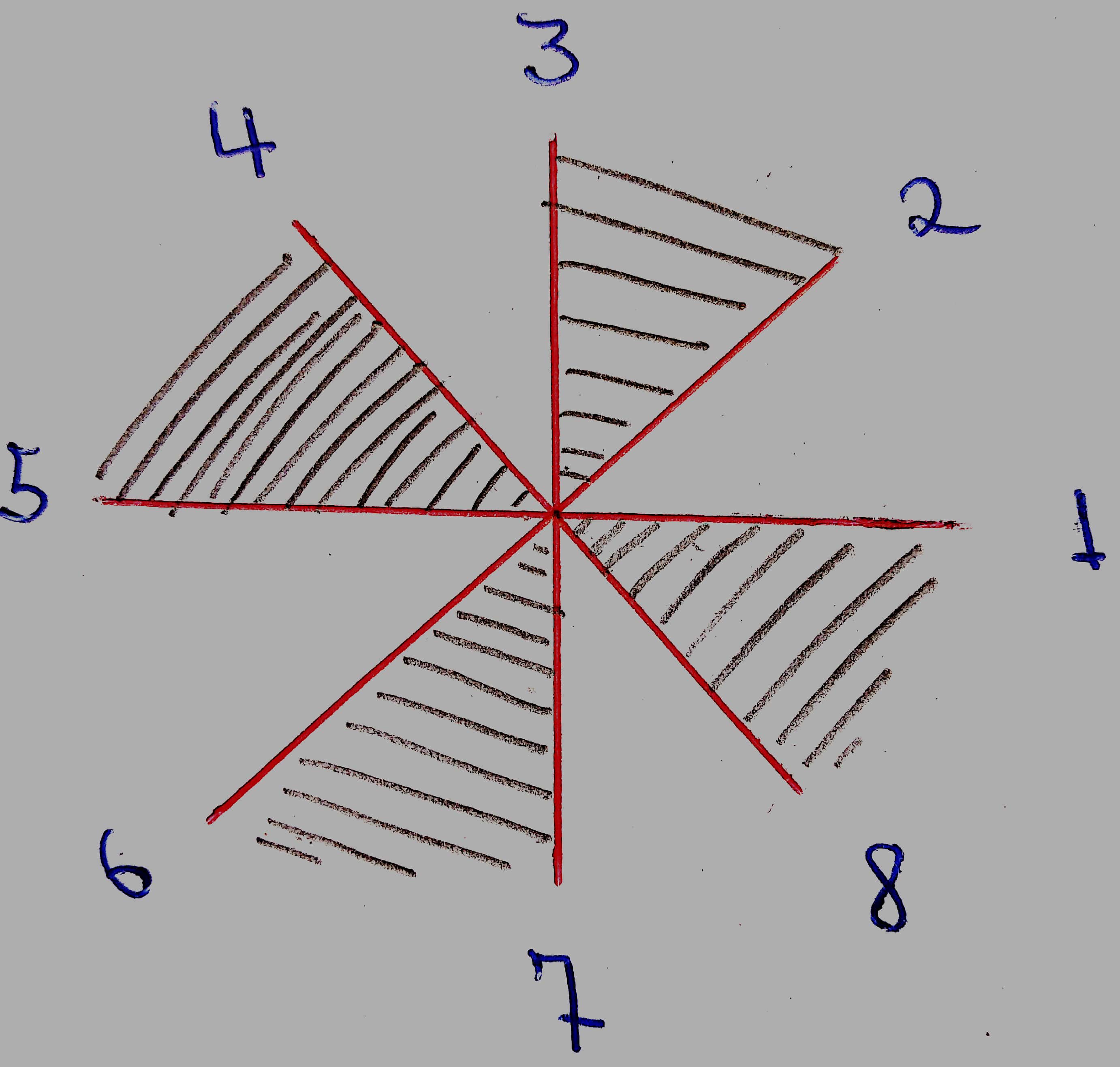} }} 
  \end{center}
\end{figure}
\begin{figure}[H]
 \begin{center}
    \subfloat[ all possible vertex-expansion at $v$]
    {{\includegraphics[width=9cm]{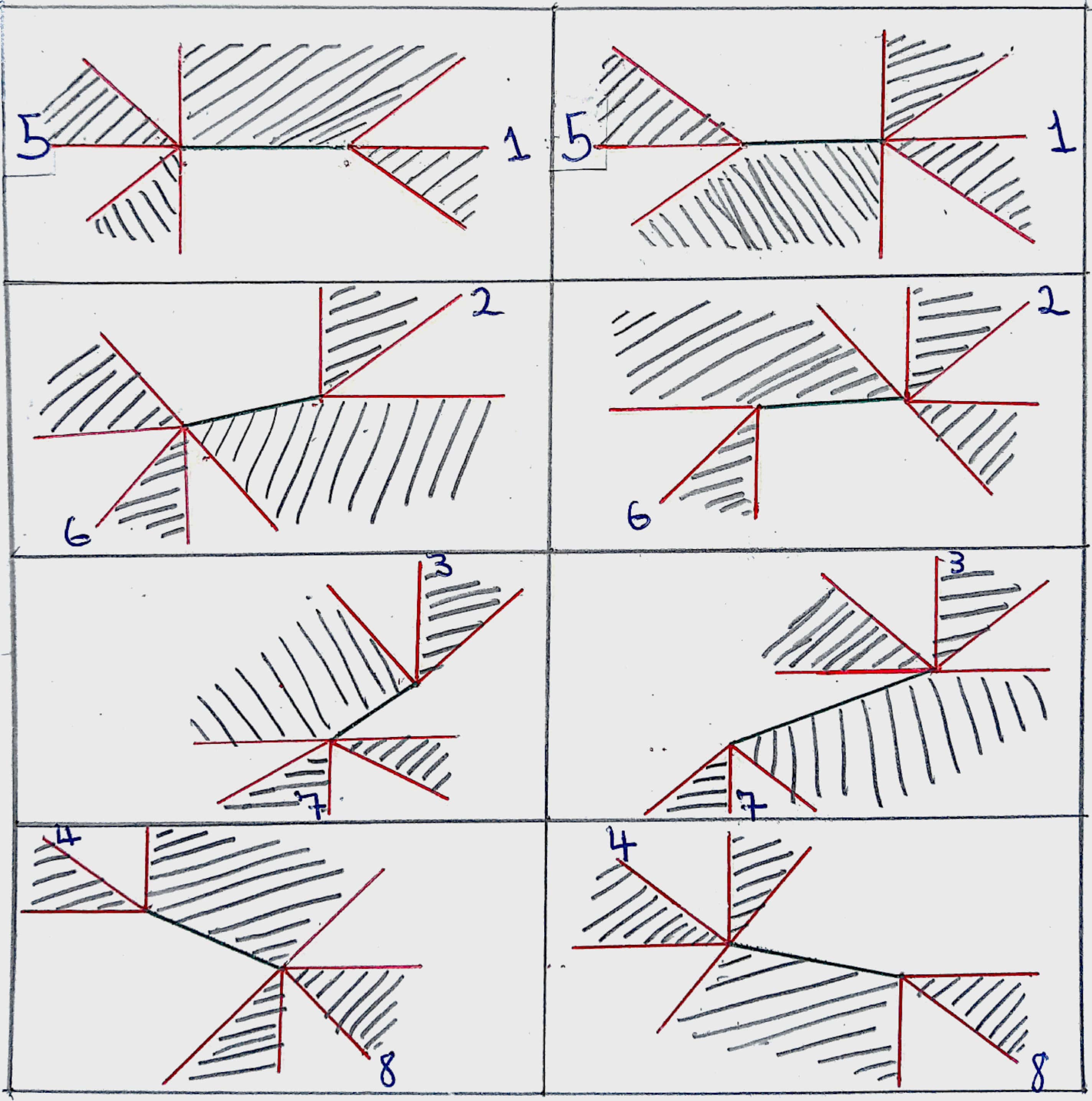} }}   
    \caption{counting vertex-expansion}
\end{center}
\end{figure}

\end{proof}

We must to pay attention to the affect of the \emph{edge-contraction} and \emph{vertex-espansion} operations on the cycles of a balanced graphs.

Let $\Gamma\in\textbf{BG}$ and $\gamma\subset\Gamma$ a cycle.
Let $u\in V(\Gamma)$ and $v\in V(\Gamma)$ corners of $\Gamma$ that are joined by a non separatng saddle-connection $l$ and let $w\in V(\Gamma)$ a corner of degree $2k>4$.

The same is true with respect to the vertex-expasion if $\gamma$ is not incident to $w\in V(\Gamma)$.

\paragraph*{edge-contraction :}\label{e-c-cycle}
\begin{itemize}
\item[\textbf{e-c : 1}]{If $\gamma\subset\Gamma$ does not contain the saddle-connection $l$ or are even incident to $u\in V(\Gamma)$ and $v\in V(\Gamma)$ then a edge-contraction against $l$ will not modify $\gamma$, i. e., it remains as a cycle on the new graph. }

\item[\textbf{e-c : 2}]{If $\gamma\in\Gamma$ is incident to only one of the vertices $u\in V(\Gamma)$ and $v\in V(\Gamma)$. Again, after the edge-contraction on $\l$, $\gamma$ persists as a cycle, since no change on the incidency structure of the cycle occurs.}

\item[\textbf{e-c : 3}]{If $\gamma\in\Gamma$ contains $l$. Again, after the edge-contraction on $l$, $\gamma$ persists as a cycle, since the operation on it simply corresponds to remove a subpath of it and glue the endponts.}

\item[\textbf{e-c : 4}]{But it can also happens that $u\in V(\Gamma)$ and $v\in V(\Gamma)$ be incident to $\gamma$ but with $\gamma$ not containing $l$. In this case, $\gamma$ is pinched at $u\in V(\Gamma)$ and $v\in V(\Gamma)$  resulting into two cycle with 
the new vertex created by the edge-contraction in commom. }
\end{itemize}
\paragraph*{vertex-expansion :}\label{v-e-cycle}
\begin{itemize}
\item[\textbf{v-e : 1}]{If $\gamma$ is not incident to $w\in V(\Gamma)$ a vertex-expasion on it $\gamma$ remains as a cycle, since the operation only changes the incidency structure on $w\in V(\Gamma)$.}

\item[\textbf{v-e : 2}]{If $\gamma$ contains the vertex $w\in V(\Gamma)$, then performing  a \emph{vertex-expansion} on $w$ we can arrive at one of the following two situations:
\begin{itemize}
\item[$\star$]{$\gamma$ persists as a cycle. This happens only if the two edges from $L$ incident to $w\in V({\Gamma})$ belongs to the same subset of the edge partition associated to that \emph{vertex-expansion}. Thus, the local balance condition will be satisfied;}
\end{itemize}
or
\begin{itemize}
\item[$\star\star$]{the cycle $\gamma$ 
is obstructed by the new saddle-connection inserted by the \emph{vertex-expansion} operation at $w$. This happens only if the two edges from $\gamma$ incident to $w\in V({\Gamma})$ belongs to different subset of the edge bipartition associated to the \emph{vertex-expansion}. But such obstruction can always be overcome inserting a path with the compatible orientation (made up by saddle-connections) closing it into a new cycle.}
\end{itemize}}
\end{itemize}

\begin{prop}\label{prop:edge-contr}
A balanced graph of type $(g, d, n-1)$ 
is returned after an \emph{edge-contraction} operation on a balanced graph of type $(g,d, n)$.
\end{prop}

\begin{proof}
Let $\Gamma$ be a balanced graph of type $(g, d, n)$ with a alternating face coloring $A-B$. Let $v,u\in V({\Gamma})$ be two endpoints of the \emph{saddle-connection} where an \emph{edge-contraction} is performed resulting on a new cellularly embedded graph $\Gamma '$ with a distinguished vertex $w\in V(\Gamma')$ obtained by the edge-contraction
.

Since the \emph{edge-contraction} does not changes the genus of the underline surface and the transformed graph still a cellular graph, 
the \emph{Euler-characteristic formula} guarantees the constancy of the number of faces, as each edge contracted decreases by one the cardinals of the vertex and edge sets. 
Also, no changes are made to the face coloring.
Hence, the resulting graph after an \emph{edge-contraction} is gobally balanced of type $(g, d, n-1).$ 


Let's show that $\Gamma '$ satisfeis the local balance.

Let $L$ be a positive cobordant multicycle of $\Gamma '$ that does not contains the distinguished vertex $w\in{V(\Gamma ')}$ either in its interior $R$ or on itself. Then, $L$ corresponds to a positive cobordant multicycle of $\Gamma$, then it satisfy the local balance condition. 

Now, if $L$ contains the vertex $w\in V(\Gamma ')$ into its interor $R$, the argument given above about the constancy of the number of faces together with the fact that the edge-contraction does not affect the face coloring the balance condition is positively verified since $L$ corresponds to a positive cobordant multicycle of $\Gamma$ that contains the saddle-connection to be contracted in its interor.
   
Finally, let $L$ be a  positive cobordant multicycle of $\Gamma '$ with the vertex $w\in V({\Gamma '})$ being incident to some cycle of $L$, say $\gamma '$. 

There is a bunch of possibilities of obtaining $\gamma '$ 
from a cycle $\gamma$ of $\Gamma$. These possibilities are that ones described in \textbf{e-c : 1},\textbf{2},\textbf{3}, and \textbf{4} at page $\pageref{e-c-cycle}$.

If we are into the situation \textbf{e-c : 1} or \textbf{2} or \textbf{3}, then $\gamma'$ to correspond to a cycle $\gamma$ of $\Gamma$ and in this case $L'$ to correspond to a positive cobordant multicycle $L$, therefore $L'$ satisfies the local balance.

But $\gamma$ can also not correspond to a cycle of the pre-operated cellular graph $\Gamma$. In this case it corresponds to one of the cycle created by the \emph{edge-contraction} described in \textbf{e-c : 4} at page $\pageref{e-c-cycle}$.

We can promote the obstructed cycle $\gamma '$ in $\Gamma$ to a positive cycle of $\Gamma$ adding to it the \emph{saddle-connections} adjacents to the $A$ face that is incident to the new edge inserted from the vertex expansion. 

After the \emph{vertex-expansion}

So we conclude the expected.
\end{proof}

\begin{defn}\label{r-cycle}
We will refer to $\rr$ into a real GB-graph as the \emph{real cycle}.
\end{defn}

\begin{prop}\label{ngenrg-f-genrg}
Any non generic real balanced graph is obtained from a sequence of edge-contractions starting from a generic balanced graph of the same degree.
\end{prop}
\begin{proof}
Recall that a generic real balanced graph possesses only corners of degree $4$. Then, for a given non generic real balanced graph we can split a corner of degree greater than $4$ by a sequence of \emph{vertex-expansions} into a collection of $4$-valent vertices being connected by new edges included into the real cycle, therefore preserving the symmetry that a real graphs has. Hence, doing that at each corner of degree greater than $4$ will output a balanced graph symmetric with respect to $\rr$ and with all vertices of degree $4$ and contained in $\rr$, i. e., we will get a generic real balanced graph.

Now, we will clarify the above described appropriate procedure of splitting the vertex by a concatenation of \emph{vertex-expansions}.
 
Consider the real cycle oriented counterclockwise.

Thus, having chosen a vertex of degree $2m\geq 6$ we can choose the real edge $r\subset\rr$ that arrives (regarding the considered orientation on the real cycle chosen) at that vertex. Then we take the predecessor and the successor edges to  $r$ concerning the cyclic order around that vertex, then that two edges together with the edge $r$ will form the set $A$ as in the definition of the \emph{vertex-expansion} operation. And we can operate a \emph{vertex-expansion} creating two new vertices, one with valence $4$ and another one with valence $2m-2$. We repeat this procedure up to left a vertex of degree $4=2m-2k$, where $k$ is the number of \emph{vertex-expansions} applied. See the illustration bellow.

\begin{figure}[H]
 \begin{center}
      \subfloat[]
    {{\includegraphics[width=4.8cm]{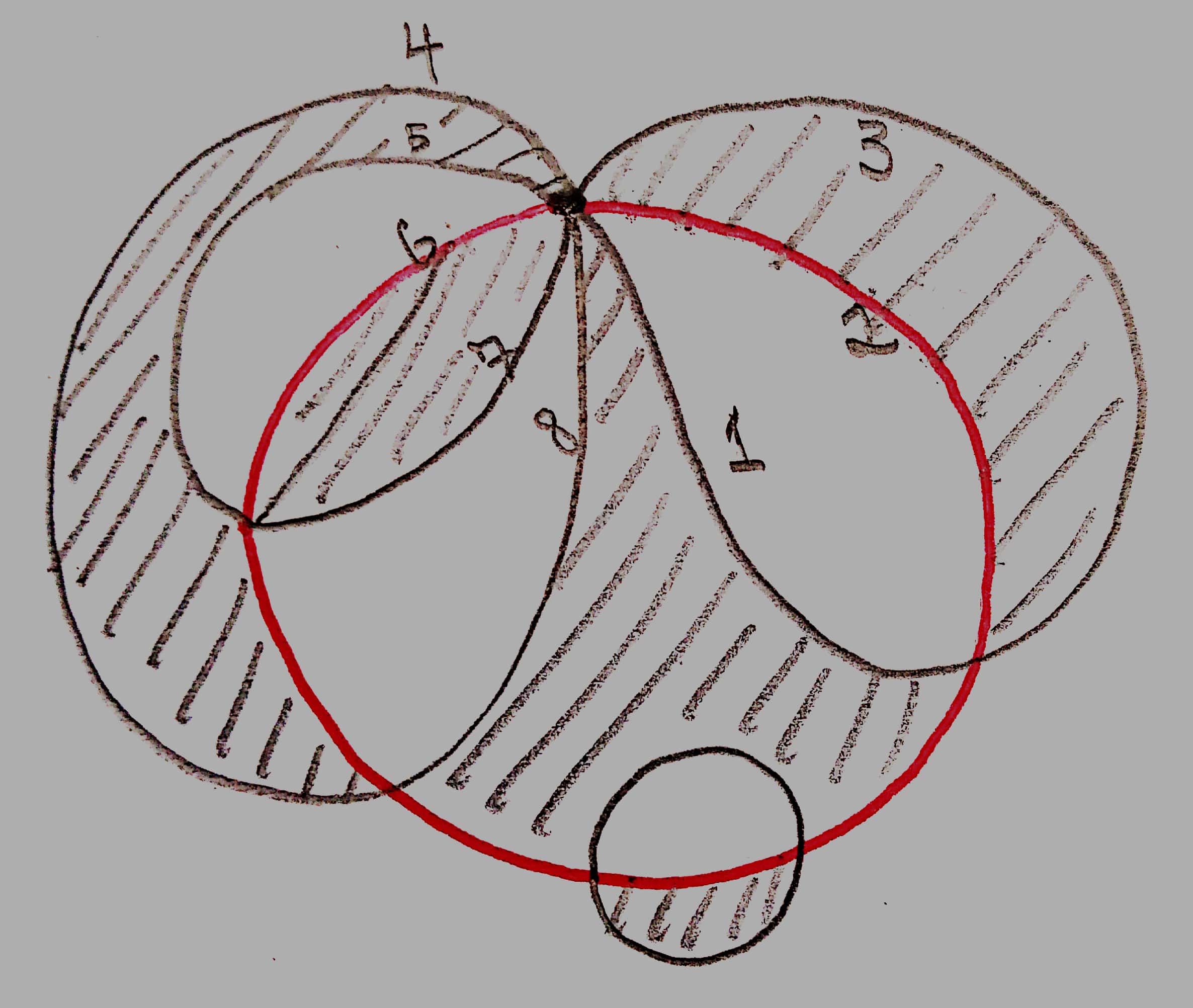}}}
     \qquad \subfloat[]
    {{\includegraphics[width=4.8cm]{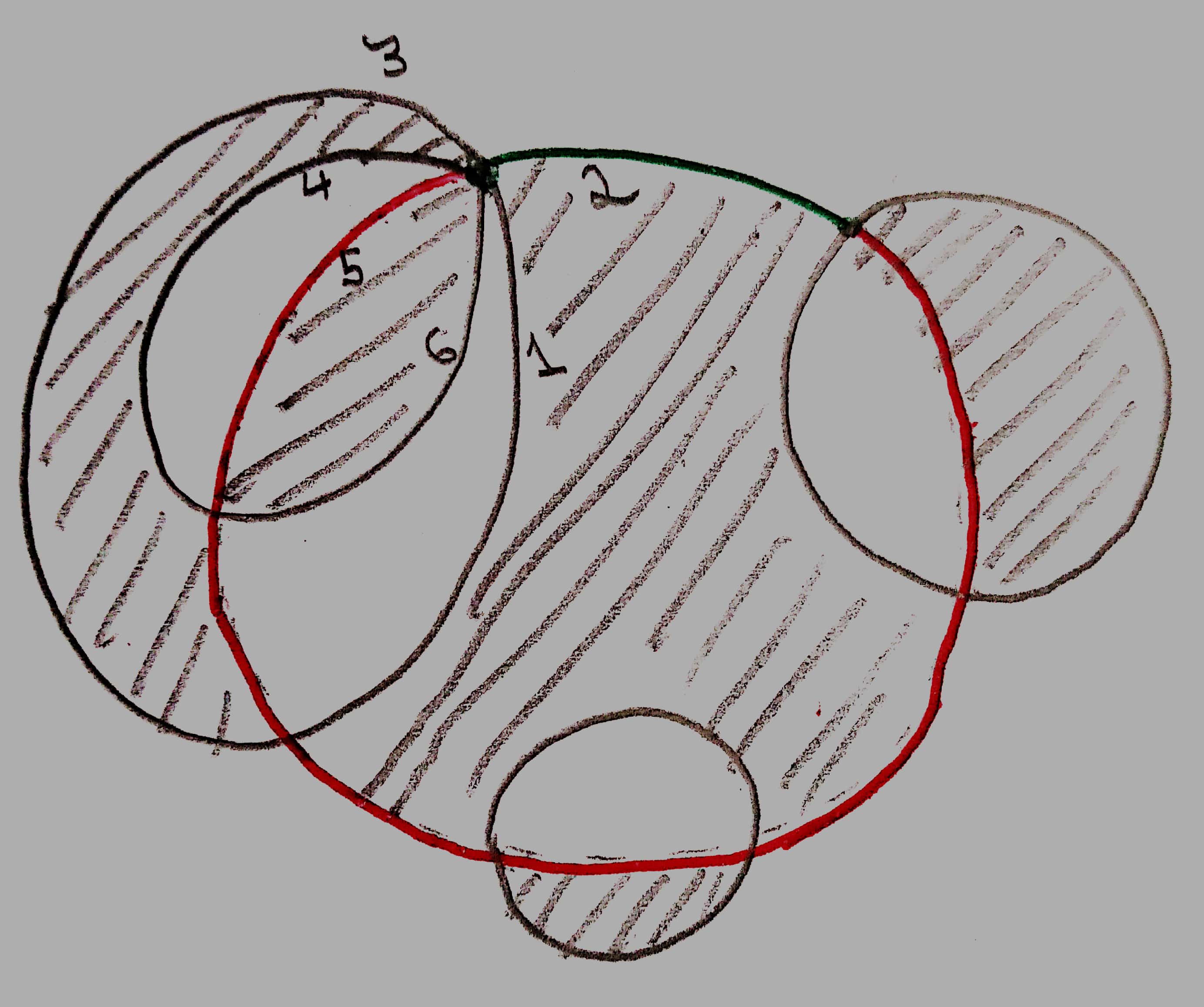}}}
      \qquad \subfloat[]
    {{\includegraphics[width=5.0cm]{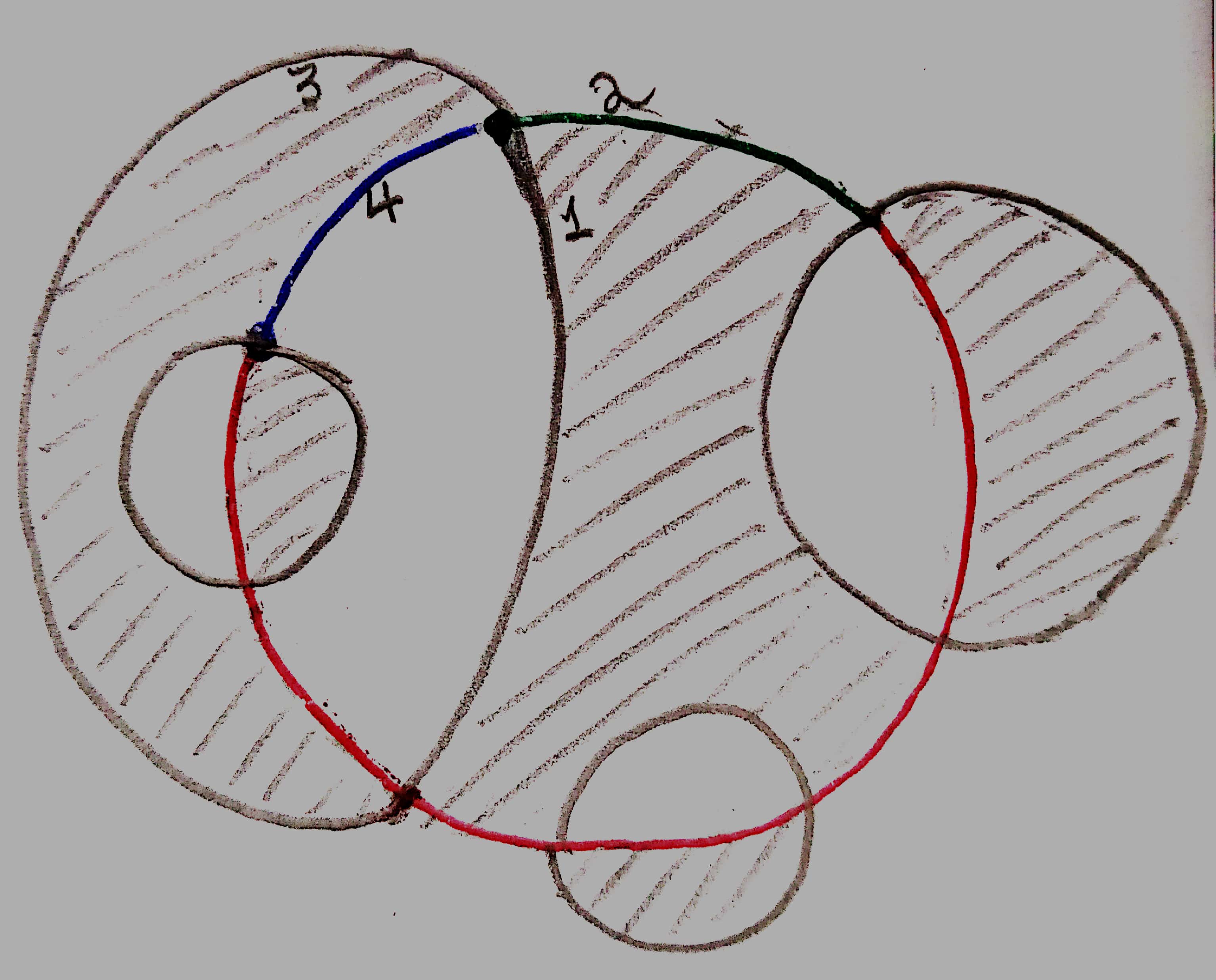}}}
    \end{center}
     \caption[vertex-expansion]{from a nongeneric to a generic real graph via successive vertex-expansion}
\end{figure}

Each \emph{vertex-expansion} has an inverse correspondent operation that is a \emph{edge-contraction}. 

Then the reverse concatenation of that correspondent inverse operarations is a sequence of {edge}-{contractions} that produces the given non generic real balanced graph from a generic one. 

\end{proof}

There is another one operation relative to the contraction of saddle-connections. To introduce it we will single out a special type of corners of a balanced graph. 

\begin{defn}[strongly-connected corners \& simple-pieces]\label{s-candsp}
We say that the endpoints of a splitting saddle-connection of a balanced graph are strongly-connected if they are joined by an odd number, strictly greater than $1$, of saddle-connections that are incident to such corners without gaps turning around them. 

In that situation there is an even number of adjacent faces bounded by such saddle-connections that are incidente to that corners, and by the alternating coloring condition with the same number of faces with each color.

The union of such faces is called by \emph{simple-pieces of} the balanced graph. And the halph of the number of face that it contains is its degree.
\end{defn}


\begin{figure}[H]
 \begin{center}
       \subfloat[degree 1 simple-piece]
   {{\includegraphics[width=5.3cm]{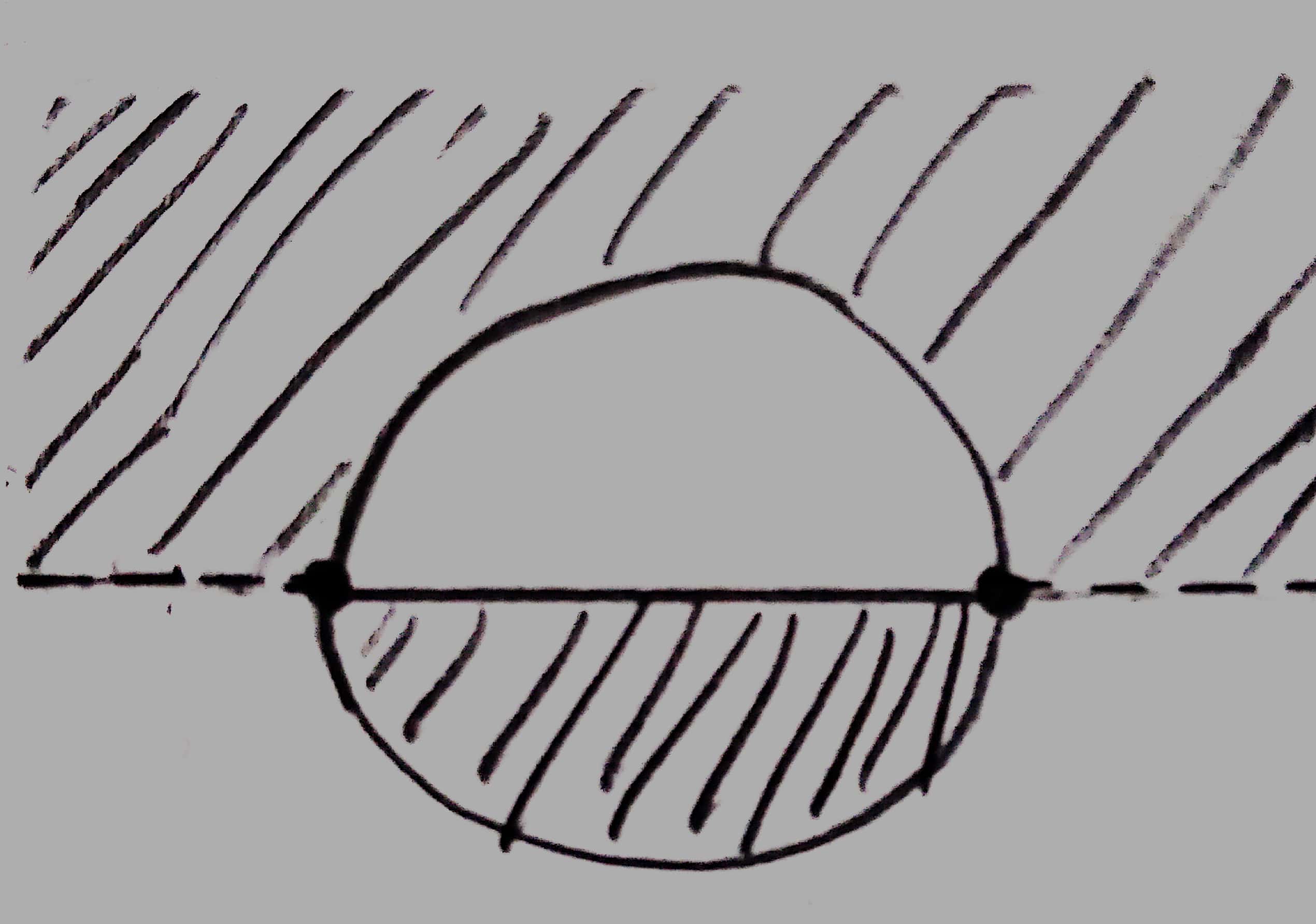}}}
        \quad    \subfloat[degree 3 simple-piece]
    {{\includegraphics[width=5.2cm]{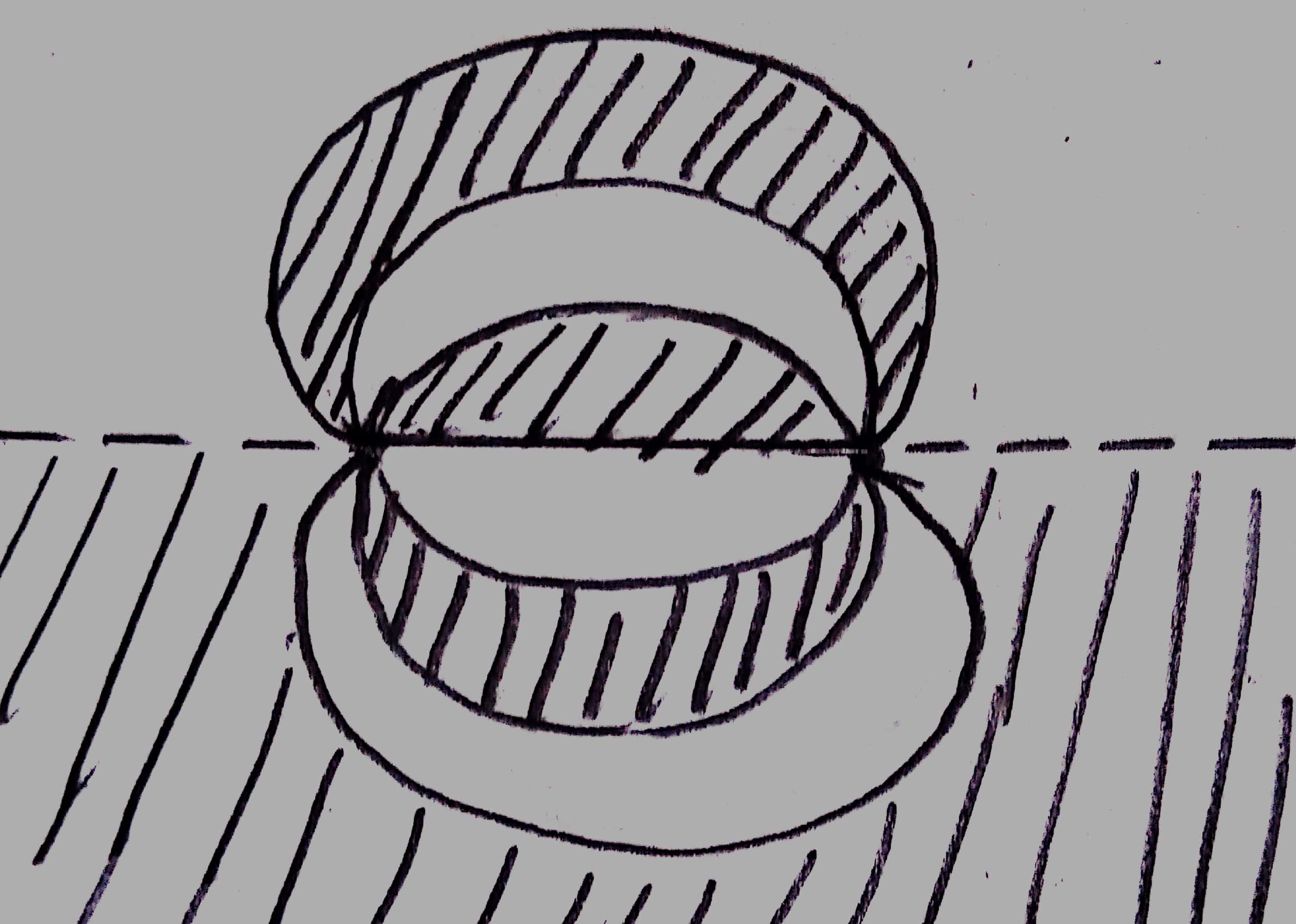}}}
     \end{center}
     \caption{strongly-connected corners \& simple-pieces}
\end{figure}
\rem{The $1$-squeleton of a simple-piece of a balanced graph has appeared elsewhere on the scientific literature, mostly connected to physics, being known with the names: \emph{banana graphs (diagram), dipole graph, sunset diagram}. They actualy consists on a family of Feynman diagrams\cite{ban:09}.}

\subsubsection{face-collapsing} 

\begin{defn}[face-collapsing]
\label{fc-oper}
The operation of \emph{face-collapsing} on balanced graphs consists on the procedure of to remove a simple-piece and then to identify 
the two splitting-corners of that simple-piece.
\end{defn}

Note that a \emph{face-collapsing} does not change the genus of the balanced graph. 
As for those another operations introduced that is visually quite evident that a face collapsing does not changes the genus (we only shrinks to a point a simply-connected region of the underline surfce), but we can quickly check this resorting to the \emph{Euler formula}.Let $\Gamma$ be a balanced graph of type $(g,d, n)$, then:
\[2g_{\Gamma}=2 + |E({\Gamma})|-n-2d\]
But if we collapse a degree $f$ simple-piece, we remove $2f$ faces, $2f+1$ edges and two vertices are identified. Thus the genus of the new graph, say $\Lambda$ will be
\begin{eqnarray*}
2g_{\Lambda}&=&2 + (|E({\Gamma})|-(2f+1))-(n-1)-(2d-2f)\\
&=&2 + |E({\Gamma})|-n-2d\\
&=& 2g_{\Gamma}
\end{eqnarray*}

\begin{figure}[H]
 \begin{center}
      \subfloat[]
    {{\includegraphics[width=6cm]{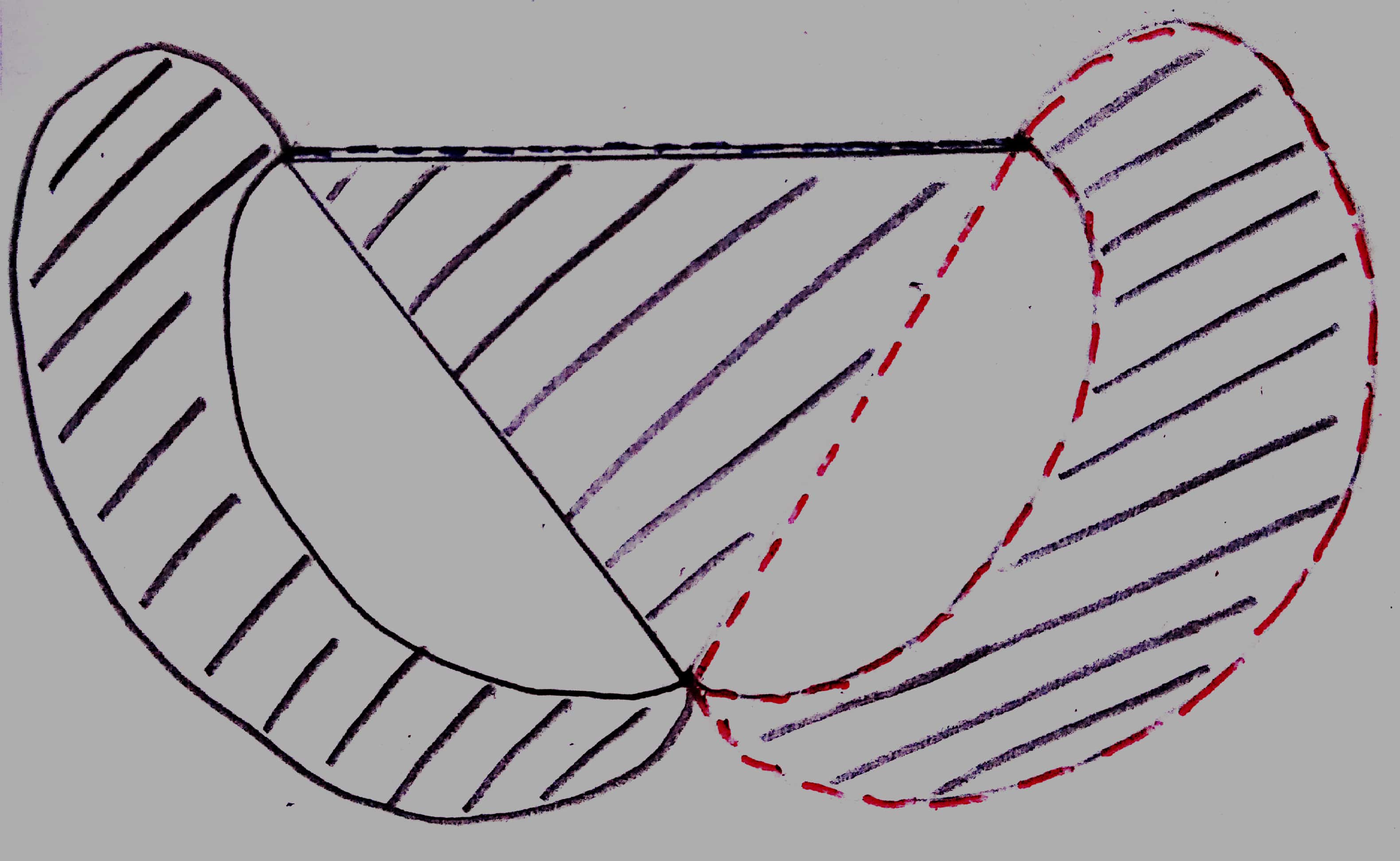}}}
    \qquad
    \subfloat[new balanced graph from (a)]
    {{\includegraphics[width=3.5cm]{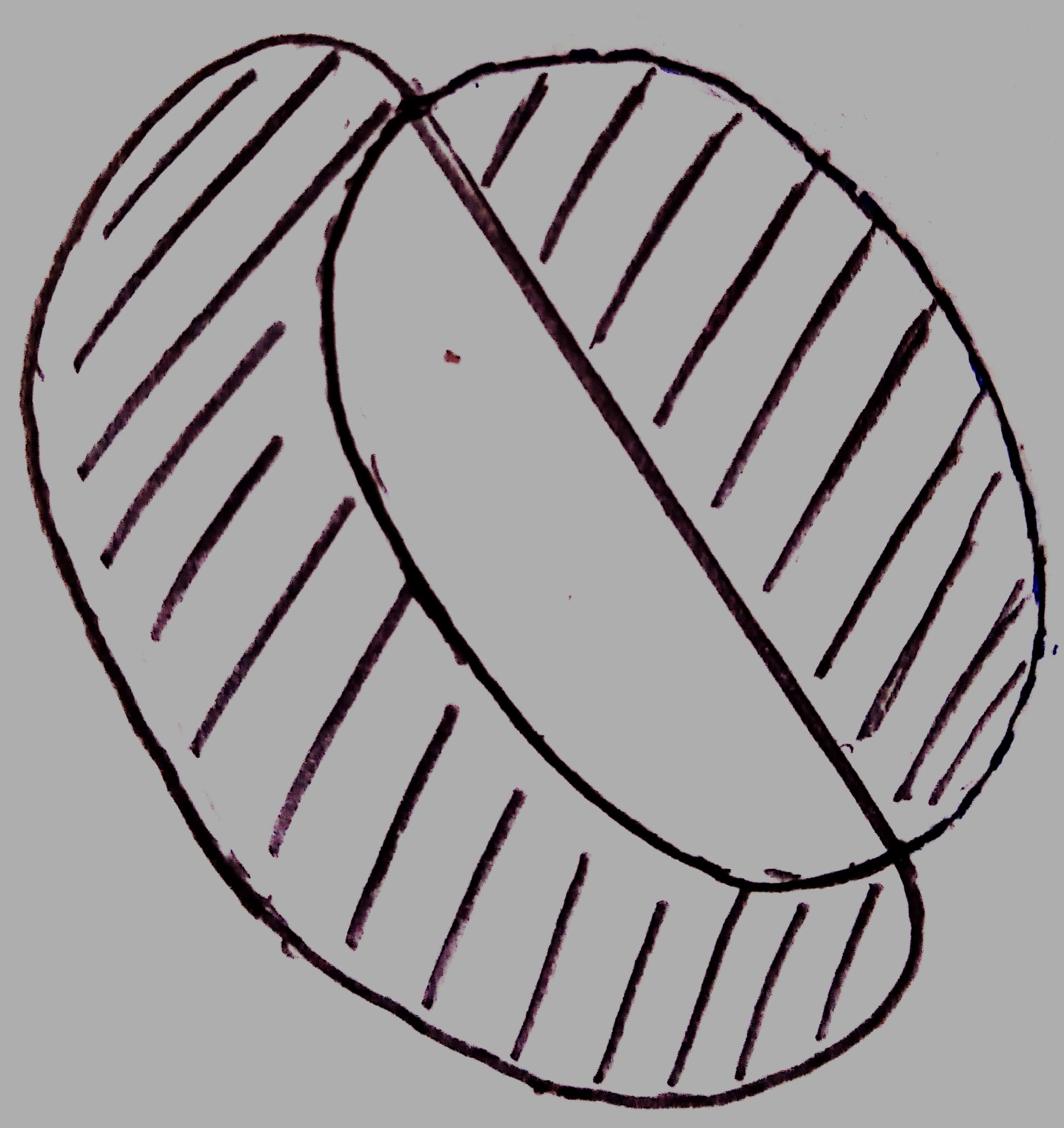}}}
    \end{center}
      \caption{face-collapsing}
 \end{figure}   
  \begin{figure}[H]
 \begin{center}
      \subfloat[]
    {{\includegraphics[width=5.5cm]{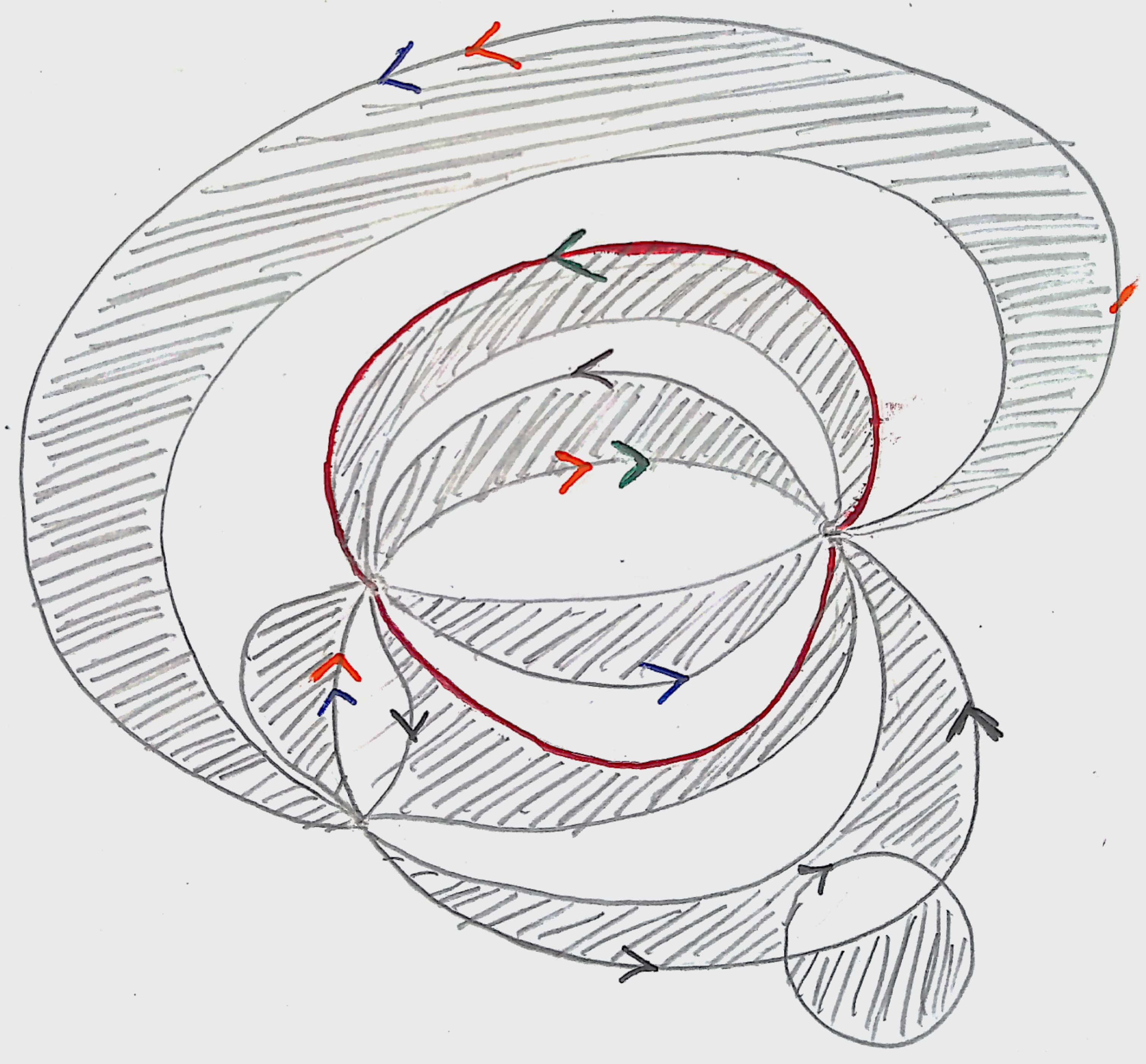}}}
     \qquad \subfloat[]
    {{\includegraphics[width=5.7cm]{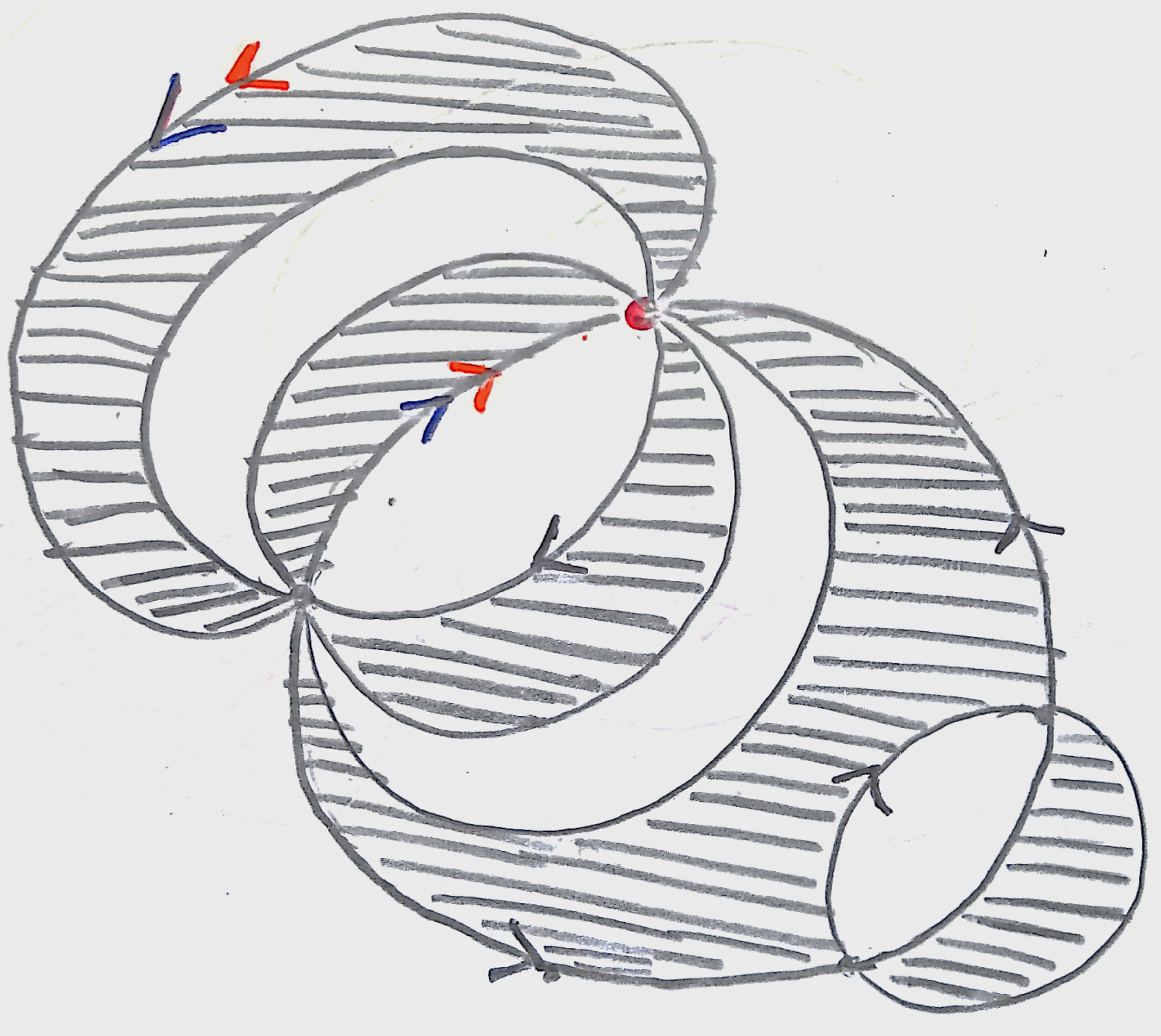}}}
        \end{center}
     \caption{face-collapsing}\label{moving}
\end{figure}

\subsubsection{face-insertion}

The \emph{face-insertion} is the reverse procedure of the \emph{face-collapsing} against a balanced graph. It consists of blowing up a point over a saddle-connection or split a vertex in a simple-peice colored in acordance with the coloring of the pre-operated balanced graph.

We have defined the simple-piece as portion of a balanced graph from a distingueshed incidence structure at two corners. 

To realize the \emph{face-insertion} we need a definition that captures the essence of a simple-piece out. A \emph{simple-peice} looks like a lune tesselation of a compact disk with all lunes sharing its two poles. 

\begin{defn}[simple-piece]\label{m-s-p}
A simple-piece of degree $f$ is the dual graph of a planar bipartite cycle of length $f+2\geq 5$ with $2$ adjacents faces together with its common edge taken out.
\end{defn}

\begin{defn}[face-insertion]
\label{finsert-oper}
The \emph{face-insertion} operation against a balanced graph $\Gamma\in\text{\textbf{BG}}$ consists on the following procedure:
\begin{itemize} 
\item{applied over a saddle-connection:\\
\begin{itemize}
\item[\textbf{1st.}]{to remove a vicinity $U(p)\subset S_g$ of a chosen point $p\in S_g$ over a \emph{saddle-connection} such that $U(p)$ stays contained in the union of those two faces adjacent to that saddle-connection where $p$ lie in;}
\item[\textbf{2nd.}]{to glue a \emph{simple-piece} $P$ to $S_g - U(p)$ identifying the boundaries of $P$ and $S_g - U(p)$ such that each one of the two vertices of $P$ is identified to each one of the two points of the set $\partial{P}\cap E(\Gamma)$ in such a way that the colors of the faces from $\Gamma$ and $P$ made adjacent by the glueing does not match.}
\end{itemize}}
\item{applied over a corner $v\in V(\Gamma)$:\\
\begin{itemize}
\item[\textbf{1st.}]{to split the set of edges incident to $v$ into two subsets of edges, say $A$ and $B$, such that the edges in each subset runs around the original vertex (the corner to be split) with only one gap, and the cardinality of $A$ and $B$ is odd;}
\item[\textbf{2nd.}]{to remove a neighbourhood, $U(v)\subset S_g$, of $v$ and then to shrink to a point the arcs of over the boundary of $S_g - U(v)$ connecting consecutively the edges in $A$ and $B$ creating $2$ new corners, say $a$ and $b$; }
\item[\textbf{3rd.}]{to glue a \emph{simple-piece} $P$ to $S_g - U(v)$ identifying the boundaries of $P$ and $S_g - U(v)$ such that each one of the two vertices of $P$ is identified to each one of the two new corners $a$ and $b$ in such a way that the colors of the faces from $\Gamma$ and $P$ made adjacent by the glueing does not match.}
\end{itemize}}
\end{itemize}
\end{defn}

  \begin{figure}[H]
 \begin{center}
      \subfloat[]
    {{\includegraphics[width=4.2cm]{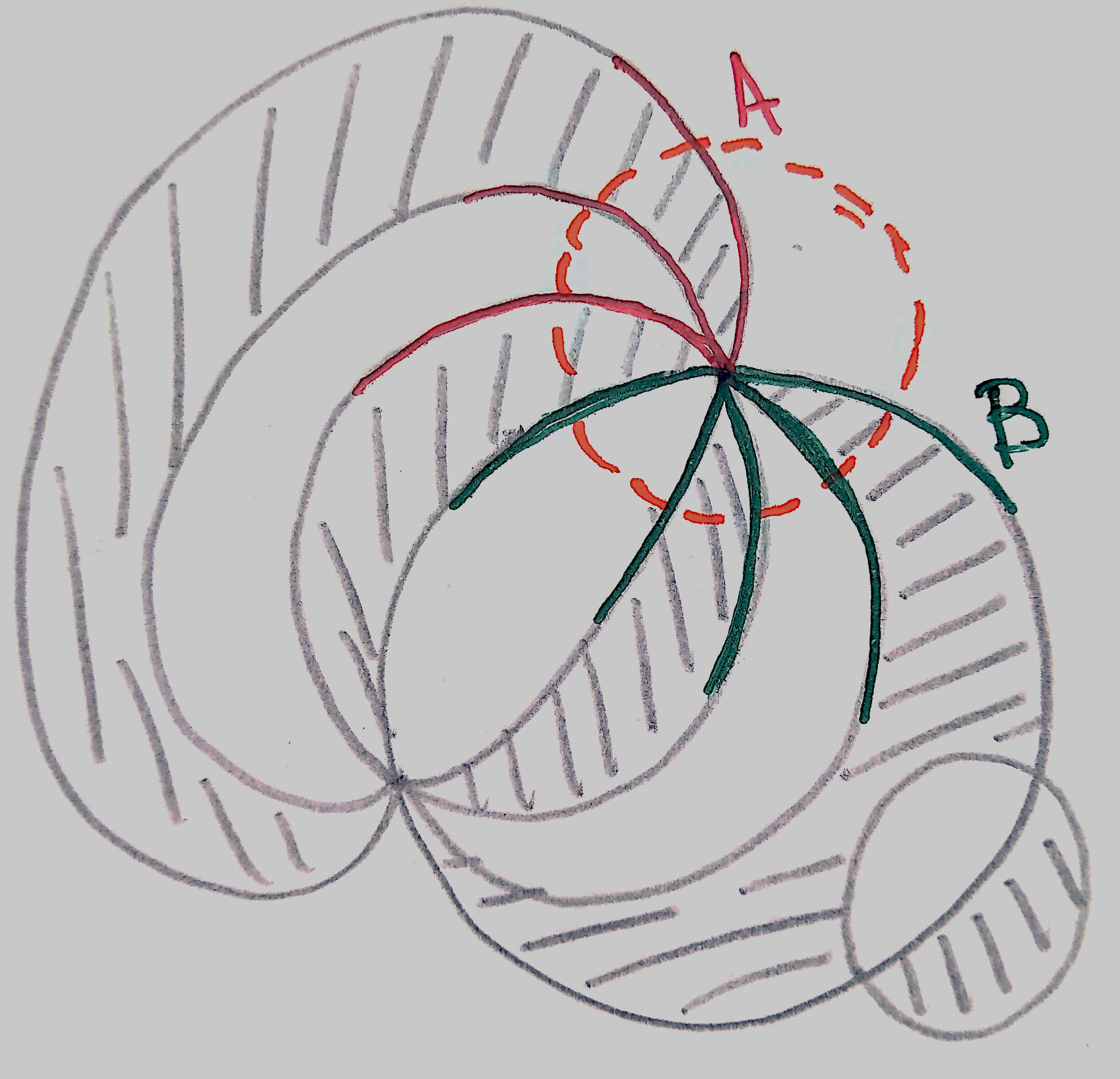}}}
     \quad
     \subfloat[]
    {{\includegraphics[width=3.95cm]{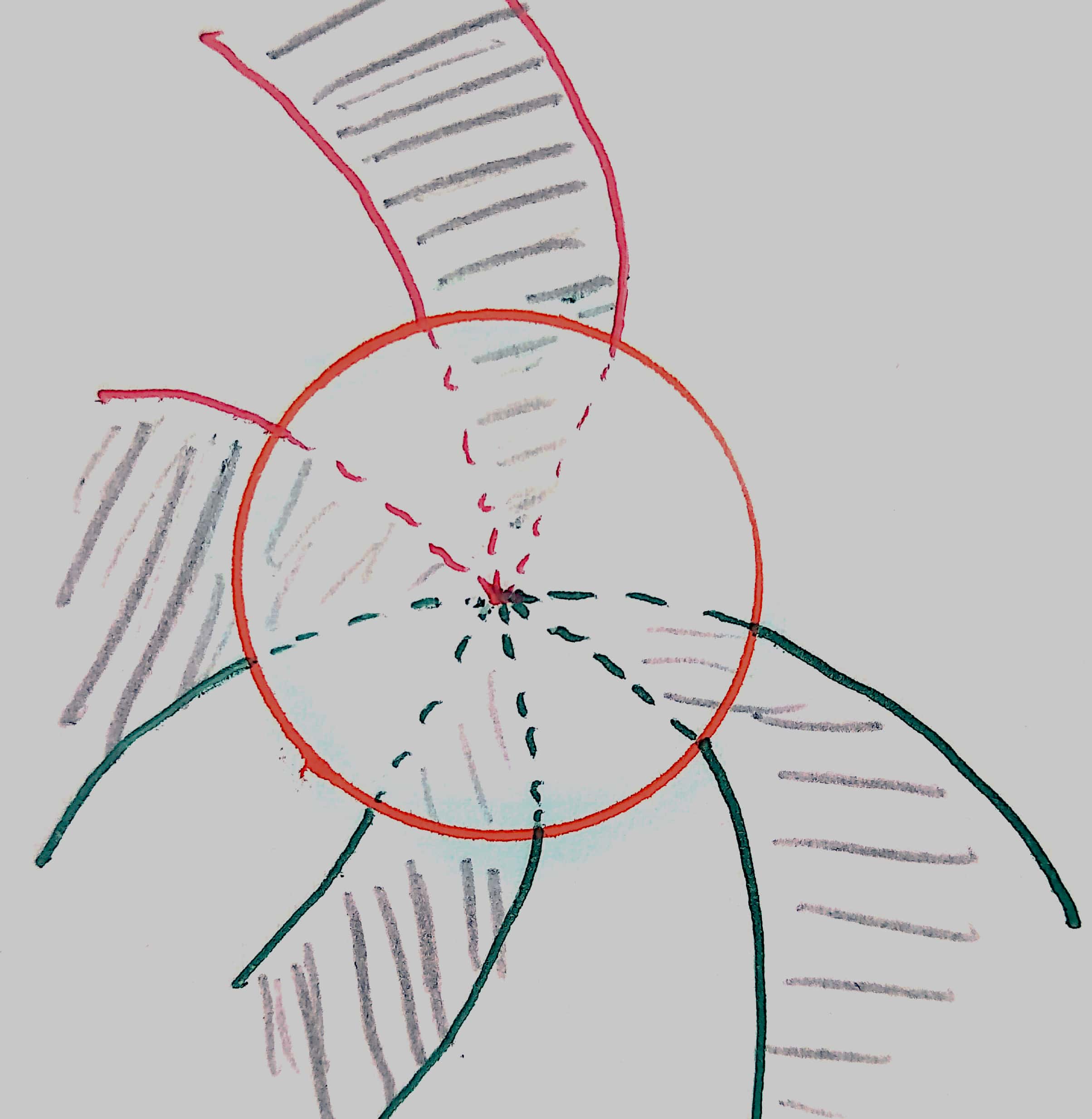}}}\\
    \subfloat[]
    {{\includegraphics[width=3.9cm]{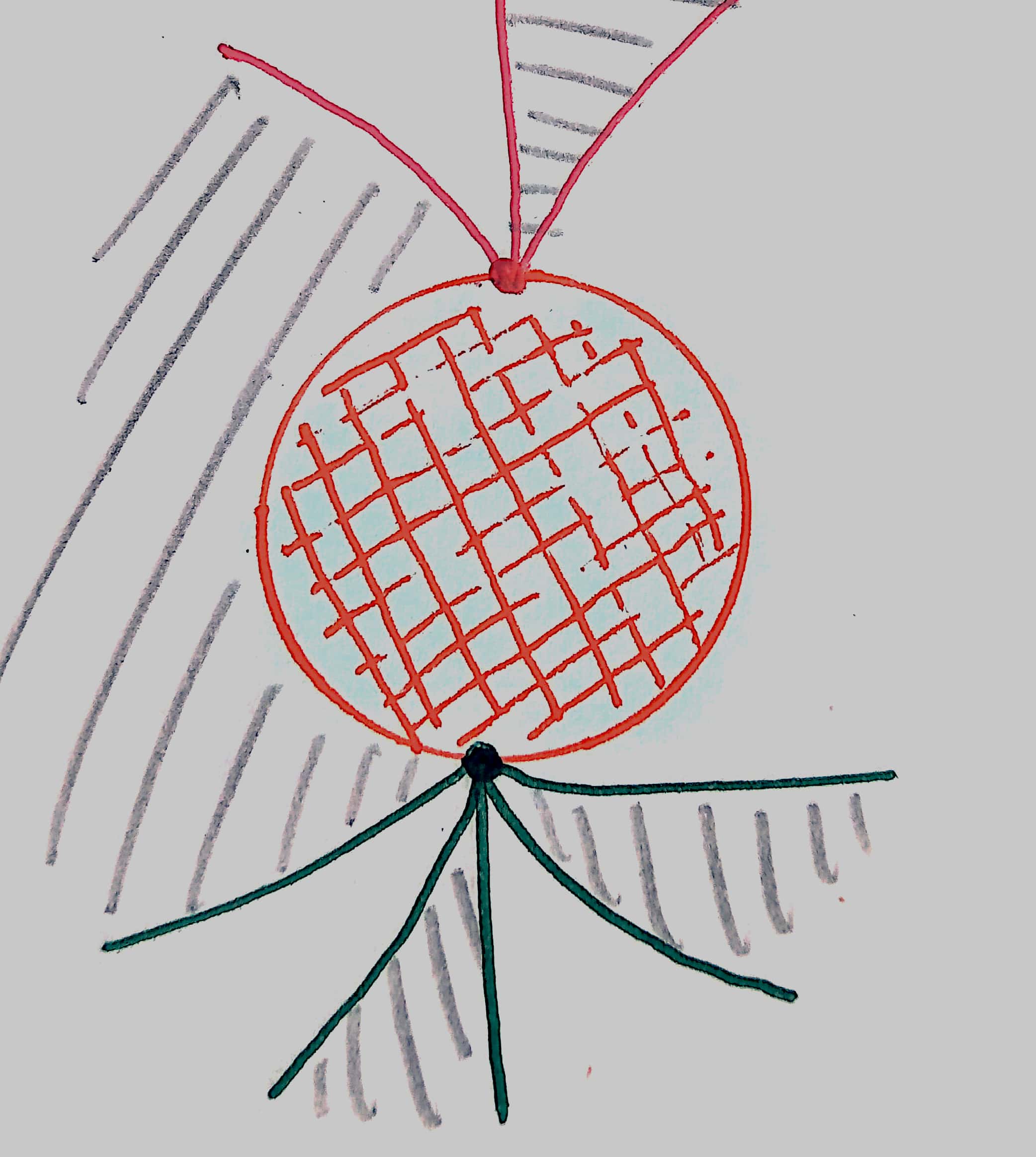}}}
    \quad
     \subfloat[]
    {{\includegraphics[width=3.55cm]{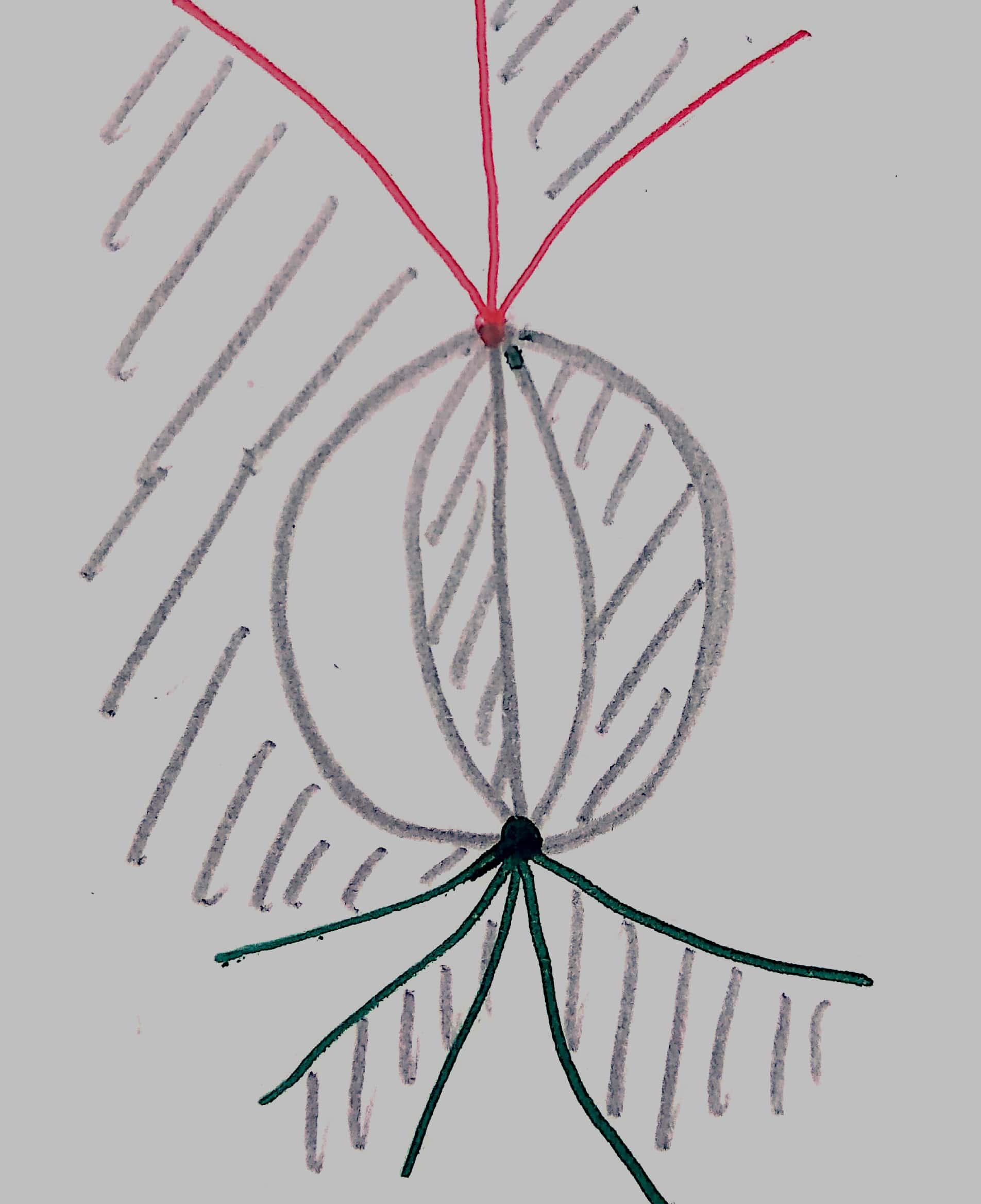}}}\\
        \end{center}
     \caption{face-insertion on a vertex}\label{moving}
\end{figure}

\begin{prop}
A balanced graph is returned after a face-collapsing operation on a given balanced graph. If $d>0$ and $f<d$ are respectively the degree of the balanced graph $\Gamma$ and the degree of one simple-piece, $P$ of $\Gamma$, then the face-collapsing against $\Gamma$ at $P$ gives a new balanced graph of degree $d-f.$
\end{prop}
\begin{proof}
Let $\Gamma$ and $P$ as announced above.

Since the number of faces with each color in a \emph{simple-piece} is the same, it follows that the \emph{face-collapse} operation does not obstruct the global balance condition. And, of course, the number of faces on the new embedded graph after a \emph{face-collapse} at a \emph{simple-piece} of degree $f$ will be $2d-2f$. Therefore, we obtain a globally balanced graph of degree $d-f$ after a \emph{face-collapse} at \emph{simple-piece} of degree $f$.

Consider $\Gamma\in (g, d, n)$ with a \emph{Black-White} alternating coloring  (then, \emph{Black} is the prefered color). And let $\Lambda$ be the globally balanced graph obtained from $\Gamma$ by the \emph{face-collapse} at $P$.

To guarantee the local balance we have to atest the condition only for those positive cobordant multicycle of $\Lambda$ that contains the vertex resulted from the collapsing of that \emph{simple-piece} $P$. Recal that \underline{contain} here means that it belongs to the same component of $S_g - \Gamma$ that contains the prefered color at the left side of the choosed cycle. We call that component by interior of positive separating cycle.

Let $w\in V(\Lambda)$ be the vertex arising from the face-collapse at $P$ and $L$ be one positive cobordant multicycle of $\Lambda$ with a cycle $\gamma$ passing through $w\in V(\Gamma)$ .
 
But it is clear that $ L $ satisfies the condition of local balance since by performing the inverse \emph{face-insertion}, we obtain a positive cobordant multicycle of $ \Gamma $ that projects over $ L $ by removing the same amount of black and white faces.  

\end{proof}

\subsubsection{Balanced move}
Now we present another possible operation over balanced graphs that we will call the \emph{balanced move}. This operation was discovered through computational tests when we tried to perceive the changes in the \emph{pullback graphs} (they are balanced graph) regarding the isotopy classes of post-critical curves.

\begin{defn}[balanced move]\label{bal-m}
Let $\Gamma$ be a balanced graph.

For a pair of corners of $\Gamma$, say $p_1$ and $p_2$, connected by only one non splitting saddle-connection we set $F_1$ and $F_2$ to be the two faces incident to that non splitting saddle-connection. 

The operation \emph{balanced move} against $\Gamma$ (regarding to $p_1$ and $p_2$) consists on the procedure of to choose one (outermost) saddle-connection incident to $p_1$ and another one incident to $p_2$ such that one is incident to $F_1$ and the other to the face $F_2$ and then to exchange their end points $p_1$ and $p_2$. Consult ilustration $\ref{moving}$.
\begin{figure}[H]
 \begin{center}
      \subfloat[]
    {{\includegraphics[width=6.15cm]{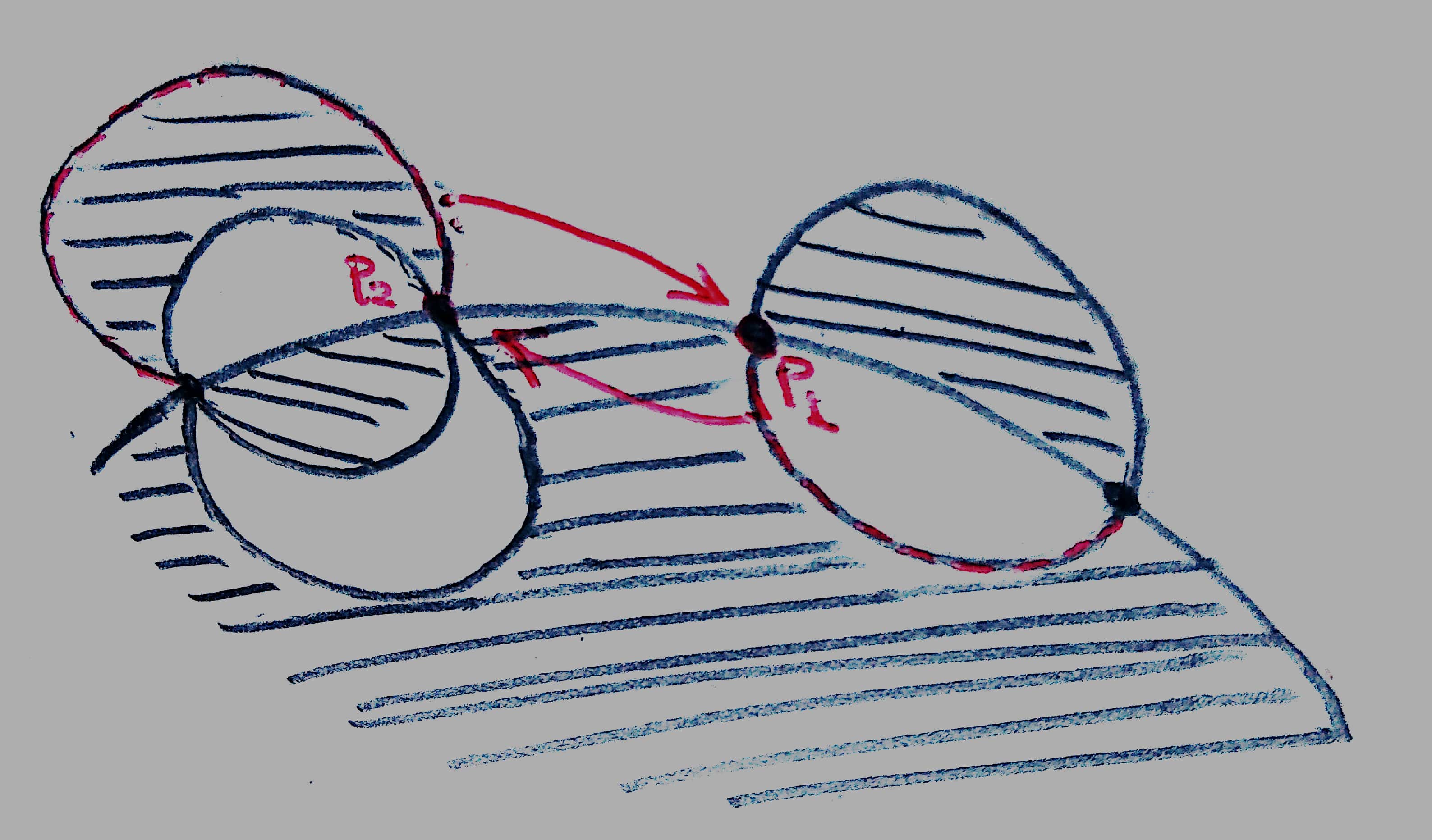}}}
    \hspace{0.06cm}
      \subfloat[]
    {{\includegraphics[width=5.05cm]{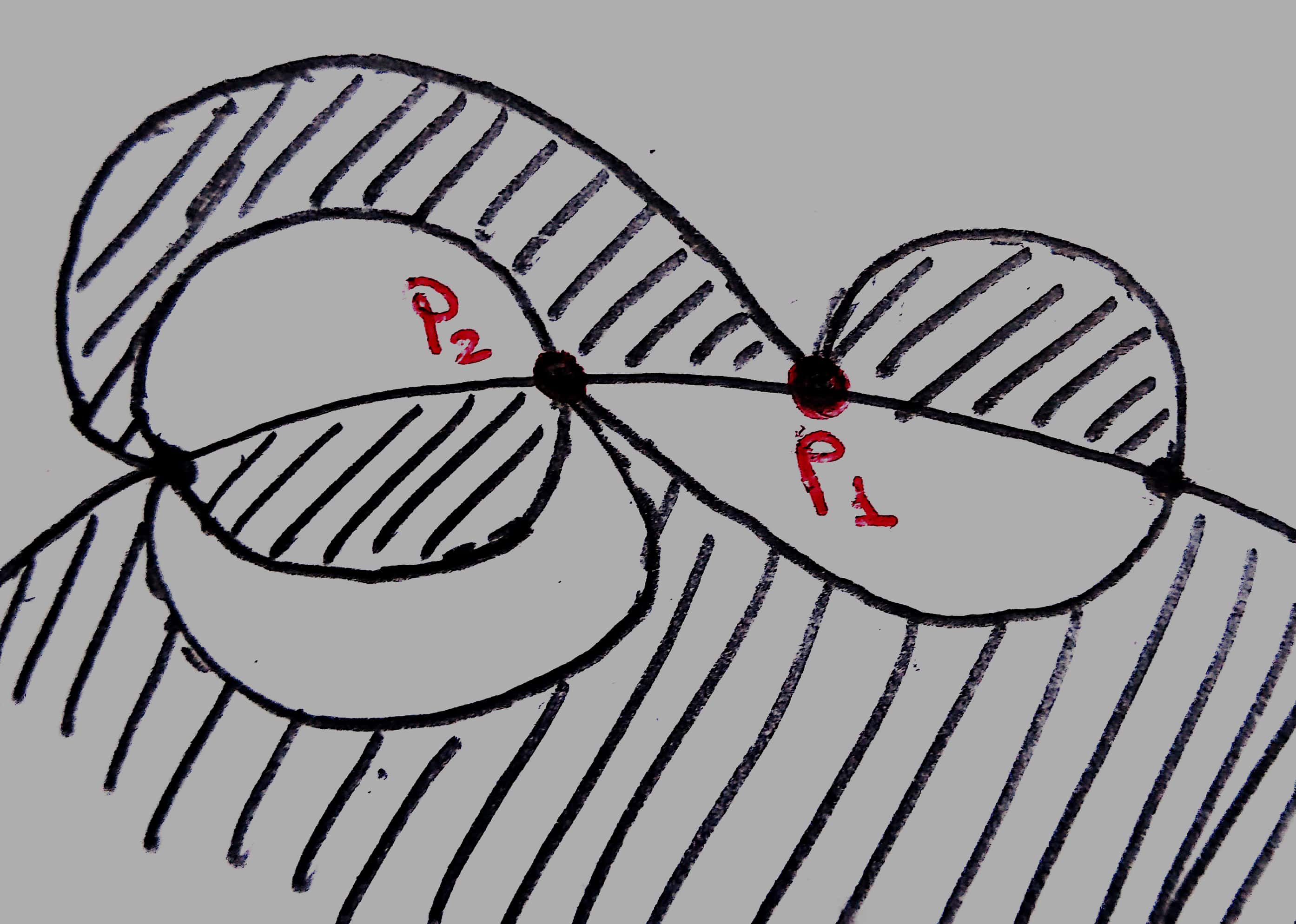}}}\\
    \subfloat[]
    {{\includegraphics[width=5.45cm]{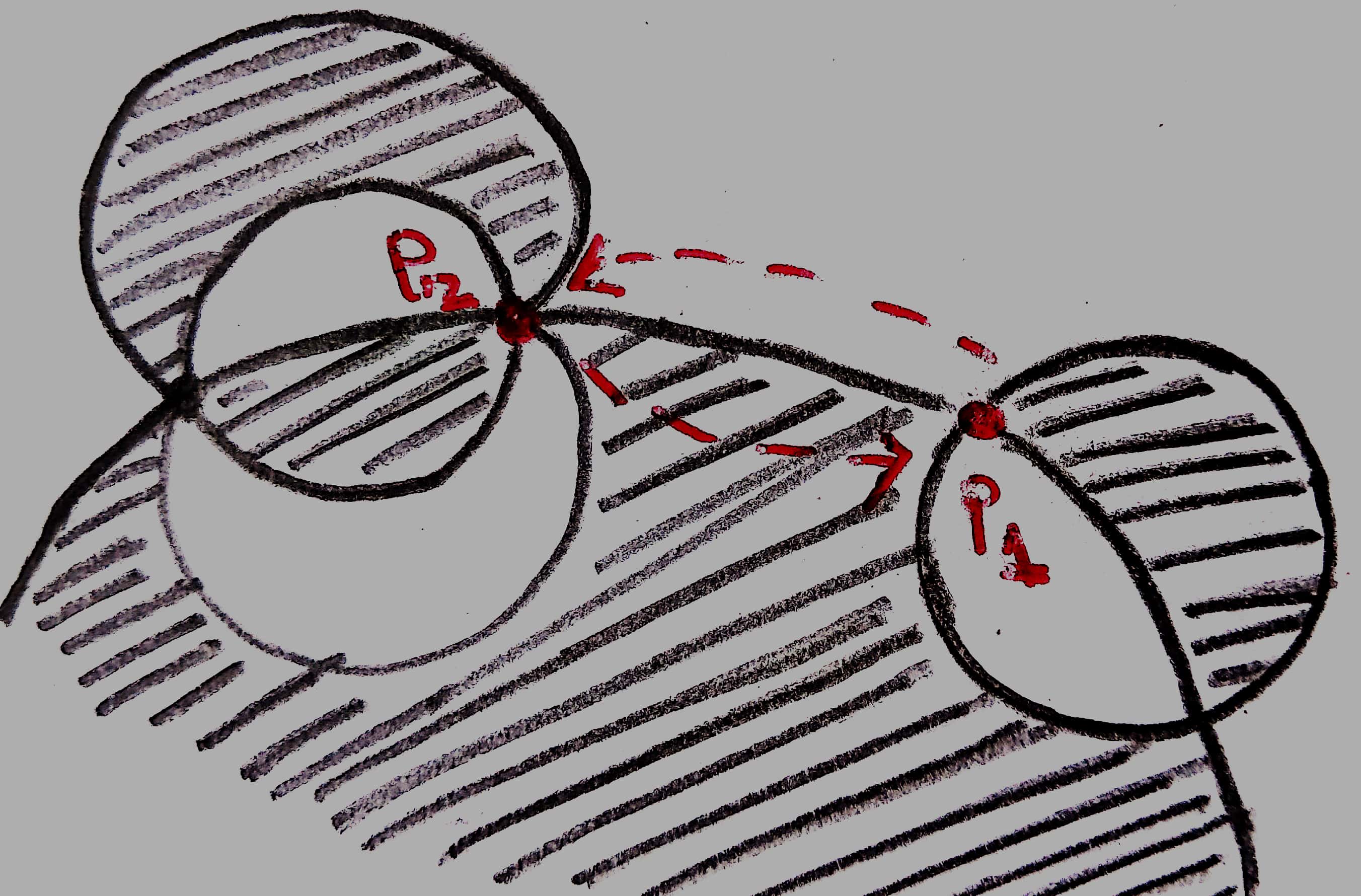}}}
     \hspace{0.06cm}\subfloat[]
    {{\includegraphics[width=5.765cm]{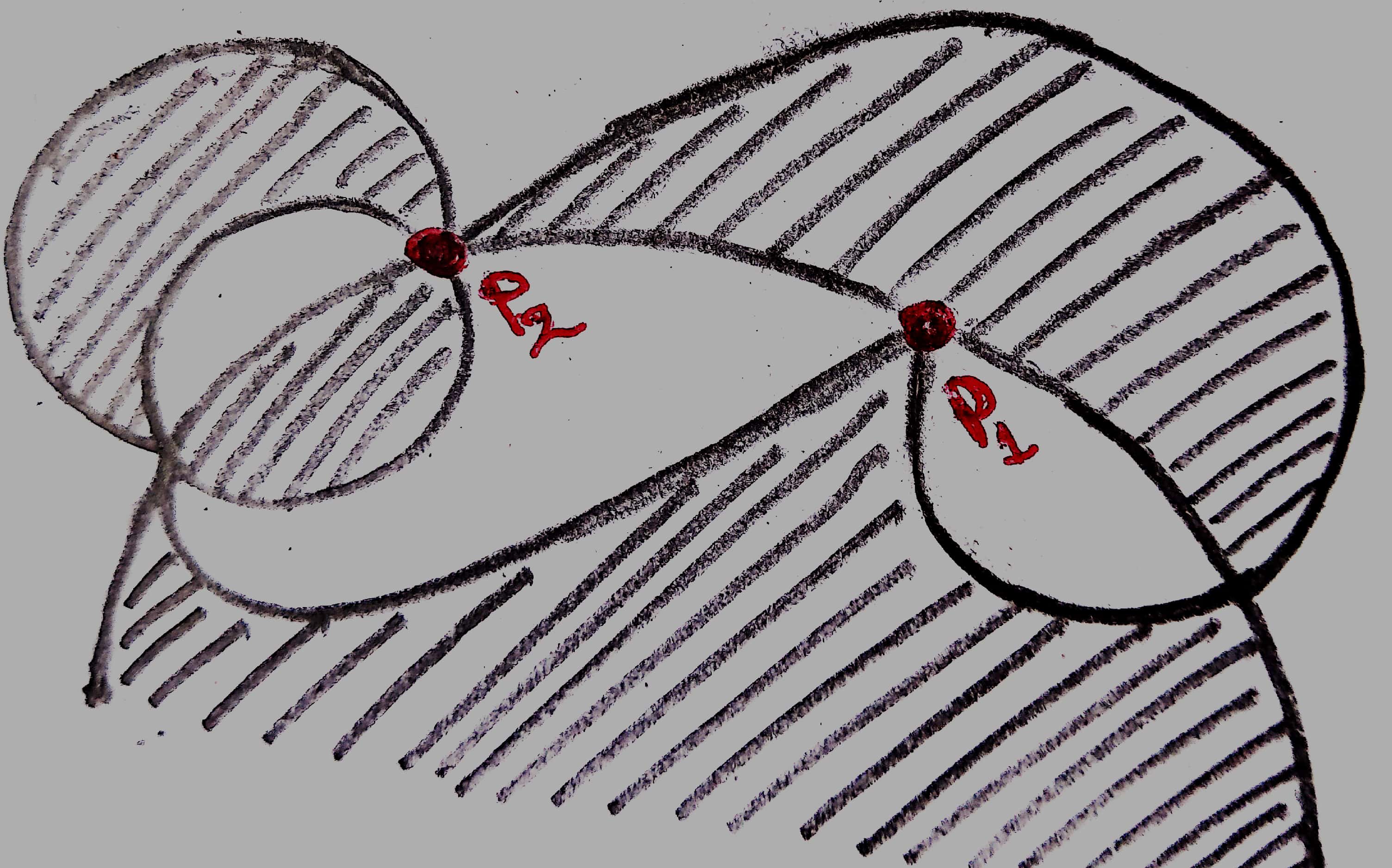}}}\\    
    {   \subfloat[]
    {{\includegraphics[width=4.05cm]{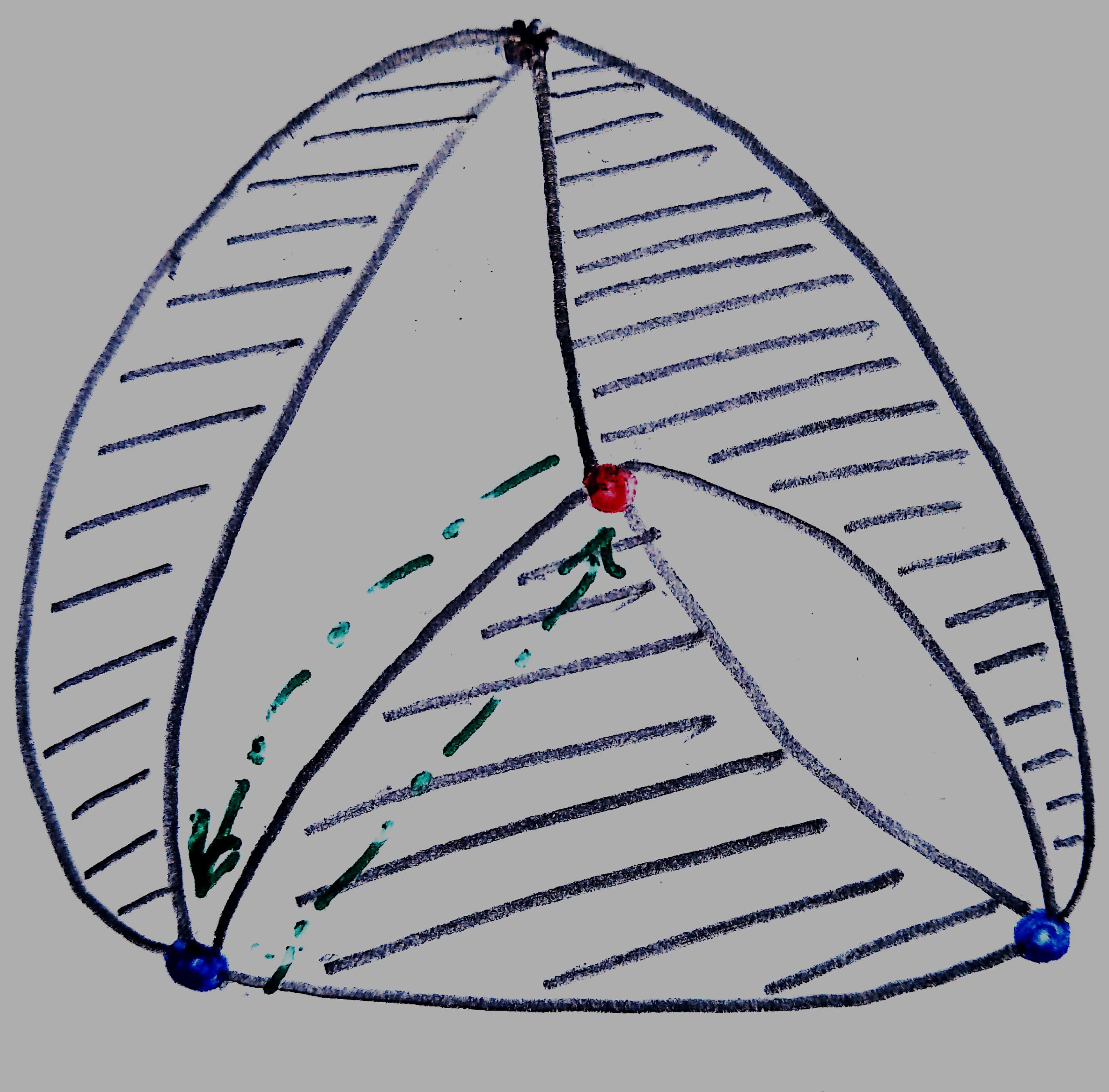}}}\hspace{0.06cm}
    \subfloat[]
    {{\includegraphics[width=4.03cm]{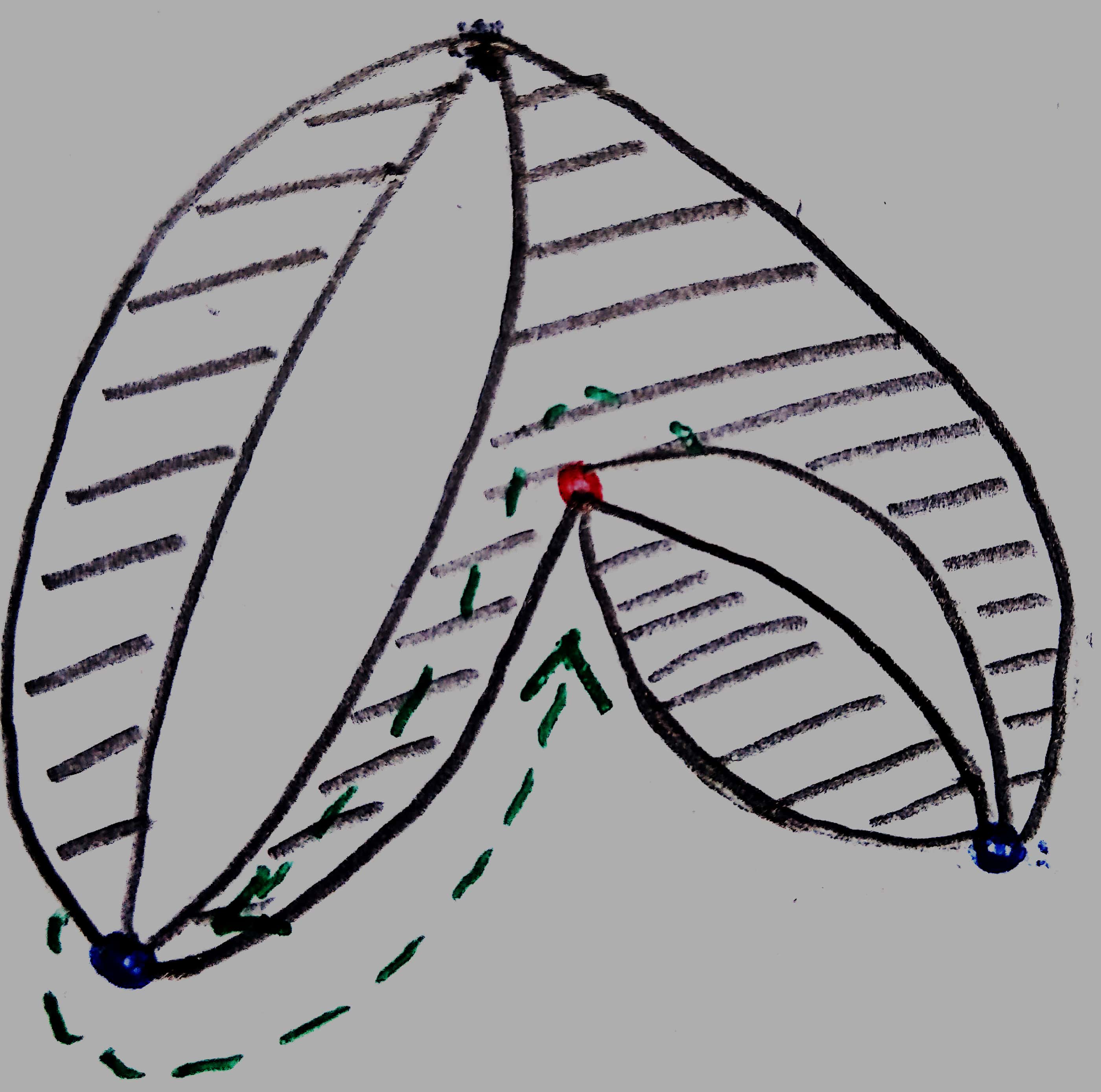}}}\hspace{0.06cm}
     \hspace{0.04cm}\subfloat[]
    {{\includegraphics[height=4cm]{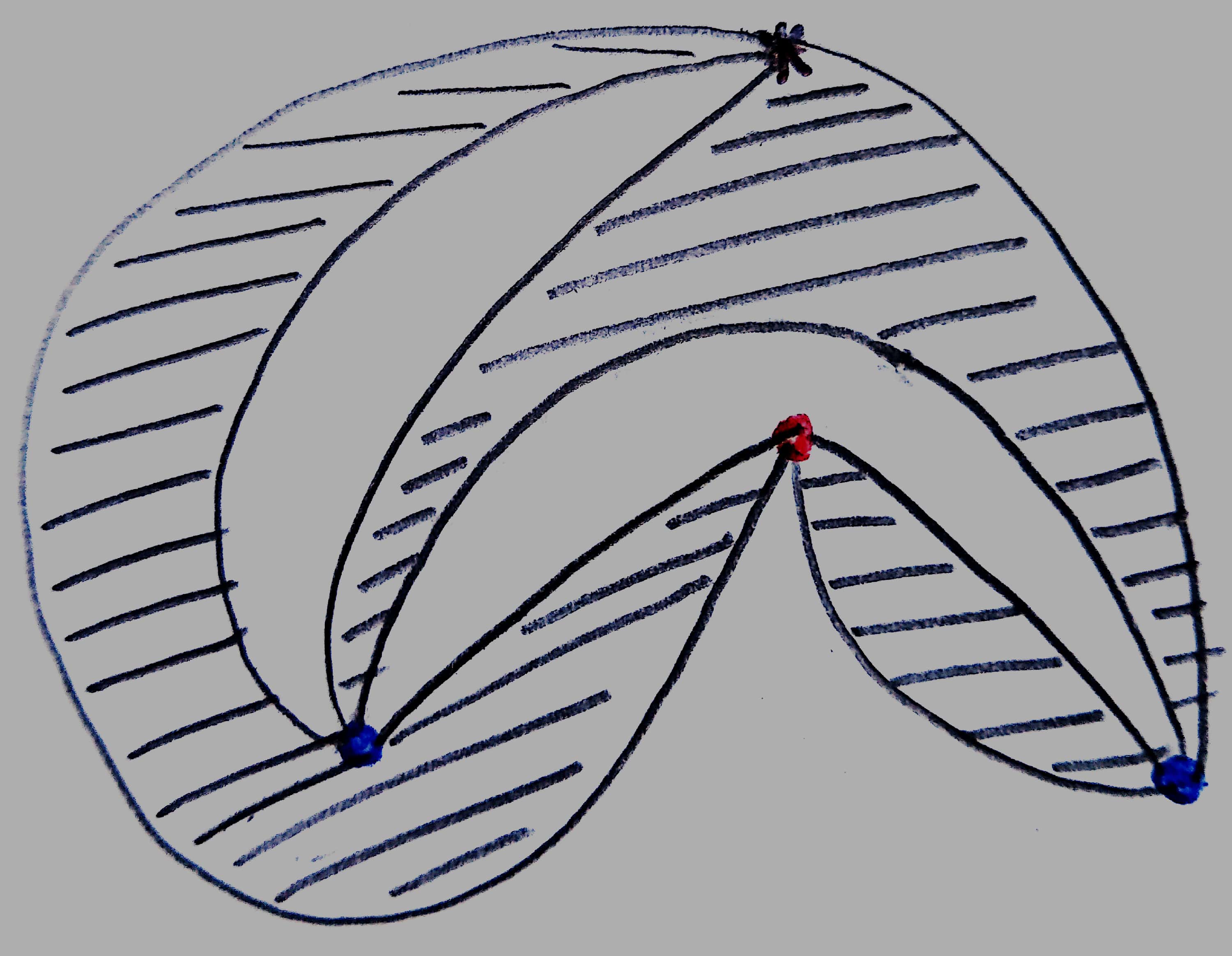}}}}
        \end{center}
     \caption{balanced move}\label{moving}
\end{figure}
\end{defn}

Now we introduce the inverse procedure to the \emph{balanced move}.

\subsubsection{reverse balanced move}

Note that any \emph{balanced move} have a inverse operation. That inverse  operation is simply the balanced move corresponding to moving back the saddle connections formerly modifyed.

\begin{figure}[H]
 \begin{center}
      \subfloat[undoing with a balanced move that balanced move ilustrated on Figure $\ref{moving}$\textbf{(a)}]
    {{\includegraphics[width=4.8cm]{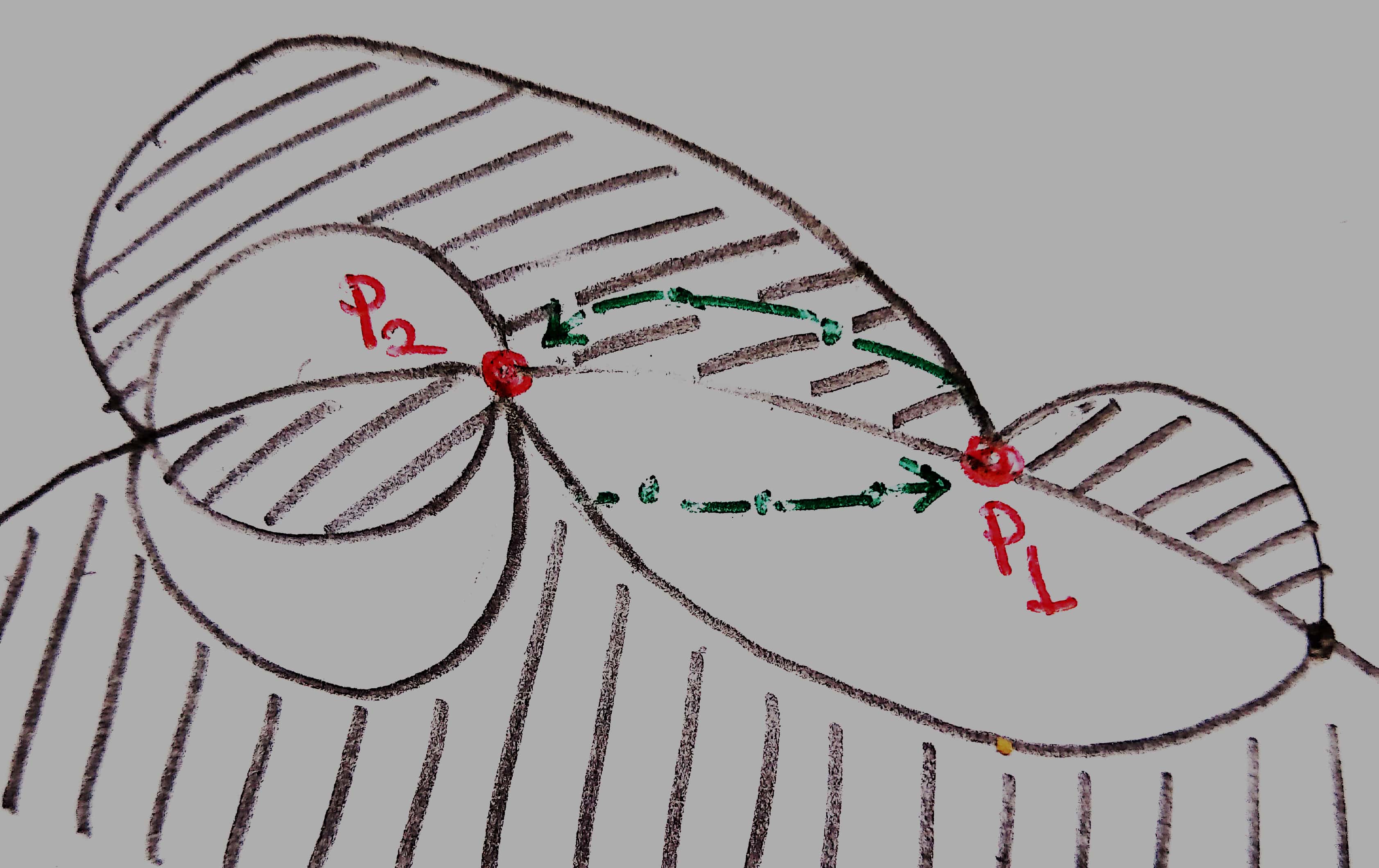}}}
    \qquad
      \subfloat[after to perform the balanced move indicated at $\ref{revmoving}$\textbf{(a)}]
    {{\includegraphics[width=5.45cm]{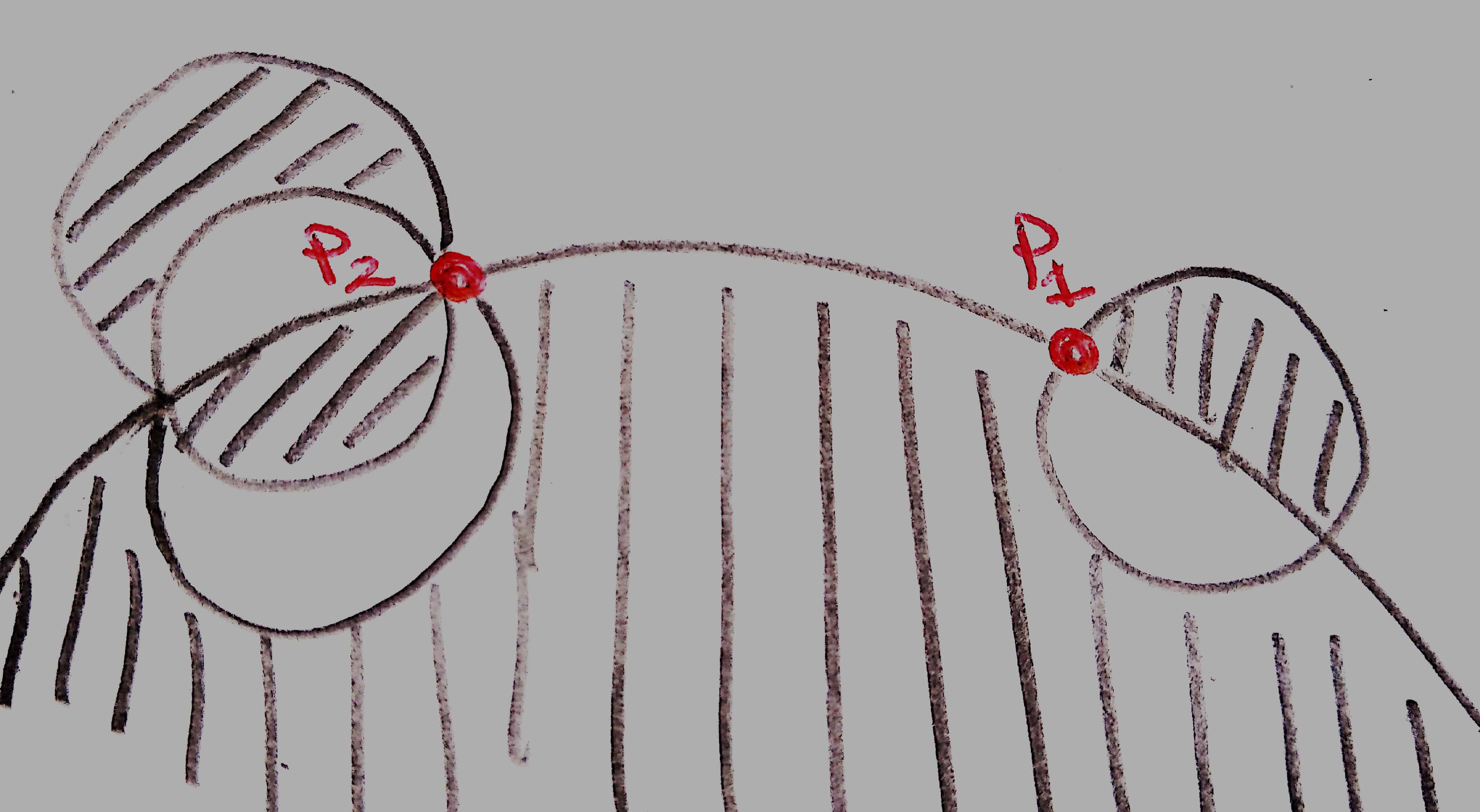}}}\\
    \subfloat[another posible balanced move against $\ref{revmoving}$\textbf{(a)}]
    {{\includegraphics[width=6cm]{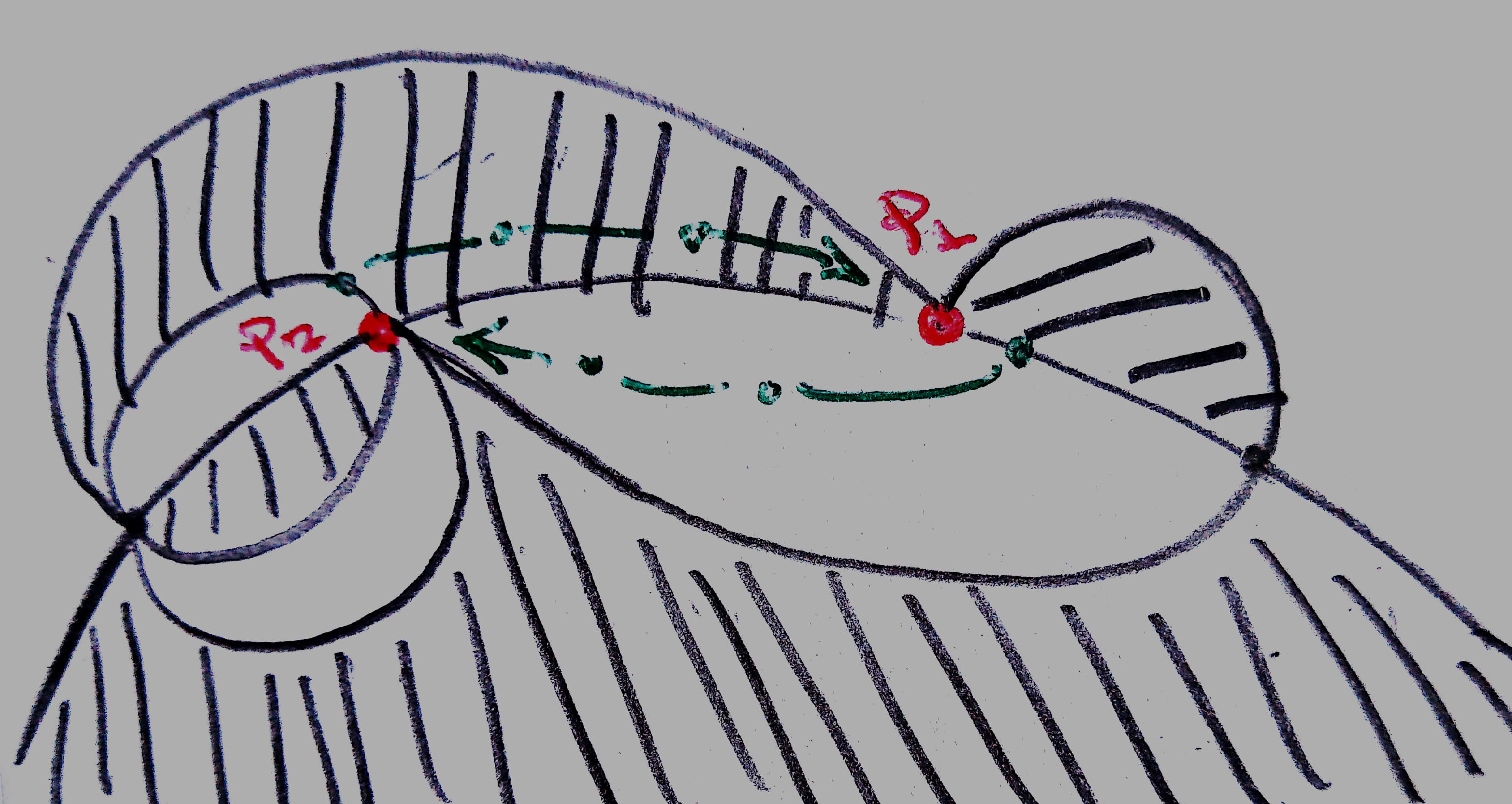}}}
     \hspace{0.1cm}\subfloat[]
    {{\includegraphics[width=5.8cm]{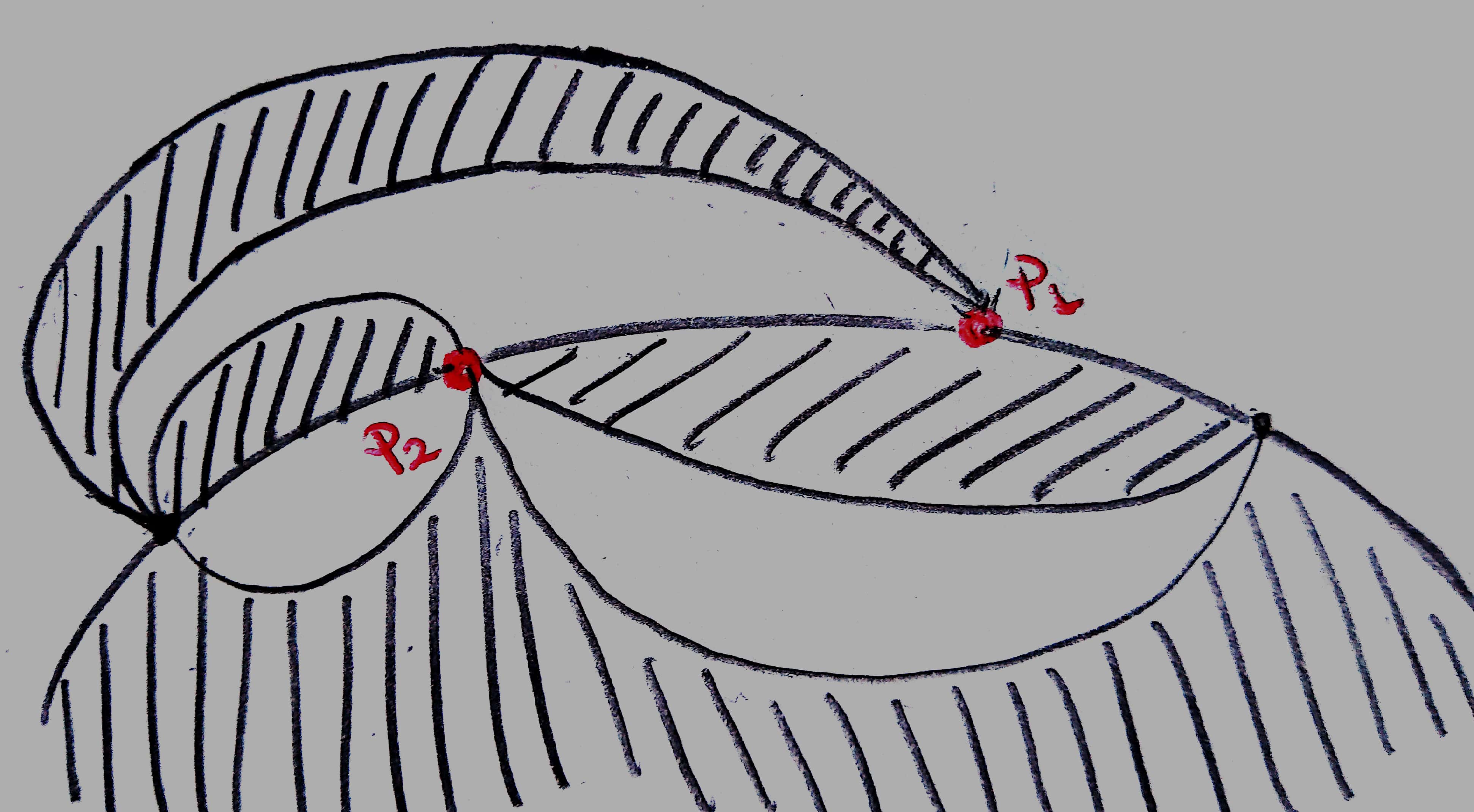}}}\\    
           \end{center}
     \caption{balanced move}\label{revmoving}
\end{figure}

\begin{ex} Bellow we obtain a balanced graph, a example given by Thurston\cite{STL:15}, from a balanced move on a real generic balanced graph. We shall see that all balanced graph of degree $d$ can be obtained from a finite sequence of operations starting with a real generic balanced graph of degree $d$..
\begin{figure}[H]
 \begin{center}
      \subfloat[]
    {{\includegraphics[width=4.25cm]{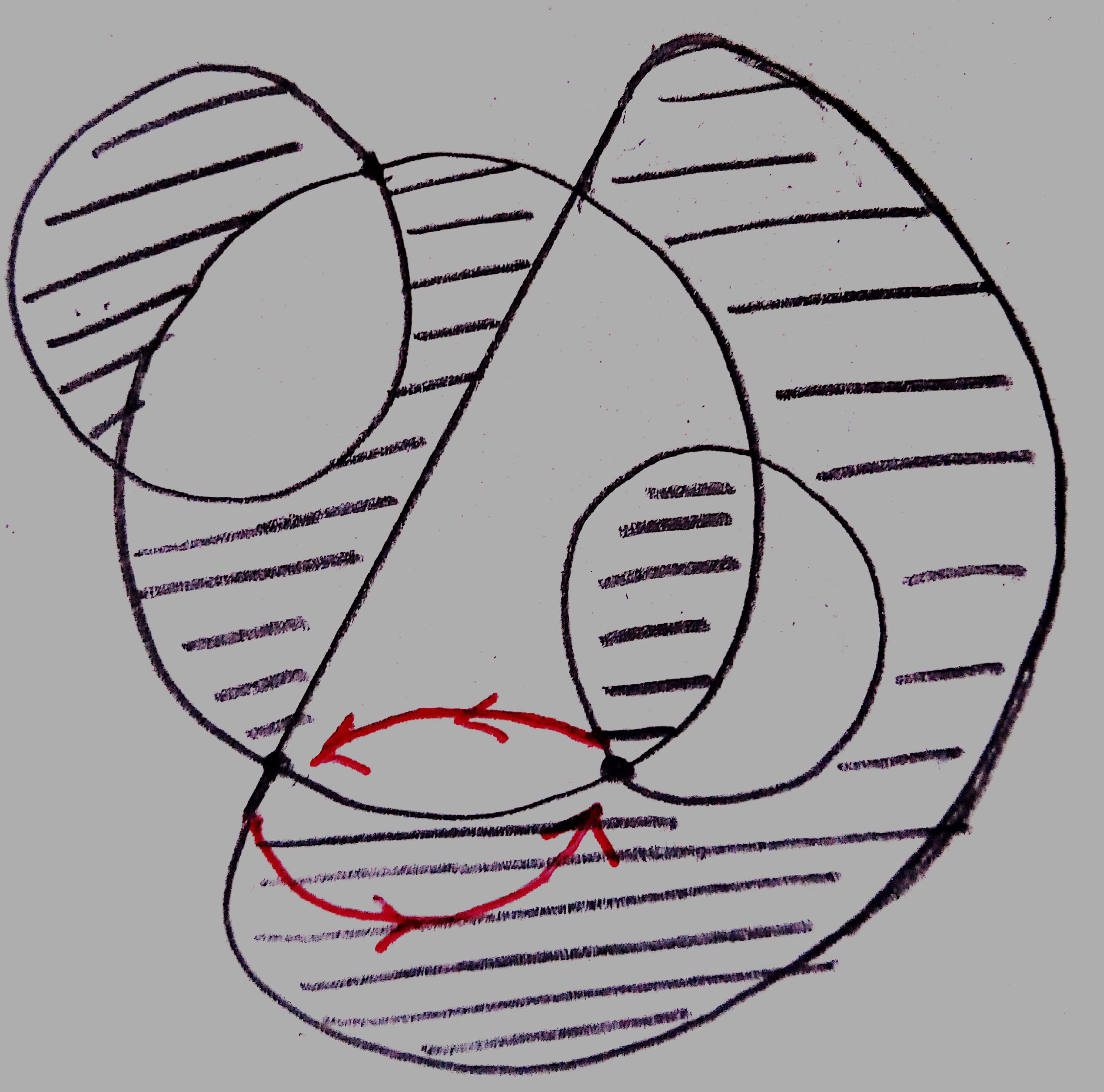}}}
     \qquad \subfloat[]
    {{\includegraphics[width=5.2cm]{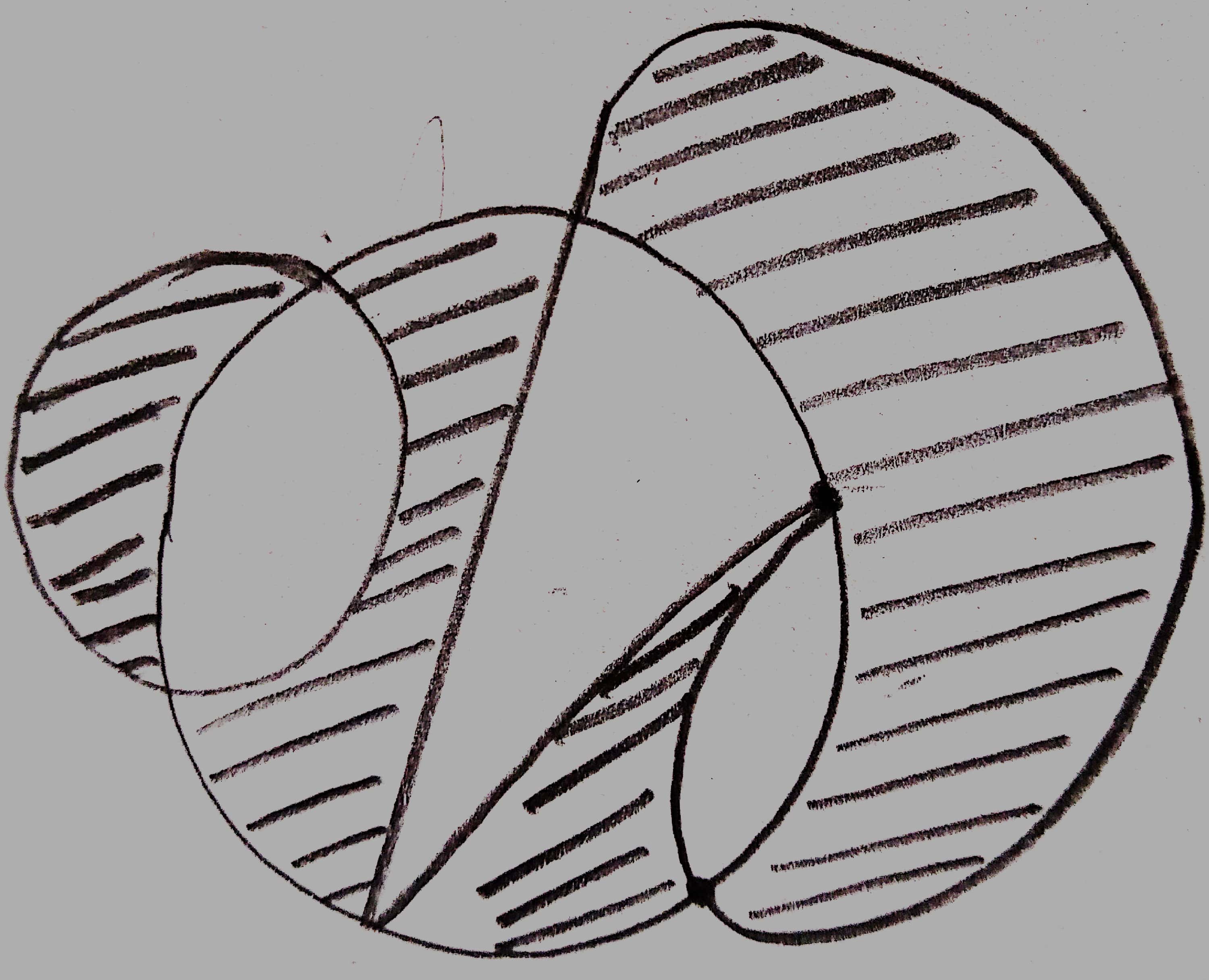}}}
      \qquad \subfloat[as (b) is drawn in \cite{STL:15}]
    {{\includegraphics[width=6.0cm]{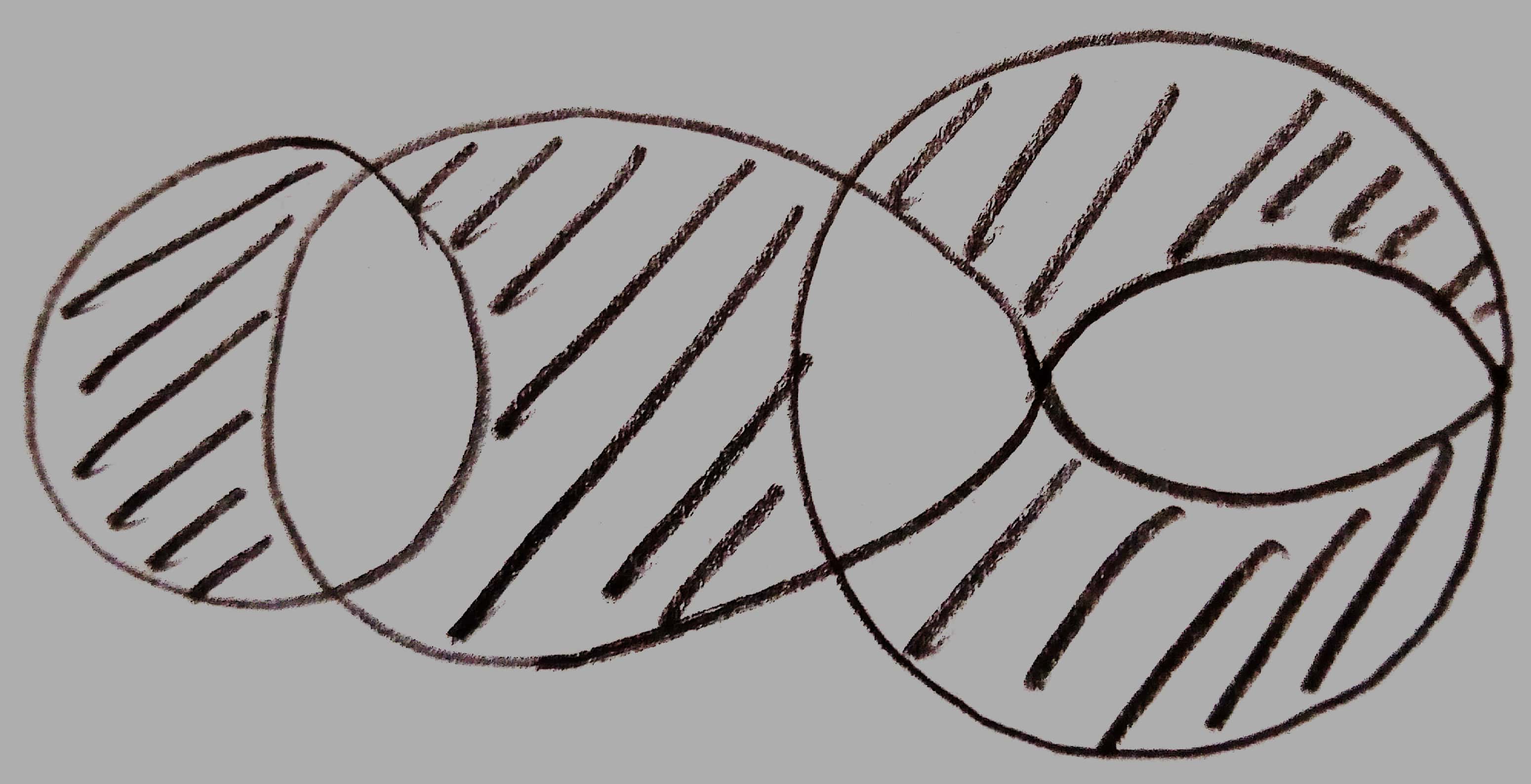}}}
    \end{center}
     \caption{balanced move}
\end{figure}
\end{ex}

\begin{prop}
A \emph{balanced move} operation on a balanced graph of type $ (g, n, d) $ turns it into a balanced graph of the same type.
\end{prop}
\begin{proof}

\end{proof}

Thurston had also introduced some operations on balanced graphs. The essence of the operation presented by \emph{Thurston} is to understand the structure of balanced graphs from the point of view of decomposing them into \emph{standard pieces} turning the class of balanced graph into a \emph{``lego world''}.

\begin{defn}[$\boldsymbol{{}^{\ast}22}$ decomposition (balanced cut)]\label{22-dec}
The $\boldsymbol{{}^{\ast}22}$ decomposition on a balanced graph $\Gamma\in \textbf{BG}$, with underline surface $S_g$, consists of the following described procedure:
\begin{itemize}
\item[$\boldsymbol{(1)}$]{choose a separating closed curve $\gamma\subset S_g$ into $S_g$ such that:
\begin{itemize}
\item[$\boldsymbol{(1.1)}$]{ it intersects the $1$-skeleton of $\Gamma$ at a even number of points, with these points lying in different \emph{saddle-connections};}
\item[$\boldsymbol{(1.2)}$]{it does not go around a single vertex;}
\item[$\boldsymbol{(1.3)}$]{ and, each component of $S_g -\gamma$ contains the same number of faces of each color;}
\end{itemize}}
\item[$\boldsymbol{(2)}$]{to cut $S_g$ along $\gamma$;}
\item[$\boldsymbol{(3)}$]{to compactify these two cut pieces from $ S_g $.}
\end{itemize} 
\end{defn}

It is immediate that the two embedded cellular graphs obtained after that surgical operation are balaced graphs.
  \begin{figure}[H]
 \begin{center} 
     \subfloat[$\boldsymbol{{}^{\ast}22}$ decomposition on the dashed curve more to the right in the figure above. ]
    {{\includegraphics[width=5.3cm]{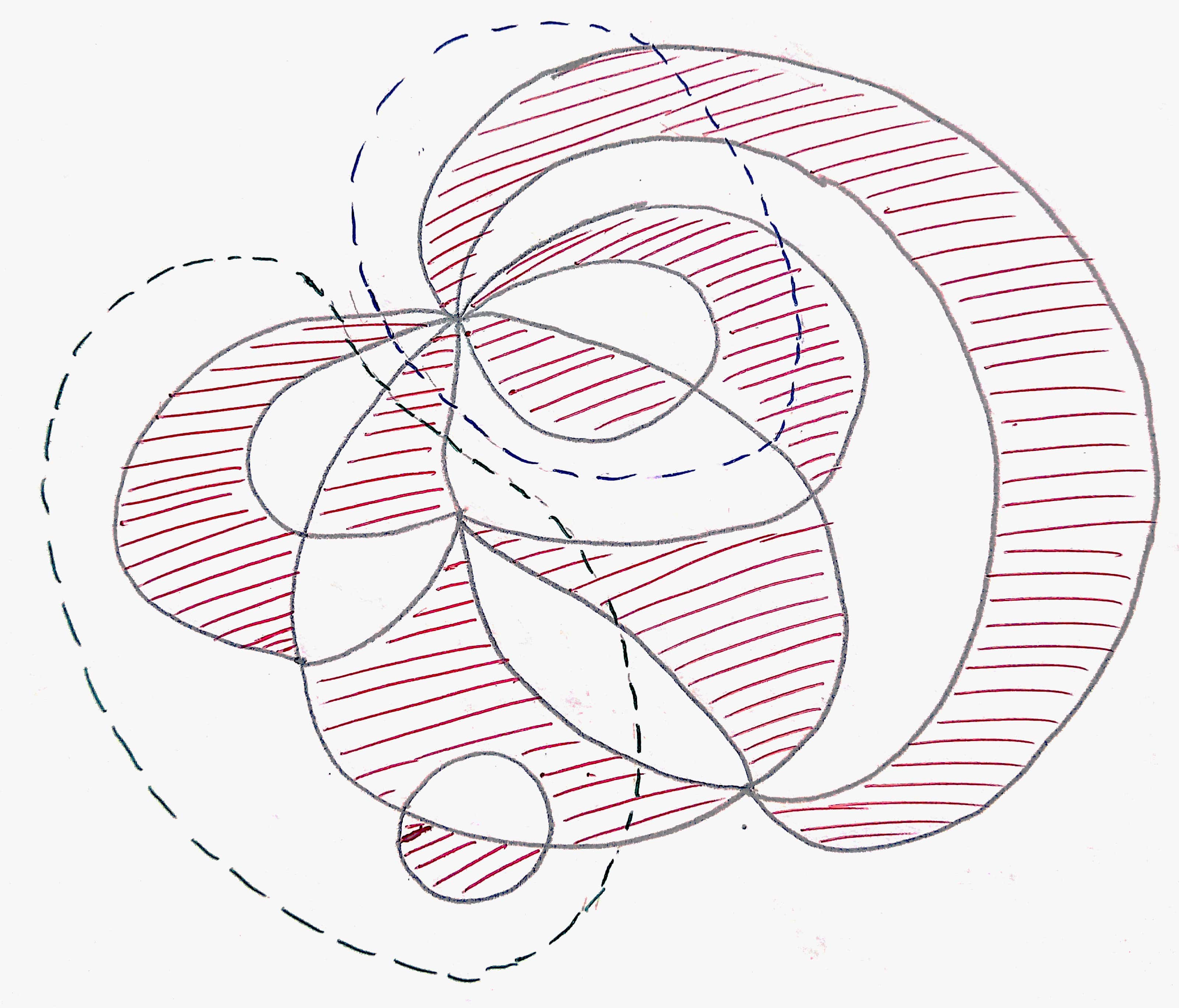}}}\\
     \subfloat[]
    {{\includegraphics[width=5.2cm]{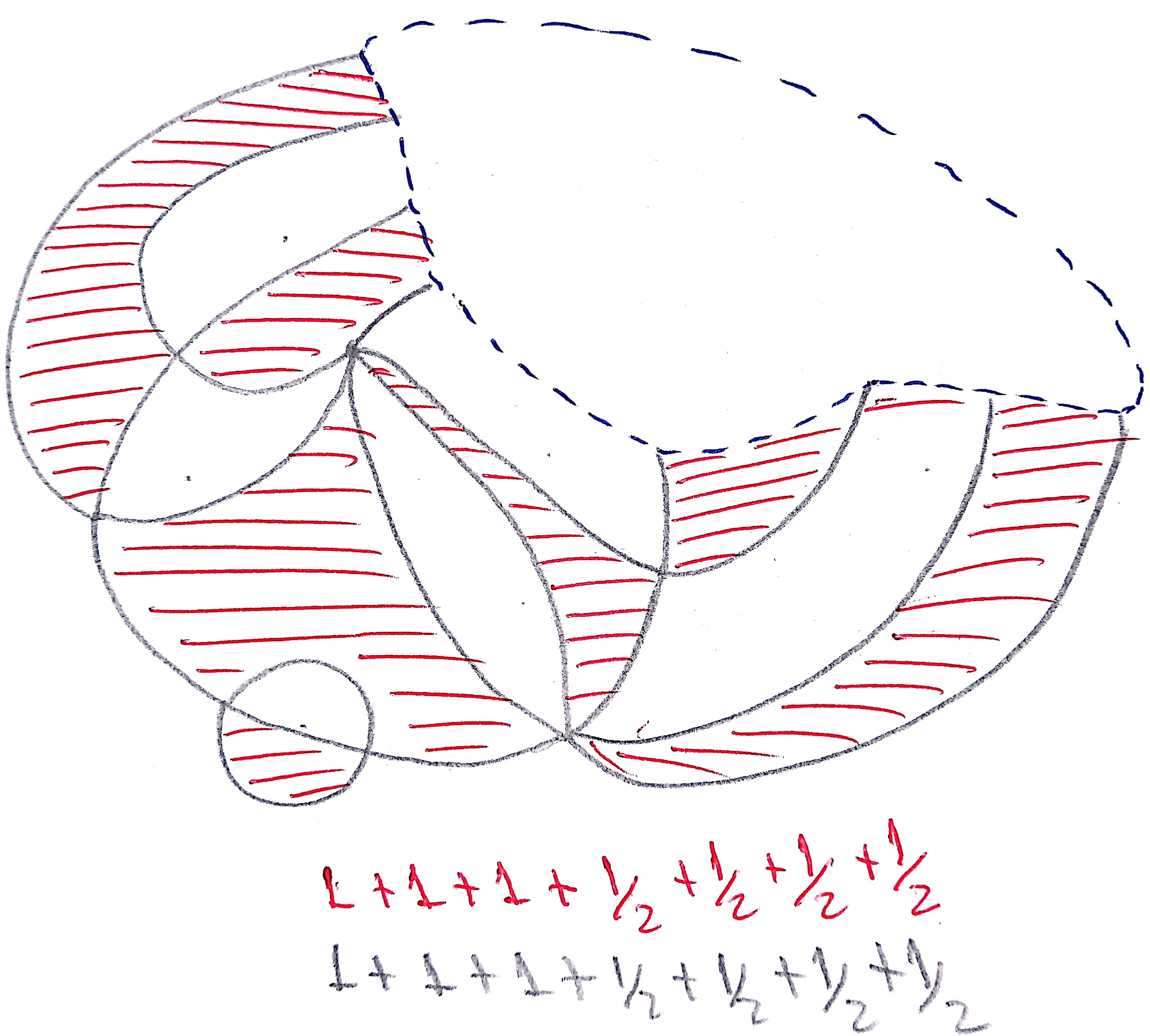}}}\quad
    \subfloat[]
    {{\includegraphics[width=3.9cm]{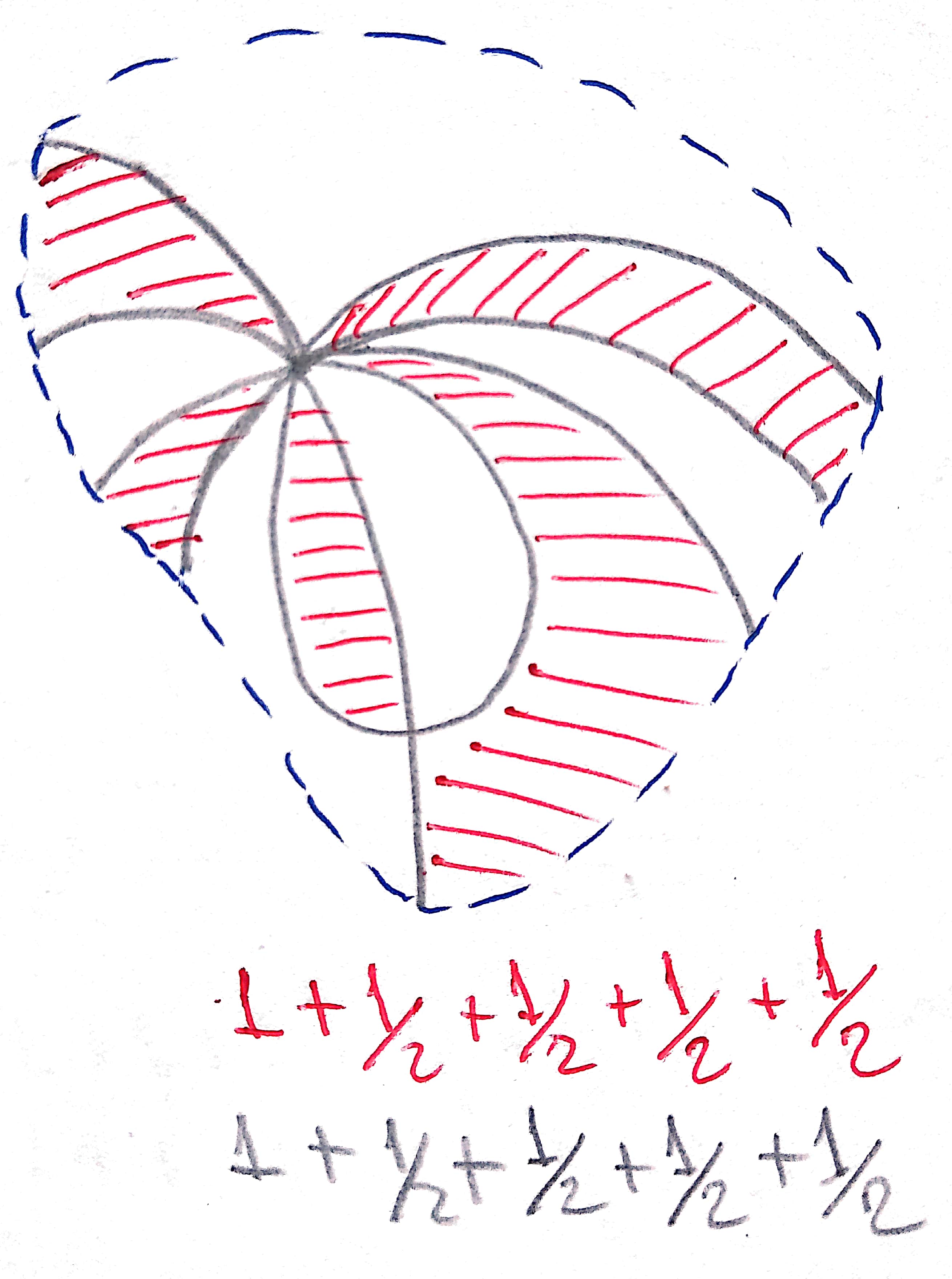}}}
         \end{center}
\end{figure}

    \begin{figure}[H]
 \begin{center} 
      \subfloat[]
    {{\includegraphics[width=5.2cm]{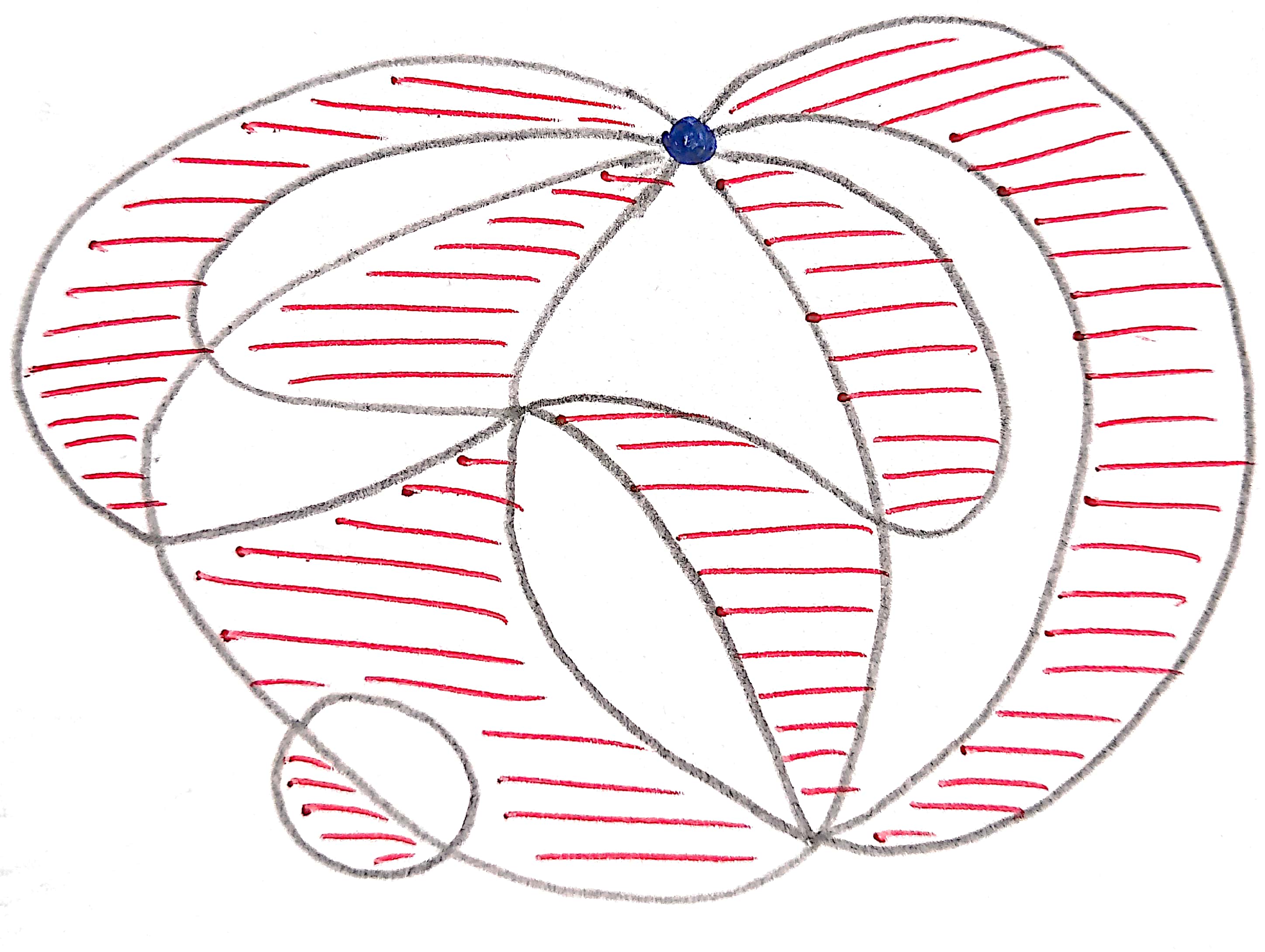}}}\quad
    \subfloat[]
    {{\includegraphics[width=4.2cm]{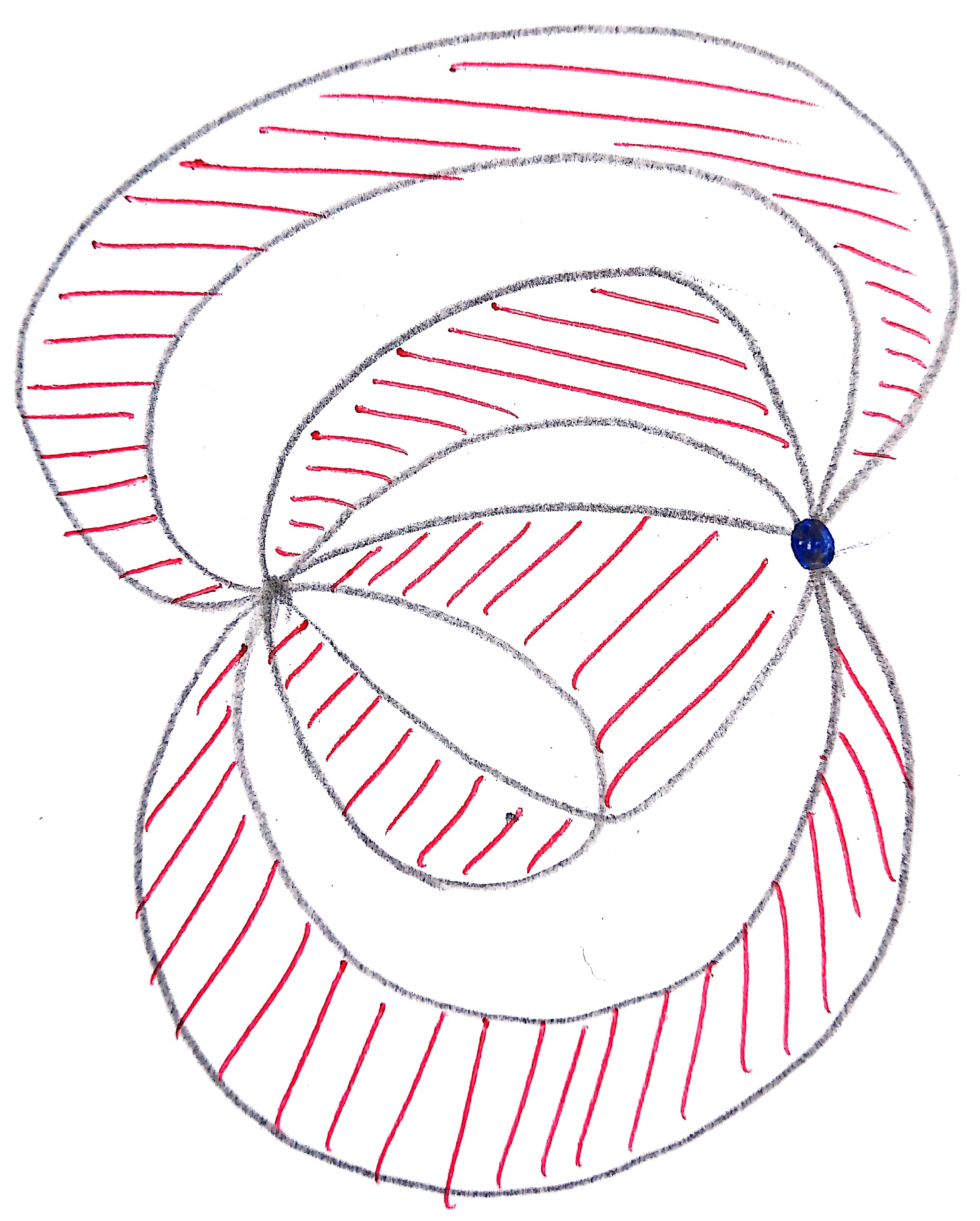}}}
     \end{center}
     \caption{$\boldsymbol{{}^{\ast}22}$ decomposition}
\end{figure}

\begin{defn}[tangle decomposition (inbalanced cut)]\label{22-dec}
Let $\Gamma\in \textbf{BG}$ with underline surface $S_g$ and an alternating \textcolor{deeppink}{A}-\textcolor{Blue}{B} face coloring.
The $\boldsymbol{{}^{\ast}22}$ decomposition on a balanced graph , consists of the following described procedure:
\begin{itemize}
\item[$\boldsymbol{(1)}$]{choose a separating closed curve $\gamma\subset S_g$ into $S_g$ such that:
\begin{itemize}
\item[$\boldsymbol{(1.1)}$]{ it intersects the $1$-skeleton of $\Gamma$ at a even number of points, with these points lying in different \emph{saddle-connections};}
\item[$\boldsymbol{(1.2)}$]{it does not go around a single vertex;}
\item[$\boldsymbol{(1.3)}$]{ and, one component of $S_g -\gamma$ contains one more \textcolor{deeppink}{A} faces than \textcolor{Blue}{B} (then, by the global balance the other component must contain 1 more \textcolor{Blue} {B} faces than \textcolor{deeppink}{A}) of each color;}
\end{itemize}}
\item[$\boldsymbol{(2)}$]{to cut $S_g$ along $\gamma$;}
\item[$\boldsymbol{(3)}$]{at the component of $S_g - \gamma$ containing more  \textcolor{deeppink}{A} we choose two consecutive \textcolor{deeppink}{A} face along the scar curve, then glue this two face together along the scar curve and shrink the two left component of the scar curve into two points over the boundary of the new face.}
\end{itemize} 
Compare with the figure $\ref{inb-dec}$ below.
\end{defn}
  \begin{figure}[H]
 \begin{center} 
     \subfloat[$\boldsymbol{{}^{\ast}22}$ decomposition on the dashed curve more to the left in the figure above. ]
    {{\includegraphics[width=5.3cm]{imagem/cutting_base_1.jpg}}}\\
     \subfloat[]
    {{\includegraphics[width=3.6cm]{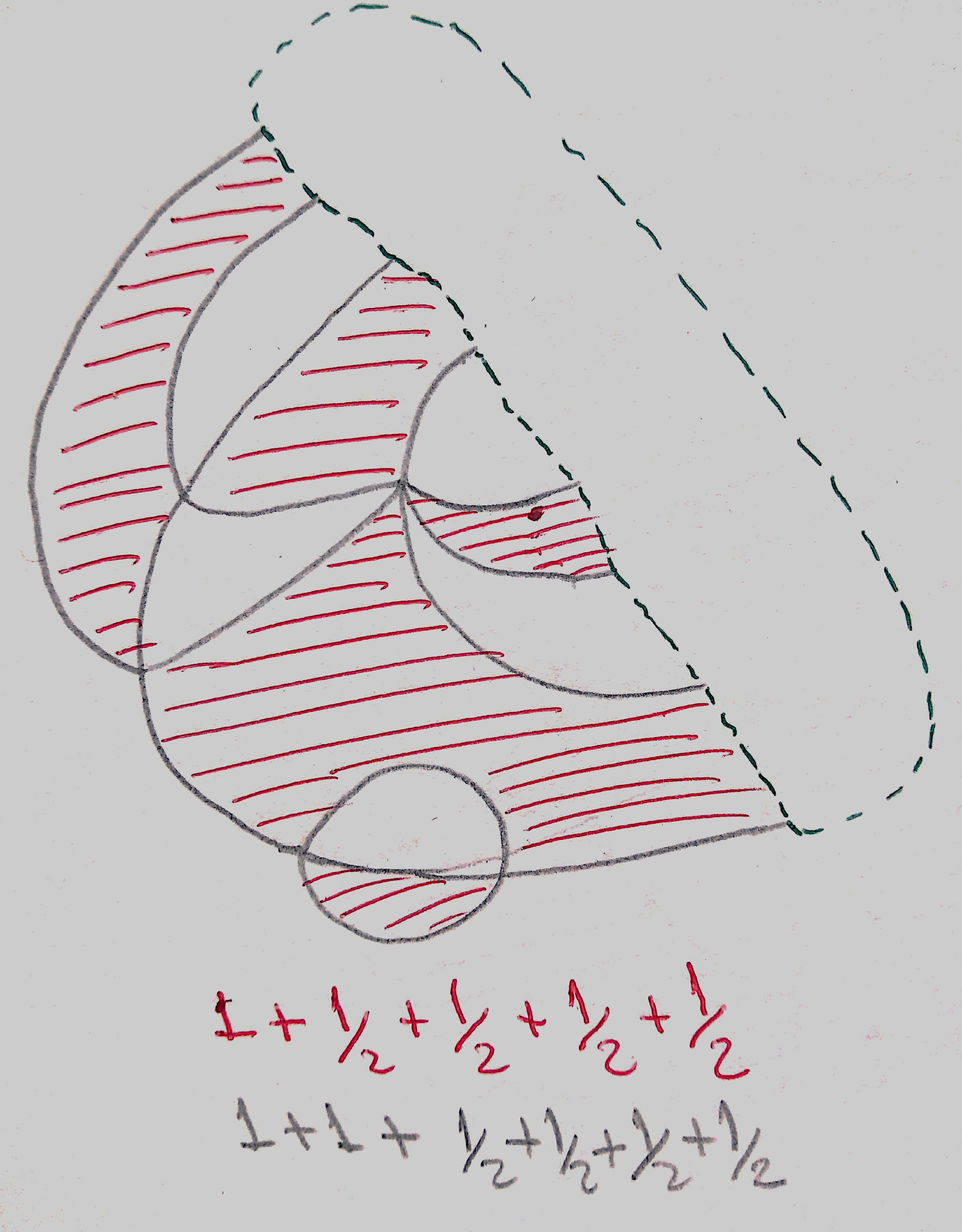}}}\quad
    \subfloat[]
    {{\includegraphics[width=4.35cm]{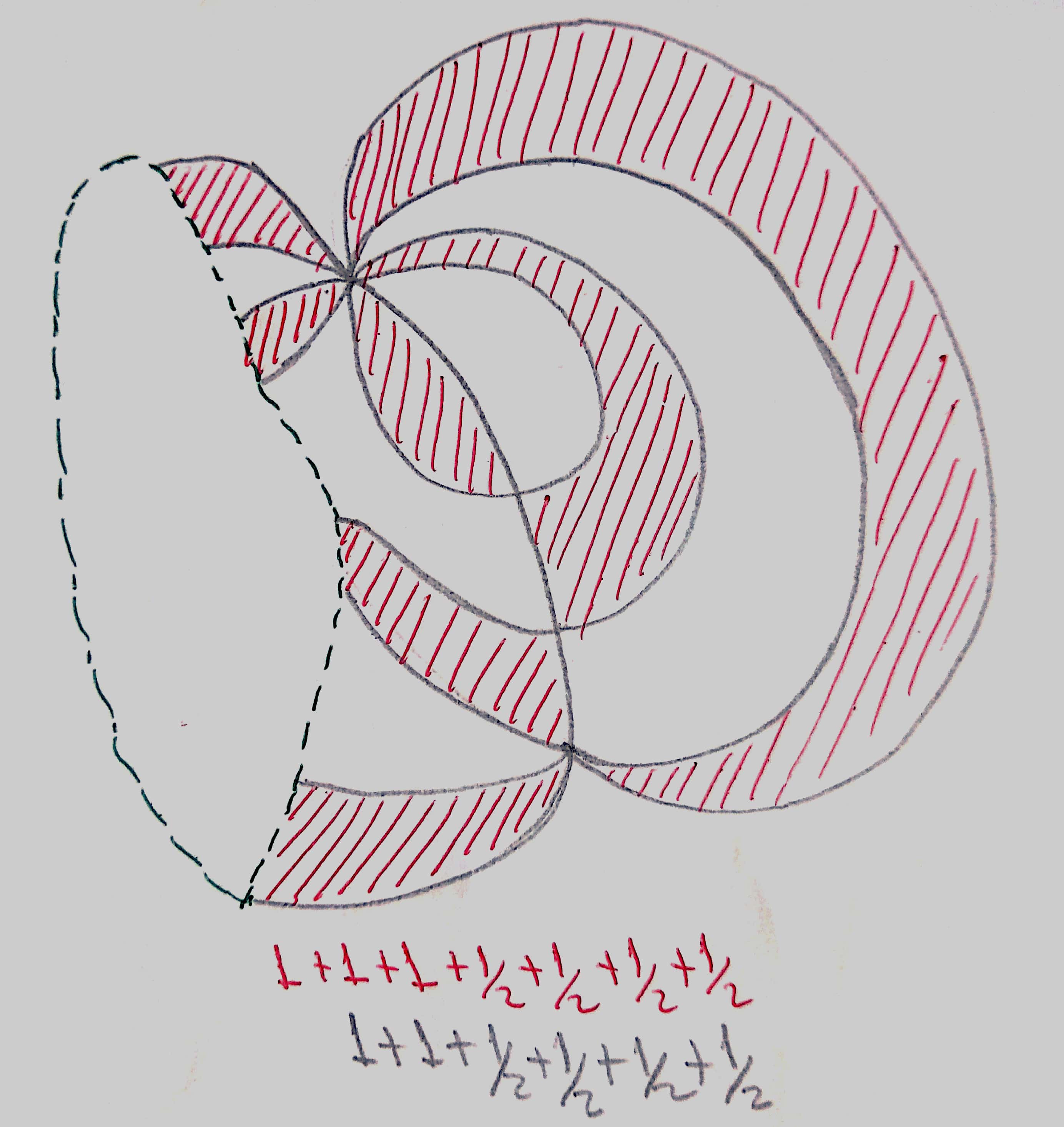}}}
         \end{center}
\end{figure}

    \begin{figure}[H]
 \begin{center} 
      \subfloat[]
    {{\includegraphics[width=4.2cm]{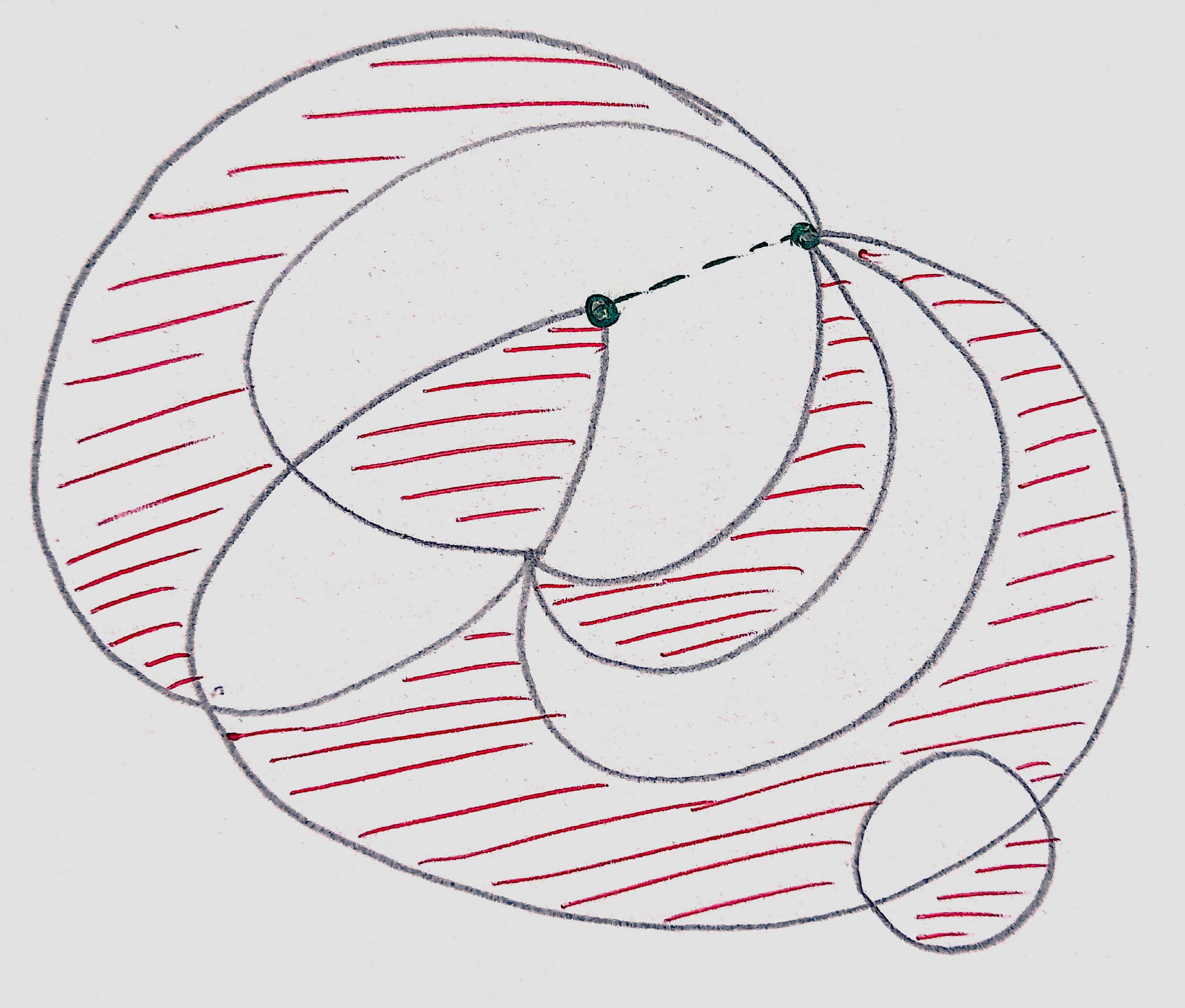}}}\quad
    \subfloat[]
    {{\includegraphics[width=4.2cm]{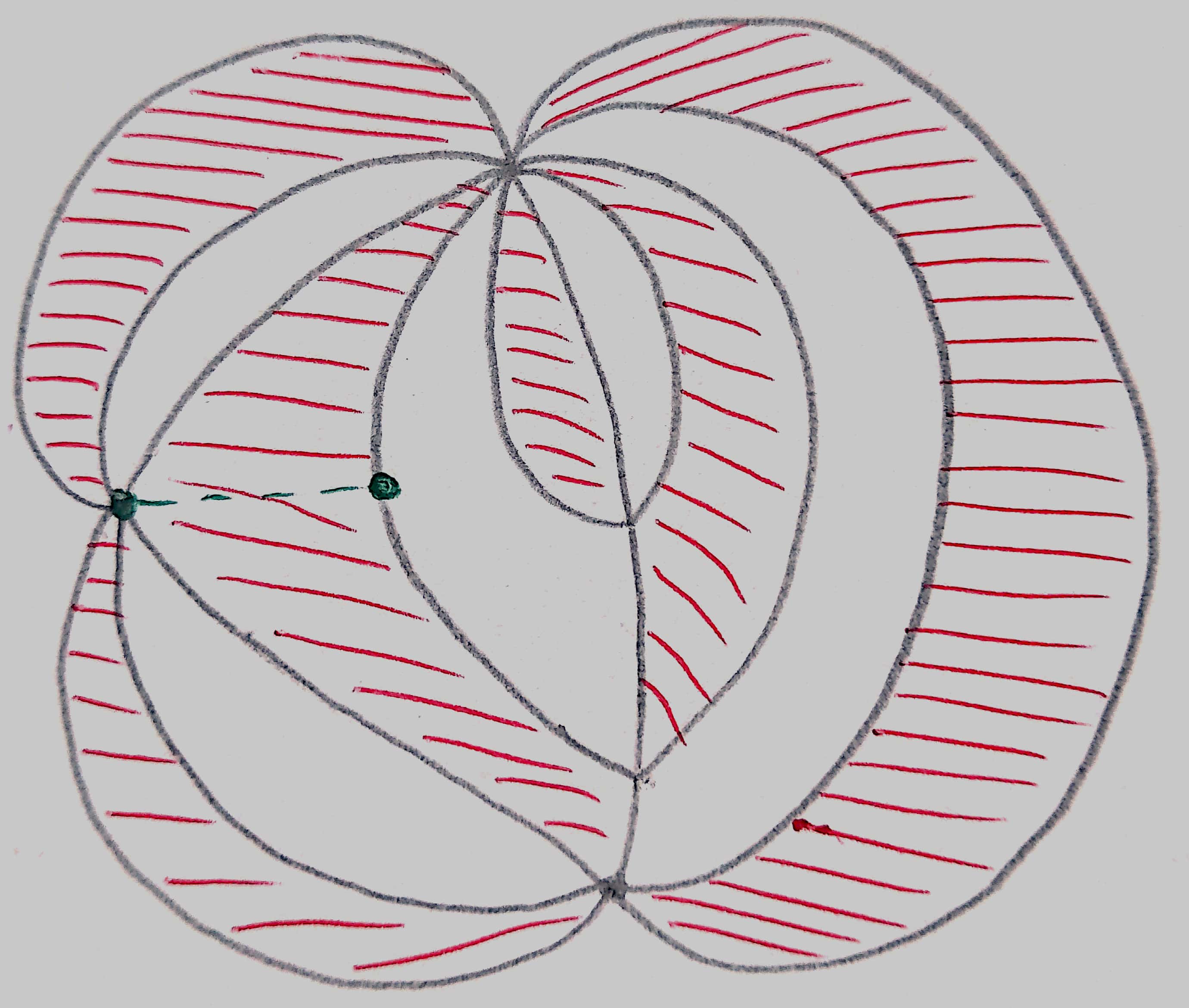}}}
\caption{$\boldsymbol{{}^{\ast}22}$ decomposition}\label{inb-dec}     
     \end{center}         
\end{figure}

\begin{defn}[Murasugi sum]\label{m-sum}
The \emph{Murasugi sum} of two balanced graphs, say $\Gamma, \Lambda\in \textbf{BG}$, both with a \textcolor{deeppink}{A}-\textcolor{blue}{B} ,face coloring, consists of the following described procedure:
\begin{itemize}
\item[$\boldsymbol{(1)}$]{To remove a rectangle from oppositely colored faces of $\Gamma\in \boldsymbol{BG}$ and $\Lambda\in \textbf{BG}$, where the rectangles have two edges on different saddle-connection incident to a face and the other two edges interior to that face;}
\item[$\boldsymbol{(2)}$]{Then glue $\Gamma\in \textbf{BG}$ and $\Lambda\in \textbf{BG}$ along the edges of those cutting out rectangles so as to match the face colors.}  
\end{itemize}
\end{defn}
\begin{ex}Below we construct a degree 3 planar balanced graph from a \emph{Murdugi sum} of two copies of the more simpler balanced graph. The degree 3 graph produced is a projection of the \emph{figure eight knot}. \end{ex}
  \begin{figure}[H]
 \begin{center} 
     \subfloat[sum]
    {{\includegraphics[width=5.cm,height=3cm]{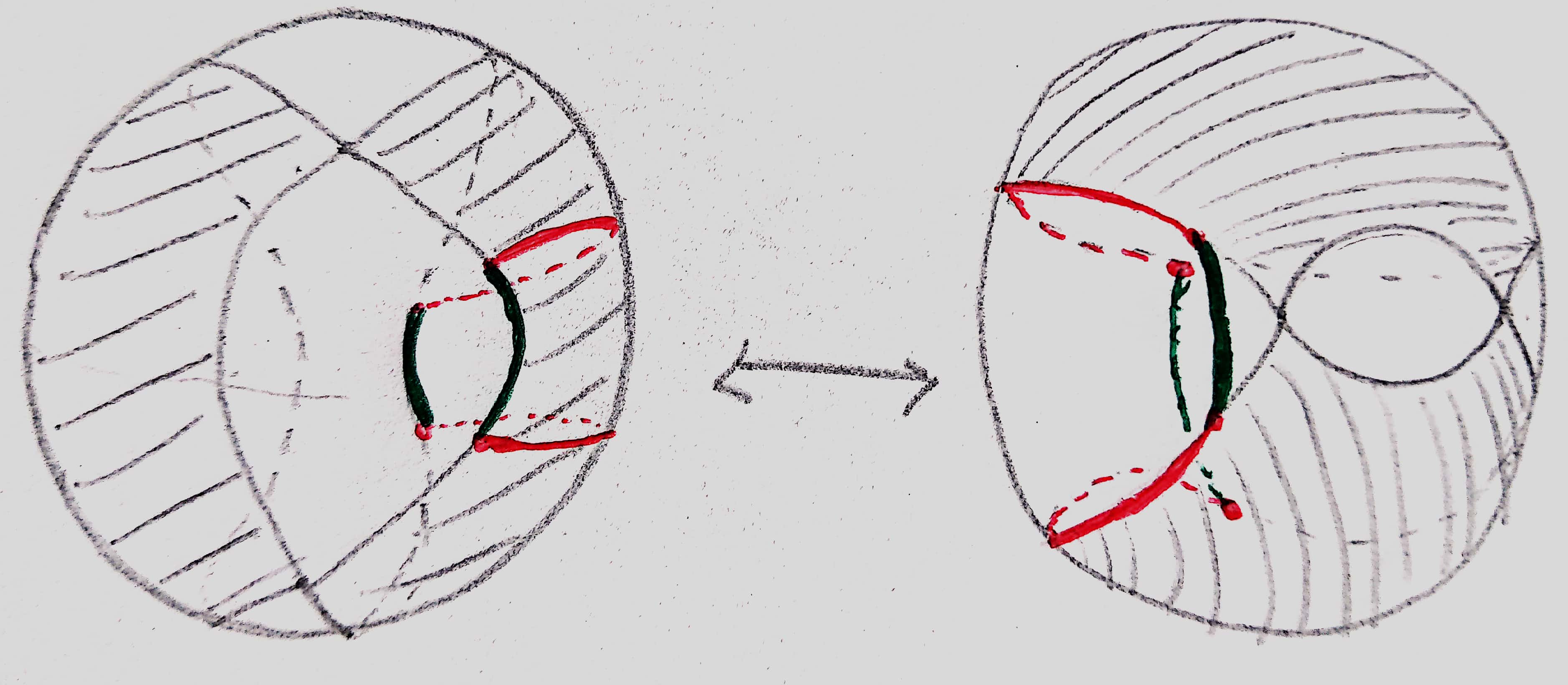}}}\quad \subfloat[new balanced graph]
    {{\includegraphics[width=3cm,height=3cm]{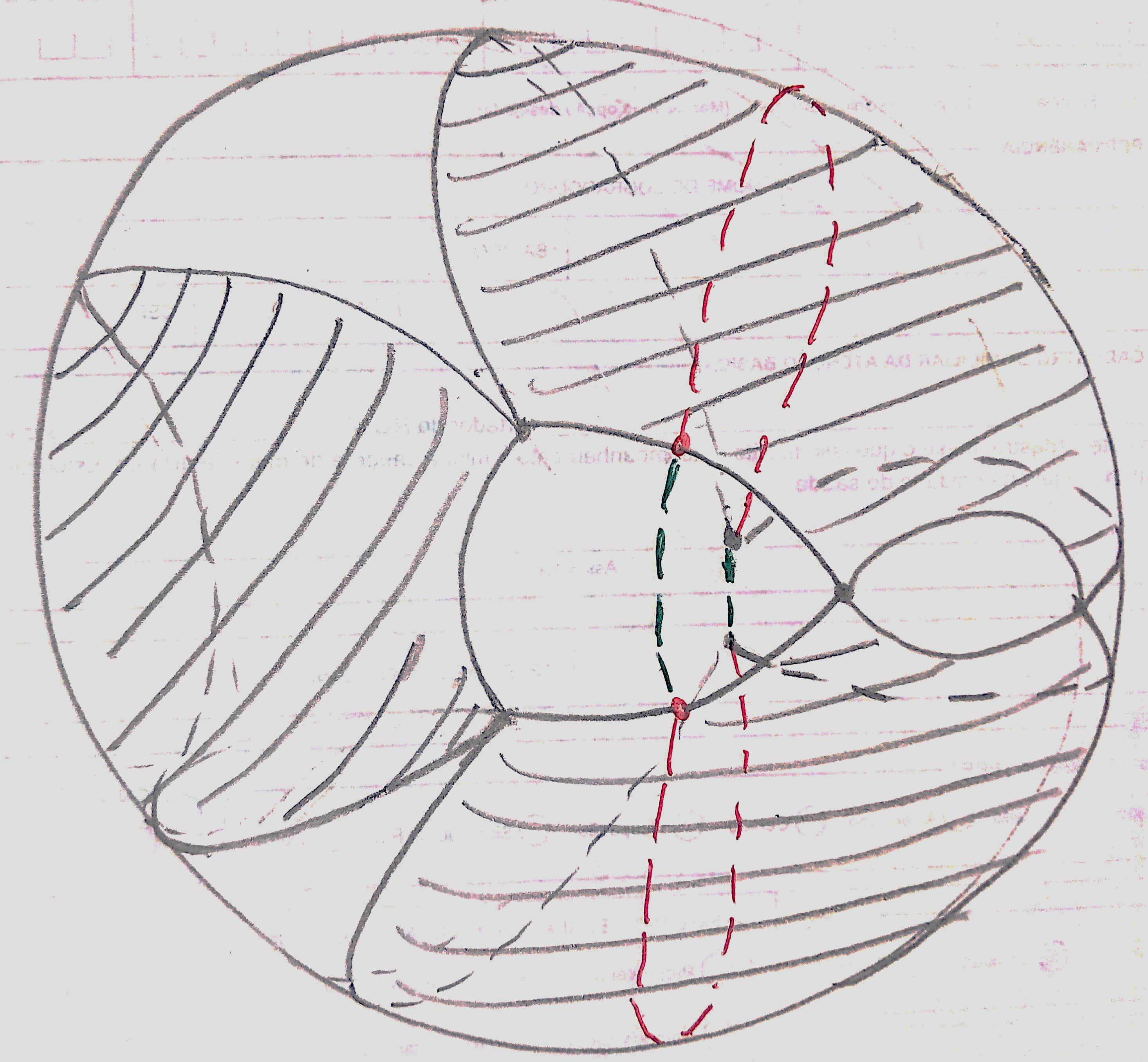}}}\\
    \subfloat[(b) into the plane(ignore the dashed line)]
    {{\includegraphics[width=4.6cm,height=3cm]{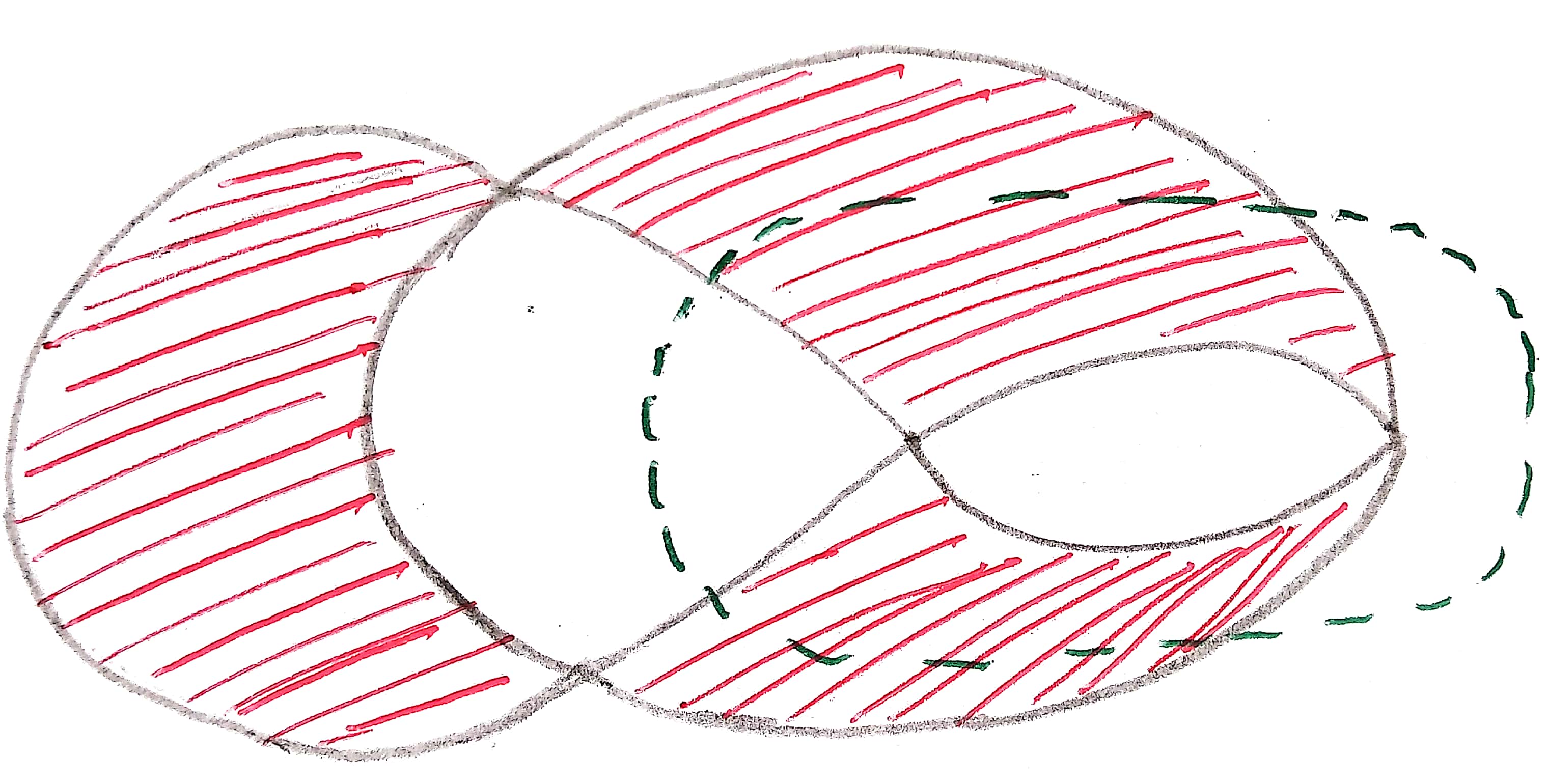}}}
     \end{center}
     \caption{Murasugi sum}
\end{figure}

\begin{defn}[{The category ${\boldsymbol{\bg}}$}]\label{cat-bg}
The category of \emph{Balanced Graph} $\bgg$ is that one whose the class of object consists of balanced graphs of any type and the morphisms are the operations defined above. Since each morphism have a inverse arrow  $\bgg$ atually is a \emph{Groupoid}.
\end{defn}

\rem{there are more than one sum operation over $ \bgg $, we are working on defining a single sum operation from these. And it is expected that this operation be compatible with the morphisms and determines a monoidal structure in $ \bgg $.}


\section{Proving the B. \& M. Shapiro conjecture}\label{shap-section}

\subsection{Local balancedness of real globally balanced graphs}
\begin{thm}\label{realgb-is-lg}

Real generic GB-graphs are locally balanced.
\end{thm} 
\begin{proof}

Let $\Gamma\subset\overline{\C}$ be a real generic GB-graph with a \textcolor{deeppink}{A}-\textcolor{blue}{B} alternating face coloring and $\gamma$ a positive cycle of $\Gamma$. $\textcolor{deeppink}{A}_{\gamma}$ and $\textcolor{blue}{B}_{\gamma}$ are the numbers of \textcolor{deeppink}{A} faces and \textcolor{blue}{B} faces inside $\gamma$. 

Being $\Gamma$ a real generic globally balanced graph, each face of it have at least one of its boundary edges contained into $\overline{\R}$, we refer to such a kind of edge as real edges. By the alternating property of the face coloring each \textcolor{blue}{B} face 
possesses a companion \textcolor{deeppink}{A} face sharing the same real edges. Since $\gamma$ keeps only \textcolor{deeppink}{A} faces adjacent to its left side, for each \textcolor{blue}{B} face $F_{\textcolor{blue}{B}}$ in the interior of $\gamma$ its companion $\textcolor{deeppink}{A}$ face $F_{\textcolor{deeppink}{A}}$ is also inside $\gamma$. And, for the same reason, must there exist at least one more $\textcolor{deeppink}{A}$ face adjacent to those nonreal edges of those \textcolor{blue}{B} faces inside $\gamma$. Therefore, $\textcolor{deeppink}{A}_{\gamma}\geq \textcolor{blue}{B}_{\gamma}+1$.

We conclude that $\Gamma$ is locally balanced.

\begin{figure}[H]
 \begin{center}
       \subfloat[degree $9$ real GB-graph]
    {{\includegraphics[scale=.07]{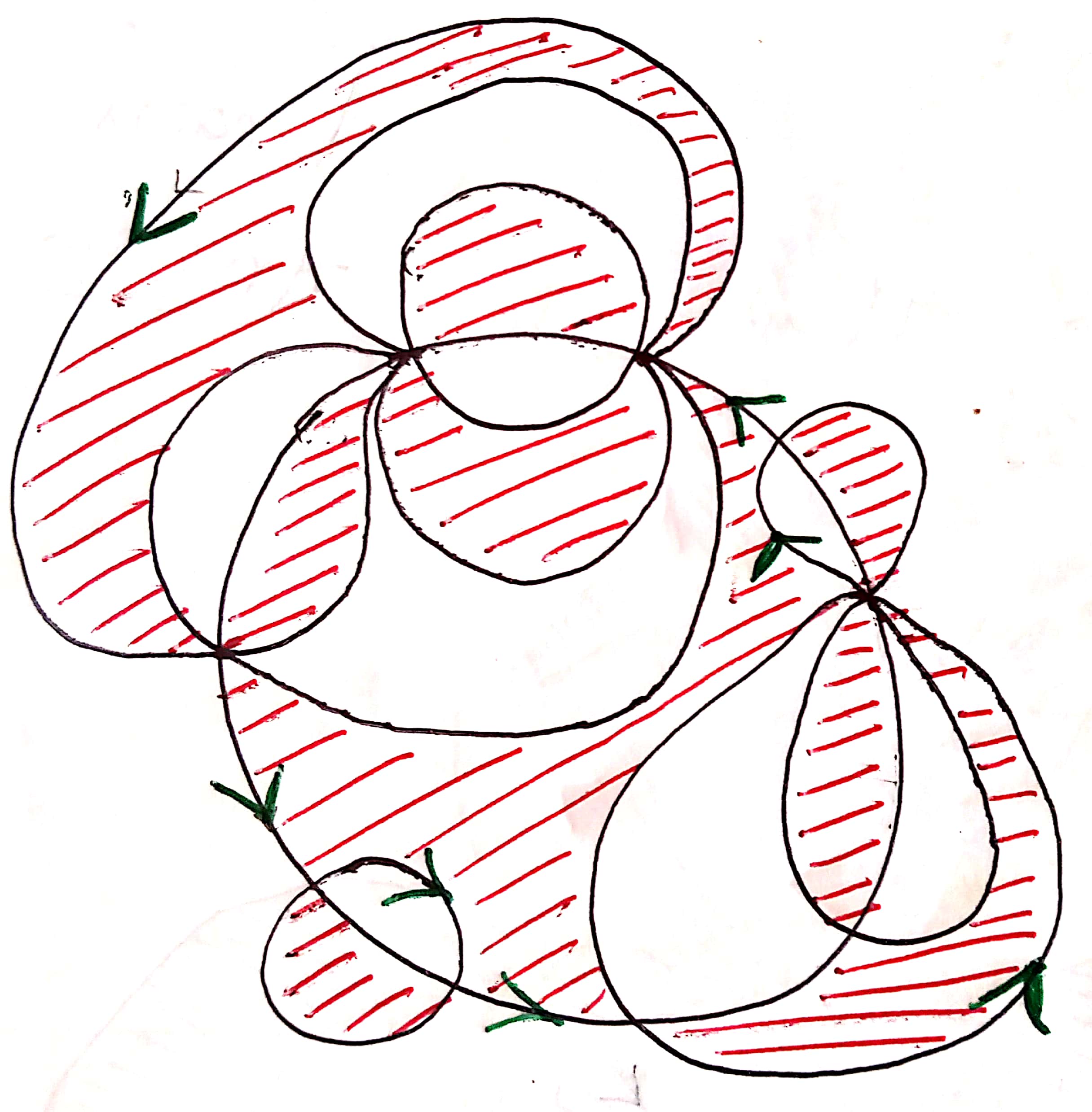}}}
        \qquad
    \subfloat[degree $8$ real GB-graph]
    {{\includegraphics[scale=.07]{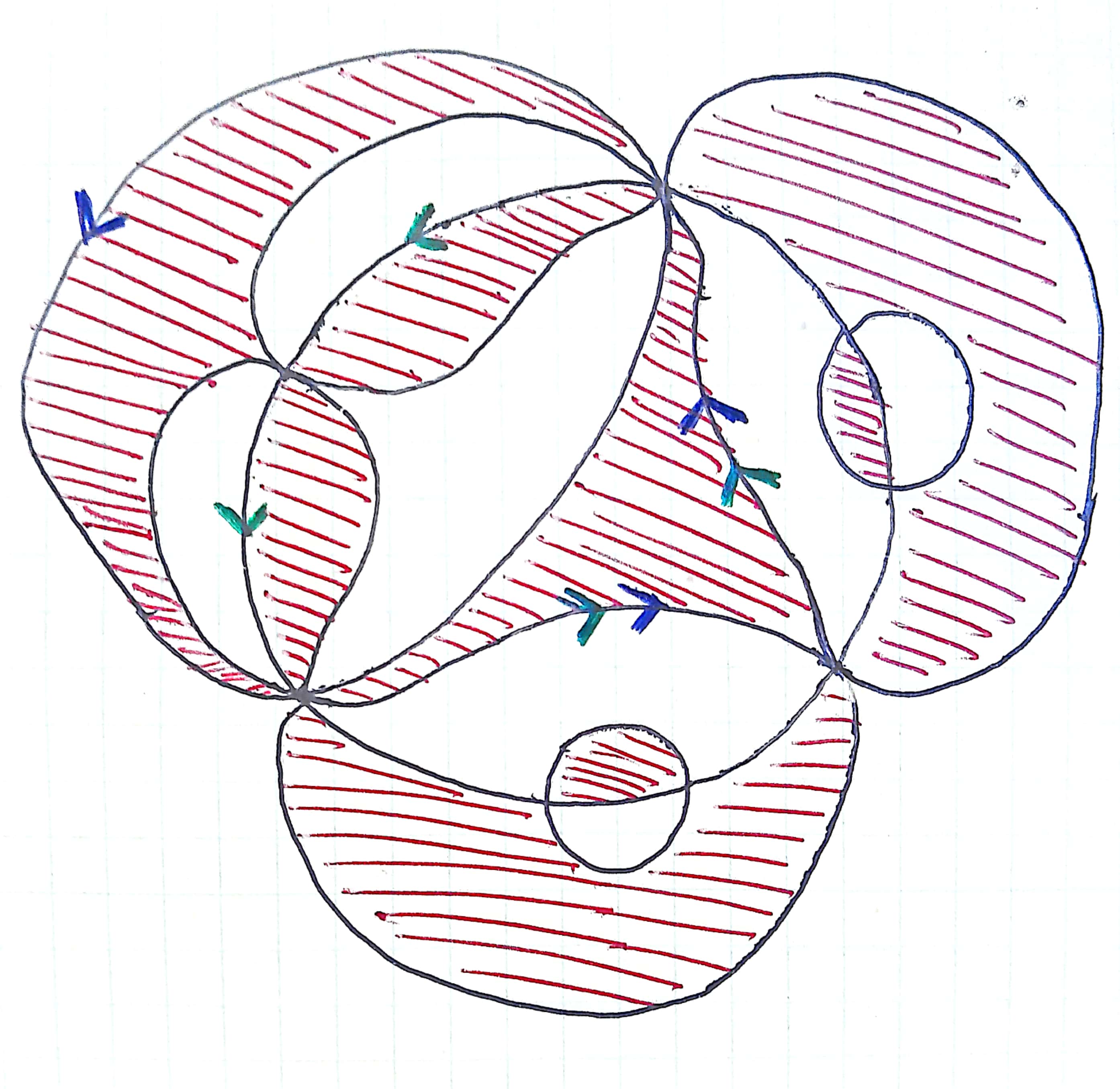} }}
    \caption{}
\end{center}\end{figure}

\end{proof}

\begin{cor}\label{cor-realgb-is-lg}
Real GB-graphs are locally balanced.
\end{cor}
\begin{proof}
Follows directly from Theorem $\ref{realgb-is-lg}$ and Proposition $\ref{ngenrg-f-genrg}$. 
\end{proof}

In this section a topological-combinatorial proof for the \emph{B}. \& \emph{M. Shapiros conjecture} is given as a byproduct of some previous results on this chapter.

First, as a corollary from Theorem $\ref{realgb-is-lg}$ we obtain a combinatorial solution for a special case of a problem posed by Goldberg\;\cite{Gold:91} that we appropriately introduced in $\ref{goldissue}$.

\begin{cor}\label{relGoldp}
The number of equivalence classes of generic degree $d$ rational functions with real critical points is the $d$-\emph{Catalan number}.
\end{cor}
\begin{proof}
We know that there are $\displaystyle{\dfrac{1}{d} \binom{2d-2}{d-1}}$ degree $d$ real generic globally balanced graphs (see $\ref{goldissue}$) for fixed $2d-2$ distinct points into $\overline{\R}$. Thus, from Theorem $\ref{realgb-is-lg}$ there are $\displaystyle{\dfrac{1}{d} \binom{2d-2}{d-1}}$ degree $d$ real balanced graphs for fixed $2d-2$ distinct points into $\overline{\R}$. And that is what was left to be proved to achieve this result (to recall returns to $\ref{goldissue}$). 
\end{proof}

Now, we will present a new proof for the \emph{B.} \& \emph{M. Shapiro's conjecture}. 
This new proof remains at a more natural and simple level of complexity and depends much less on sophisticated non-discrete mathematical machinery than that obtained by Eremenko \& Gabrielov \cite{MR1888795}, \cite{MR3051166}, hence it is more accessible. Nevertheless, we still have to resort to Goldeberg's result \cite{Gold:91}. (This can be overcome if we could prove that for a fixed subset $C\subset{\cc}$ of $2d-2$ points, any degree $d$ non-generic planar balanced graph with vertce set $C$ is obtained from a generic one with vertice set $C$ by those operations on graphs and that those operations only permutes the classes of the corresponding branched covers. We conjecture that this can be proved.)

\begin{thm}[Eremenko-Gabrielov-Mukhin-Tarasov-Varchenko Theorem]\label{sha-conj}
A generic rational function $R:\overline{\C}\rightarrow\overline{\C}$ with only real critical points is equivalent to a real rational function.
\end{thm}
\begin{proof}
Fix a subset $C\subset{\overline{\R}}$ of $2d-2$ points. From corollary $\ref{relGoldp}$ the number of real non-equivalent real rational function with critical set $C$ is $\displaystyle{\dfrac{1}{d} \binom{2d-2}{d-1}}$. But from the number of equivalente classes of generic rational function of prescribed critical set is at most $\rho_d$ \cite{Gold:91}. Then we are done.
\end{proof}

\begin{cor}
A generic rational function $R:\overline{\C}\rightarrow\overline{\C}$ with all critical points leaving into a circle is equivalent to a real rational function.
\end{cor}

\chapter[Generic Cubic Rational Functions]{\rule[0ex]{16.5cm}{0.2cm}\vspace{-23pt}
\rule[0ex]{16.5cm}{0.05cm}\\Generic Cubic Rational Functions}\label{cap-04}

By the \emph{Riemann-Hurwitz formula} (or by a simple algebraic computation) a cubic rational function $f\in\C(z)$ has $4=2\cdot 3-2$ critical points counted with multiplicity.

We are going to consider the generic cubic rational functions, that is, the ones that have precisely $4$ critical points.

Into such a case, each cubic rational function can be written in the following way after suitable changes of coordinates on the domain and codomain by \emph{M\"obius} transformations,

\begin{eqnarray}
\phi(z) = \frac{a z^3 + (1-2 a)z^2}{(2-a) z - 1}
\end{eqnarray}

$\phi$ has critical points at $0$, $1$, $\infty$ and $c = \dfrac{2 a-1}{a(2-a)}.$

The constraint that $\phi(z)$ possesses $4$ distinct enforces the constrant, $a\in\C-\{0, 2^{-1}, 1, 2\}$, over the coefficient $a$.

A generic choice (that is, outside a proper algebraic subvariety of $\C$) of a parameter $c\in\C$ gives rise to $2$ $a$-solutions:

\begin{eqnarray}
\alpha(c)=\dfrac{\sqrt{c^2 -c+1}-1+c}{c}\nonumber
\end{eqnarray}
and
\begin{eqnarray}
\beta(c)=\dfrac{-\sqrt{c^2 -c+1}-1+c}{c}, \nonumber
\end{eqnarray}	where $\sqrt{\star}$ denotes the 
\emph{principal branch} of the \emph{square root}.

Then, each aforementioned choice determines two rational functions, say $\phi_{\alpha}$ and $\phi_{\beta}$, that have $\{c, 0, 1, \infty\}\subset\cc$ as its critical set.

Since $\phi_{\alpha}$ and $\phi_{\beta}$ have $3$  fixed points in common, they cannot be equivalent unless they are equal. That agrees with Goldberg's result \cite{Gold:91} that there exists at most $\rho(3)=2$ equivalence classes for degree three generic rational functions on $\cc$  for a generic prescription of the critical set $R\subset\cc$. 

\subsection{Justifying the normal form}

\begin{lem}\label{justcnf}
Every generic cubic rational function $f\in \C(z)_3$ with $\{0, 1, \infty\}\subset crit(f)$ is equivalent to a unique cubic rational function of the form 
\begin{eqnarray}
\phi_a (z)=\dfrac{a z^{3} +(1-2 a)z^{2}}{(2-a)z-1}
\end{eqnarray}
for some $a\in\C-\{-1,0,1/2,1,2\}$ whose fourth critical point is given by
\begin{eqnarray}
c(a) = \frac{2 a-1}{a(2-a)}
\end{eqnarray}
\end{lem}
\begin{proof}
First of all we can assume, {up to a postcomposition} with a \emph{M{\"o}bius} map
, that the images of $0$, $1$ and $\infty$ by $\phi$ is itself, i.e, the set $\{0, 1, \infty\}$ is pointwise fixed by $\phi$.

We have 
\begin{eqnarray}
\phi(z)=\dfrac{P(z)}{Q(z)}=\dfrac{\sum_{k=0}^{3} a_k z^{k}}{\sum_{k=0}^{3} b_k z^{k}}
\end{eqnarray}

Provided that $\phi(\infty)=\infty$ we should have $\deg(\sum_{k=0}^{3} a_k z^{k})>\deg(\sum_{k=0}^{3} b_k z^{k})$. Hence, $b_3=0.$

Since $\phi(0)=0$ we must have $a_0=0$ and $b_0 \neq 0$. Furthermore, $0$ is also a critical point, then it is a zero of multiplicity at least $2$, what implies that $a_1 =0.$

Then,
\begin{eqnarray}
\phi(z)=\dfrac{a_3 z^{3}+a_2 z^{2}}{b_2 z^{2}+b_1 z+b_0}
\end{eqnarray}
 
Now, $\phi(1)=1$ implies that
\begin{eqnarray}\label{f1}
a_3 +a_2= b_2+b_1+b_0\quad\mbox{with}\quad{b_0\neq 0}
\end{eqnarray}
The \emph{Wroskian} of $\phi$ is 
\begin{eqnarray}
 W(\phi)(z)=a_3 b_2 z^{4}+2a_3 b_1 z^{3}+(3a_3 b_0 +a_2 b_1)z^{2}+2a_2 b_0 z
\end{eqnarray}
Since, ${W(\phi)(1)}=0$, it follows
\begin{eqnarray}\label{wf1}
a_3 b_2+2a_3 b_1+3a_3b_0 +a_2 b_1+2a_2 b_0=0
\end{eqnarray}
Then using the relation $(\ref{wf1})$
\begin{eqnarray}\label{w12}
0&=&2a_3(b_2+b_1 +b_0)- a_3 b_2 +a_3 b_0 + a_2(b_2 +b_1 +b_0)+a_2 b_0 -a_2 b_2\nonumber\\
&=&(2a_3 +a_2)(b_2 +b_1 +b_0)- a_3 b_2 +a_3 b_0+a_2 b_0 -a_2 b_2 \nonumber\\
&=&(2a_3 +a_2)(b_2 +b_1 +b_0)+(b_0 -b_2)(a_2+a_3)\nonumber\\
&=&(2a_3 +a_2)(a_2+a_3)+(b_0 -b_2)(a_2+a_3)\nonumber\\
&=&(a_2+a_3)(2a_3 +a_2+b_0 -b_2)
\end{eqnarray}
Since $b_0\neq 0$, without loss of generality, we can assume that $b_0=-1$.

Now notice that $b_2 =0.$ Since $\infty$ is a critical point for $\phi$ 
\begin{eqnarray}
0&=&\left.\dfrac{d}{dz}\left(\dfrac{1}{\phi(\frac{1}{z})}\right)\right|_{z=0}\\
&=&\left.\dfrac{a_3 b_2 +2a_3 b_1 z + (3a_3 b_0 +a_2 b_1)z^{2} + 2a_2 b_0 z^{3}}{(a_3 + a_2z)^{2}}\right|_{z=0}\\
&=& b_2
\end{eqnarray}


Therefore,
\begin{eqnarray}
\phi(z)=\dfrac{a_3 z^{3}+a_2 z^{2}}{b_1 z-1}
\end{eqnarray}
Hence $(\ref{f1})$ and $(\ref{w12})$ turns out to
\[\begin{cases} a_3 +a_2+1= b_1 \\ (a_2+a_3)(2a_3 +a_2-1)=0 \end{cases}\]
What implies that

\[(I)\:\begin{cases} a_2=-a_3 \\ b_1=1 \end{cases}\quad\mbox{or}\quad(II)\:\begin{cases} a_2=1-2a_3 \\ b_1=2-a_3 \end{cases}\]

The solution $(I)$ is dropped out since for it the cubic function $f$ degenerates to a quadratic function.

Therefore, we have
\begin{eqnarray}
\phi_{a_3}(z)=\phi (z)=\dfrac{a_3 z^{3}+(1-2a_3) z^{2}}{(2-a_3) z-1}
\end{eqnarray}
and solving the equation $\displaystyle{\dfrac{W(\phi)(z)}{z(z-1)}=0}$, we shall find the fourth critical point
\begin{eqnarray}
c(a_3)=\dfrac{2a_3 -1}{a_3 (2-a_3)}
\end{eqnarray}
The uniquiness follows from the fact that the identity automorphism of $\cc$ is the unique one that have strictly more than $2$ fixed points. But a $\emph{M\"obius}$ function assuring the equivalence between two such normal cubic functions will have to fix  pointwise the set $\{0, 1, \infty\}$, then it has to be the identity function, so those two functions are equal actually.
\end{proof}

From Lemma $\ref{just-norml}$ and Lemma $\ref{justcnf}$ it follows

\begin{cor}\label{cubic-n-f}
Any cubic generic ratinal function is equivalent to a cubic generic ratinal function of the form
\begin{eqnarray*}
\phi(z) = \frac{a z^3 + (1-2 a)z^2}{(2-a) z - 1}
\end{eqnarray*}
\end{cor}

\begin{prop}
The conformal automorphism group of the rational functions $c(a)$ and $\dfrac{1}{c(a)}$ are
 \begin{eqnarray}
Aut\left(c(a)\right)=\left\{ z,\dfrac{1}{z}\right\}\cong \Z_{2}
\end{eqnarray}
and
 \begin{eqnarray}
Aut\left(\dfrac{1}{c(a)}\right)=\left\{ z,\dfrac{1}{z}, 1-z,\dfrac{z-1}{z},\dfrac{1}{1-z}, \dfrac{z}{z-1}\right\}=\langle z,\dfrac{1}{z}, 1-z\rangle\cong \mathcal{S}_3
\end{eqnarray}
\end{prop}
\begin{proof}
Comparing $\phi\circ c$ with $c\circ\phi$ as well as  $\psi\circ (\frac{1}{c})$ against $(\frac{1}{c})\circ\psi$, for each affine map $\phi\in \left\{ z,\dfrac{1}{z}\right\}$ and $\psi\in \left\{ z,\dfrac{1}{z},\dfrac{z-1}{z},\dfrac{1}{1-z}, \dfrac{z}{z-1}, 1-z \right\}$, we conclude that
\[ \left\{ z,\dfrac{1}{z} \right\}\subset Aut(c(a))\] 
and
\[ \left\{ z,\dfrac{1}{z},\dfrac{z-1}{z},\dfrac{1}{1-z}, \dfrac{z}{z-1}, 1-z \right\}\subset Aut\left(\dfrac{1}{c(a)}\right)\].

Now, notice that an automorphism of a rational function permutes its periodics points for each fixed period. 

$c(a)$ has two fixed points, $a=0$ and $a=1$, and one period $2$ orbit, $\sqrt[3]{-1}\mapsto -(-1)^{2/3}$. And, $\dfrac{1}{c(a)}$ has three fixed points, these fixed points are the $3$-roots of the unity, $1, \sqrt[3]{-1}, -(-1)^{2/3}$.

Since two automorphisms of $\overline{\C}$ that coincide at three points should to be equal follows that we can have at most  $2$ \emph{Möebius maps} comuting with $c(a)$ and $6=3!$ \emph{Möebius maps} comuting with $\dfrac{1}{c(a)}$.

Therefore, the stated proposition is true.
\rem{We can via this argumentation  obtain a group order boundness result for that automorphisms groups}
\end{proof}

Now, let's stick to some brief computations with $\phi$:

\begin{eqnarray}
\phi(z)=0 \iff z=0 \;\mbox{or}\;z=2-\dfrac{1}{a}
\end{eqnarray}

\begin{eqnarray}
\phi(z)=1 \iff z=1 \;\mbox{or}\;z=-\dfrac{1}{a}
\end{eqnarray}

\begin{eqnarray}
\phi(z)=\infty \iff z=\infty \;\mbox{or}\;z=\dfrac{1}{2-a}
\end{eqnarray}

\begin{prop}
$\phi(c)={a}^2c^3$
\end{prop}
\begin{proof}

\begin{eqnarray}
\phi(z) = \frac{a z^3 + (1-2a)z^2}{(2-a) z - 1}
\end{eqnarray}
and
\begin{eqnarray}
c = \frac{2a - 1}{a(2-a)}
\end{eqnarray}

Then,
\begin{eqnarray}
1-2a=a c(a-2)
\end{eqnarray}

Hence,
\begin{eqnarray}
\phi(c)&=&\dfrac{c^2(a c+(1-2a))}{(2-a)c-1}\\\nonumber
&=& c^2\dfrac{a c+(1-2a)}{(2-a)c-1}\\\nonumber
&=& c^2\dfrac{a c+(a-2)a c}{\dfrac{2a -1}{a}-1}\\\nonumber
&=&a c^3\dfrac{a -1}{\left(\dfrac{a -1}{a}\right)}\\\nonumber
&=&a^2 c^3
\end{eqnarray}
\end{proof}

\subsection{Degenerate maps}
For $a=0$ and $a=1$ we get, respectively, the maps $f_0 :z\mapsto \dfrac{z^2}{2z -1}$ and $f_1 :z\mapsto z^2$. $f_0$ and $f_1$ are conformally conjugated through the conformal map $S(z)=\dfrac{z-1}{z}.$

Such degenerated functions can be achieved through the real families $\alpha(c)|_{\R}$ and $\beta(c)|_{\R}$ by making $c\in\R$ goes to $-\infty$ for $\alpha(c)$, and for $\beta$, by making $c\in\R$ to go to $+\infty$ obtaining in such a way $a=0$. And  $a=1$ is attained only by $\alpha(c)|_{\R}$ when $c\in\R$ approaches to $c=1$.


The coefficient $a=2$ is obtained 
making $c\in\R$ goes to $+\infty$ for $\alpha$ and taking $c\in\R$ goes to $-\infty$ for the familly $\beta$.  In this case, we obtain the limit polynomial function $P(z)=-2z^3 + 3z^2$. $P$ is the monic cubic polynomial map that possesses $z=0$ and $z=1$ as fixed critical points.

Although the real $beta(c)$ function has a indeterminacy at $c=0$ a limit function is attained as we set the coefficient $a$ tend to $\infty\cc$. The function that we get is the (parabolic) quadratic map $z\mapsto z^2 +z$. In this case, $z=1$ is no longer a critical point or even a fixed point.

The coefficient $a=\frac{1}{2}$ is achieved only by $\alpha$ as $c\to 0$, 
giving the function $f_{\frac{1}{2}}:z\mapsto \dfrac{z^3}{3z-2}$ .  
$f_{\frac{1}{2}}$ is conformally conjugated by the inversion map  
to $P(z)$.The reason for that is the fact that the fourth critical point $c$ has collapsed against the super-attracting fixed point at $z=0$. That function has a critical point of multiplicity $2$ at $z=0.$

For the coefficient $a=-1=\beta(1)$ we obtain the function $\frac{3 z^2-z^3}{3 z-1}$. That function has a critical point of multiplicity $2$ at $z=1$, thus it has only $3$ fixed points at $0,1$ and $\infty$.

Although the real $\beta(c)$ function has a indeterminacy at $c=0$ a limit function is attained as we set the coefficient $a$ goes towards $\infty\in\cc$. The function that we get is the (parabolic) quadratic map $z\mapsto z^2 +z$. In this case, $z=1$ is no longer a critical point or even a fixed point.

We saw that when $c\in\R$ tends to $+\infty$ or to $-\infty$ the maps $\phi_1$ and $\phi_2$ degenerates into degree $2$ rational functions. What more we can single out is that those limit functions obtained when $ c $ moves towards $ + \infty $ are the same as when $ c $ moves towards $ - \infty $, but they are exchanged between $ \phi_{\alpha}$ and $ \phi_{\beta} $.

\begin{figure}[H]
    \centering
    \subfloat[real graph of $\alpha_1$ and $\alpha_2$]{{\includegraphics[width=10cm]{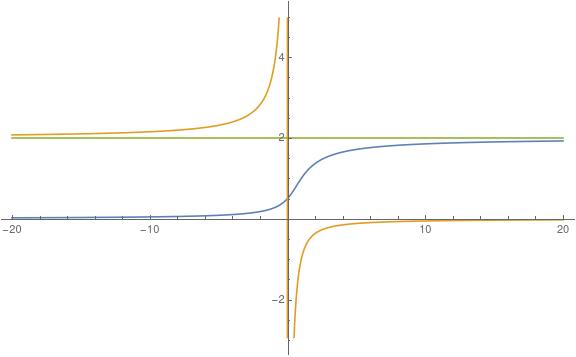} }}%
    \label{fig:rg2ab}%
\end{figure} 

The degeneracies described above shows that the \emph{locus} (non compact varieties) defined by $\alpha$ and $\beta$ into $\C(z)_3$ it has some interesting \emph{fins}. Two of that siting inside $\C(z)_3$ are the conformally conjugated cubic rational functions $P(z)=2z^3-3z^2$ and $Q(z)=\dfrac{z^3}{3z-2}$ ($P(z)$ is the polynomial dynamical model for all such normal rational maps considered (see $\ref{dynamics}$)). And two another fins at the boundary of $\C(z)_3$ into $\C(z)_2$, namely the conformally conjugated maps $z\mapsto z^2$ and $z\mapsto \frac{z^2 -1}{z}$, as described above.

For each $c\in\C-\{0,1,\frac{1}{2}\left(1\pm \sqrt{3}\right)\}$, $\alpha(c)$ and $\beta(c)$ are different.

\subsection{Pullback graphs of real generic cubic rational functions}\label{sbsec-realcubicpbg}

We are going to consider those generic cubic real rational maps of the form $\phi_a$. That is, those ones $\phi_a$ with $a\in\R-\{-1,0,1/2,1,2\}.$ $\rr$ is a postcritical curve for each such function.


Proposition $(\ref{prop-shape-graph})$ assures that the inverse image of $\overline{\R}$ by $\phi_a$ is a planar graph with $4$ vertices of degree $4$ and $6$ \emph{Jordan} domains as its faces. Since $f(\rr)\subset\rr$, we also have $\rr \subset \Gamma_f:=f^{-1}(\rr)$ and furthermore, each face possesses a complex conjugate face since $\phi_a (z)=\overline{\phi_a (\overline{z})}.$

The finite preimage of $\infty$ by $\phi_a$ is the point $z_\infty =\dfrac{1}{2-a}\in\R$. We know also that $\alpha =\beta$ if and only if $c\in\{\dfrac{1}{2}+\dfrac{\sqrt{3}}{2}i, \dfrac{1}{2}-\dfrac{\sqrt{3}}{2}i\}$. So, in the real case where all critical points of $\phi_a$ are real the finite preimage of $\infty$  by $\phi_{\alpha}$ and $\phi_{\beta}$ are distinct.

There are $3$ possibilities for the position of the fourth critical point $c$ with respect to the another $3$ fixed critical points of $\phi_a$, namely, $c\in(\infty,0)$, $c\in(0,1)$ and $c\in(1,\infty)$.

In each situation, the pullback graph of $\phi_a$ are determined and distinguished by its finite preimage of $\infty$. 

The reason for that is the following.

$z_{\infty}\notin\{0,1,\infty, c\}$ for both maps $\phi_\alpha$ and $\phi_\beta$. Then, having fixed a data $d\in\{(c<0<1<\infty),(0<c<1<\infty),(0<1<c<\infty)\}$, 
 $z_\infty$ will be between two cons points in $d$. Since each face of the pullback graph has only one preimage of each critical value on its boundary, cannot occur $c<z_{\infty}<+\infty$, $1<z_{\infty}<+\infty$, $-\infty<z_{\infty}<c$ and $-\infty<z_{\infty}<0$.
 
For the same reason there should be an arc inside the upper half plane connecting those two points in $d$ neighboring $z_{\infty}$, because, otherwise, will there exist a face with $\infty$ and $z_\infty$ in its boundary.

\begin{lem}\label{l3rg1}
For $d\in\{(c<0<1<\infty), (0<c<1<\infty), (0<1<c<\infty)\}$, if $B<z_\infty <C$ for $B\in{d}$ the biggest element in $d$ less than $z_\infty$ and $C\in{d}$ the smallest element in $d$ greater than $z_\infty$. Then 
\begin{itemize}
\item[(a)]{$B,C\in\{c,0,1\}$;}
\item[(b)]{there exist two arcs connecting $B$ to $C$ one inside the upper half plane and the other one into the lower half plane.}
\end{itemize}
\end{lem}
\begin{proof}
We saw above that $B$ and $C$ can not be $\infty$, so $(a)$ follows. Namelly, the configurations allowed are : $c<0<z_{\infty}<1$, $0<z_{\infty}<1<c$, $c<z_{\infty}<0<1$, $0<z_{\infty}<c<1$, $0<c<z_{\infty}<1$, $0<1<z_{\infty}<c.$

Assume that $(b)$ does not holds. As each point in $d$ is a $4$-valent vertex of $\Gamma$, should exist $2$ arc from $B$ to its precussor $A$ in $d$ and should exist $2$ arcs connecting $B$ to its successor $D$ in $d$. Otherwise, the arcs connecting $B$ to $D$, each one into the upper and lower half plane turns impossible to connect $A$ and $B$, what force this vertices to have a edge incident to it tow times. But this not happens for pulback graphs.


 $A=\infty$ or $D=\infty$, but in any situation will exist two faces of the pullback graph with both $z_\infty$ and $\infty$ on their boundary. This can not occur.

\end{proof}



Recall that $\alpha (\R)= (0, 2)$ and $\beta(\R)=(-\infty,0)\cup (2,+\infty)$. 
\begin{lem}\label{lemzinf}
For $\phi_{\alpha}$ :
\begin{itemize}
\item{$z_\infty\in\left(\dfrac{1}{2}, \dfrac{2}{3}\right)$, for $0<\alpha(c)<\dfrac{1}{2}$;}
\item{$z_{\infty}\in\left(\dfrac{2}{3}, 1\right)$, for $\dfrac{1}{2}<\alpha(c)<1$;}
\item{ $z_{\infty}>1$, for $1<\alpha(c)<2$.}
\end{itemize}
And for $\phi_{\beta}$ :
\begin{itemize}
\item{ $z_{\infty}\in\left(0, \dfrac{1}{2}\right)$, for $\beta(c)<0$;},
\item{ $z_{\infty}<0$, for $\beta(c)>2$.}
\end{itemize}
\end{lem}

We know also that

\begin{lem}\label{zinfpositionc}
For $\phi_{\alpha}$
\begin{eqnarray}
z_\infty < c\; \Leftrightarrow \; \quad c>1
\end{eqnarray} 
And for $\phi_{\beta}$
\begin{eqnarray}
z_\infty < c\; \Leftrightarrow \; \quad c>0
\end{eqnarray} 
\end{lem}

From Lemma $\ref{lemzinf}$ and Lemma $\ref{zinfpositionc}$ we conclude:

\begin{cor}
Set $z_{\infty}^{\alpha}$ and $z_{\infty}^{\beta}$ be the finite preimage of $\infty$ by $\phi_{\alpha}$ and $\phi_{\beta}$ respectively. Then,
\begin{itemize}
\item[(1)]{for $c<0$, $z_{\infty}^{\alpha}\in (0,1)$ and $z_{\infty}^{\beta}\in (c,0)$}
\item[(2)]{for $0<c<1$, $z_{\infty}^{\alpha}\in (c,1)$ and $z_{\infty}^{\beta}\in (0,1)$\quad({$z_{\infty}^{\beta}\in (0,c)$})}
\item[(3)]{for $c>1$, $z_{\infty}^{\alpha}\in (1,c)$ and $z_{\infty}^{\beta}\in (0,1)$}
\end{itemize}
\end{cor}
 \begin{figure}[H]
    \centering
    {{\includegraphics[width=14cm]{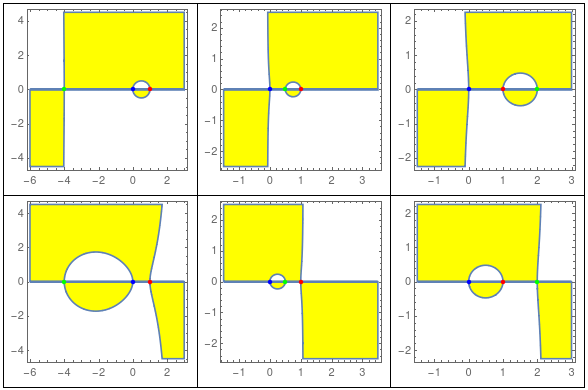} }}%
   \caption[summary of possible configurations]{summary of possible configurations: first line corresponding to $\phi_{\alpha}$ and the seconf one to $\phi_{\beta}$} \label{fig:configs3rpbg}%
\end{figure} 

\begin{thm}
For $\phi_\alpha$ we have:
\begin{itemize}
\item[$(1\alpha)$]{$\Gamma_{c}^{\alpha}$ are all isotopic relative to $\{0,1, \infty\}$ for every $c<0$;}
\item[$(2\alpha)$]{$\Gamma_{c}^{\alpha}$ are all isotopic relative to $\{0,1, \infty\}$ for every $0<c<1$;}
\item[$(3\alpha)$]{$\Gamma_{c}^{\alpha}$ are all isotopic relative to $\{0,1, \infty\}$ for every $1<c$;}
\item[$(4\alpha)$]{and those pullback graphs in $(1\alpha)$, $(2\alpha)$ and $(3\alpha)$ are non-isotopic between them.}
\end{itemize}
And, for $\phi_\beta$ we have:
\begin{itemize}
\item[$(1\beta)$]{$\Gamma_{c}^{\beta}$ are all isotopic relative to $\{0,1, \infty\}$ for every $c<0$;}
\item[$(2\beta)$]{$\Gamma_{c}^{\beta}$ are all isotopic relative to $\{0,1, \infty\}$ for every $0<c<1$;}
\item[$(3\beta)$]{$\Gamma_{c}^{\beta}$ are all isotopic relative to $\{0,1, \infty\}$ for every $1<c$;}
\item[$(4\beta)$]{and those pullback graphs in $(1\beta)$, $(2\beta)$ and $(3\beta)$ are non-isotopic between them.}
\end{itemize}
\end{thm}

\paragraph{proof of item $(1\alpha)$}
Lemma $\ref{lemzinf}$ and Lemma $\ref{zinfpositionc}$ implies that for every $c<0$ the pullback graph are embeddings of the same abstract oriented graph $G_{<0}^{\alpha}:=\{\{\tilde{c},\tilde{0},\tilde{1},\tilde{\infty}\},\{\tilde{0}\to\tilde{1},\tilde{0}\to\tilde{1},\tilde{0}\leftarrow\tilde{1},\tilde{c}\to\tilde{0},\tilde{1}\to\tilde{\infty},\tilde{\infty}\to\tilde{c},\tilde{\infty}\to\tilde{c},\tilde{\infty}\leftarrow\tilde{c}\}\}$.

Since $\phi_a (i\R)\cap\R=\emptyset$ the arcs $l_{c\infty}^{+}$ and $l_{c\infty}^{-}$ connecting $c$ to $\infty$ into the upper and lower plane respectively are contained  into the left half plane $\{z\in\C; \Re(z)<0\}$. And for the same reazon  the arcs $l_{01}^{+}$ and $l_{01}^{-}$ connecting $0$ to $1$ into the upper and lower plane respectively are contained  into the right half plane $\{z\in\C; \Re(z)>0\}$. This holds for all $c<1$.
Let $A:=\myov{\{z\in\C; \Re(z)>0\}}$, $B:=\myov{\{z\in\C; \Re(z)>0,\Im(z)>0\}}$ and $C:=\myov{\{z\in\C; \Re(z)>0,\Im(z)>0\}}$.

Any two arcs $l_{c\infty}^{+}\cup l_{c\infty}^{-}$ and $l_{c'\infty}^{+}\cup l_{c'\infty}^{-}$ into $A$ are homotopic relative to its end point $\infty\in A\subset\cc$. And the same is true for any par of arcs $l_{01}^{c+}$ and $l_{01}^{c'+}$ into $B$ or $l_{01}^{c-}$ and $l_{01}^{c'-}$ into $C$. Then from Theorem $\ref{iso-homo}$ and Theorem $\ref{ext-iso}$ there are three (ambient) isotopies $H_A:A\times [0,1]\longrightarrow A$, $H_B:B\times [0,1]\longrightarrow B$ and $H_C:C\times [0,1]\longrightarrow C$ each one relative to the boundary of its base ambient, $\partial{A}$, $\partial{B}$ and $\partial{C}$. Finaly, we can glue together those isotopies then producing an isotopy $H_{\cc}:\cc\times [0,1]\longrightarrow \cc$ sending $\Gamma_{c}^{\alpha}$ to $\Gamma_{c'}^{\alpha}$.

The proof is the same for the other cases.


 


In conclusion we have:

\begin{thm}[combinatorial model]
$(a)$\quad For $\phi_\alpha$ we have:
Add the marks at the vertices to better distinguish the graphs below
\begin{itemize}
\item[$(1a)$]{for every $c<0$, all $\Gamma_{c}^{\alpha}$'s has the following pattern:
\begin{figure}[H]
\begin{center}
\includegraphics[scale=0.25]{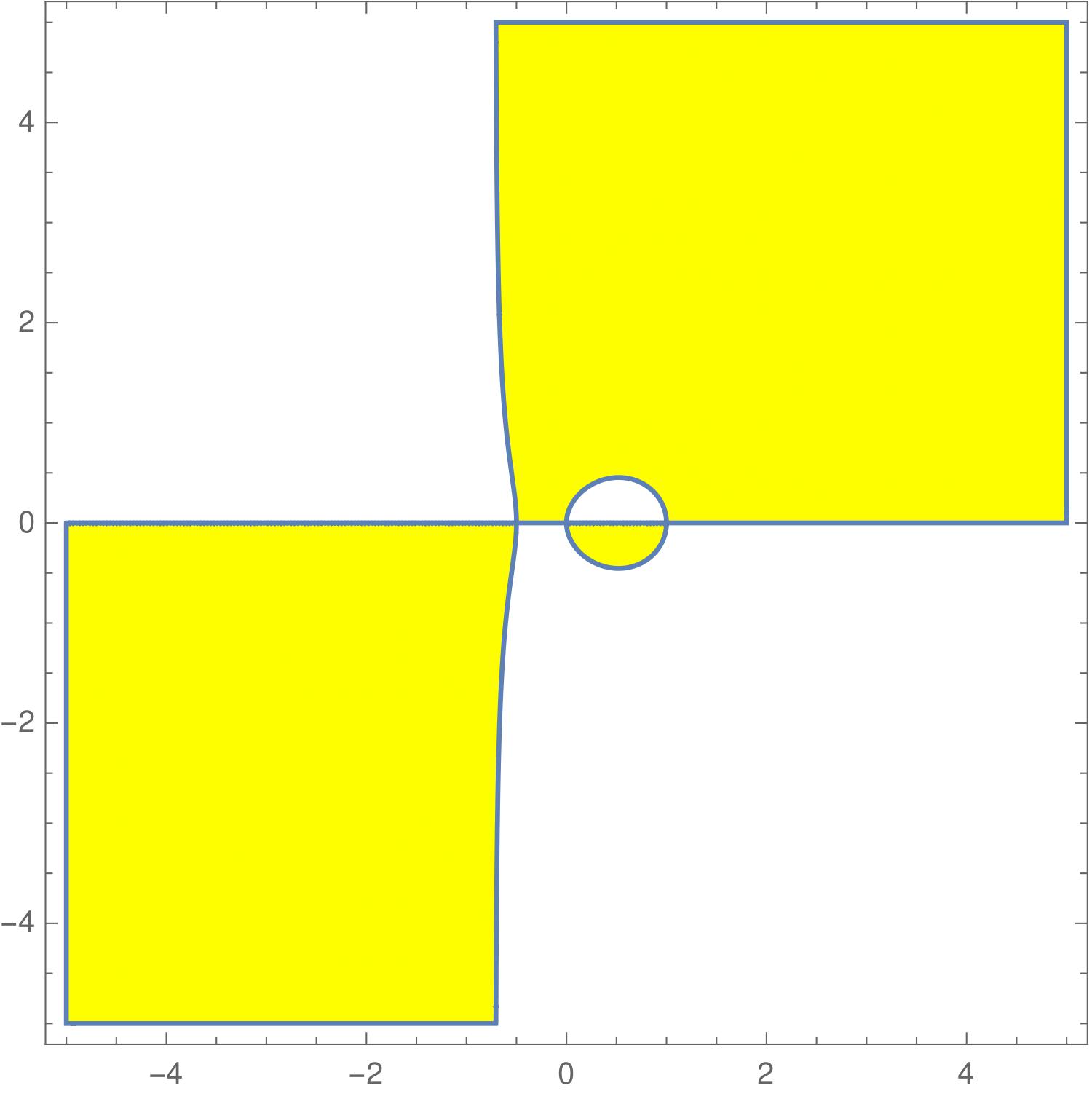}\
\caption{$c<0$}
\label{m1}
\end{center}
\end{figure}}
\item[$(2a)$]{for every $0<c<1$, all $\Gamma_{c}^{\alpha}$'s has the following pattern:
\begin{figure}[H]
\begin{center}
\includegraphics[scale=0.25]{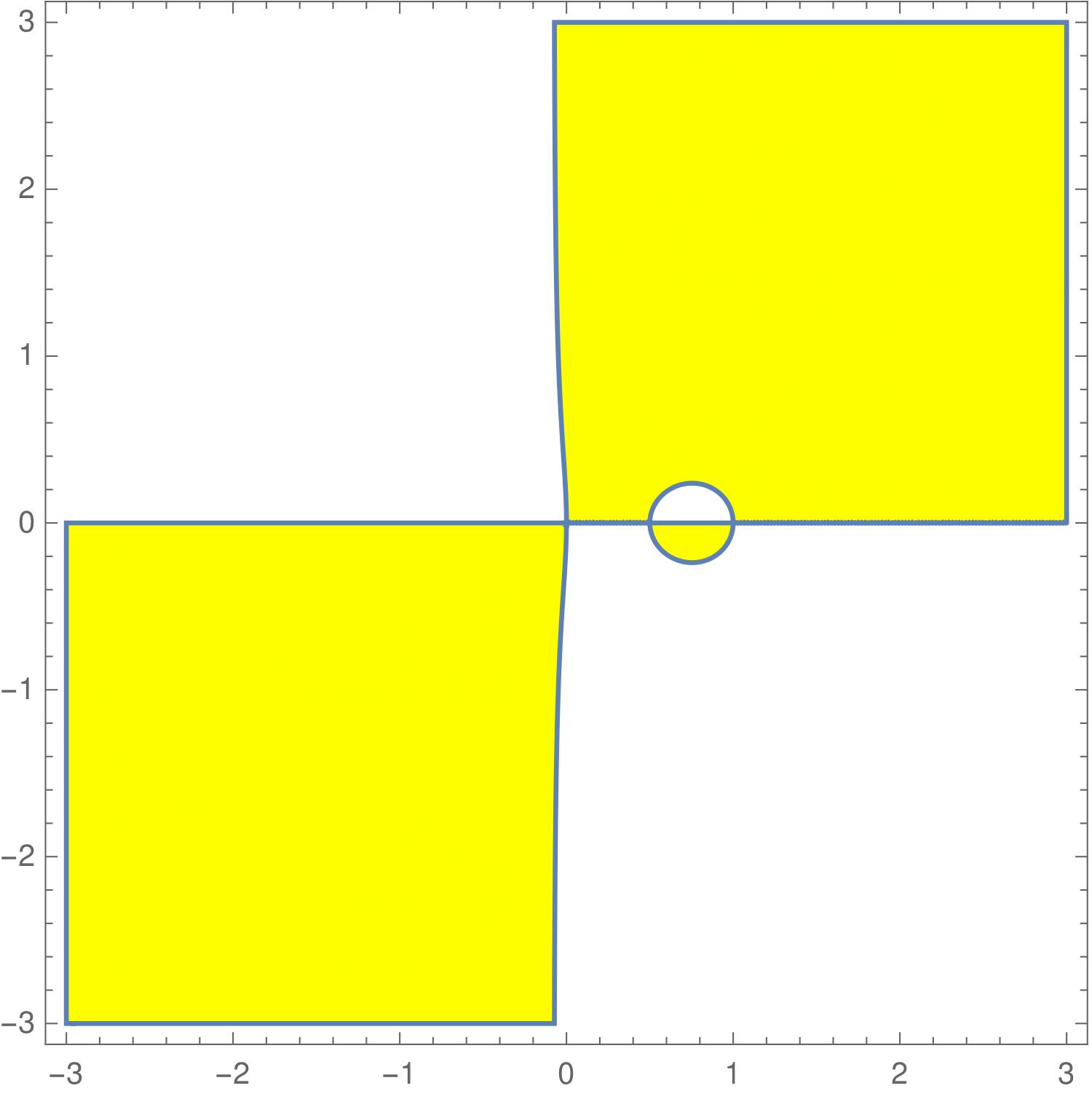}\
\caption{$0<c<1$}
\label{m2}
\end{center}
\end{figure}}
\item[$(3a)$]{for every $1<c$, all $\Gamma_{c}^{\alpha}$'s has the following pattern:
\begin{figure}[H]
\begin{center}
\includegraphics[scale=0.25]{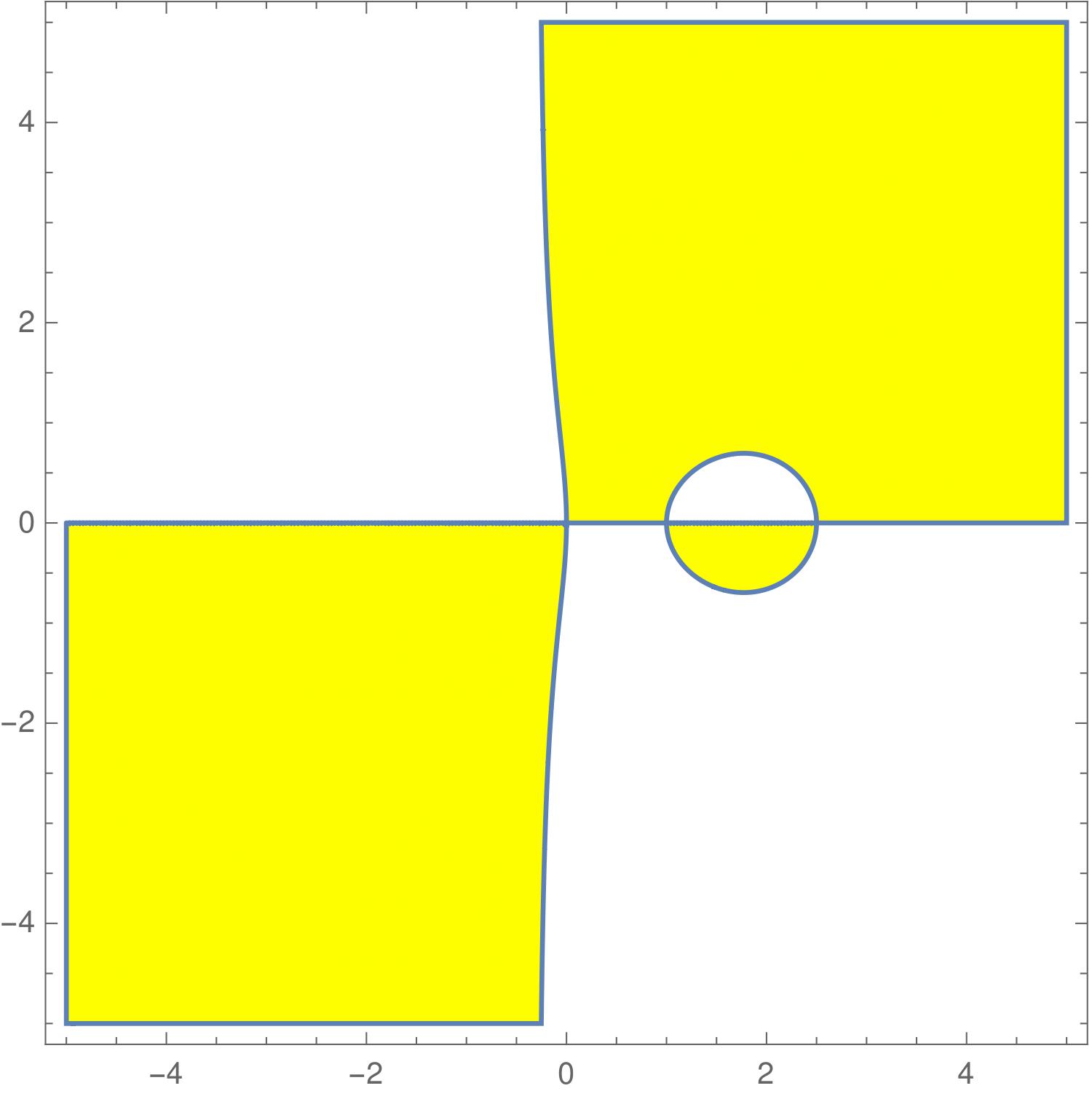}\
\caption{$1<c$}
\label{m3}
\end{center}
\end{figure}}
\end{itemize}

$(b)$\quad And for $\phi_\beta$ we have:
\begin{itemize}
\item[$(1b)$]{for every $c<0$, all $\Gamma_{c}^{\beta}$'s has the following pattern:
\begin{figure}[H]
\begin{center}
\includegraphics[scale=0.25]{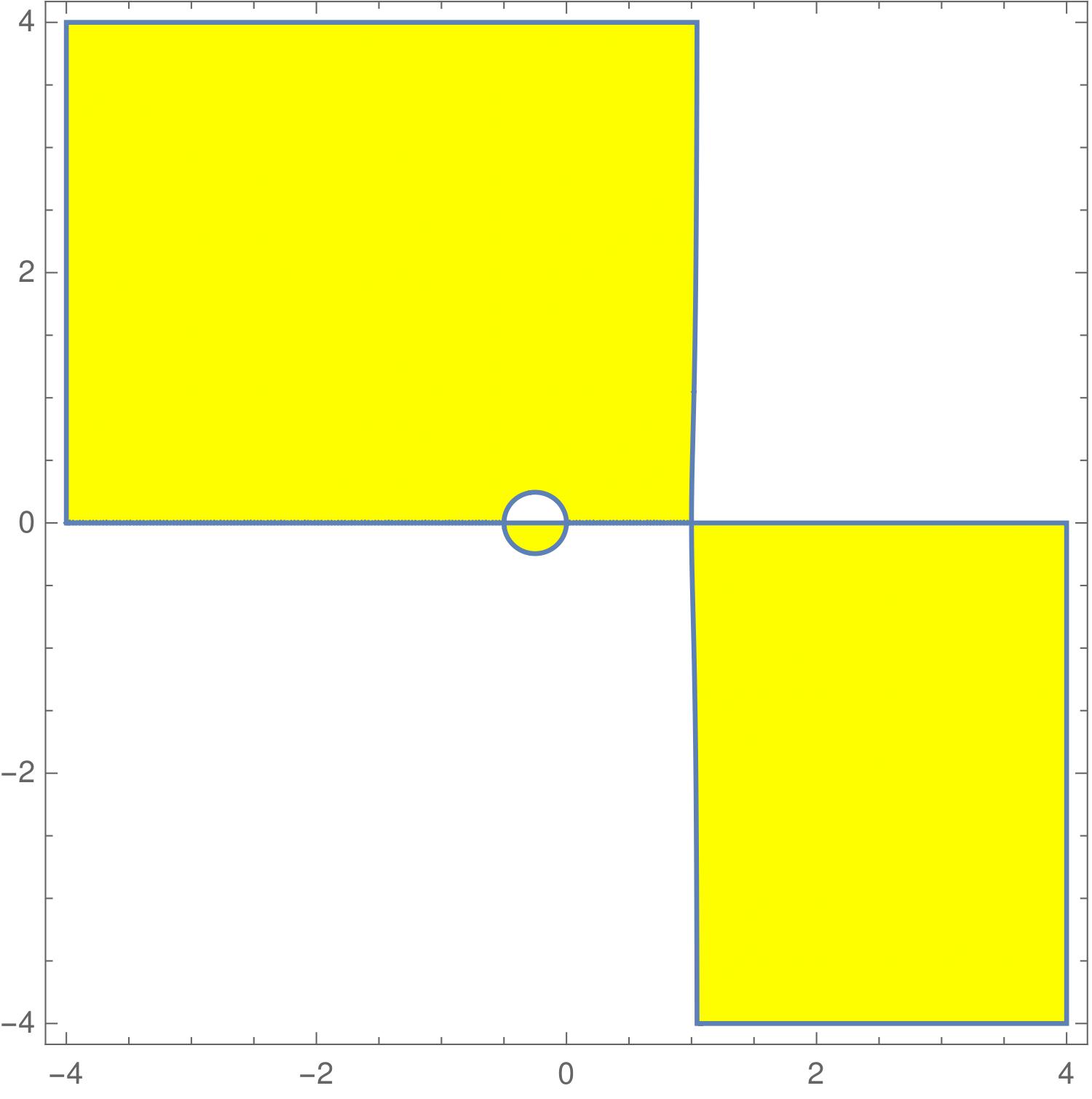}\
\caption{$c<0$}
\label{m4}
\end{center}
\end{figure}}
\item[$(2b)$]{for every $0<c<1$, all $\Gamma_{c}^{\beta}$'s has the following pattern:
\begin{figure}[H]
\begin{center}
\includegraphics[scale=0.25]{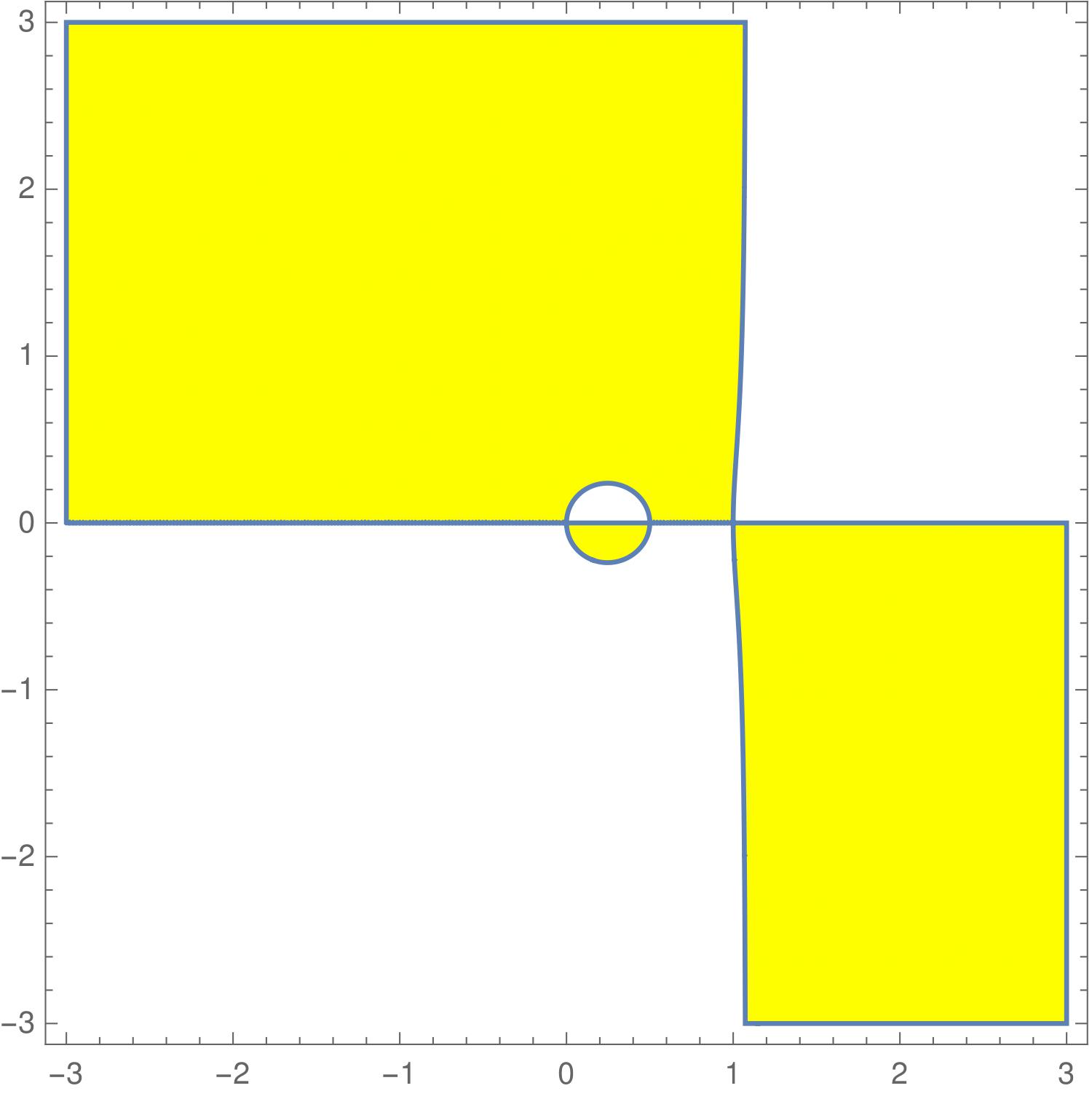}\
\caption{$0<c<1$}
\label{m5}
\end{center}
\end{figure}}
\item[$(3b)$]{for every $1<c$, all $\Gamma_{c}^{\beta}$'s has the following pattern:
\begin{figure}[H]
\begin{center}
\includegraphics[scale=0.25]{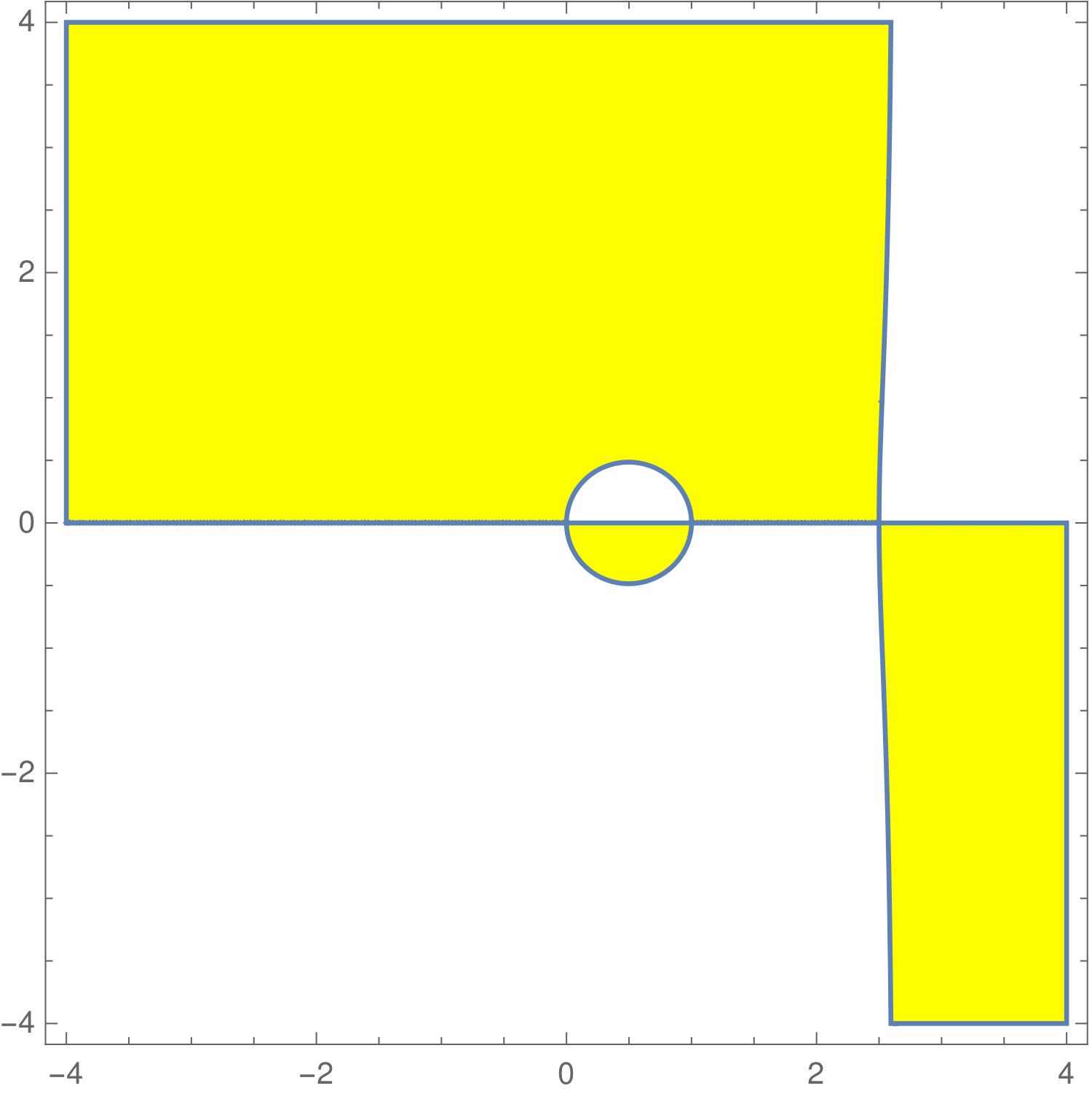}\
\caption{$1<c$}
\label{m6}
\end{center}
\end{figure}}
\end{itemize}
\end{thm}



In \cite[section: $11.1$]{MR2830310} \emph{Frank Sotille} using basics \emph{complex analitical tools} proved a continuity result for the move of the ``nets'', the 1-squeleton, along paths of rational functions. As above, the isotopy of the graph 
changes 
is shown to occur only if two critical points connected by a non real saddle-connection colides. In the colaesce the rational function and gaph degenerate into functions and graphs of smaller degree.



\section{A class of postcritical curves for the functions $\phi$}

For the rational function $\phi$ we will consider the following postcritical curves:
\[C_{0}(c):=\{t\in\R;t\geq 0\}\sqcup\{t\cdot{}c\in \C; t\geq 0\}\sqcup{\{\infty\}}\]

and
 \[C_{1}(c):=\{t\in\R;t\geq 0\}\sqcup\{t\cdot{}c\in \C; t\geq 0\}\sqcup{\{\infty\}},\] where $c\in\C-\{0,1\}$ is the critical point of $\phi.$

Below, we can see a few examples:

\section{a look at the complex setting}\label{sect:gv}

\subsubsection{Variation of $\phi (c)$ around the fixed critical points $0, 1$}

Suppose that the critical value $c$ is close to $z=0$.

\begin{eqnarray}
\sqrt{c^2 - c +1}&=&\sqrt{1+(c^2- c)}\\ \nonumber
&=&1+\dfrac{(c^2 -c)}{2}-\dfrac{1}{8}(c^2 -c)^2+\dfrac{1}{16}(c^2 -c)^3+\cdots\\ \nonumber
&=&1-\dfrac{c}{2}+\dfrac{3}{8}c^2+\dfrac{3}{16}c^3+\cdots\\ \nonumber
\end{eqnarray}
Then :
\begin{eqnarray}
a_1 c&=&(c-1)+\sqrt{c+(c^2 -1)}\\ \nonumber
&=&\left(\dfrac{1}{2}c+\dfrac{3}{8}c^2 +\dfrac{3}{16}c^3+\cdots\right)\\\nonumber
\end{eqnarray}
and
\begin{eqnarray}
a_2 c&=&(c-1)-\sqrt{c+(c^2 -1)}\\\nonumber
&=&\left(-2+\dfrac{3}{2}c-\dfrac{3}{8}c^2 -\dfrac{3}{16}c^3+\cdots\right)\\\nonumber
\end{eqnarray}

From that it follows:
\begin{eqnarray}
\phi_{\alpha}(c)&=&(a_1 c)^2 c\\\nonumber
&=&\left(\dfrac{1}{2}c+\dfrac{3}{8}c^2 +\dfrac{3}{16}c^3+\cdots\right)^2 c\\\nonumber
&=&\left(\dfrac{1}{4}c^2+\dfrac{3}{8}c^3 +\cdots\right)c\\\nonumber
&=&{\dfrac{1}{4}c^3}+\dfrac{3}{8}c^4 +\cdots+\cdots\\\nonumber
\end{eqnarray}
and
\begin{eqnarray}
\phi_{\beta}(c)&=&(a_2 c)^2 c\\\nonumber
&=&\left(-2+\dfrac{3}{2}c-\dfrac{3}{8}c^2 -\dfrac{3}{16}c^3+\cdots\right)^2 c\\\nonumber
&=&\left({4}-6c +\dfrac{15}{4}c^2-\dfrac{3}{8}c^3\cdots\right)c\\\nonumber
&=&{{4}c}-6c^2 +\dfrac{15}{4}c^3-\dfrac{3}{8}c^4\cdots\\\nonumber
\end{eqnarray}

Now, suppose that $c$ is close to $z=1$, \emph{i.e.}, $c=1+h$ for small $|h|$.
Then,
\begin{eqnarray}
\sqrt{c^2 - c +1}&=&\sqrt{1+(h+h^2)}\\\nonumber
&=&1+\dfrac{(h+h^2)}{2}-\dfrac{1}{8}(h+h^2)^2+\dfrac{1}{16}(h+h^2)^3-\dfrac{15}{24} (h+h^2)^4 +\cdots\\\nonumber
\end{eqnarray}

So,
\begin{eqnarray}
a_{1} c&=&(c-1)+\sqrt{c^2 - c +1}\\\nonumber
&=&h+\sqrt{1+(h+h^2)}\\\nonumber
&=&1+\dfrac{3}{2}h+\dfrac{3}{8}h^2 -\dfrac{3}{16}h^3+\dfrac{33}{48}h^{4}+\cdots\\\nonumber
\end{eqnarray}
and
\begin{eqnarray}
a_{2} c&=&(c-1)-\sqrt{c^2 - c +1}\\\nonumber
&=&h-\sqrt{1+(h+h^2)}\\\nonumber
&=&-1+\dfrac{1}{2}h-\dfrac{3}{8}h^2 +\dfrac{3}{16}h^3-\dfrac{33}{48}h^{4}+\cdots\\\nonumber
\end{eqnarray}
From that it follows
\begin{eqnarray}
\phi_{\alpha}(c)&=&(a_1 c)^2 c\\\nonumber
&=&\left(1+\dfrac{3}{2}h+\dfrac{3}{8}h^2 -\dfrac{3}{16}h^3+\cdots\right)^2(1+h)\\\nonumber
&=&\left(1+{3}h+{3}h^2 -\dfrac{3}{4}h^3+\cdots\right)(1+h)\\\nonumber
&=&1+{4h}+6h^2+\dfrac{15}{4}h^3+\cdots\\\nonumber
\end{eqnarray}
and
\begin{eqnarray}
\phi_{\beta}(c)&=&(a_2 c)^2 c\\\nonumber
&=&\left(-1+\dfrac{1}{2}h-\dfrac{3}{8}h^2 +\dfrac{3}{16}h^3-\dfrac{33}{48}h^{4}+\cdots\right)^2(1+h)\\\nonumber
&=&\left(1-h+ h^2 -\dfrac{3}{4}h^3+\dfrac{207}{192}h^4+\cdots\right)(1+h)\\\nonumber
&=&1+{\frac{1}{4}h^3}+\dfrac{3}{16}h^4 +\cdots\\\nonumber
\end{eqnarray}

In the figures $\ref{fig:phi_alpha}$ and $\ref{fig:phi_beta}$ bellow we can see how the critical value $\phi_{\alpha}(c)$ and $\phi_\beta(c)$ varies around $c=0$ and $c=1$, respectively. The colors corresponds to the quadrants to which the critical value belongs. 

For $\phi_{\alpha}$ the yellow, green, blue and white colors correspond to the first, second, third and fourth quadrants, $Q_{\alpha 1}:=\{x+iy; x>0\quad\text{and}\quad y>0\}$, $Q_{\alpha 2}:=\{x+iy; x<0\quad\text{and}\quad y>0\}$, $Q_{\alpha 3}:=\{x+iy; x<0\quad\text{and}\quad y<0\}$ and $Q_{\alpha 4}:=\{x+iy; x>0\quad\text{and}\quad y<0\}$, respectively. And for $\phi_{\beta}$, the colors yellow, green, blue and white correspond respectively to the quadrants $Q_{\beta 1}:=\{x+iy; x>1\quad\text{and}\quad y>0\}$, $Q_{\beta 2}:=\{x+iy; x<-1\quad\text{and}\quad y>0\}$, $Q_{\beta 3}:=\{x+iy; x<-1\quad\text{and}\quad y<0\}$ and $Q_{\beta 4}:=\{x+iy; x>-1\quad\text{and}\quad y<0\}$.

\begin{figure}[H]
    \centering
    \subfloat[signal $\phi_{\alpha}$]{{\includegraphics[width=7cm]{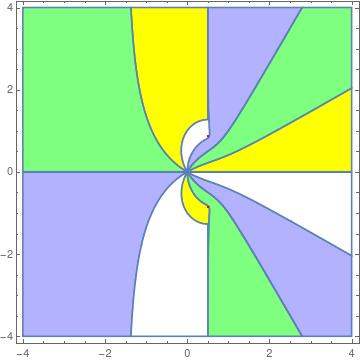} \label{fig:phi_alpha}}}%
    \qquad
    \subfloat[signal $\phi_{\beta}$]
    {{\includegraphics[width=7cm]{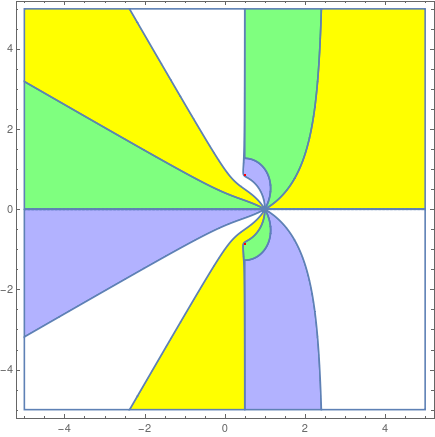} \label{fig:phi_beta}}}%
    \caption{}%
\end{figure}

From those pictures, we also see that in general the critical value does not distinguishes $\phi_{\alpha}$ and $\phi_{\beta}$.

We have $\phi_{\alpha}(0)=0$ and $\phi_{\beta}(1)=1$.

Hence, each point $w_0$ in a neighborhood around $w=0\in\C$ has $3$ preimagens under $\phi_{\alpha(\ast)}(\ast)$, say $c_1, c_2, c_3$. Each such preimagem is a critical point of the rational function $\phi_{\alpha(c)}(\ast)$ with critical value $w_0 =\phi_{\alpha(c)}(c)$. The same thing happens for the map $\phi_{\beta(\ast)}(\ast)$ around $c=1\in\C$. In this way, the functions $\phi_{\alpha(c_1)}$, $\phi_{\alpha(c_2)}$ and $\phi_{\alpha(c_3)}$ possesses the curve $C_{\alpha}(0)$ as a postcritical curve.

Below we find some examples showing the posticritical curve and the respective pullback graphs for the functions $\phi_{\alpha(c_1)}$, $\phi_{\alpha(c_2)}$ and $\phi_{\alpha(c_3)}$.

\begin{figure}[H] \label{fig:configs3rpbg}%
    \begin{center}
    {{\includegraphics[width=8cm]{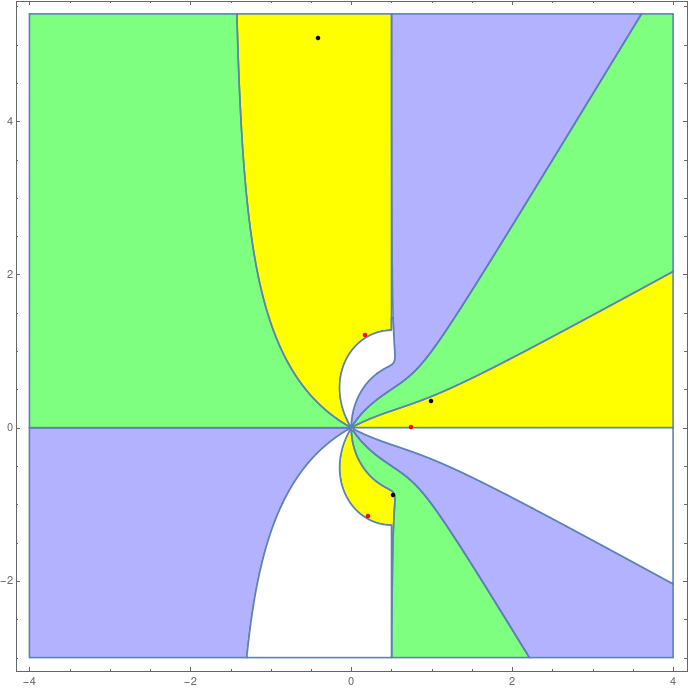} }%
   \caption[$\varphi_{\alpha(c)}(c)$ around $c=0$]{ $\varphi_{\alpha(c)}(c)$ around $c=0$;\\
  \textcolor{red}{Red points}: $c_1=0.7394232981232858 + 0.00940663360697634\sqrt{-1}$,\\        $c_2=0.1743999999999997+1.2039999999999997\sqrt{-1}$,\\
 $ c_3=0.20316520361107504 - 1.1580795225777576\sqrt{-1}$;\\
  \textbf{Black points}: $c'_1=0.993928733987696+0.3487141844400933\sqrt{-1}$,\\
$c'_2=-0.4138109548164717+5.0890589810301865\sqrt{-1}$,\\
$c'_3=0.5147433574385906-0.8754575404662145\sqrt{-1}$.
}}
\end{center}
\end{figure} 

The Figure $\ref{fig:cpbg1}$ below contains on the first line the postcritical curve $C:=C_{\alpha}(c_1)=C_{\alpha}(c_2)=C_{\alpha}(c_3)$. The second line contains the pullback graphs for the maps $phi_{\alpha(c_1)}$, $phi_{\alpha(c_2)}$ and $phi_{\alpha(c_3)}$ at the first, second and third column respectively, and third line contains a zoom on the images shown above it.

Those pullback graphs are not isotopic relative to the subset $\{0,1, \infty\}$. The second and third pullback graph are non isotopic (relative to the subset $\{0,1, \infty\}$) embeddings of the same abstract directed graph $\{\{\tilde{0}, \tilde{1}, \tilde{c}, \tilde{\infty}\}, \tilde{0}\to \tilde{\infty},\tilde{\infty}\to \tilde{0}, \tilde{0}\to \tilde{1},\tilde{c}\to \tilde{0},\tilde{c}\to \tilde{1}, \tilde{1}\to \tilde{c}, \tilde{\infty}\to \tilde{c}, \tilde{1}\to \tilde{\infty}\}$.


\begin{figure}[H]
    \centering
    {{\includegraphics[width=12cm]{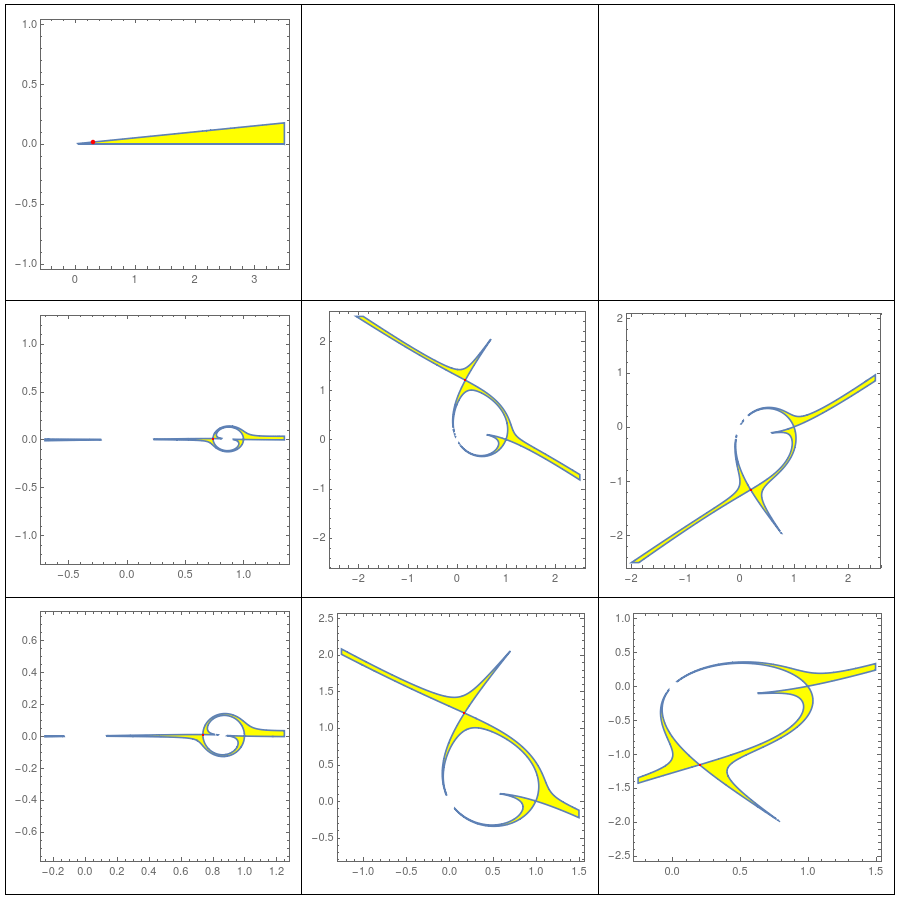} }}%
   \caption{} \label{fig:cpbg1}%
\end{figure} 

In the next Figure $\ref{fig:cpbg2}$ is shown $3$ postcritical curve on the top line and their respective pullback graphs for the function $\phi_{\alpha(c)}$ for $c=.1744+1.226\sqrt{-1}$. The second postcritical curve 
represents the isotopy class of the curve passing from $0,1,\infty$ and $c_2$ obtained from 
the first one by a \emph{half-twist} around the marked critical values $0$ (the corner) and $w=\phi_{\alpha(c)(c)}$ (the blue spot) and the third curve was obtained from a \emph{twist} around the marked critical values $0$ and $w$ on the first postcritical curve. The pullback graphs for the second and third postcritical curves can be obtained from the first pullback graph from a appropriate \emph{balanced move} as we can see below.

\begin{figure}[H]
    \centering
    {{\includegraphics[width=13cm]{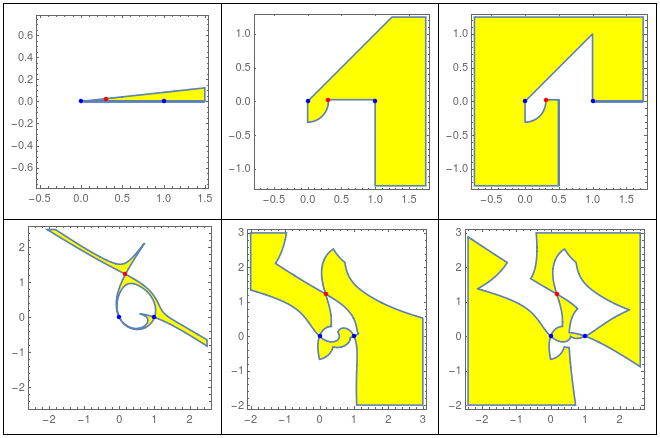} }}%
   \caption{varying the posticritical curve} \label{fig:cpbg2}%
\end{figure}

\begin{figure}[H]
    \centering
    \subfloat[balanced move on the first pullback graph to transform it into the second pullback graph of Figure $\ref{fig:cpbg2}$;]
    {{\includegraphics[width=3.62cm]{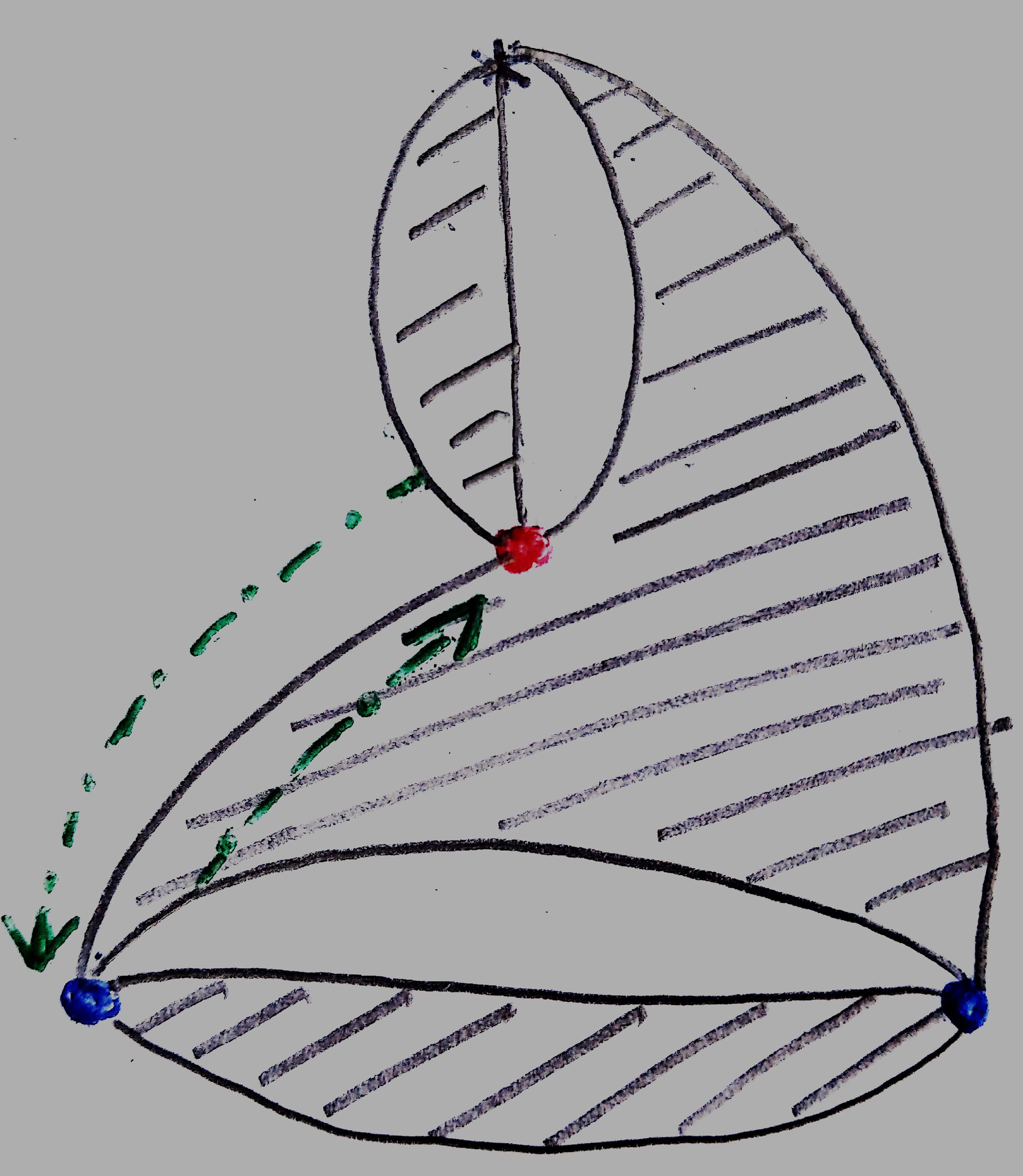}}}\label{fig01}\quad
      \subfloat[the second pullback graph obtained after the indicated balanced move on \textbf{(a)}/and a new balanced move indicated to be performed]%
    {{\includegraphics[width=3.44cm]{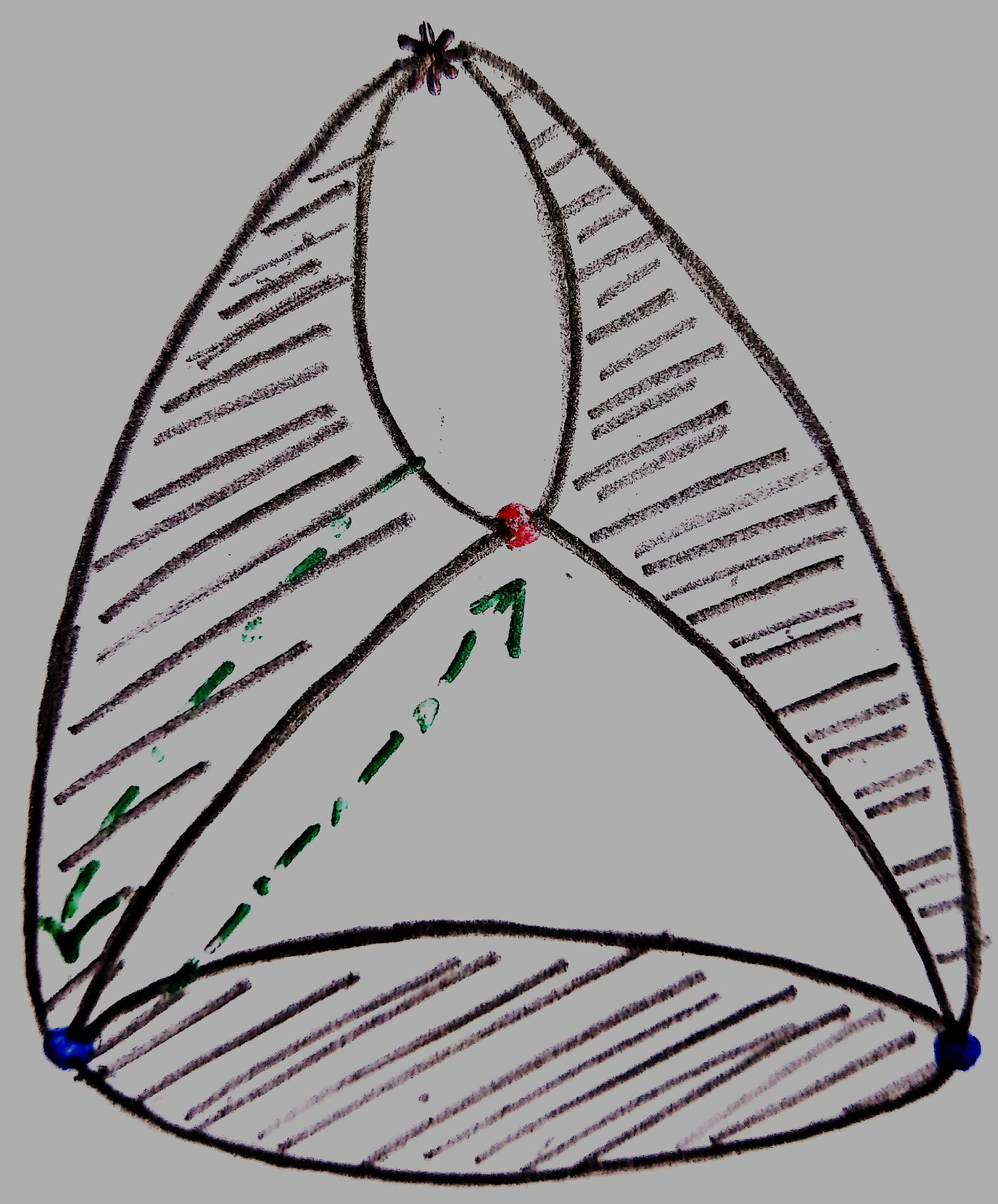}}}
    \quad
      \subfloat[the third pullback graph of Figure $\ref{fig:cpbg2}$ obtained after the indicated balanced move on \textbf{(a)} and \textbf{(b)}]
    {{\includegraphics[width=3.73cm]{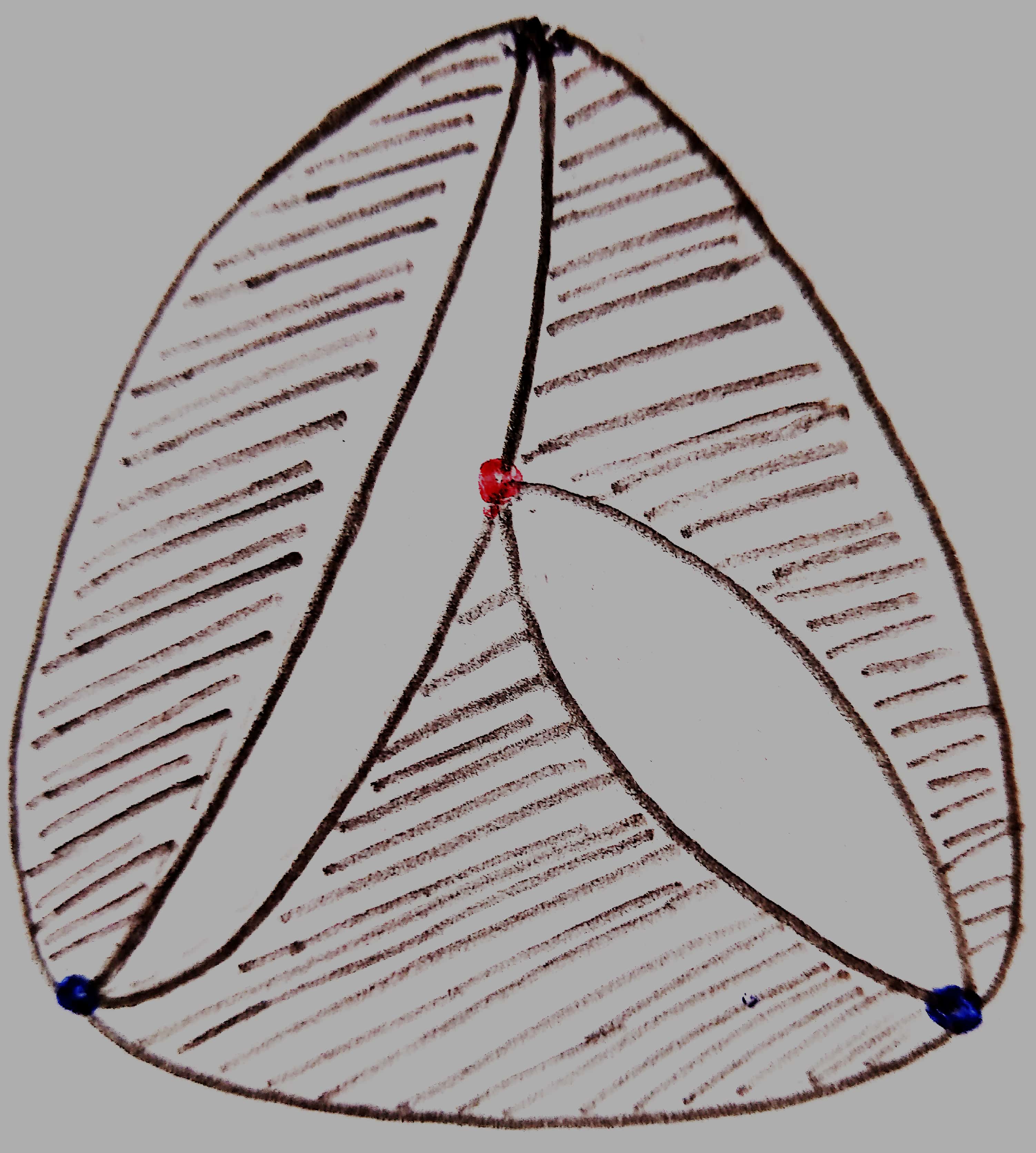}}}
   \caption{varying the posticritical curve} \label{fig:cpbg2.1}%
\end{figure}

\begin{figure}[H]
    \centering
    {{\includegraphics[width=15cm]{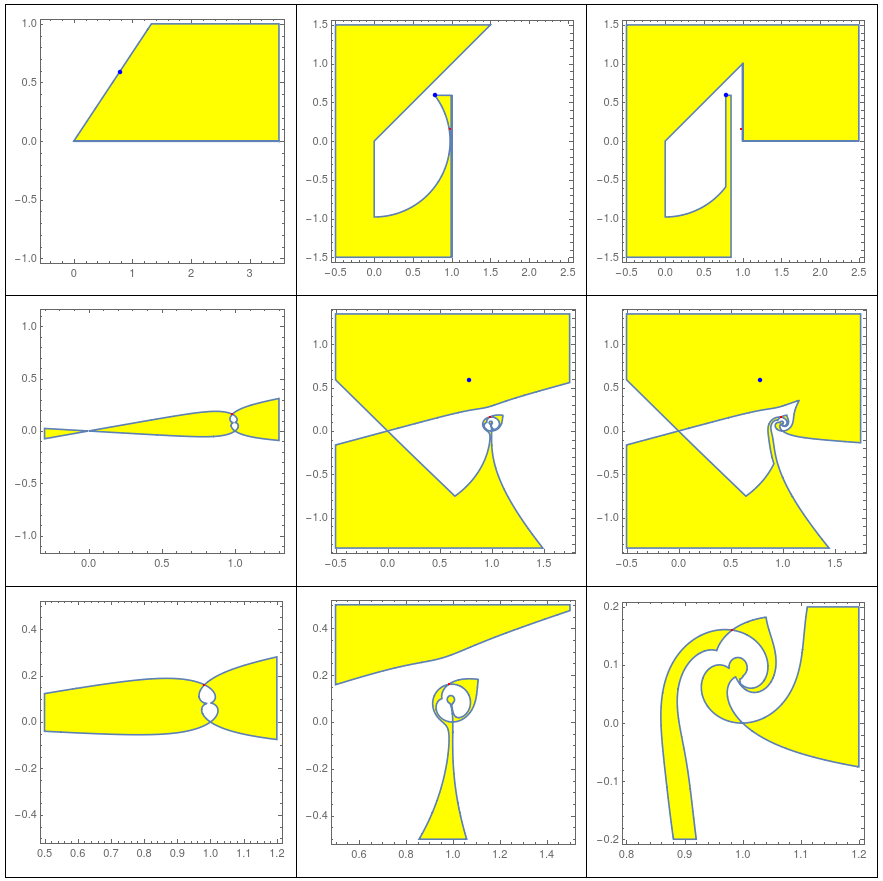} }}%
   \caption{changing the postcritical curves} \label{fig:cpbg3}%
\end{figure} 

\begin{figure}[H]
    \centering
    {{\includegraphics[width=15cm]{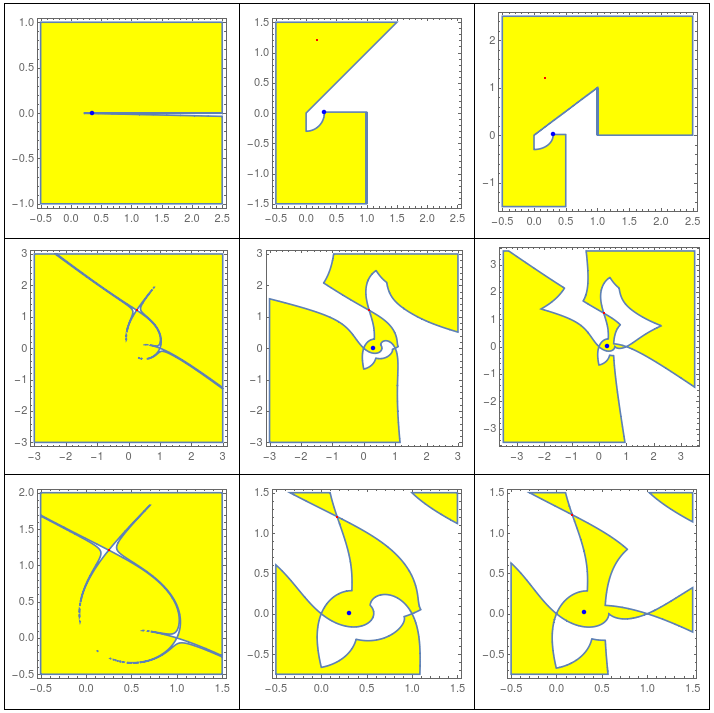} }}%
   \caption{changing the postcritical curves} \label{fig:cpbg4}%
\end{figure}

For an example with a real rational function go to $\ref{isotopy-real-map}$.

\section[one more look]{one more look}
\subsection{equivalent mappings}


Let $C:=crit(f)=crit(g)$ and $\sigma\in Aut(\cc)$ such that $g=\sigma\circ f$.

First, notice that being $f$ and $g$ equivalent then $\#(\{z\in\cc;f(z)=g(z)\})\leq 2$, since otherwise $\sigma$ will be the identity, hence $f=g$. Therefore, each equivalence class possesses an unique representative fixing pointwise the set $\{0,1, \infty\}$. And $\sigma^{-1}(g(C))=f(C)$. 
So $V({\Sigma_f})=V({\sigma^{-1}(\Sigma_g)})$.

\begin{mlem}
If 
$g=\sigma\circ f$ for $\sigma\in Aut(\cc)$, then $V({\Sigma_f})=V({\sigma^{-1}(\Sigma_g)})$. 
\end{mlem}
\begin{mthm}
If $f\simeq g$ then exist postcritical curves $\Sigma_f$ and $\Sigma_g$ for wich $\Gamma_f(\Sigma_f)=\Gamma_g(\Sigma_g)$
\end{mthm}
\begin{proof}
Given a postcritical curve $\Sigma_g$, just takes $\Sigma_f=\sigma^{-1}(\Sigma_g)$. Then, $\Gamma_g(\Sigma_g)=g^{-1}(\Sigma_g)=(\sigma\circ f)^{-1}(\Sigma_g)=f^{-1}(\sigma^{-1}(\Sigma_g))$. Hence, $\Gamma_g(\Sigma_g)=\Gamma_f(\sigma^{-1}(\Sigma_g))$.
\end{proof}

Then, each postcritical curve $\Sigma$ for $g$ have a preferred postcritical curve for $f$ that generates the same pullback $g^{-1}(\Sigma)$. That postcritical curve is $f(g^{-1}(\Sigma))=f(\Gamma_g(\Sigma))$. 

\begin{ex}[one especial postcritcal curve]

Suppose, without loss of generality, that $\infty\notin C$, and let $\Sigma$ to be a simple piecewise linear path  connecting all those points in $g(C).$ 

Suppose also that $\sigma(\infty)=\infty$. Then $\sigma^{-1}(\Sigma)$  will be a postcritical curve for $f$ piecewise linear with inflexion points exactly on $f(C)$ due the conformality of $\sigma$. 
Actually, $\sigma^{-1}(\Sigma)$ and $\Sigma$ are similar $n$-gons where $n=\# f(C)$.

\end{ex}


Nevertheless can occur that for a simultaneous postcritical curve for two equivalent maps the pullback graph are not isotopic. In the following example we can see that:
\begin{ex}
Here we consider the map $\phi_{\alpha}$ for the parameter 
\[c=1.0689621007681127+0.212415098959392i\] 
with critical value $\phi_{\alpha}(c)=1+i$ and the map equivalent to it $f:=i\phi_{\alpha}$.
Note that $f(1)=i$ and $f(c)=-1+i$. 
\begin{figure}[H]
    \centering
    {{\includegraphics[width=6cm]{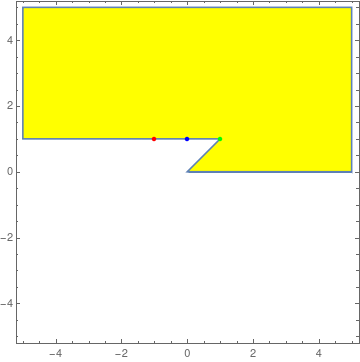} }}%
     \caption[postcritical curve]{postcritical curve\\
     red point $=-1+i$\\
     blue point $=i$\\
     green point $=1+i$}%
    \label{fig:spostcc1}%
\end{figure}

\begin{figure}[H]
    \centering
    \subfloat[pullback graph for $\phi_{\alpha}$]{{\includegraphics[width=7cm]{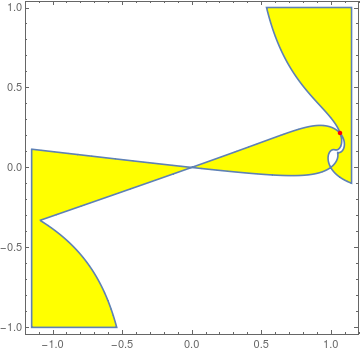} }}
        \qquad
    \subfloat[pullback graph for $f$]{{\includegraphics[width=7cm]{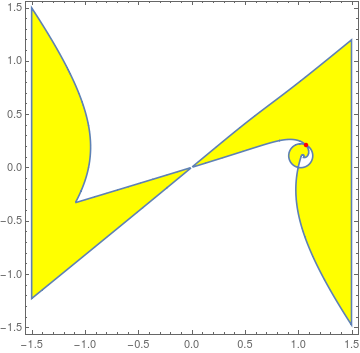} }}%
    \caption{$c=1.0689621007681127` + 0.212415098959392`i$}%
    \label{fig:h12pullb1}%
\end{figure}
\end{ex}

\begin{lem}[lifting isotopies]\label{isotpc1}
Let $f\in \C (z)_d$ and $\Sigma_f$ a \emph{Pos-critical curve} running through the critical values of $f$, $R_f$. Then, for every \emph{Jordan} curve $\Sigma$ isotopic to $\Sigma_f$ relative to $R_f$, the embedded graph $f^{-1}(\Sigma)$ is isotopic to $f^{-1} (\Sigma_f)$.
\end{lem}
\begin{proof}
This is an immediate corolary of the Theorem $\ref{isotpcbc}$ since rational maps are topolgical branched covers.
\end{proof}

\begin{prop}\label{isotpc2}
If $f\simeq g $ then for each postcritical curve for $g$ there is an postcritical curve for $f$ with the same pullback graph.
\end{prop}
\begin{proof}
Thanks to Lemma $\ref{isotpc1}$, in order to ensure that, is enough to take $\Sigma$ a representative of a fixed isotopy class of a postcritical curves for $g$ and then consider the isotopy class of the \emph{Jordan} curve $f(\Gamma_g(\Sigma))$. Then, that two isotopy class of postcritical curves for $f$ and $g$ will have the same pullback graph up to isotopy.
\end{proof}


In the other direction:

\begin{thm}
If $f\simeq g$ 
and $\Sigma_f$ and $\Sigma_g$ are two isotopy class of postcritical curves for $f$ and $g$ respectively with the same pullback graph (up to isotopy) then the isotopy class of $\sigma^{-1}(\Sigma_g)$ is the same of $\Sigma_f$.
\end{thm}
\begin{proof}
Let $F:[0,1]\times\overline{\C}\rightarrow\overline{\C}$ be an isotopy between $\Gamma_f$ and $\Gamma_g$ $\mod C$ and let $G$ and $G^{\prime}$  be faces of $\Gamma_f$ such that $F(1,G)=G^{\prime}$.
We can choose an open neighborhood $U\subset\overline{\C}$ of $G$ and $G^{\prime}$ such that $F(t,G)\subset U$ for all $t\in[0,1]$. Then, we can from $F$ we can define a new isotopy $\Phi:[0,1]\times\overline{\C}\rightarrow\overline{\C}$ that is equal to $F$ on $U$ and being the identity in the complementar of $U$.

Then $\Phi$ projects through $f$ to an isotopy bettween $f(\partial{G})=\Sigma_f$ and $f(\partial{G^{\prime}})=f(g^{-1}(\Sigma_g)=\sigma^{-1}(\Sigma_g).$ And we are done. 

\[
\begin{tikzcd}[column sep=small, row sep=small] 
    & \cc \arrow{ddl}[swap]{f} \arrow{ddr}{g} &\\
    & \circlearrowleft &\\[-2ex]
    \cc \arrow{rr}[swap]{\sigma} & & \cc \\
\end{tikzcd}
\]

\end{proof}

\subsection{non-equivalent mappings}

Let $f$ and $g$ two rational functions with the same branch set.
$f\not\simeq g$ if and only if for every $\sigma\in Aut(\cc)$, $g\neq \sigma \circ f$. 


There exist non equivalent maps with the same pullback graph relative to a simultaneous postcritical curve, furthermore, been stable for that property.

\begin{ex}
Both cubic generic rational functions $\phi_{\alpha}$ and $\phi_{\beta}$ coming from parameters inside the yellow region bellow possesses the same pulback graph for the ``standard post critical curves''
\begin{figure}[H]
\begin{center}
\includegraphics[width=5cm]{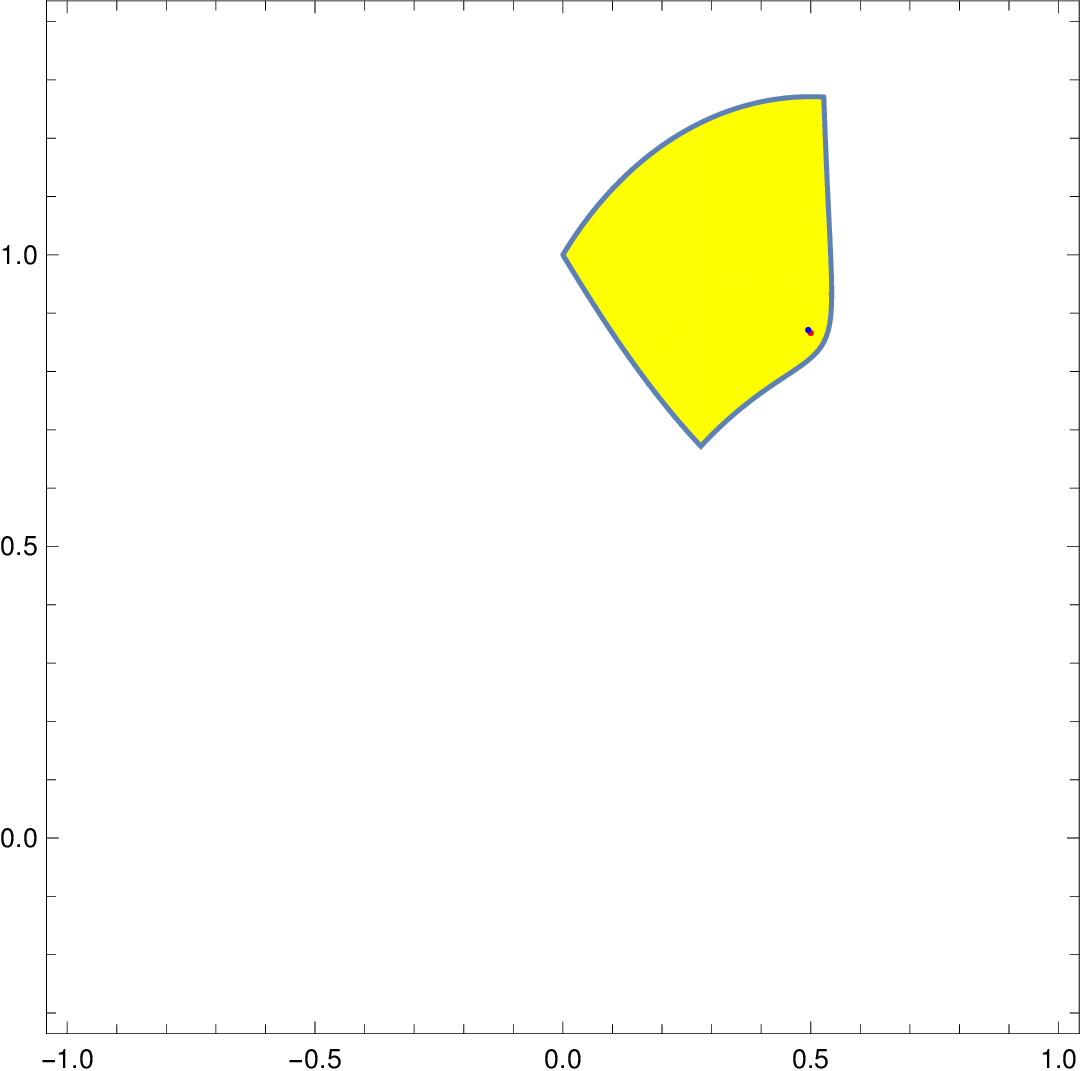}\
\caption{intersection}
\label{m1}
\end{center}
\end{figure}
\end{ex}
\begin{ex}
Bellow we show the simultaneous post-critical curve for the both cubic maps $\phi_{\alpha}$ and $\phi_{\beta}$ for the parameter $c=0.495+ i(.005+\sqrt{3}/2)$ together with its pullback graph:
\begin{figure}[H]
    \centering
    {{\includegraphics[width=5cm]{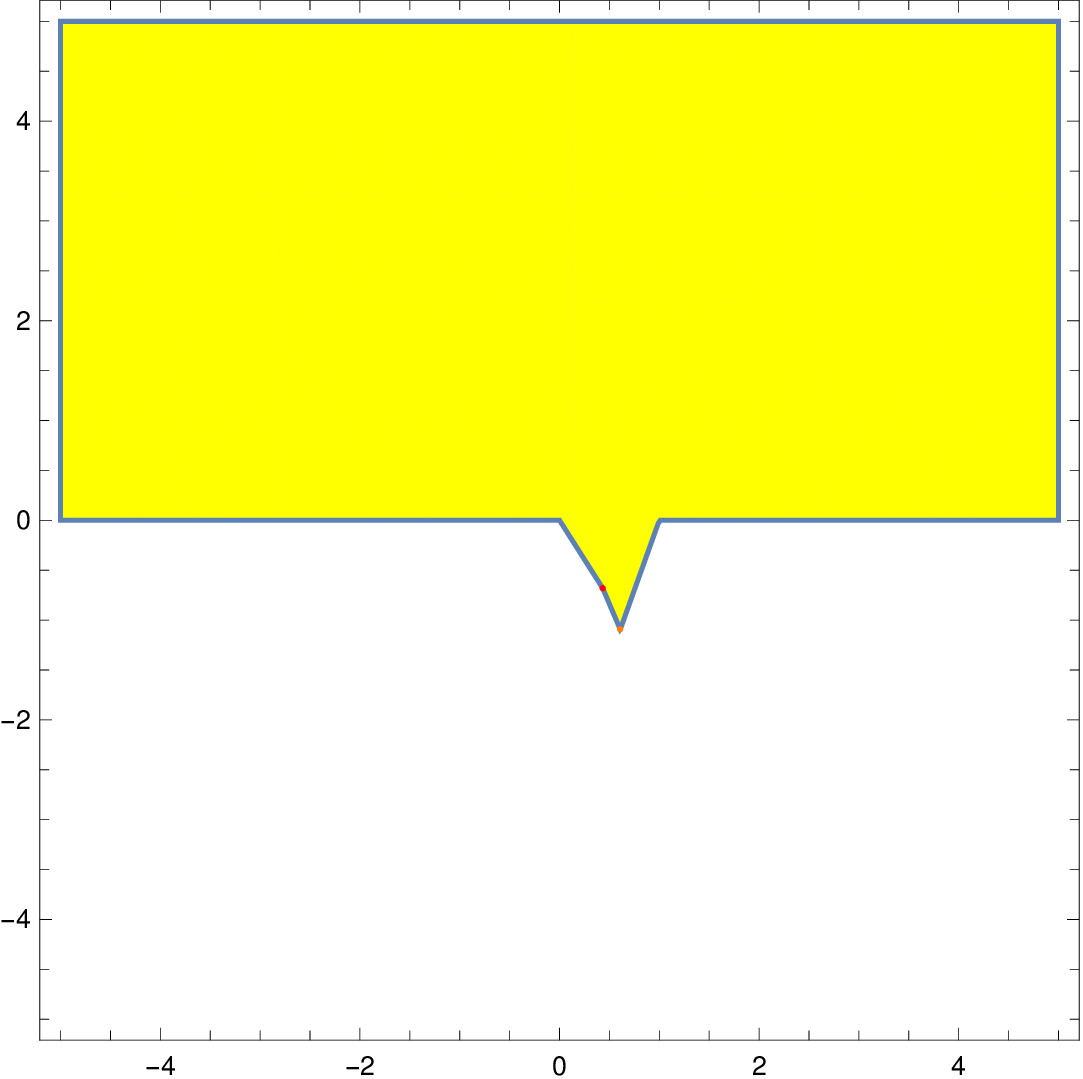} }}%
     \caption{post-critical curve}%
    \label{fig:spostcc1}%
\end{figure}

\begin{figure}[H]
    \centering
    \subfloat[pullback graph for $\phi_{\alpha}$]{{\includegraphics[width=5cm]{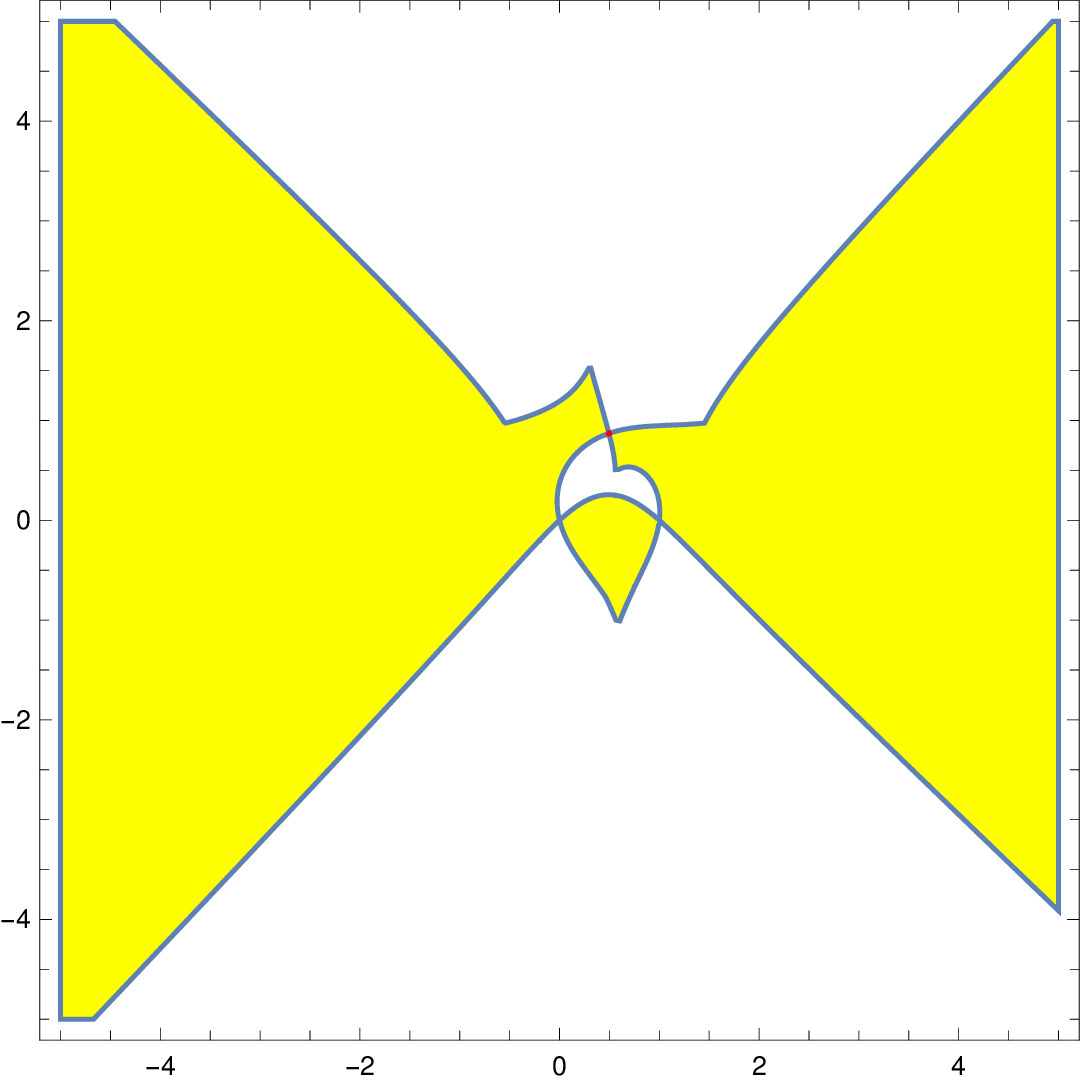} }}
        \qquad
    \subfloat[pullback graph for $\phi_{\beta}$]{{\includegraphics[width=5cm]{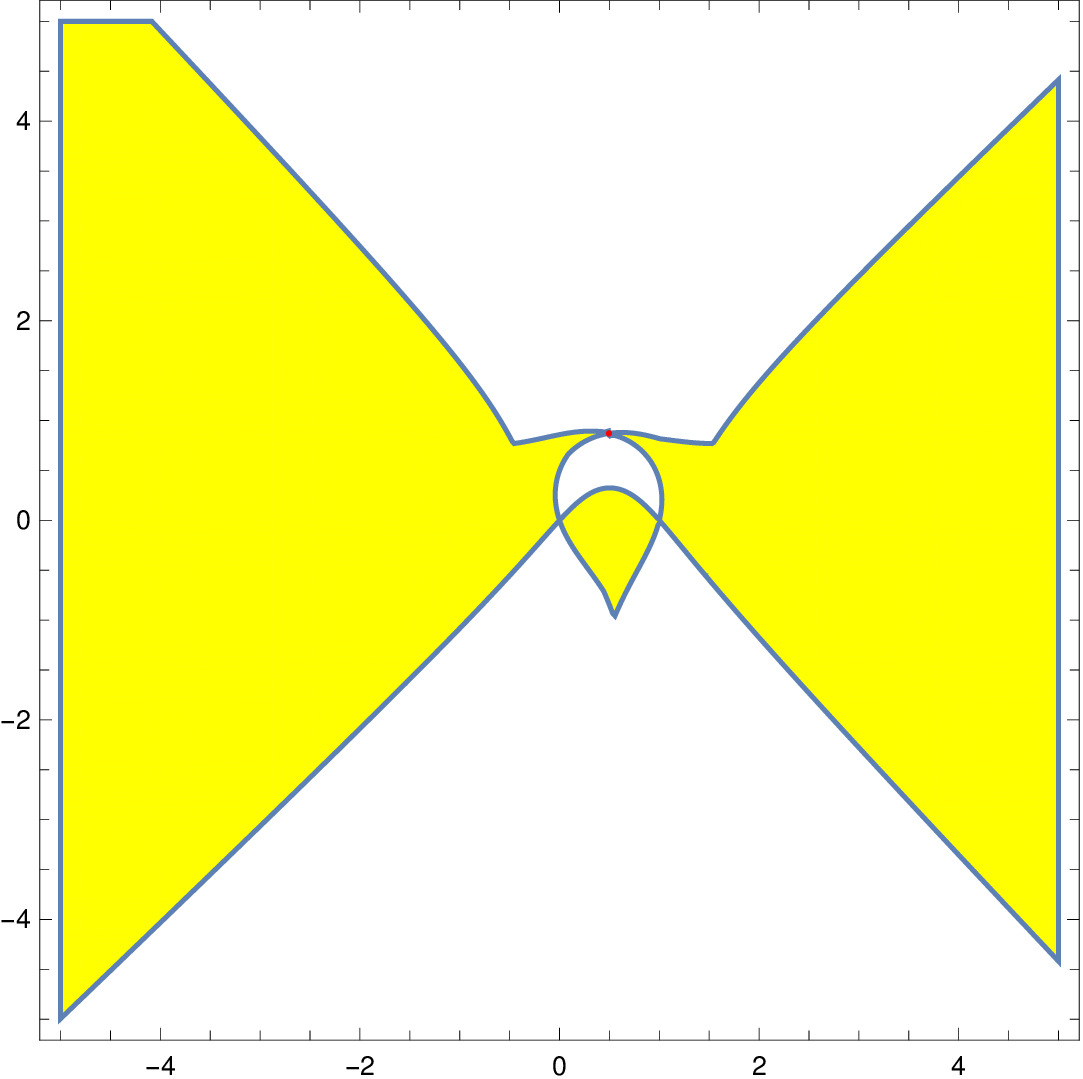} }}%
    \caption{$c=0.495+ i(.005+\sqrt{3}/2)$}%
    \label{fig:h12pullb1}%
\end{figure}
\end{ex}


\renewcommand{\chaptermark}[1]{\markboth{\MakeUppercase{\appendixname\ \thechapter}} {\MakeUppercase{#1}} }
\fancyhead[RE,LO]{}
\appendix

**\appendix

\chapter[Dynamics of real cubic representatives]{\rule[0ex]{16.5cm}{0.2cm}\vspace{-23pt}
\rule[0ex]{16.5cm}{0.05cm}\\Dynamics of real cubic representatives}\label{dynamics}\label{ap-B}

For the coefficient functions $\alpha(c)$ and $\beta(c)$, restricted to the real line, we have :
\begin{itemize}
\item[(4.1)]{$\displaystyle{\lim_{c\to -\infty}\alpha (c)=1+\lim_{c\to -\infty}\left(\dfrac{-1}{c}-\sqrt{1-\dfrac{1}{c}+\dfrac{1}{c^2}}\right)=0^{+}}$;}
\item[(4.2)]{$\displaystyle{\lim_{c\to +\infty}\alpha (c)=1+\lim_{c\to +\infty}\left(\dfrac{-1}{c}+\sqrt{1-\dfrac{1}{c}+\dfrac{1}{c^2}}\right)=2}$;}
\item[(4.3)]{$\displaystyle{\lim_{c\to 0}\alpha (c)=\lim_{c\to 0}\dfrac{d}{dc}\left(c-1+\sqrt{c^2 -c+1}\right)=\lim_{c\to 0}1+\dfrac{1}{2}\dfrac{2c-1}{\sqrt{c^2 -c+1}}=1-\dfrac{1}{2}=\dfrac{1}{2}}$\\(applying \emph{L'Hospital rule)};}
\item[(4.4)]{$\displaystyle{\lim_{c\to -\infty}\beta (c)=1+\lim_{c\to -\infty}\left(\dfrac{-1}{c}+\sqrt{1-\dfrac{1}{c}+\dfrac{1}{c^2}}\right)=2^{+}}$;}
\item[(4.5)]{$\displaystyle{\lim_{c\to 0^{-}}\beta (c)=+\infty}$;}
\item[(4.6)]{$\displaystyle{\lim_{c\to 0^{+}}\beta (c)=-\infty}$;}
\item[(4.7)]{$\displaystyle{\lim_{c\to +\infty}\beta (c)=1+\lim_{c\to -\infty}\left(\dfrac{-1}{c}-\sqrt{1-\dfrac{1}{c}+\dfrac{1}{c^2}}\right)=0^{-}}$;}
\item[(4.8)]{$\alpha(\mathbb{R})=(0, 2)$ whit $\alpha(1)=1$ and $\alpha(0)=1/2$;}

\item[(4.9)]{$\beta(\mathbb{R})=(-\infty, 0)\cup(2, +\infty)$;}
\item[(4.10)]{ $\alpha =\beta$ if and only if $c\in\{\dfrac{1}{2}+\dfrac{\sqrt{3}}{2}i,\dfrac{1}{2}-\dfrac{\sqrt{3}}{2}i\}$.}
\end{itemize}

\begin{figure}[H]
    \centering
    \subfloat[real graph of $\alpha$ and $\beta$]{{\includegraphics[width=10cm]{imagem/alphagraph} }}%
    \label{fig:rg2ab}%
\end{figure}

\subsection{Dynamical polynomial model - the case where $c$ is real}

Recall that we are considering the rational maps:
\begin{eqnarray}
\phi(z) = \frac{\alpha z^3 + (1-2\alpha)z^2}{(2-\alpha) z - 1}.
\end{eqnarray}
Such a map has 4 critical points: $0, 1, \infty, c$, where the fourth critical point $c$ is related to the coefficient $\alpha$ by the equation
\begin{eqnarray}
c = \frac{2\alpha - 1}{\alpha(2-\alpha)}
\end{eqnarray}
And these critical points are maintained fixed by $\phi$, excepting $c.$

To avoid misunderstandings in what follows, for $\alpha (c) = \dfrac{-\sqrt{c^2 -c+1}-1+c}{c}$ we will set $\psi_c :=\phi_c$.

\begin{thm}
For every $c$ real such that $c>1$, there exists a topological disk $D_c$ containing the non-escaping set $K_c$ and a quasi-conformal map $\lambda$ defined on $D_c$ that conjugates $\phi$ to the cubic polynomial:
\begin{eqnarray}
P(z)= -2 z^3 + 3 z^2.
\end{eqnarray}
 \end{thm}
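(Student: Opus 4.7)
The plan is to realise the restriction of $\phi_c$ to a neighbourhood of its non-escaping set $K_c$ as a polynomial-like map of degree $3$ in the sense of Douady--Hubbard, then apply the Straightening Theorem to produce a quasiconformal conjugacy $\lambda$ onto a cubic polynomial $Q$, and finally identify $Q$ with $P$ from its critical portrait.

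First I would show that for real $c>1$ the free critical point $c$ lies in the basin of attraction $B(\infty)$ of the superattracting fixed point $\infty$. Since $\phi_c$ preserves $\rr$ and the critical points $0,1,\infty$ are fixed, the orbit of $c$ stays in $\rr$, and the behaviour of $\phi_c$ at infinity (of the form $\phi_c(z)\sim \tfrac{\alpha}{2-\alpha}z^{2}$ with $\alpha=\alpha(c)\in(1,2)$) together with monotonicity of $\phi_c$ on $(1,+\infty)$ forces $\phi_c^{n}(c)\to\infty$. Consequently, $K_c:=\CPu\setminus B(\infty)$ is compact, contains $0$ and $1$, and avoids $c$ together with its entire forward orbit.

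Next, using the B\"ottcher coordinate at $\infty$, I would choose nested attracting neighbourhoods $W'\Subset W$ of $\infty$ bounded by smooth Jordan curves, with $\phi_c(\overline{W})\subset W'$ and with $W'$ large enough to contain the forward orbit of $c$ as well as a neighbourhood of $c$ itself. Set $V:=\CPu\setminus\overline{W'}$ and let $U$ be the connected component of $\phi_c^{-1}(V)$ containing $K_c$. Then $V$ is a topological disk containing $0$ and $1$ but not $c$ or $\infty$. The critical verification is that $U\Subset V$ and that $\phi_c\colon U\to V$ is proper of degree exactly $3$: the only critical values of $\phi_c$ meeting $V$ are $0$ and $1$, so by Riemann--Hurwitz the total ramification of $\phi_c\colon U\to V$ equals $2$, forcing the degree to be $3$ and $U$ to contain precisely the two simple critical points $0$ and $1$. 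Setting $D_c:=U$, the Douady--Hubbard Straightening Theorem yields a quasiconformal homeomorphism $\lambda$ on $D_c$ conjugating $\phi_c|_{D_c}$ to a cubic polynomial $Q$.

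It remains to identify $Q$. The conjugacy transports the critical portrait, so $Q$ is a cubic polynomial with two simple superattracting fixed points; after an affine normalisation these may be taken to be $0$ and $1$. Writing $Q(z)=a_3 z^3+a_2 z^2+a_1 z+a_0$, the four conditions $Q(0)=0$, $Q(1)=1$, $Q'(0)=0$, $Q'(1)=0$ give $a_0=a_1=0$, $3a_3+2a_2=0$, $a_3+a_2=1$, whence $a_3=-2$, $a_2=3$, so $Q=P$. The main difficulty is the second step, namely producing the nested disks with the correct polynomial-like degree: it requires the escape of the orbit of $c$ established first to be uniform enough that $W'$ can be taken to contain the entire orbit while still leaving a topological-disk $V$ whose pullback component $U=\phi_c^{-1}(V)$ is compactly contained in $V$. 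Once this is secured, the remainder is a routine application of straightening.
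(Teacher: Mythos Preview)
Your proposal is correct and follows essentially the same strategy as the paper: show that the free critical point $c$ escapes to $\infty$, exhibit a degree-$3$ polynomial-like restriction of $\phi_c$ around $K_c$, apply the Douady--Hubbard Straightening Theorem, and identify the straightened polynomial from the fact that $0$ and $1$ are simple superattracting fixed critical points. The only noteworthy difference is in how the nested disks are produced: the paper works with concrete round disks $B(0,r)$ and verifies by direct estimates (using $1<\alpha(c)<2$) that $\phi_c^{-1}(B(0,r))\Subset B(0,r)$ and is a topological disk once $\phi_c(c)\notin B(0,r)$, whereas you appeal to the B\"ottcher coordinate at $\infty$ to manufacture $W'\Subset W$; your route is cleaner conceptually but you should be careful that the B\"ottcher coordinate does not extend past the escaping critical point $c$, so the disk $W'$ containing $c$ cannot literally be a B\"ottcher level set---you need instead an equipotential slightly below the level of $c$ (or simply an ad hoc Jordan domain in the basin), after which your Riemann--Hurwitz count goes through exactly as written.
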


\textbf{Proof:}\\
Pick a disk centered on zero, of radius $r\geq 3$ 
 and its preimage by $\phi$ which is a topological disk $D$. On $D$ the restriction of $\phi$ is polynomial-like of order $3$.
Hence by the straightening theorem, we know the existence of a hybrid conjugacy with a cubic polynomial map. However, the two points $0$ and $1$ are critical fixed points and $P(z)= -2 z^3 + 3 z^2$ is the only cubic polynomial map satisfying this.

\begin{figure}[H]
\begin{center}
\includegraphics[width=6cm]{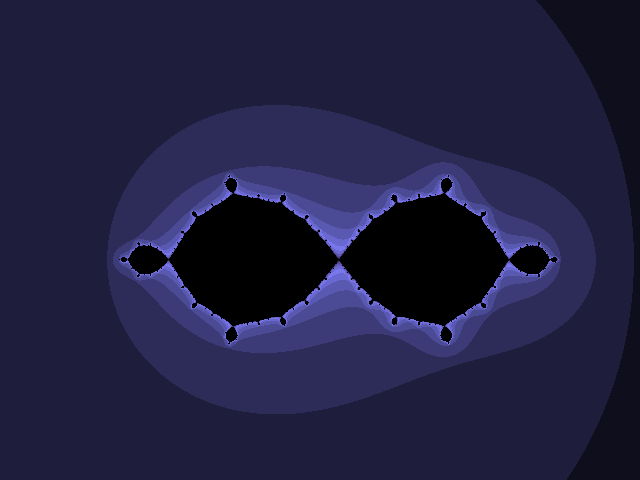}
\caption{Julia set of $P(z)=-2z^3 +3z^2$}
\label{fig:preimage}
\end{center}
\end{figure}

\begin{figure}[H]
\begin{center}
\includegraphics[width=4cm]{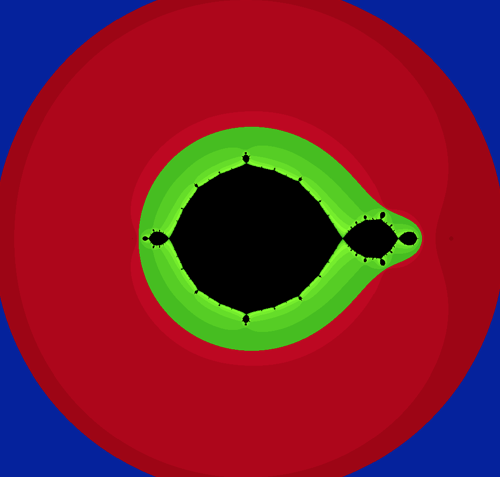}\quad 
\caption{Non-escaping set for $\phi$}
\label{fig:preimage}
\end{center}
\end{figure}

In the figure \ref{fig:preimage}, the red disk has a green preimage, on which the restriction of $\phi$ is a polynomial-like map of degree three. The non-escaping set is by definition the set of points for which the orbit does not go to infinity.

A simple observation seems to suggest that we can say a lot more:
\begin{claim}
When $c \to +\infty$ along the real line, the non-escaping set converges towards the non-escaping set of the polynomial $P$.
\end{claim}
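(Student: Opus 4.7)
The plan is to combine local uniform convergence of the rational maps with hyperbolicity of the limit polynomial to upgrade pointwise dynamical convergence to Hausdorff convergence of non-escaping sets. First, I would verify directly that $\phi_c \to P$ locally uniformly on $\mathbb{C}$ as $c \to +\infty$. By (4.2) one has $\alpha(c) \to 2$, so the unique pole $z = 1/(2-\alpha(c))$ of $\phi_c$ escapes to infinity, while the numerator $\alpha z^3 + (1-2\alpha)z^2$ converges to $2z^3 - 3z^2$ and the denominator $(2-\alpha)z - 1$ converges to $-1$ on any bounded set. Thus on every closed disk $\overline{B(0,R)}$, $\phi_c$ converges uniformly to $P(z) = -2z^3 + 3z^2$, and all iterates $\phi_c^k$ converge uniformly to $P^k$ on a slightly smaller disk.

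Next, I would exploit that $P$ is hyperbolic: its two critical points $0$ and $1$ are super-attracting fixed points, so $K_P$ equals the closure of its interior, which is the union of the two immediate basins and their preimages. Pick a round disk $B(0,R)$ with $R$ large enough that $P^{-1}(B(0,R)) \Subset B(0,R)$; then $P : P^{-1}(B(0,R)) \to B(0,R)$ realizes $P$ as a polynomial-like map of degree $3$ whose filled Julia set is $K_P$. Since $0$ and $1$ remain super-attracting fixed critical points of every $\phi_c$, for $c$ large the disk $D_c$ provided by the previous theorem can be taken to be $B(0,R)$, and the polynomial-like map $\phi_c : \phi_c^{-1}(B(0,R)) \cap D_c \to B(0,R)$ is a $C^0$-small perturbation of the polynomial-like $P$.

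With this in place I would establish Hausdorff convergence by proving both semicontinuities. For \emph{upper semicontinuity}, if $z_n \in K_{c_n}$ with $c_n \to +\infty$ and $z_n \to z$, then by uniform convergence of iterates $\phi_{c_n}^k(z_n) \to P^k(z)$ for every $k$; since the orbit of $z_n$ stays in the fixed disk $B(0,R)$, so does the orbit of $z$, giving $z \in K_P$. For \emph{lower semicontinuity}, take $z \in \mathrm{int}(K_P)$: there exists a neighborhood $U \ni z$ and $N$ such that $P^N(U)$ is relatively compact inside a Böttcher linearization domain at $0$ or at $1$. Uniform convergence $\phi_c^N \to P^N$ on $U$, together with the fact that $\phi_c$ has the same super-attracting fixed points $0,1$ with the same local degrees, ensures that $\phi_c^N(U)$ lands inside the corresponding basin of $\phi_c$ for all $c$ large, so $U \subset K_c$. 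Taking closures and using $K_P = \overline{\mathrm{int}(K_P)}$ yields $K_P \subset \liminf_{c\to\infty} K_c$.

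\textbf{Main obstacle.} The delicate step is the lower semicontinuity, because a priori $K_c$ could be much smaller than $K_P$ (Fatou components could collapse under perturbation). This is where hyperbolicity of $P$ is essential: by the Mañé--Sad--Sullivan $J$-stability theorem, $P$ lies in the interior of the structurally stable locus, so the holomorphic motion of Böttcher coordinates provides the uniform control on the size of the immediate basins of $0$ and $1$ under perturbation. Equivalently, continuity of the Douady--Hubbard straightening in the hyperbolic component containing $P$ transports the conclusion from polynomial perturbations to polynomial-like perturbations, which is exactly the setting in which $\phi_c$ lives by the preceding theorem.
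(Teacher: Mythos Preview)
The paper does not actually prove this claim. The sentence introducing it reads ``A simple observation seems to suggest that we can say a lot more,'' and what follows is not a proof of Hausdorff convergence of $K_c$ to $K_P$ but rather a sequence of auxiliary claims (that $c$ lies in the basin of $\infty$, that $\phi_c^{-1}(B(0,2))$ is connected and compactly contained in $B(0,2)$, etc.) aimed at making the polynomial-like restriction of $\phi_c$ precise for each fixed $c>1$. No argument is given that the non-escaping sets themselves converge as $c\to+\infty$; the claim is left at the level of a numerically motivated assertion.

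Your proposal therefore goes well beyond what the paper offers, and your strategy is sound: local uniform convergence $\phi_c\to P$ on compacta (since $\alpha(c)\to 2$ pushes the pole $1/(2-\alpha)$ to infinity) plus hyperbolicity of $P$ is exactly the right engine. Upper semicontinuity is routine from uniform convergence of iterates on a fixed large disk, and your lower semicontinuity argument via stability of the super-attracting basins at $0$ and $1$ is the correct idea. Two small points worth tightening: first, the appeal to Ma\~n\'e--Sad--Sullivan is slightly oblique here, since that theorem concerns holomorphic families of rational maps of fixed degree, whereas you are taking a limit $c\to\infty$; the cleaner packaging is the one you mention at the end, namely continuity of the filled Julia set for polynomial-like maps near a hyperbolic one (Douady--Hubbard), which applies directly once you have placed all $\phi_c$ for large $c$ in a common polynomial-like frame $B(0,R)$. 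Second, you should check explicitly that the frame can be taken uniform in $c$: the paper's own estimates show $|\phi_c(z)|>2$ for $|z|>3/(2\alpha-1)$, and since $2\alpha-1\to 3$ this bound tends to $1$, so a fixed disk $B(0,R)$ with $R\ge 2$ works for all large $c$.
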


Let $\phi_c(z):= \dfrac{\alpha(c) z^3 + (1-2\alpha(c))z^2}{(2-\alpha(c)) z - 1}$, for $\alpha(c) = \dfrac{\sqrt{c^2 -c+1}-1+c}{c}$.

Note that $\alpha(\mathbb{R})=(0, 2)$.

 Following the above guidelines, it is enough to show :

\begin{itemize}
\item[(1)]{to know if $c$ belongs or not in $\phi^{-1}(B(0, 2))$,\\
($c\notin\phi^{-1}(B(0, 2))$ makes the things well more tractable, but if $c\in\phi^{-1}(B(0, 2))$ changing slightly the ball $B(0, 2)$ in such a way that the border goes inside of the disk nearly to the real axes and avoids the critical point $c$ letting it outside;) }
\item[(2)]{to show that $|\phi^{\circ n}(z)|\to+\infty$ as $n\to +\infty$ whenever $|z|\geq 2.$ Actually, one only needs: $|\phi(z)|>2$ for all $z$ with $|z|\geq2$.}
\end{itemize}


Recall that we have a finite preimage of the point at infinity by $\phi$, namely the point $z_\infty :=\dfrac{1}{2-\alpha(c)}$.

We have $z_\infty \in B(0,2)$ if, and only if, $0<c<8/3$. In fact, much more can be said. Regarding the formula for $\alpha(c)$ we can see that:
\begin{eqnarray}
z_\infty < c\; \Leftrightarrow \; c<0\quad\mbox{or}\quad c>1
\end{eqnarray} 
So, this occurs in the case considered here.

From this we can deduce that  $c$ always belongs to the basin of attraction of infinity.
\begin{claim}\label{claim2}
If $c>1$, then $c$ belongs to the basin of attraction of the point at infinity.
\end{claim}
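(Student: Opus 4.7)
The plan is to prove that the forward orbit $\{\phi_c^{\circ n}(c)\}_{n\ge 0}$ is a strictly increasing real sequence with no finite accumulation point, and hence escapes to $+\infty$.

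First I would locate everything on the real axis. For $c>1$, the relation $c=(2\alpha-1)/(\alpha(2-\alpha))$ together with the formula for $\alpha(c)$ forces $\alpha=\alpha(c)\in(1,2)$, so the unique finite pole $z_\infty=1/(2-\alpha)$ is positive and satisfies $1<z_\infty<c$ (the inequality $z_\infty<c$ is exactly the one noted just above the claim). Consequently $\phi_c$ is continuous on $(z_\infty,+\infty)$, and among the critical points $0,1,\infty,c$ of $\phi_c$, only $c$ lies in this interval. Since $\phi_c(z)\to+\infty$ as $z\to z_\infty^+$ (the numerator at $z_\infty$ equals $2(\alpha-1)^2z_\infty^2/(2-\alpha)>0$ while $(2-\alpha)z-1\to0^+$) and as $z\to+\infty$ (the leading behaviour is $\alpha z^2/(2-\alpha)$), the critical point $c$ must be the global minimum of $\phi_c$ on $(z_\infty,+\infty)$, and $\phi_c$ is strictly increasing on $[c,+\infty)$.

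Next I would compute $\phi_c(c)$ explicitly. Substituting $c=(2\alpha-1)/(\alpha(2-\alpha))$ gives the two identities
\[
(2-\alpha)c-1=\frac{\alpha-1}{\alpha},\qquad \alpha c+1-2\alpha=\frac{(2\alpha-1)(\alpha-1)}{2-\alpha},
\]
and after a short cancellation one obtains the clean formula
\[
\phi_c(c)=c\left(\frac{2\alpha-1}{2-\alpha}\right)^{\!2}.
\]
For $\alpha\in(1,2)$ the ratio $(2\alpha-1)/(2-\alpha)$ is strictly greater than $1$, so $\phi_c(c)>c$. Combined with the monotonicity above, the half-line $[c,+\infty)$ is forward-invariant under $\phi_c$ and the orbit of $c$ is strictly increasing: $c<\phi_c(c)<\phi_c^{\circ2}(c)<\cdots$.

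Finally I would exclude any finite limit. The fixed-point equation $\phi_c(z)=z$ reduces to $z(\alpha z^2-(\alpha+1)z+1)=0$, whose roots are $0$, $1/\alpha$ and $1$; together with $\infty$ these are all the fixed points of $\phi_c$. Since $\alpha>1$, every finite fixed point lies in $[0,1]$, so none belongs to $[c,+\infty)$. A monotone increasing sequence in $[c,+\infty)$ that converges to a finite limit $L$ would satisfy $\phi_c(L)=L$ by continuity, contradicting the absence of fixed points there; hence $\phi_c^{\circ n}(c)\to+\infty$, which proves the claim. I expect the only delicate point to be the algebraic step yielding the clean expression for $\phi_c(c)$; once that is in hand, everything reduces to a standard monotone-dynamics argument on the real line.
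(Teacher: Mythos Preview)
Your proof is correct and follows the same overall strategy as the paper---showing that the real orbit of $c$ is strictly increasing with no finite limit---but the mechanics differ. The paper establishes directly that $\phi_c(t)>t$ for every $t>z_\infty$ by factoring
\[
\phi_c(t)-t=\frac{\alpha\, t\,(t-1)\bigl(t-\tfrac{1}{\alpha}\bigr)}{(2-\alpha)t-1}
\]
and reading off the sign on $(z_\infty,\infty)$; this single factorization simultaneously gives the strict increase of the orbit and the absence of fixed points there, with no appeal to monotonicity of $\phi_c$ itself or to the value $\phi_c(c)$. Your route instead combines three separate ingredients: monotonicity of $\phi_c$ on $[c,\infty)$ via a critical-point/asymptotic argument, the explicit formula $\phi_c(c)=c\bigl((2\alpha-1)/(2-\alpha)\bigr)^{2}$ (equivalently $\alpha^{2}c^{3}$, as used elsewhere in the paper), and a final enumeration of the fixed points $0,\,1/\alpha,\,1$. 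Both arguments are clean; the paper's version is shorter because one sign analysis does all the work, while yours has the side benefit of exhibiting the explicit growth factor $\bigl((2\alpha-1)/(2-\alpha)\bigr)^{2}>1$ at the very first iterate.
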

\begin{proof}
We will prove that for $c>1$ ( which implies $1<\alpha<2$,) we have $\phi_c(t)>t$ for every $t>z_\infty$ which is enough to obtain the claim, since that, as mentioned above, $c>z_\infty.$
\begin{eqnarray}
&\phi(t)-t &>0\\
&\iff &\frac{\alpha t^3 + (1-2\alpha)t^2}{(2-\alpha) z - 1}-t>0\\
&\iff &\frac{\alpha t^3 - (1+\alpha)t^2+t}{(2-\alpha) z - 1}>0\\
\end{eqnarray} 

$n(t):=\alpha t^3 - (1+\alpha)t^2+t=0$ only if $t=0$ or
$m(t):=\alpha t^2 - (1+\alpha)t+1=0$.

$m(t)$ has discriminant $\Delta=(\alpha-1)^2$. Thus,
\begin{eqnarray}
m(t)=0 \iff t=\frac{1}{\alpha}\;\;\mbox\;\;t=1
\end{eqnarray}

Now, since we have $0<\alpha$ it follows that 
$n(t)>0$ if and only if $t\in(0, \frac{1}{\alpha})$ or $t>1$. 

Furthermore, $(2-\alpha)t-1>0$ if and only if $t>\dfrac{1}{2-\alpha}>1$.

Then, $\phi_c(t)>t$ iff $t<0$, $t\in(\frac{1}{\alpha}, 1)$ or $t>\dfrac{1}{2-\alpha}$, and we are done.

\end{proof}
Therefore, if  $c$ is such that $\phi_c (c)>2$, which happens if and only if $c\geq2+\sqrt{3}-\sqrt{3+2\sqrt{3}}$, and having $(2)$ and $\phi^{-1}(B(0, 2))$ connected, from the \emph{Riemann-Hurwitz Formula} we can conclude that  $\phi^{-1}(B(0, 2))$ is a topological disk. Now,  $\phi|_{\phi^{-1}(B(0, 2))}: \phi^{-1}(B(0, 2))\longrightarrow B(0, 2)$ will be a polynomial-like map of degree $3$ with $2$ fixed critical points. Then, it follows from the \emph{Straightening theorem} that $\phi|_{\phi^{-1}(B(0, 2))}$ is hybrid equivalent to the cubic polynomial map. However, the two points $0$ and $1$ are critical fixed points and $P(z)= -2 z^3 + 3 z^2$
 is the only cubic polynomial map satisfying this. 


In the case $1<c< 2+\sqrt{3}-\sqrt{3+2\sqrt{3}}$, the critical value $\phi_c (c)$ belongs to the open disk $B(0, 2)$. In that situation, provided that $\phi^{-1}(B(0, 2))$ is connected, that domain will be, by the \emph{Riemann-Hurwitz Formula}, a ring domain. So we have to choose another more appropriate domain rather than $B(0, 2)$ in order to obtain a polynomial-like restriction of $\phi_c$.

There is locally a univalent branch of $\phi$ around the point $z_\infty$. Since $\phi$ does not have any other critical points in the region $\widehat{\mathbb{C}}-B(0, |\phi_c (c)|)$ than the point at infinity, the branch of $\phi^{-1}$ for which $\phi^{-1}(\infty)=z_\infty$, can be continued analytically in some univalent map over that region. 

Set $C:=\phi^{-1}(\widehat{\mathbb{C}}-B(0, |\phi_c (c)|))$.
Note that if $1<c<2$, then $C\subset B(0, 2)$. In addition, $C\subset B(0, 2)$, if $\phi_c (c)\in B(0, 2)$, $C$ stays contained in the bounded component of the complemente of the doubly connected region $\phi^{-1}(B(0, 2))$.

\begin{claim}
$\phi^{-1}(B(0, 2))$ is connected.
\end{claim}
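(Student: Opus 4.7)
The plan is to exploit Riemann--Hurwitz on the \emph{complementary} disk $V^\ast := \widehat{\mathbb C}\setminus \overline{B(0,2)}$ rather than on $B(0,2)$ itself, and then deduce connectedness of $\phi^{-1}(B(0,2))$ from the topology of its complement in $\widehat{\mathbb C}$. Since $\phi$ has degree $3$ with four critical points $\{0,1,c,\infty\}$ each of local degree $2$, the critical values are $\{0,1,\phi(c),\infty\}$. Under our hypothesis $1<c<2+\sqrt 3-\sqrt{3+2\sqrt 3}$ we have $|\phi(c)|<2$, so the only critical value of $\phi$ lying in $V^\ast$ is $\infty$.

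First I would analyse $\phi^{-1}(V^\ast)$. The set-theoretic fibre over $\infty$ is $\{\infty,z_\infty\}$, with local degrees $2$ and $1$ respectively (so indeed $2+1=3=\deg\phi$). Let $U_\infty$ be the connected component of the open set $\phi^{-1}(V^\ast)$ that contains $\infty$. Since $V^\ast$ is a topological disk and $\phi\colon U_\infty\to V^\ast$ is a proper branched cover of some degree $d_\infty\in\{1,2,3\}$, Riemann--Hurwitz gives $\chi(U_\infty)=d_\infty-(e_\infty-1)=d_\infty-1$, the sum of ramification being over critical points of $\phi$ in $U_\infty$ (all lying over $\infty$, since $\infty$ is the unique critical value in $V^\ast$). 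The key step is to rule out $d_\infty=3$: if $d_\infty=3$ then $U_\infty$ already accounts for the full fibre over $\infty$, so $U_\infty=\phi^{-1}(V^\ast)$, and $\chi(U_\infty)=2$ would force $U_\infty=\widehat{\mathbb C}$, which is absurd because $0\in\widehat{\mathbb C}\setminus\phi^{-1}(V^\ast)$. Therefore $z_\infty\notin U_\infty$, $d_\infty=2$, and $\chi(U_\infty)=1$, so $U_\infty$ is a topological disk.

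Next I would treat the component $U'$ of $\phi^{-1}(V^\ast)$ containing $z_\infty$. The remaining degree is $3-2=1$, so $\phi\colon U'\to V^\ast$ is a degree-$1$ unbranched proper map, hence a homeomorphism, and $U'$ is a disk disjoint from $U_\infty$. Since the two components already account for the full fibre over any point of $V^\ast$, there are no further components:
\[
\phi^{-1}(V^\ast)=U_\infty\sqcup U',
\]
a disjoint union of two open topological disks in $\widehat{\mathbb C}$.

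Finally I would transfer this back to $B(0,2)$. By taking complements in $\widehat{\mathbb C}$,
\[
\phi^{-1}\!\bigl(\overline{B(0,2)}\bigr)=\widehat{\mathbb C}\setminus\bigl(U_\infty\sqcup U'\bigr),
\]
which is the complement of two disjoint open disks in the sphere, i.e.\ a closed topological annulus. Since $\phi$ is a non-constant holomorphic (in particular open) map, $\phi^{-1}(B(0,2))$ equals the interior of $\phi^{-1}(\overline{B(0,2)})$, namely the open annulus obtained by deleting its two boundary circles. An open annulus is connected, which proves the claim (and incidentally confirms the parenthetical remark in the excerpt that $\phi^{-1}(B(0,2))$ is doubly connected in this parameter range). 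The only subtle step is the exclusion $d_\infty=3$; everything else is routine Riemann--Hurwitz bookkeeping combined with the fact that the sphere has $\chi=2$.
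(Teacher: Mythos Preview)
Your argument is correct and in fact more informative than the paper's, since you not only prove connectedness but actually identify $\phi^{-1}(B(0,2))$ as an annulus in the regime $|\phi(c)|<2$. The route, however, is genuinely different. The paper argues by contradiction directly on $\phi^{-1}(B(0,2))$: if it were disconnected, the component $A$ containing the two fixed critical points $0,1$ would carry degree $2$, and the paper asserts that Riemann--Hurwitz forbids such an $A$. (Unspelled in the paper, the point is that $A$ must also contain $c$ when $|\phi(c)|<2$, giving $\chi(A)=2-3=-1$; a planar domain with $\chi=-1$ has three boundary circles, but $\partial A$ is a degree-$2$ cover of $\partial B(0,2)$ and so has at most two components.) You instead pass to the complementary disk $V^\ast$, classify the components of $\phi^{-1}(V^\ast)$ as two disjoint Jordan disks via Riemann--Hurwitz, and read off connectedness of $\phi^{-1}(B(0,2))$ as their common complement in the sphere.

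Two remarks. First, the step ``complement of two disjoint open disks in $\widehat{\mathbb C}$ is an annulus'' deserves one more sentence: it uses that $\phi^{-1}(\partial B(0,2))$ is a smooth $1$-manifold (there are no critical values on $\partial B(0,2)$), so $\partial U_\infty$ and $\partial U'$ are two disjoint Jordan curves, and two disjoint Jordan curves separate $\widehat{\mathbb C}$ into exactly three regions. Second, you work under the hypothesis $|\phi(c)|<2$, whereas the paper also invokes connectedness earlier in the case $\phi(c)>2$. Your method adapts immediately: in that case $c\in\phi^{-1}(V^\ast)$ contributes an extra ramification point, a short case analysis on where $c$ lies shows $\phi^{-1}(V^\ast)$ is a single disk, and hence $\phi^{-1}(B(0,2))$ is itself a disk. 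If you intend the claim in full generality, it is worth recording this variant.
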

\begin{proof}
If not, $\phi^{-1}(B(0, 2))$ should have $2$ connected components due to the (global) degree of the map $\phi$, is equal to $3$. One component, say $A$, containing the two fixed critical points (of local degree $2$) and another one $B$ on which $\phi$ is univalent. But this yields a contradiction. The restriction $\phi|_A :A\rightarrow B(0, 2)$ cannot exist by the \emph{Riemann-Hurwitz Formula}.
\end{proof}

If $\phi_c (c)$ belongs to the open disk $B(0, 2)$, we can choose another domain $G$ rather than the domain $B(0, 2)$. More precisely, we can
take the following subset of the plane:

\begin{defn}For $\epsilon>0$. Set 
\end{defn}
$\displaystyle G_\epsilon :=B(0, 2)\cap \left(\mathbb{C}-(B(c, \epsilon)\cup R_\epsilon \right)$ in which $R_\epsilon:=\{x+iy; x\in(c,+\infty)\;\mbox{and}\;y\in(-\epsilon, \epsilon)\} $. 

Then, as argued above we get that $\phi^{-1}(G_\epsilon)$ is connected.

Thus, to get a polynomial-like restriction as said above we have to show that $\phi^{-1}(B(0, 2))$ is compactly contained in $B(0, 2)$.
\begin{claim}
 $\phi^{-1}(B(0, 2))$ is compactly contained in $B(0, 2)$.
\end{claim}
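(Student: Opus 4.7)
The plan is to reduce the claim to the pointwise inequality $|\phi(z)|>2$ for every $z$ on the circle $|z|=2$. Granting this, set $U:=\phi^{-1}(B(0,2))$. Since $\phi(0)=0$ one has $0\in U$, and by the previous claim $U$ is connected. If $U$ contained any point with $|z|\ge 2$, a path inside $U$ from $0$ to that point would meet the circle $|z|=2$ inside $U$ by the intermediate value theorem applied to $|\cdot|$, contradicting the estimate; hence $U\subset B(0,2)$. The topological boundary $\partial U$ lies in $\{|\phi(z)|=2\}$ and in $\overline{B(0,2)}$; the same estimate keeps it off the circle $|z|=2$, so $\overline{U}=U\cup\partial U\subset B(0,2)$, i.e.\ $U$ is compactly contained.

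To establish $|\phi(z)|>2$ on the circle of radius $2$, write $z=2e^{i\theta}$ and factor $4e^{2i\theta}$ out of the numerator of $\phi$, which gives
\begin{equation*}
|\mathrm{num}|^{2}=16\bigl(4\alpha^{2}+(1-2\alpha)^{2}+4\alpha(1-2\alpha)\cos\theta\bigr),\qquad |\mathrm{den}|^{2}=(4-2\alpha)^{2}+1-2(4-2\alpha)\cos\theta.
\end{equation*}
Expanding $|\mathrm{num}|^{2}-4|\mathrm{den}|^{2}$ and collecting in $u:=\cos\theta$ yields the one-line identity
\begin{equation*}
|\mathrm{num}|^{2}-4|\mathrm{den}|^{2}=4\,f(\alpha,u),\qquad f(\alpha,u):=28\alpha^{2}-13-4\bigl(8\alpha^{2}-3\alpha-2\bigr)u,
\end{equation*}
so the target reduces to $f(\alpha,u)>0$ for $u\in[-1,1]$.

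From the identities recalled at the opening of the chapter the hypothesis $c>1$ corresponds to $\alpha(c)\in(1,2)$. On this range $8\alpha^{2}-3\alpha-2>0$ (its positive root is $(3+\sqrt{73})/16\approx 0{.}72$), so $f(\alpha,\cdot)$ is strictly decreasing in $u$ and its minimum over $[-1,1]$ is
\begin{equation*}
f(\alpha,1)=-4\alpha^{2}+12\alpha-5=-4\bigl(\alpha-\tfrac{1}{2}\bigr)\bigl(\alpha-\tfrac{5}{2}\bigr),
\end{equation*}
strictly positive on $(1/2,5/2)\supset(1,2)$ and equal to $3$ at both endpoints $\alpha=1,2$. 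Thus the estimate holds with a uniform margin throughout the parameter range. The only real obstacle I anticipate is error-free bookkeeping in producing $f(\alpha,u)$; once the linear shape in $u$ is isolated, positivity is a one-line check on the four vertices of $[1,2]\times[-1,1]$.
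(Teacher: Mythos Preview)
Your argument is correct. The reduction to showing $|\phi(z)|>2$ on $|z|=2$, combined with the connectedness of $\phi^{-1}(B(0,2))$ established in the preceding claim, is clean and complete; the algebra producing $f(\alpha,u)=28\alpha^{2}-13-4(8\alpha^{2}-3\alpha-2)u$ checks out, and the factorisation $f(\alpha,1)=-4(\alpha-\tfrac12)(\alpha-\tfrac52)$ gives strict positivity on the full range $\alpha\in(1,2)$.

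Your route differs from the paper's in two respects. First, the paper estimates on the whole exterior $|z|>2$ via crude triangle inequalities, bounding $\dfrac{|(2-\alpha)z-1|}{|\alpha z+(1-2\alpha)|}$ by $\dfrac{1+|z|}{\alpha|z|-1}$ and then asking when that is $<2$; you instead compute $|\mathrm{num}|^{2}-4|\mathrm{den}|^{2}$ exactly on the single circle $|z|=2$ and invoke connectedness to control the rest. Second, and more importantly, the paper's estimate only yields $|\phi(z)|>2$ for $|z|>3/(2\alpha-1)$, which exceeds $2$ when $\alpha$ is close to $1$; the paper itself concedes this and says one must enlarge the disk in that regime. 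Your sharp computation on $|z|=2$ avoids this defect entirely: $f(\alpha,1)\ge 3$ uniformly on $[1,2]$, so $B(0,2)$ works throughout. In short, your approach is both more direct and strictly stronger than what the paper manages.
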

\begin{proof}
For this is enough to show that $|\phi_c (z)|>2$ for all $z\in \mathbb{C}-\overline{B(0, 2)}$.

We will get that from the follows inequalities:
\begin{itemize}
\item[(1)]{since for $c>1$ we have $1<\alpha(c)<2$, then
\begin{eqnarray}
|1-2\alpha|>1\;\mbox{and}\; 2\alpha-1<3 
\end{eqnarray}}
\item[(2)]{$|2-\alpha |<1$}
\item[(3)]{$\dfrac{1+|z|}{\alpha|z|-1}<2$ for $|z|>\dfrac{3}{2\alpha -1}>1$

In fact, for 
\begin{eqnarray}
|z|>\dfrac{3}{2\alpha -1} &\iff& ({1-2\alpha})|z|<{-3}\\
&\iff& 1+|z|<2\alpha|z|-2 
\end{eqnarray}}
\end{itemize} 

Note that
\begin{eqnarray}
|\phi_ (z)|&=&|z|^2\dfrac{|\alpha z+(1-2\alpha)|}{|(2-\alpha)z - 1|}\\
&\geq& 4 \dfrac{|\alpha z+(1-2\alpha)|}{|(2-\alpha)z - 1|}
\end{eqnarray}
and
\begin{eqnarray}
4 \dfrac{|\alpha z+(1-2\alpha)|}{|(2-\alpha)z - 1|}> 2 \iff \dfrac{|(2-\alpha)z - 1|}{|\alpha z+(1-2\alpha)|}< 2
\end{eqnarray}
Now, from the previous inequalities we obtain:

\begin{eqnarray}
\dfrac{|(2-\alpha)z - 1|}{|\alpha z+(1-2\alpha)|}&\leq& \dfrac{1+(2-\alpha)|z|}{\alpha|z|-|(1-2\alpha)|}\\
&\leq&\dfrac{1+|z|}{\alpha|z|-1}\;\;\;\mbox{from (1) and (2)}\\
&\leq&2\;\;\;\;\;\;\;\;\;\;\;\;\;\;\;\;\mbox{from (3)}
\end{eqnarray}

Actually, this have to be improved due to $(3)$. This allows us to build the polynomial like restriction, but for $\alpha$ close to $1$ we have to take a little more large disk rather that $B(0, 2)$.
\end{proof}

\begin{figure}[H]
    \centering
    \subfloat[real graph of $\alpha_1$ and $\alpha_2$]{{\includegraphics[width=10cm]{imagem/alphagraph} }}%
    \label{fig:rg2ab}%
\end{figure}

\begin{claim}
For every $c<1$ the critical point $c$ of the map $\phi_c$ belongs to the basin of attraction of the super-attracting point of $\phi_c$ at the origin. 
\end{claim}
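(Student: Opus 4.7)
The plan is to reduce the claim to a monotone real-line argument on the interval joining $0$ and $c$. First I would derive two identities by direct computation. Using $c = (2\alpha-1)/(\alpha(2-\alpha))$ one finds $(2-\alpha)c - 1 = (\alpha-1)/\alpha$, and substituting this into the definition of $\phi_c(c)$ gives the clean formula $\phi_c(c) = \alpha(c)^2\, c^3$. Second, clearing denominators yields the factorisation
\[
\phi_c(t) - t \;=\; \frac{\alpha\, t\,(t-1)\,(t - 1/\alpha)}{(2-\alpha)\,t - 1},
\]
so the sign of $\phi_c(t) - t$ on $\mathbb{R}$ is read off from the real fixed points $0$, $1$, $1/\alpha$ and the pole $1/(2-\alpha)$. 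For $c < 1$ the explicit formula for $\alpha(c)$ forces $\alpha(c) \in (0,1)$, so $1/\alpha > 1$ and every nonzero real fixed point of $\phi_c$ lies strictly above $c$.

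Next I would handle the two sub-cases in parallel. Set $I := [0,c]$ when $0<c<1$ and $I := [c,0]$ when $c<0$. The pole $1/(2-\alpha)$ and the remaining real critical points $1$ and $1/\alpha$ all lie outside $I$ (an elementary check using the closed-form expressions above), so $\phi_c$ is smooth on $I$ and, having no interior critical point, monotone there. At the endpoints $\phi_c(0) = 0$ and $\phi_c(c) = \alpha^2 c^3$ is strictly between $0$ and $c$: for $0<c<1$ because $0 < \alpha^2 < 1$ and $c^3 < c$; for $c<0$ because $\alpha c = (2\alpha-1)/(2-\alpha)$ is a monotone function of $\alpha$ with $|\alpha c| < 1/2$ on $(0,1/2)$, so $|\phi_c(c)| = (\alpha c)^2 |c| < |c|$. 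Consequently $\phi_c(I) \subset I$ and the unique fixed point of $\phi_c$ lying in $I$ is the origin.

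Finally, invoking the sign analysis of $\phi_c(t) - t$: on $(0,c]$ one has $\phi_c(t) < t$, so the orbit $x_n := \phi_c^{\circ n}(c)$ is strictly decreasing and bounded below by $0$; on $[c,0)$ one has $\phi_c(t) > t$, so the orbit is strictly increasing and bounded above by $0$. In either case $(x_n)$ is a bounded monotone sequence in $I$, so it converges to a real fixed point of $\phi_c$ in $I$, which must be $0$. Thus $c$ lies in the basin of the super-attracting fixed point at the origin, as claimed. The only nontrivial step along the way is the bound $|\alpha(c)\,c| < 1/2$ in the negative case, which is immediate once one rewrites $\alpha c = (2\alpha-1)/(2-\alpha)$ and notes that this is a monotone increasing function of $\alpha \in (0,1/2)$ with range $(-1/2,\,0)$.
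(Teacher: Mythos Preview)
Your argument is correct and follows essentially the same route as the paper: both proofs rest on the factorisation
\[
\phi_c(t)-t=\frac{\alpha\,t(t-1)(t-1/\alpha)}{(2-\alpha)t-1}
\]
together with the observation that for $c<1$ one has $\alpha\in(0,1)$, and then a monotone real-line iteration towards $0$. The paper locates $c$ between the two zeros of $\phi_c$ via Rolle's theorem and works on the interval $(0,t_0)$ (respectively $(t_0,0)$), while you instead bound $\phi_c(c)=\alpha^2 c^3$ directly to trap the orbit in $[0,c]$ (respectively $[c,0]$); these are cosmetic variations of the same idea.

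One small slip: you refer to ``the remaining real critical points $1$ and $1/\alpha$'', but $1/\alpha$ is a \emph{fixed} point, not a critical point. The finite critical points are $0$, $1$, and $c$, so the interior of your interval $I$ is indeed free of critical points and your monotonicity step goes through; just correct the wording.
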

\begin{proof}
Note that $\phi_c(t)=0$ if and only if $t=0$ or $t=\dfrac{2\alpha -1}{\alpha}:=t_0$. Then we have $t_0 <0$ iff $0<\alpha<\dfrac{1}{2}$ which happens only if $c\in{(-\infty, 0)}$. And, since $0<\alpha<1$ we have $0<t_0$ iff $\frac{1}{2}<\alpha<1$, which corresponds to the situation where $c\in{(0, 1)}.$ 

Now, from the \emph{Rolle's Theorem} we realize that the critical $c$ stands between the two zeroes of $\phi_c$, $0$ and $t_0$, once we have the another two finite critical points fixed.

Recall that $\phi_c(c)=\alpha^2c^3$, so we get $\phi_c(c)<0$ if $c<0$ and $\phi_c(c)>0$ for $0<c<1$. When $c=0$ we have both $t_0$ and $\phi_c (c)$ equal to $0$. Then, since $\phi_c$ has only two zeroes which are distinct from $c\in (-\infty, 0)\cup(0, 1)$ we can conclude from the sign of $\phi_c(c)$ that $\phi(t)>0$ for all $t\in (0, t_0)$ and $\phi_c (t)<0$ for all $t\in (t_0, 0)$.

Realize that if, for the case $0<c<1$, we have $0<\phi_c (t)<t$ for every $t\in{(0, t_0)}$ the assertion will follow. And will follow also in the case $c<0$, which as seen above corresponds to the situation $t_0<c<0$, if it occurs that $t<\phi_c (t)<0$ for all $t\in{(t_0, 0)}$.

Luckily this is the situation do we have.

\begin{mclaim}\label{mc1}
The following holds for all $c<1$:
\begin{itemize}
\item[$(\ast)$]{$\phi_c (t)-t<0$ if $t\in{(0, z_\infty=\frac{1}{2-\alpha})}$}
\item[$(\ast\ast)$]{$\phi_c (t)-t>0$ if $t\in{(-\infty, 0)}$}
\end{itemize}
\end{mclaim}
\textbf{proof of $\ref{mc1}$:}
Then,
\begin{eqnarray}
\phi_c (t)-t<0 \;\;\iff\;\;\dfrac{\alpha t^3-(1+\alpha)t^2+t}{(2-\alpha)t-1}<0
\end{eqnarray}
We already know from the demonstration of \textbf{Claim $\ref{claim2}$} that $n(t)=\alpha t^3-(1+\alpha)t^2+t=0$ only when $t=0$, $t=1$ or $t=\dfrac{1}{\alpha}$. And notice that S
 in this case we have $0<z_\infty <1<\dfrac{1}{2}$, since $0<\alpha<1$.

So, using $\alpha>0$, it follows that
\begin{eqnarray}
n(t)<0\;\;\iff\;\; t\in{(1, \frac{1}{2})\cup(0, z_\infty)}
\end{eqnarray}

As for the denominator $d(t)=(2-\alpha)t-1$, since $\dfrac{1}{2-\alpha}<1$ for $0<\alpha<1$, follows that
\begin{eqnarray}
d(t)>0\;\;\iff\;\; t\in (-\infty, \frac{1}{2-\alpha}=z_\infty)
\end{eqnarray}

Therefore,

\begin{eqnarray}
\phi_c (t)-t=\dfrac{n(t)}{d(t)}<0\;\;\iff \;\; t\in (0, 1)\cup(\frac{1}{\alpha}, +\infty)
\end{eqnarray}
and also
\begin{eqnarray}
\phi_c (t)-t=\dfrac{n(t)}{d(t)}>0\;\;\iff \;\; t\in (-\infty, 0)\cup(1, \frac{1}{\alpha})
\end{eqnarray}
\end{proof}

In the sequel, we will prove that for all $c<1$, $\phi_c$ is hybrid equivalent to $\phi_0$ which is conformally(in $\widehat{\mathbb{C}}$) equivalent to our previous cubic polynomial model $P(z)=-2z^3+3z^2.$

For $\phi_c (z):= \dfrac{\alpha(c) z^3 + (1-2\alpha(c))z^2}{(2-\alpha(c)) z - 1}$, with $\alpha(c) = \dfrac{-\sqrt{c^2 -c+1}-1+c}{c}$  we have a similar behavior to the one above. For $c<0$ we have a family of rational maps that are hybrid equivalent(as appropriately polynomial-like restriction around your Julia sets) to our cubic polynomial model $P(z)=-2z^3 +3z^2$ degenerating to the map $z\to z^2$ as $c$  goes to $0$ and ``converging'' to $P(z)=-2z^3 +3z^2$. In this case the fourth critical point $c$ goes to infnity by iteration.
And, for $c>0$ we have a family of rational maps that are hybrid equivalent(as appropriately polynomial-like restriction around your Julia sets and containing the point at infinity) to the map $z\to \dfrac{z^3}{3z-2}$, but as we have already seen, this map is conformally conjugated to the cubic polynomial $P(z)=-2z^3 +3z^2.$ In this case, the critical point $c$ belongs to the basin of attraction of the super attracting fixed point at $z=1$. Note that the shape of the Julia set is determined by the fact that the $4$th critical point belongs or not to a certain basing of attraction. Notice also that the critical point $c$ is a fixed point for $\phi_c$ only if it is equal to $1$, but for this case we have the degenerate map $z\to z^2.$


\begin{claim}
For all $c<0$, the critial point $c$ belongs to basin of attraction of the fixed super-attracting point at infinity of $\psi_c$.
\end{claim}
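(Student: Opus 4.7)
My plan is to argue purely on the real line using the sign analysis of $\psi_c(t)-t$ already developed in the proof of Mini-Claim \ref{mc1}, adapted to the regime $\alpha>2$ that corresponds to $c<0$ for the branch $\alpha(c)=\beta(c)=\dfrac{-\sqrt{c^2-c+1}-1+c}{c}$. First I would invoke property (4.9), which says $\beta(\mathbb{R})=(-\infty,0)\cup(2,+\infty)$, together with (4.4) and (4.5), to conclude that for every $c<0$ one has $\alpha(c)\in(2,+\infty)$. This is the essential change of regime: the pole $z_\infty=\dfrac{1}{2-\alpha}$ is now real and \emph{negative}, and $\alpha/(2-\alpha)<0$, so near $\infty$ the map looks like $z\mapsto \tfrac{\alpha}{2-\alpha}z^{2}$, confirming that $\infty$ is a super-attracting fixed point of local degree $2$.

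Next I would locate the fourth critical point $c$ relative to the pole $z_\infty$. From $c=\dfrac{2\alpha-1}{\alpha(2-\alpha)}$ one computes $\dfrac{c}{z_\infty}=c(2-\alpha)=2-\dfrac{1}{\alpha}\in\left(\tfrac{3}{2},2\right)$ for $\alpha>2$. Since both $c$ and $z_\infty$ are negative, a ratio strictly greater than $1$ forces
\[
c\;<\;z_\infty\;=\;\dfrac{1}{2-\alpha}\;<\;0.
\]

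Now I would re-run the factorization $\psi_c(t)-t=\dfrac{n(t)}{d(t)}$ with $n(t)=t(\alpha t-1)(t-1)$ and $d(t)=(2-\alpha)t-1$ on the interval $(-\infty,z_\infty)$. For $\alpha>2$ one checks directly that $n(t)<0$ and $d(t)>0$ for every $t<z_\infty$, so $\psi_c(t)<t$ throughout that interval. In particular, since $c<z_\infty$, the interval $(-\infty,c]$ is forward invariant under $\psi_c$: for every $t\le c$ one has $\psi_c(t)<t\le c$. The orbit of $c$ is therefore a strictly decreasing real sequence contained in $(-\infty,c]$.

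Finally I would rule out a finite accumulation. If the sequence $\psi_c^{\circ n}(c)$ were bounded below, it would converge to a real fixed point of $\psi_c$ in $(-\infty,c]$; but the only real fixed points of $\psi_c$ are the zeros of $n(t)$, namely $0,\,1/\alpha,\,1$, all of which are positive. Hence $\psi_c^{\circ n}(c)\to-\infty$, that is, $c$ lies in the basin of attraction of the super-attracting fixed point at $\infty$. The only subtle point is the location of $c$ with respect to the pole $z_\infty$, which I expect to be the main obstacle and which is handled by the explicit computation of $c(2-\alpha)$ above; the rest is routine sign bookkeeping borrowed from Mini-Claim \ref{mc1}.
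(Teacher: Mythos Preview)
Your proof is correct and follows essentially the same route as the paper: locate $c$ to the left of the pole $z_\infty$, then use the sign analysis of $\psi_c(t)-t=\dfrac{n(t)}{d(t)}$ on $(-\infty,z_\infty)$ to get a strictly decreasing real orbit with no finite fixed-point limit. Your computation $c(2-\alpha)=2-\tfrac{1}{\alpha}\in(\tfrac32,2)$ is a cleaner way to place $c<z_\infty$ than the paper's direct manipulation of $c-z_\infty$, but the overall argument is the same.
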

\begin{proof}
Remember that the finite pre-image of $\infty$ is the point $z_\infty =\dfrac{1}{2-\alpha}$.
We first note that $c<z_\infty$. In fact, for all $c\in\mathbb{R}$ we have $c+\sqrt{c^2 -c+1}>0$, whereas for $c<0$, $\sqrt{c^2 -c+1}=\sqrt{(-c)^2 +(-c)+1}>-c$(and for $c>0$ such inequality is evident).
But
\begin{eqnarray}
c<z_\infty \;\;&\iff&\;\; c-\dfrac{1}{2-\frac{c-1-\sqrt{c^2 -c+1}}{c}}<0\\
&\iff&\;\;c-\dfrac{c}{c+1+\sqrt{c^2 -c+1}}<0\\
&\iff&\;\;1-\dfrac{c}{c+1+\sqrt{c^2 -c+1}}>0\;\;;\mbox{for}\;c<0\label{des1}
\end{eqnarray}
So, since $\dfrac{1}{1+c+\sqrt{c^2 -c+1}}>1$, we have $(\ref{des1})$, for all $c<0.$ 

We shall see now that for all $t<z_\infty$ we have $\psi_c (t)<t$, which is sufficient to guarantee that $\psi_{c}^{\circ{}n}(t)\to -\infty$ as $n\to+\infty$, for all $t<z_\infty$. And, since $c<z_\infty$ we are done.
Then,
\begin{eqnarray}
\psi_c (t)<t\;\;\iff\;\;\dfrac{n(t)}{d(t)}=\dfrac{\alpha t^3-(1+\alpha)t^2+t}{(2-\alpha)t-1}<0
\end{eqnarray}
Since that $\alpha>2$, $2-\alpha$, then $d(t)<0$ only if $t>\dfrac{1}{2-\alpha}=z_\infty.$ We yet now that $n(t)=0$ iff $t=0$, $t=1$ or $t=\dfrac{1}{\alpha}.$ But $0<\dfrac{1}{\alpha}<\frac{1}{2}$  and whereas $\alpha>$, we have $\lim_{t\to -\infty}\psi_c (t)=-\infty$ and $\psi_c(t)\neq 0$ for all $t<z_\infty<\frac{1}{\alpha}$ follows that $n(t)<0$ for $t\in(-\infty, z_\infty)$.
Thus, $\dfrac{n(t)}{d(t)}<0$ for $t\in(-\infty, z_\infty)$.

\end{proof}

\begin{claim}
For all $c>0$, the critial point $c$ belongs to basin of attraction of the fixed super-attracting $z=1$.
\end{claim}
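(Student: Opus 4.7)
The plan is to follow the template of the two preceding claims, but centered at the fixed critical point $z=1$ rather than at $\infty$ or $0$. The first step is to establish the sign of $\alpha$ in this regime. For $c>0$, the identity $c|\alpha|= 1+\sqrt{c^2-c+1}-c$ together with $\sqrt{c^2-c+1}>|c-1|$ (equivalent to $c>0$) gives $\alpha(c)<0$, consistently with the fact that for the present branch $\alpha=\beta$ takes values in $(-\infty,0)$ on $(0,+\infty)$.

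Next, I would replicate the algebraic step that produced $\phi_c(t)-t=\tfrac{t(\alpha t-1)(t-1)}{(2-\alpha)t-1}$, but now extract the factor $(t-1)^{2}$ that is forced by $1$ being a \emph{super-attracting} fixed point. An elementary polynomial division yields the two key identities
\[
\psi_c(t)-t=\frac{t(t-1)(\alpha t-1)}{(2-\alpha)t-1},\qquad \psi_c(t)-1=\frac{(t-1)^{2}(\alpha t+1)}{(2-\alpha)t-1}.
\]
With $\alpha<0$ these give full sign information on $\mathbb R$. Writing $z_\infty=1/(2-\alpha)$ and $1/|\alpha|=-1/\alpha$, one checks that $0<z_\infty<1/|\alpha|$ always, and that the comparison between $1/|\alpha|$ and $1$ switches at $c=1$: $|\alpha|>1$ iff $0<c<1$ and $|\alpha|<1$ iff $c>1$ (the equation $|\alpha|=1$ reducing to $\sqrt{c^2-c+1}=|2c-1|$, equivalent to $c\in\{0,1\}$).

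The core of the argument is then to locate $c$ relative to $1/|\alpha|$ and to build an $\psi_c$-invariant interval on which $\psi_c$ is monotone with unique fixed point $1$. The case split is:

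\textbf{Case $c>1$.} Here $c|\alpha|=1+\sqrt{c^2-c+1}-c<1$ because $\sqrt{c^2-c+1}<c$ for $c>1$, so $1<c<1/|\alpha|$. On the interval $J:=(1,\,1/|\alpha|)$ the two identities give $\psi_c(t)-1>0$ and $\psi_c(t)-t<0$, so $1<\psi_c(t)<t<1/|\alpha|$. Hence $J$ is forward invariant and the orbit $\psi_c^{\circ n}(c)$ is strictly decreasing and bounded below by $1$, forcing convergence to the unique fixed point of $\psi_c$ in $\overline{J}$, namely $1$.

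\textbf{Case $0<c<1$.} Here the same computation gives $c|\alpha|>1$, hence $1/|\alpha|<c<1$. On $I:=(1/|\alpha|,\,1)$ the identities give $\psi_c(t)-1<0$ and $\psi_c(t)-t>0$, so $t<\psi_c(t)<1$. Thus $I$ is forward invariant, the orbit $\psi_c^{\circ n}(c)$ is strictly increasing and bounded above by $1$, so it converges, and again the only available limit is $1$.

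The main technical obstacle is the bookkeeping needed to locate $c$ relative to the moving reference points $z_\infty$, $1/|\alpha|$, and $1$; the $c=1$ parameter is degenerate and is excluded from the family, while the two subcases $0<c<1$ and $c>1$ must be handled separately because the invariant interval on which $\psi_c$ is monotone sits on opposite sides of $1$. Once these positions are pinned down the rest is a straightforward monotonicity-plus-boundedness argument, paralleling Mini-Claim~\ref{mc1} in the previous proof.
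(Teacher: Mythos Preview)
Your proof is correct and follows essentially the same architecture as the paper's: the same case split at $c=1$, the same invariant intervals $(z_1,1)$ and $(1,z_1)$ (your $1/|\alpha|$ is exactly the paper's $z_1=-1/\alpha$), and the same monotone-bounded orbit argument converging to the super-attracting fixed point $1$. The only substantive difference is in how you place $c$ inside the relevant interval: the paper invokes Rolle's theorem (the critical point $c$ must lie between the two preimages $1$ and $z_1$ of the value $1$), whereas you compute $c|\alpha|=1+\sqrt{c^2-c+1}-c$ directly and compare it to $1$. Your explicit factorization $\psi_c(t)-1=\dfrac{(t-1)^2(\alpha t+1)}{(2-\alpha)t-1}$ also makes the sign analysis cleaner than the paper's version, but the underlying idea is identical.
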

\begin{proof}
Recall that $z_1\in\mathbb{C}-\{1\}$ is the unic point such that 
$\psi_c (z)=1$.
 First we will see that $z_\infty< c$.
Notice that
\begin{eqnarray}
z_\infty< c\;\;&\iff &\;\;0>\dfrac{1}{2-\alpha}-c=\dfrac{c}{c+1+\sqrt{c^2 -c+1}}-c\\
&\iff &\;\;\dfrac{1}{c+1+\sqrt{c^2 -c+1}}-1<0\;\;\mbox{since}\;\;{c>0}
\end{eqnarray}
But this later inequality is always true for $c>0$.

We will split the above statement into two parts.

\begin{mclaim}\label{mc-2}
For all $c\in (0, 1)$ we have $t<\psi_c (t)<1$ for all $t\in (z_1, 1)$ and $c\in (z_1, 1).$
\end{mclaim}

\begin{mclaim}\label{mc-3}
For all $c>1$ we have $1<\psi_c (t)<t$ for all $t\in (1, z_1)$ and $c\in (1, z_1)$
\end{mclaim}

Thus, in both case we can conclude that $\psi_c^{\circ{}n} (c)\to 1$ when $n\to +\infty.$

\textbf{proof of Mini-Claim $\ref{mc-2}$:}
First, note that $-\dfrac{1}{\alpha}<1$ as for $0<c<1$ we have $\alpha<-1$.
Thus, we have to show that $\dfrac{n(t)}{d(t)}>0$ for $t\in (-\frac{1}{\alpha}, 1)$.

Seeing that $2-\alpha >0$, $d(t)>0$ if, and only if $t>\dfrac{1}{2-\alpha}$.

And since $\alpha<0$, $\lim_{t\to+\infty}\psi_c (t)=-\infty$ e $\lim_{t\to+\infty}\psi_c (t)=+\infty$, then $n(t)<0$ for all $t>1$ and $n(t)>0$ for all $t<\dfrac{1}{\alpha}$. Do remind that $n(t)=0$ iff $t=0$, $t=1$ or $t=\dfrac{1}{\alpha}$. And, whereas $n'(1)=\alpha-1\neq 0$ and $n'(0)=1\neq 0$ follows that $n(t)>0$ for $t\in (0, 1)$ and $n(t)<0$ for $t\in (\frac{1}{\alpha}, 0)$. Therefore, $\dfrac{n(t)}{d(t)}>0$ iff $t\in (\frac{1}{\alpha}, 0)\cup (z_\infty , 1)$. In particular, holds the inequality in Mini-Claim 2 since $z_\infty=\dfrac{1}{2-\alpha}<-\dfrac{1}{\alpha}=z_1$. To finish up, we realize that from \emph{Rolle Theorem}  and the fact that $\psi_c$ have only three finite critical points, namely $0$, $1$, $c$ we conclude that $z_1<c<1.$

\textbf{proof of Mini-Claim $\ref{mc-3}$:}

For $c>1$ we have $-1<\alpha<0$, then $z_1 =-\dfrac{1}{\alpha}>1$. And arguing as above we have to have $1<c<z_1.$ 
For the study of the signal of $\dfrac{n(t)}{d(t)}$ we now that $\dfrac{n(t)}{d(t)}$ if and only if $t\in (-\infty, \frac{1}{\alpha})\cup (1, +\infty)$. Hence, in particular we have $1<\psi_c (t)<t$ for all $t\in (1, z_1)$.
\end{proof}

We can notice that for $c=1$ we get the rational map $\psi_1 (z)=\dfrac{-z^3 +3z^2}{3z -1}$ that is conformally conjugated to the cubic polynomial $P(z)=-2z^3+3z^2$ by the conformal map $S(z):=\dfrac{z-1}{z}$ that sends $0$ to $\infty$, $1$ to $0$ and $\infty$ to $1$.
\begin{cor}
$c$ never belongs to the Julia set of any of the two maps $\phi_c$ and $\psi_c$.
\end{cor}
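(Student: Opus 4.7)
The plan is to observe that the corollary is an immediate consequence of the four preceding claims together with the standard Fatou/Julia dichotomy. Recall that the basin of attraction of any (super)attracting periodic point of a rational map is an open subset of the Fatou set, hence disjoint from the Julia set; this follows from the normality of the iterate family on a neighbourhood of the attracting cycle. So once one exhibits $c$ as lying in such a basin, the conclusion is automatic.

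Concretely, I would organise the argument as an exhaustive case analysis on the sign of $c-1$ for $\phi_c$ and on the sign of $c$ for $\psi_c$. For $\phi_c$, the claim for $c>1$ places $c$ in the basin of the superattracting fixed point at $\infty$, and the claim for $c<1$ places $c$ in the basin of the superattracting fixed point at $0$; together these cover every admissible $c \in \mathbb{R}-\{0,1\}$ (the excluded values being those where $\alpha(c)$ degenerates or $\phi_c$ collapses to $z\mapsto z^2$, so that the statement is vacuous). For $\psi_c$, the claim for $c<0$ places $c$ in the basin of $\infty$, and the Mini-Claims for $0<c<1$ and $c>1$ place $c$ in the basin of the superattracting fixed point at $1$; at $c=1$ the map $\psi_1$ is conformally conjugate to the cubic polynomial $P$ via $S$, and under that conjugacy $c=1$ is sent to the superattracting fixed point $0$ of $P$, which again lies in the Fatou set.

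The proof is then just one sentence per case: in each regime one picks the relevant claim, observes that the basin of the named attracting fixed point is contained in $\widehat{\mathbb{C}} - J(\phi_c)$ (respectively $\widehat{\mathbb{C}} - J(\psi_c)$), and concludes that $c \notin J(\phi_c)$ (respectively $c\notin J(\psi_c)$). The only bookkeeping obstacle is ensuring that the case analysis is genuinely exhaustive and that the handful of degenerate parameter values are accounted for; no new dynamical input beyond the preceding claims is required.
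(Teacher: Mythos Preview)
Your proposal is correct and matches the paper's treatment exactly: the corollary is stated without proof because it is an immediate consequence of the four preceding claims, each of which places $c$ in the basin of attraction of a superattracting fixed point, hence in the Fatou set. Your case breakdown and the handling of the degenerate parameters are precisely the bookkeeping the paper implicitly relies on.
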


\backmatter \singlespacing   
\bibliographystyle{halpha}
\bibliography{bibliografia}  

\index{TBP|see{periodicidade região codificante}}
\index{DSP|see{processamento digital de sinais}}
\index{STFT|see{transformada de Fourier de tempo reduzido}}
\index{DFT|see{transformada discreta de Fourier}}
\index{Fourier!transformada|see{transformada de Fourier}}


\end{document}